\newcommand{\labell}[1] {\label{#1}}
\newcommand{\1}{{{\mathchoice {\rm 1\mskip-4mu l} {\rm 1\mskip-4mu l}
{\rm 1\mskip-4.5mu l} {\rm 1\mskip-5mu l}}}}
\newcommand{\Cal}{{\rm Cal}}
\newcommand{\Sing}{{\rm Sing}}
\newcommand{\Ver}{{\rm Vert}}
\newcommand{\less}{{\smallsetminus}}
\newcommand{\bla}{{\bigl\langle}}
\newcommand{\bra}{{\bigl\rangle}}
\newcommand{\un}{\underline}
\newcommand{\Sp}{{\rm Sp}}
\newcommand{\Ta}{{\Tilde a}}
\newcommand{\oWw}{{\overline {\mathcal W}}}
\newcommand{\Tga}{{\Tilde\ga}}
\newcommand{\Tka}{{\Tilde\ka}}
\newcommand{\Tla}{{\Tilde\la}}
\newcommand{\Tr}{{\Tilde r}}
\newcommand{\Tc}{{\Tilde c}}
\newcommand{\TK}{{\Tilde K}}
\newcommand{\TQ}{{\Tilde Q}}
\newcommand{\p}{{\partial}}
\newcommand{\al}{{\alpha}}
\newcommand{\be}{{\beta}}
\newcommand{\Om}{{\Omega}}
\newcommand{\om}{{\omega}}
\newcommand{\eps}{{\varepsilon}}
\newcommand{\de}{{\delta}}
\newcommand{\De}{{\Delta}}
\newcommand{\ga}{{\gamma}}
\newcommand{\Ga}{{\Gamma}}
\newcommand{\io}{{\iota}}
\newcommand{\ka}{{\kappa}}
\newcommand{\la}{{\lambda}}
\newcommand{\La}{{\Lambda}}
\newcommand{\si}{{\sigma}}
\newcommand{\Si}{{\Sigma}}
\newcommand{\univ}{{\rm univ}}
\newcommand{\Uu}{{\mathcal U}}
\newcommand{\Aa}{{\mathcal A}}
\newcommand{\Ii}{{\mathcal I}}
\newcommand{\Ee}{{\mathcal E}}
\newcommand{\Gg}{{\mathcal G}}
\newcommand{\Ll}{{\mathcal L}}
\newcommand{\Bb}{{\mathcal B}}
\newcommand{\Oo}{{\mathcal O}}
\newcommand{\Mm}{{\mathcal M}}
\newcommand{\Rr}{{\mathcal R}}
\newcommand{\Ss}{{\mathcal S}}
\newcommand{\Xx}{{\mathcal X}}
\newcommand{\oMm}{{\overline {\Mm}}}
\newcommand{\ov}{\overline}
\renewcommand{\Tilde}{\widetilde}
\newcommand{\Ts}{{\Tilde s}}
\newcommand{\TM}{{\Tilde M}}
\newcommand{\TP}{{\Tilde P}}
\newcommand{\TF}{{\Tilde F}}
\newcommand{\Tom}{{\Tilde \om}}
\newcommand{\Tsi}{{\Tilde \si}}
\newcommand{\Tbe}{{\Tilde \be}}
\newcommand{\PP}{{\mathbb P}}
\newcommand{\NN}{{\mathbb N}}
\newcommand{\FF}{{\mathbb F}}
\newcommand{\Q}{{\mathbb Q}}
\newcommand{\R}{{\mathbb R}}
\newcommand{\C}{{\mathbb C}}
\newcommand{\Z}{{\mathbb Z}}
\newcommand{\Ham}{{\rm Ham}}
\newcommand{\Nn}{{\mathcal N}}
\newcommand{\Pp}{{\mathcal P}}
\newcommand{\Hh}{{\mathcal H}}
\newcommand{\Qq}{{\mathcal Q}}
\newcommand{\ev}{{\rm ev}}
\newcommand{\SSS}{{\smallskip}}
\newcommand{\QED}{{\hfill $\Box$\MS}}
\newtheorem{theorem}{Theorem}[section]
\newtheorem{thm}[theorem]{Theorem}
\newtheorem{cor}[theorem]{Corollary}
\newtheorem{lemma}[theorem]{Lemma}
\newtheorem{prop}[theorem]{Proposition}
\newtheorem{example}[theorem]{Example}
\newtheorem{rmk}[theorem]{Remark}
\numberwithin{figure}{section}
\numberwithin{equation}{section}
\numberwithin{table}{section}
\newcommand{\MS}{{\medskip}}
\newcommand{\NI}{{\noindent}}
\begin{document}
 \title{Hamiltonian $S^1$-manifolds are uniruled}
 \author{Dusa McDuff}\thanks{partially supported by NSF grant DMS 0604769}
\address{Department of Mathematics, Barnard College,
 Columbia University, 2990 Broadway, New York, NY 10027}
\email{dusa@math.columbia.edu}
%\urladdr{http://www.math.sunysb.edu/\~{}dusa}
\keywords{symplectically uniruled, Hamiltonian $S^1$-actions, 
quantum homology, relative Gromov--Witten invariants, 
Seidel representation}
\subjclass[2000]{53D45,53D05,14E08}
\date{August 4, 2007, revised May 6 2008}
\begin{abstract}
The main result of this note is that every closed Hamiltonian 
$S^1$ manifold is uniruled, i.e. it has a nonzero Gromov--Witten invariant one of whose constraints is a point.
The proof uses the Seidel representation of $\pi_1$ of the Hamiltonian group in the small quantum homology of $M$ as well as the blow up technique recently introduced by Hu, Li and Ruan.  It applies 
more generally to manifolds that have  a loop of Hamiltonian
symplectomorphisms with a nondegenerate fixed maximum. Some consequences for Hofer geometry are explored. An appendix 
discusses the structure of the quantum homology ring of uniruled manifolds.
\end{abstract}

\maketitle
%\begin{center} Preliminary version
%\end{center}

\tableofcontents

%%%%%%%%%%%%%%%%%%%%%%%%%%%%%%%%%%%%%%%%%%%%%%%%%%%%%%%%%%
\section{Introduction}
%%%%%%%%%%%%%%%%%%%%%%%%%%%%%%%%%%%%%%%%%%%%%%%%%%%%%%%%%%

A projective manifold is said to be {\bf projectively uniruled} if there is a holomorphic rational curve through every point. 
Projective manifolds with this property form an important class of  varieties in 
 birational geometry  since they do not have
minimal models but rather give rise to Fano fiber spaces;  see for example Kollar~\cite[Ch~IV]{K}.

One way 
to translate this property into the symplectic world
is to call a symplectic  manifold $(M,\om)$  {\bf (symplectically) uniruled} if there is a nonzero 
genus zero Gromov--Witten invariant of the form $\bla pt, a_2,\dots,a_k\bra_{k,\be}^{M}$ where $\be\ne 0$.
Here $k\ge 1$, $0\ne \be\in H_2(M)$, and $a_i\in H_*(M)$, 
and we consider the invariant where the $k$ 
marked points are allowed to vary freely.
Because 
Gromov--Witten invariants are preserved under symplectic deformation, a symplectically uniruled manifold  $(M,\om)$
has a $J$-holomorphic rational curve through every point for every $J$ that is tamed by some symplectic form deformation equivalent to $\om$.
Further justification for this definition is given in 
the foundational paper by Hu--Li--Ruan~\cite{HLR}. 
They show that $(M,\om)$ is symplectically uniruled whenever there is {\it any} nontrivial 
genus zero Gromov--Witten invariant $\bla \tau_{i_1}pt, \tau_{i_2} a_2,\dots,\tau_{i_k} a_k\bra_{k,\be}^{M}$ with $\be\ne 0$, where 
the ${i_j}\ge 0$ denote the degrees of the descendent insertions.\footnote
{
Descendent insertions  $\tau_i$  are defined in
 the discussion following equation (\ref{eq:GW}).}  
 They also show that 
the uniruled property is preserved 
under symplectic blowing up and down, and is
therefore  preserved by  symplectic birational equivalences.

We will call a symplectic manifold $(M,\om)$  {\bf strongly uniruled} 
if there is a nonzero invariant
$\bla pt, a_2, a_3\bra_{3,\be}^{M}.$  (Since one can always 
add marked points with insertions given by divisors, this is the same as requiring 
there be some nonzero invariant with $k\le 3$, but it is slightly different 
from Lu's usage of the term in~\cite[Def~1.14]{Lu}.) 
Manifolds with this
 property may be detected by the special structure of their small quantum cohomology rings; see Lemma~\ref{le:QH}.

 Kollar and Ruan showed in  \cite{K,Ru} (see also \cite[Thm.~4.2]{HLR}) that
every projectively uniruled  manifold is strongly uniruled; i.e. if there is a holomorphic $\PP^1$ through every point there is a nonzero 
invariant $\bla pt, a_2,a_3\bra_{\be}^{M}.$
 It is not 
clear that the same is true in the symplectic category.  Two questions are included here.  Firstly, there is a question about the behavior of Gromov--Witten invariants: if there is some nonzero $k$-point invariant with a point insertion must there be
 a similar nonzero $3$-point invariant?    Secondly, there is a more
  geometric question.  Suppose that $M$ is covered by $J$-holomorphic $2$-spheres either 
for one $\om$-tame $J$ or  for a 
significant class of $J$.  Must $M$ then be uniruled? Hamiltonian $S^1$-manifolds are a good test case here since, when $J$ is $\om$-compatible and $S^1$-invariant, 
 there is an $S^1$-invariant $J$ sphere through every point (given by the orbit of a gradient flow trajectory of the moment map with respect to the associated metric $g_J$.)

In this note we show that every 
 Hamiltonian $S^1$-manifold is  uniruled.  This is obvious when 
 $n=1$ and is well known for $n=2$ since, by Audin \cite{Au} and Karshon~\cite{Ka}, the only $4$-dimensional Hamiltonian $S^1$-manifolds are blow ups of rational or ruled surfaces.
Our proof in higher dimensions  relies heavily on the approach
used by Hu--Li--Ruan~\cite{HLR} to analyse the 
Gromov--Witten invariants of a blow up.    

 The first step is to argue  as follows.  By Lemma~\ref{le:semi} we 
 may
  blow  $M$ up along its maximal and minimal fixed point sets 
  $F_{\max}, F_{\min}$ until these two are divisors.  Then consider the gradient flow of the (normalized) moment map $K$ with respect to
  an $S^1$-invariant metric $g_J$ constructed from an $\om$-compatible invariant almost complex structure $J$.   The $S^1$-orbit of any gradient flow line is $J$-holomorphic.  If $\al$ is the
 class  of the $S^1$-orbit of a  flow line  from $F_{\max}$ to $F_{\min}$ then $c_1(\al) = 2$, and because there is just one of these spheres through a generic point of $M$
  one might 
  naively think that the Gromov--Witten invariant 
  $\bla pt, F_{\min},F_{\max}\bra^M_\al$ is $1$.  However, there could be other  curves in class $\al$ that cancel this one.  Almost the only case in which one can be sure this does not happen is when the $S^1$ action is semifree, i.e. no point in $M$ has finite stabilizer: see Proposition~\ref{prop:al}.
 If the order of the stablizers is at most $2$, then 
 one can also show that $(M,\om)$ must be
   strongly uniruled:
  see Proposition~\ref{prop:2iso}.  However it is not clear whether $(M,\om)$ must be strongly uniruled when the isotropy has higher order.
  
To deal with the general case, we 
use the 
Seidel representation 
of $\pi_1(\Ham(M,\om))$ in the group of multiplicative units 
of the quantum homology ring $QH_*(M)$.\footnote
{
Although we describe this here in terms of genus zero Gromov--Witten invariants,  another way to think of it is as the transformation  induced on Hamiltonian Floer homology by continuation along noncontractible Hamiltonian loops.}
  To make the argument work,
we blow up once more at a point in $F_{\max}$, obtaining an 
$S^1$-manifold called $(\TM,\Tom)$.  
By Proposition~\ref{prop:SU} the Seidel element $\Ss(\Tga)\in QH_{2n}(\TM)$ of the resulting $S^1$ action $\Tga$ on $\TM$ 
involves the exceptional divisor $E$ on $\TM$.  Although we know very little about the structure of $QH_*(M)$, the fact $(M,\om)$ is not uniruled implies by Proposition~\ref{prop:unicond2} that the part of $QH_*(\TM)$ that does not involve $E$ forms an ideal.  Moreover, the quotient of
$QH_*(\TM)$  by this ideal has an understandable structure.  Using this, we show that  the invertibility of  $\Ss(\Tga)$ implies that certain terms in
the inverse element
 $\Ss(\Tga^{-1})$ cannot vanish.  
 This tells us that certain section invariants\footnote
 {
 Suppose that $P\to S^2$ is a bundle with fiber $(M,\om)$ 
 with Hamiltonian structure group.  Then the fiberwise symplectic form $\om$ extends to a symplectic form $\Om$ on $P$.  Gromov--Witten invariants of $P$ in classes $\be\in H_2(P;\Z)$ that project to the positive generator of $H_2(S^2;\Z)$ are called {\it section invariants}, while those whose class lies in the image of $H_2(M;\Z)$ 
 are called {\it fiber invariants}.}
 of the fibration $\TP'\to S^2$  defined by the loop $\Tga^{-1}$ cannot vanish.
 The homological constraints here involve $E$. The final step is to show
 that these invariants can be nonzero only if $(M,\om)$ is uniruled.
 Hence the original manifold is as well, by the blow down result of  Hu--Li--Ruan~\cite[Thm~1.1]{HLR}.  Their blowing down argument does not give control on the number of insertions; a $3$-point invariant might blow down to an invariant with more insertions. Hence we cannot conclude that   $(M,\om)$ is strongly uniruled.

This argument does not use the properties of $F_{\min}$ nor does it use 
much about the circle action.  We do need to assume that the loop
$\ga = \{\phi_t\}$ has a {\bf fixed maximal submanifold}; i.e. that there is
a  nonempty (but possibly disconnected)  submanifold $F_{\max}$ such that at each time $t$
the generating Hamiltonian $K_t$ for $\ga$  takes its maximum 
on $F_{\max}$ 
in the strict sense that if $x_{\max}\in F_{\max}$ then $K_t(x)\le K_t(x_{\max})$ for all $x\in M$
 with equality iff $x\in F_{\max}$.  Further we need $\ga$ to restrict to an $S^1$ action near $F_{\max}$; i.e.  in appropriate coordinates $(z_1,\dots z_k)$ normal to $F_{\max}$ we assume that near $F_{\max}$ 
 the generating Hamiltonian $K_t$ has the form
 $const - \sum m_i |z_i^2|$  for some positive integers $m_i$.
  In these circumstances, we shall say that $\ga$ is {\bf a circle action near its maximum.} Note that this action is effective (i.e. no element other than the identity acts trivially) iff gcd$(m_1,\dots,m_k) = 1$.
 
 Our main result is the following.
 
\begin{thm}\labell{thm:main}  Suppose that $\Ham(M)$ contains a loop $\ga$ with a fixed maximum near which $\ga$ 
is an effective  circle action.
Then $(M,\om)$ is uniruled. \end{thm}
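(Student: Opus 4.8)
The plan is to argue by contradiction: I would assume that $(M,\om)$ is \emph{not} uniruled and derive a contradiction from the invertibility of the relevant Seidel element. Since uniruledness is preserved under both blowing up and blowing down by Hu--Li--Ruan~\cite[Thm~1.1]{HLR}, it suffices to work with convenient blow ups of $M$ and then transport the conclusion back down. So first I would invoke Lemma~\ref{le:semi} to blow $M$ up along $F_{\max}$ and $F_{\min}$ until both are divisors. I then take an invariant $\om$-compatible $J$, form the associated $S^1$-invariant metric $g_J$, and observe that the $S^1$-orbit of each gradient trajectory of the normalized moment map $K$ is a $J$-holomorphic sphere. The trajectories running from $F_{\max}$ to $F_{\min}$ sweep out a class $\al$ with $c_1(\al)=2$, with a single such sphere through a generic point; this is the geometric source of the expected invariant $\bla pt, F_{\min}, F_{\max}\bra^M_\al$.

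As motivation I would first dispose of the two favorable special cases. If the action is semifree, Proposition~\ref{prop:al} shows the above invariant equals $1$, so $(M,\om)$ is even strongly uniruled; if the isotropy has order at most $2$, Proposition~\ref{prop:2iso} gives the same conclusion. The difficulty in general is that extra $J$-holomorphic curves in class $\al$ can cancel the naive count of $1$, so these elementary arguments break down when the isotropy has higher order, and one must use the full Seidel machinery instead.

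For the general case I would blow up once more at a point of $F_{\max}$, producing $(\TM,\Tom)$ with exceptional divisor $E$ and an induced circle action $\Tga$ near its maximum. The crucial input is Proposition~\ref{prop:SU}, which guarantees that the Seidel element $\Ss(\Tga)\in QH_{2n}(\TM)$ genuinely involves $E$. Now I bring in the contradiction hypothesis: if $(M,\om)$, and hence $\TM$ before this last blow up, were not uniruled, then Proposition~\ref{prop:unicond2} tells us that the part of $QH_*(\TM)$ not involving $E$ forms an ideal $\Ii$, and the quotient $QH_*(\TM)/\Ii$ has a transparent structure in which the classes supported on $E$ can be tracked explicitly. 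Since $\Ss(\Tga)$ is a multiplicative unit, its inverse $\Ss(\Tga^{-1})$ exists, and working in the quotient ring I would show that the $E$-content of $\Ss(\Tga)$ forces certain $E$-terms of $\Ss(\Tga^{-1})$ to be nonzero.

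The hard part will be translating this algebraic nonvanishing into geometry. The nonzero $E$-terms of $\Ss(\Tga^{-1})$ are precisely section invariants of the fibration $\TP'\to S^2$ associated to the loop $\Tga^{-1}$, whose homological constraints involve $E$. I would then need to show that such a section invariant can be nonzero only when $(M,\om)$ is uniruled, essentially extracting from a section class meeting the exceptional divisor an honest fiber rational curve passing through a generic point with a point constraint. This contradicts the standing assumption that $(M,\om)$ is not uniruled, and blowing back down via Hu--Li--Ruan~\cite[Thm~1.1]{HLR} transfers uniruledness to the original $M$; since that blow-down does not control the number of marked points, the conclusion is uniruledness rather than strong uniruledness. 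Finally, I note that the whole argument uses nothing about $F_{\min}$ or the global structure of the action, only the local normal form near the fixed maximum, so it applies verbatim to any loop $\ga$ that is an effective circle action near a fixed maximum, giving the stated theorem.
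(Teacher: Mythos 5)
Your outline reproduces the paper's own strategy step for step: argue by contradiction, use Lemma~\ref{le:semi} to make $F_{\max}$ a divisor, blow up once more at a point of $F_{\max}$, invoke Proposition~\ref{prop:SU} for the $E$-term in $\Ss(\Tga)$ and Proposition~\ref{prop:unicond2} for the ideal $\Ii$ and the quotient ring, pass to the inverse element $\Ss(\Tga^{-1})$ to get nonvanishing section invariants of $\TP'$, and blow down at the end via \cite[Thm~1.1]{HLR}. So the approach is the correct one; the problem is that the step you yourself label ``the hard part'' is deferred rather than proved, and the mechanism you sketch for it would not work. A nonzero section invariant of $\TP'$ with an $E$-constraint does not let you ``extract an honest fiber rational curve through a generic point'': nonvanishing Gromov--Witten invariants give no control over individual $J$-holomorphic curves, and visible curves can be cancelled by unseen ones --- that is precisely why the Seidel machinery is needed in the first place. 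What the paper actually does here is a two-stage argument occupying most of \S\ref{s:rel}. First, Proposition~\ref{prop:eps}, proved by applying the relative decomposition formula to fiber sums of $(\TP',D)$ with the ruled manifolds $(Y,D^+)$ and $(W,D^+)$, converts the nonzero invariant $\bla E^2\bra^{\TP'}_{\si-\eps}$ of Corollary~\ref{cor:SU} into a nonzero \emph{descendant} invariant $\bla \tau_1 pt\bra^{P'}_{\si}$ on the un-blown-up bundle $P'$. Second, Lemma~\ref{le:horpt} uses the splitting identity (\ref{eq:LPi}) together with the fiber/section dichotomy for homology classes of $P'$ to show that such a descendant invariant forces a nonzero fiber invariant of $M$ with a point insertion, i.e. strong uniruledness --- the desired contradiction. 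Without these two results (or substitutes for them) the algebra never touches the Gromov--Witten theory of $M$ itself.

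A second concrete omission is that the argument is not dimension-uniform. When $n=2$ the algebra only yields $\bla E\bra^{\TP'}_{\si-2\eps}\ne 0$ (Corollary~\ref{cor:SU}(ii)), and its blow-down (Proposition~\ref{prop:eps}(ii)) may produce a two-point invariant $\bla pt,s\bra^{P'}_{\si}$ rather than the descendant one; ruling that out is Lemma~\ref{le:horpt2}, whose proof needs genuinely four-dimensional input (minimal reductions, Liu's theorem on exceptional spheres, and the rational/ruled dichotomy). Your uniform treatment would therefore only prove the theorem for $\dim M\ge 6$. Two minor points: only $F_{\max}$ needs to be made a divisor, not $F_{\min}$; and Proposition~\ref{prop:SU} itself already assumes $(M,\om)$ is not uniruled --- its conclusion lives in the quotient $QH_{2n}(\TM)/\Ii$, which only makes sense under that hypothesis --- so the contradiction hypothesis must be in force before you invoke it, not introduced afterwards.
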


\begin{cor}\labell{cor:main}  Every Hamiltonian $S^1$-manifold is uniruled.
\end{cor}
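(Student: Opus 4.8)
The plan is to derive the corollary directly from Theorem~\ref{thm:main}, so the whole task reduces to producing, out of a Hamiltonian circle action, a loop in $\Ham(M,\om)$ of the kind the theorem requires. First I would note that the circle action is literally such a loop $\ga=\{\phi_t\}$, whose time-independent generating Hamiltonian is the normalized moment map $K$. Since $M$ is closed, $K$ attains its maximum on a nonempty (possibly disconnected) critical submanifold $F_{\max}$, which is a component of the fixed-point set.

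Next I would appeal to the equivariant Darboux / local normal form theorem near $F_{\max}$: in suitable symplectic coordinates $(z_1,\dots,z_k)$ normal to $F_{\max}$ the action is linear with integer weights $m_i$, and $K = const - \sum_i m_i|z_i|^2$. Because $K$ is maximal along $F_{\max}$, every $m_i$ is a positive integer and the maximum is strict off $F_{\max}$; thus $\ga$ is a circle action near its maximum in the precise sense demanded by the theorem.

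The only hypothesis needing attention is effectiveness, i.e. $\gcd(m_1,\dots,m_k)=1$. I would handle it by a connectedness argument: the element $e^{2\pi i\theta}\in S^1$ acts on the normal coordinates by $z_j\mapsto e^{2\pi i\theta m_j}z_j$, so it fixes an entire neighborhood of $F_{\max}$ pointwise exactly when $\theta m_j\in\Z$ for all $j$, i.e. when $\theta\in \frac1d\Z$ with $d=\gcd(m_i)$. Any such element then has open fixed-point set and hence, $M$ being connected, acts trivially on all of $M$. Consequently the kernel of the global action is exactly $\Z/d$. If the given action is effective then $d=1$ and we are done; if not, I would replace $S^1$ by the quotient circle $S^1/(\Z/d)$, which acts effectively with the same orbits, has moment map $K/d$, and has normal weights $m_i/d$ with $\gcd=1$. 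Either way the resulting loop is an effective circle action near its maximum, and Theorem~\ref{thm:main} yields that $(M,\om)$ is uniruled.

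I do not expect a genuine obstacle here: all of the hard analysis lives in Theorem~\ref{thm:main}, and the corollary is obtained simply by verifying that an arbitrary Hamiltonian $S^1$-action meets its hypotheses. The sole point requiring a moment's care is the passage to an effective action, which rests on the observation that a nontrivial circle element fixing an open set of a connected manifold must fix the whole manifold.
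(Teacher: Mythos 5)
Your proposal is correct, and it follows the same route the paper intends: Corollary~\ref{cor:main} is stated without a separate proof precisely because it is meant to follow by applying Theorem~\ref{thm:main} directly to the circle action, which is what you do. The only step with real content is effectiveness of the local action at $F_{\max}$, and your treatment is sound: since all normal weights at $F_{\max}$ are nonzero, the element $e^{2\pi i/d}$ with $d=\gcd(m_1,\dots,m_k)$ fixes an open neighborhood of $F_{\max}$; being an isometry for an $S^1$-invariant metric, its fixed-point set is closed, so by connectedness it is all of $M$, whence an effective global action has coprime weights at the maximum, and a non-effective one may be replaced by its effective quotient circle with moment map $K/d$. One comparison worth making: your quotient/connectedness argument is available only because $\ga$ is globally a circle action; for the more general loops treated in the unnamed corollary following Proposition~\ref{prop:loop} (loops that are circle actions only near their maximum), there is no global kernel to quotient by, and the paper instead invokes Slimowitz's homotopy through positive loops to make the local weights coprime. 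For the case at hand your elementary argument suffices and avoids that machinery, and it lands exactly where effectiveness is actually used in the proof of Theorem~\ref{thm:main}: Lemma~\ref{le:semi} needs coprime weights so that repeated blow-up turns $F_{\max}$ into a divisor with normal weight $1$, which feeds into the Seidel-element computation of Corollary~\ref{cor:S1}.
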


\begin{rmk}\rm   Since our proofs involve the blow up of $M$ they work only when $n: = \frac 12 \dim M \ge 2$. However 
Theorem~\ref{thm:main} 
is elementary when  $n=1$.  For then, if $M\ne S^2$, the group 
$\Ham M$ is contractible, as is each component of 
the group $G$ of elements in $\Ham M$ that fix $x_0$. 
Therefore the homomorphism $\pi_1(G)\to\pi_1(\Sp(2,\R))$ given by 
taking the derivative at $x_0$ is trivial.   This implies that there is no loop $\ga$ in $\Ham M$ that is 
%%Dan effective 
a nonconstant circle action near $x_0$.  

In fact, in this case there is also no nonconstant loop that is the identity
 near a fixed maximum.  For $
\om$ is exact on  $M\less \{x_0\}$ so that 
the Calabi homomorphism $\Cal: \Ham^c(M\less \{x_0\}) \to \R$ is  well defined  on the group   
$\Ham^c$  of compactly supported 
Hamiltonian symplectomorphisms of $M\less \{x_0\}$; cf. \cite[Ch.~10.3]{MS1}.  In particular
$$
\Cal(\phi): = \int_0^1\Bigl(\int_M\bigl(H_t(x) - H_t(x_0)\bigr)\,\om\Bigr)dt,
$$
 is independent of the choice of path $\phi^H_t$ 
in $\Ham^c$ with time $1$ map $\phi$.  But if $\ga$ were a
nonconstant loop in $\Ham^c$ with fixed maximum at $x_0$ we could arrive at $\phi=id$ by a path for which this integral is negative. \end{rmk}

%\begin{rmk}\rm  Since our proofs involve the blow up of $M$ they work only when $n: = \frac 12 \dim M \ge 2$. However 
%Theorem~\ref{thm:main} 
%is elementary when  $n=1$.  For then the identity component of the group $\Diff(M,x_0)$ of diffeomorphisms of $M$ that fix the point 
%$x_0$ is contractible unless $M=S^2$, and hence contains no loop
%whose derivative at $x_0$ describes a noncontractible loop in $GL(2,\R)$.
%Thus by Remark~\ref{rmk:max}(i)  such  $M$   cannot support any loop with a fixed maximum.
%\end{rmk}

If all we know is that $\ga$ has a fixed maximum $F_{\max}$ then
for each $x\in F_{\max}$ the linearized flow $A_t(x)$ in the 
$2k$-dimensional normal
space $N_x^F: =T_xM/T_x(F_{\max})$ is 
generated by a family of nonpositive  quadratic forms. We shall say that $F_{\max}$ is {\bf nondegenerate} if these quadratic forms are everywhere negative definite. In this case, the linearized flow is 
a so-called positive loop in the symplectic group $\Sp(2k;\R)$, i.e. a loop generated by a family of negative definite quadratic forms;
cf. Lalonde--McDuff~\cite{LM}.    If $2k\le 4$ Slimowitz~\cite[Thm~4.1]{Sl} shows that any two positive loops that are homotopic in $\Sp(2k;\R)$ are in fact homotopic through a family of positive loops. 
(In principle this should hold in all dimensions, but the details have been worked out only in low dimensions.)  
 By 
Lemma~\ref{le:pos} below, 
%%D added
in the $4$-dimensional case  one can then homotop $\ga$ so that it 
is an effective  local circle action near its maximum.  Thus we find:

\begin{prop}\labell{prop:loop} 
 Suppose that $\dim M \le 4$ and the loop $\ga$ has a
nondegenerate maximum at $x_{\max}$.  Then $\ga$ can be homotoped so that it is %%Dan effective 
a nonconstant circle action near its maximum, 
that when $\dim M = 4$ can be assumed  effective.
%an effective circle action near its maximum.
Thus $(M,\om)$ is uniruled.
\end{prop}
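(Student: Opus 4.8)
The plan is to turn the analytic hypothesis---a nondegenerate fixed maximum---into the algebraic hypothesis of Theorem~\ref{thm:main}, namely an effective local circle action near the maximum, and then quote that theorem. Since the maximum is a single point $x_{\max}$, the relevant normal space is all of $T_{x_{\max}}M\cong\R^{2k}$, with $2k=\dim M\le 4$. The first step is to observe that nondegeneracy says exactly that the linearized flow $\{A_t\}$ of $\ga$ on $T_{x_{\max}}M$ is generated by a family of negative definite quadratic forms, i.e. that it is a positive loop in $\Sp(2k;\R)$ in the sense of Lalonde--McDuff~\cite{LM}. Standard index estimates for positive loops then show that the absolute value $c$ of its class in $\pi_1(\Sp(2k;\R))\cong\Z$ satisfies $c\ge k$; in particular $c\ge 2$ when $\dim M=4$.

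Next I would build a linear model lying in the same class. Choosing positive integers $m_1,\dots,m_k$ with $\sum_j m_j=c$, and the sense of rotation so as to match that of $\{A_t\}$, the loop $\Lambda_t=\diag\bigl(e^{2\pi i m_1 t},\dots,e^{2\pi i m_k t}\bigr)$ is a positive loop of the same class; it is the linearization at $x_{\max}$ of the circle action generated near $x_{\max}$, up to an additive constant, by $-\sum_j m_j|z_j|^2$. When $\dim M=4$ one has $k=2$ and $c\ge 2$, so I would take $\gcd(m_1,m_2)=1$, for instance $(m_1,m_2)=(1,c-1)$; then this model action is effective.

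Since $\{A_t\}$ and $\{\Lambda_t\}$ are positive loops in the same free homotopy class and $2k\le 4$, Slimowitz's theorem~\cite[Thm~4.1]{Sl} makes them homotopic \emph{through positive loops} in $\Sp(2k;\R)$. Feeding this homotopy into Lemma~\ref{le:pos} lets me homotop the Hamiltonian loop $\ga$ itself, keeping $x_{\max}$ a strict fixed maximum throughout, until it agrees near $x_{\max}$ with the linear model $\Lambda$. Thus $\ga$ becomes a nonconstant circle action near its maximum, effective when $\dim M=4$, and Theorem~\ref{thm:main} yields that $(M,\om)$ is uniruled. (When $\dim M=2$ the remark above forces $M=S^2$, which is uniruled directly.)

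The step I expect to be the main obstacle is the passage through Lemma~\ref{le:pos}: one must verify that a homotopy of \emph{linearizations} through positive loops can be realized by a genuine homotopy of $\ga$ that keeps $x_{\max}$ a strict fixed maximum at every stage and terminates \emph{exactly} at the linear normal form $\Lambda$, rather than merely at some positive loop with the correct linearization. The subsidiary delicate point is that effectiveness is attainable in dimension four; this rests on the index bound $c\ge 2$ for positive loops in $\Sp(4;\R)$, which is what guarantees the existence of coprime weights summing to $c$. It is precisely the restriction $\dim M\le 4$---through the availability of Slimowitz's result---that makes the homotopy-through-positive-loops step, and hence the whole argument, go through.
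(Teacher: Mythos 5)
Your proposal follows the paper's own route: nondegeneracy makes the linearized flow a positive loop in $\Sp(2k;\R)$, Slimowitz's theorem (available precisely because $2k\le 4$) gives a homotopy through positive loops to a linear circle action, Lemma~\ref{le:pos} transfers this to a homotopy of $\ga$ itself through Hamiltonian loops with fixed maximum at $x_{\max}$, and Theorem~\ref{thm:main} (plus the $n=1$ remark) yields uniruledness. The only variations are cosmetic: you make explicit the index bound $c\ge k$ needed to know a linear model exists in the same $\pi_1$-class (the paper leaves this implicit in its appeal to Slimowitz), and you arrange effectiveness by choosing coprime weights such as $(1,c-1)$ at the outset, whereas the paper invokes Slimowitz a second time to replace given positive weights by mutually prime ones.
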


Slimowitz's result also implies that  
 every $S^1$ action
on $\C^k$  with strictly positive weights $(m_1,\dots,m_k)$ is homotopic through positive loops  to an action with positive  weights
$(m_1',m_2',m_3\dots,m_k)$  where $m_1',m_2'$ are mutually prime.
(Here we do not change the action on the last $(k-2)$ coordinates.)  Since actions are effective iff  their  weights at any fixed point 
are mutually prime we deduce:

\begin{cor}  Theorem~\ref{thm:main} holds for any loop that is a circle action near its maximum.
\end{cor}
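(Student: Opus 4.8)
The plan is to reduce immediately to Theorem~\ref{thm:main} by homotoping the given loop to one whose local circle action near the maximum is effective; the whole point is to carry out this homotopy while keeping both the Hamiltonian loop structure and the fixed maximum intact. So suppose $\ga$ is a circle action near its fixed maximum $F_{\max}$, so that in suitable normal coordinates $(z_1,\dots,z_k)$ the generating Hamiltonian has the form $const-\sum_i m_i|z_i|^2$ with all weights $m_i$ strictly positive integers. Recall that $\ga$ is effective exactly when $\gcd(m_1,\dots,m_k)=1$; if this already holds, Theorem~\ref{thm:main} applies directly, so I assume it fails. The linearized flow near $F_{\max}$ is then the diagonal positive loop in $\Sp(2k;\R)$ with weights $(m_1,\dots,m_k)$.

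First I would alter only the first two weights. Setting $s=m_1+m_2\ge 2$, choose $(m_1',m_2')=(1,s-1)$, so that $\gcd(m_1',m_2')=1$ while $m_1'+m_2'=s$. The two diagonal circle actions on the $\C^2$-factor spanned by $(z_1,z_2)$, with weights $(m_1,m_2)$ and $(m_1',m_2')$, then represent the same class of $\pi_1(\Sp(4;\R))=\Z$, since this class is detected by the sum of the weights. Because the relevant dimension here is $2k=4$, Slimowitz's theorem \cite[Thm~4.1]{Sl} applies and shows these two positive loops are homotopic through a family of positive loops in $\Sp(4;\R)$. Taking the product with the constant (positive) action on the remaining coordinates $(z_3,\dots,z_k)$ produces a homotopy through positive loops in $\Sp(2k;\R)$ from the linearized flow of $\ga$ to the diagonal circle action with weights $(m_1',m_2',m_3,\dots,m_k)$.

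Next I would globalize this local deformation. By Lemma~\ref{le:pos} a homotopy through positive loops of the linearized flow is realized by a homotopy of $\ga$ itself, through loops in $\Ham(M)$ that remain circle actions near $F_{\max}$; positivity throughout guarantees that $F_{\max}$ persists as a fixed maximum. The resulting loop $\ga'$ is therefore a circle action near its maximum with weights $(m_1',m_2',m_3,\dots,m_k)$, and since $\gcd(m_1',m_2')=1$ we have $\gcd(m_1',m_2',m_3,\dots,m_k)=1$, so $\ga'$ is effective. Applying Theorem~\ref{thm:main} to $\ga'$ then yields that $(M,\om)$ is uniruled.

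I expect the main obstacle to be this globalization step: one must ensure that the purely linear homotopy near $F_{\max}$ extends to an honest homotopy of global Hamiltonian diffeomorphisms without destroying the maximum away from $F_{\max}$, which is precisely the content of Lemma~\ref{le:pos} and the reason that positivity of the intermediate loops, rather than an arbitrary homotopy in $\Sp(2k;\R)$, is indispensable. The secondary point worth checking is the bookkeeping that lets us invoke Slimowitz only in dimension $4$: by confining the deformation to the $(z_1,z_2)$-plane and matching $m_1+m_2=m_1'+m_2'$ we never need the (unproven in high dimensions) statement that homotopic positive loops in $\Sp(2k;\R)$ are homotopic through positive loops.
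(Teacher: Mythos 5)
Your proposal is correct and takes essentially the same route as the paper: McDuff likewise confines the deformation to the first two normal coordinates precisely so that Slimowitz's theorem can be invoked in $\Sp(4;\R)$, replaces $(m_1,m_2)$ by mutually prime weights with the same Maslov class via a homotopy through positive loops while leaving the remaining $(k-2)$ coordinates untouched, and then globalizes (this is the content of Lemma~\ref{le:pos}) to obtain a Hamiltonian loop that is an effective circle action near its maximum, to which Theorem~\ref{thm:main} applies. The only cosmetic difference is that you make the concrete choice $(m_1',m_2')=(1,s-1)$, where the paper merely asserts the existence of coprime replacement weights.
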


These results have implications for Hofer geometry. 
It is so far unknown which elements in $\pi_1(\Ham(M))$
can be represented by loops that minimize the Hofer length.
 By Lalonde--McDuff~\cite[Prop.~2.1]{LMe}  
such a loop 
has to have a fixed maximum and minimum
$x_{\max}, x_{\min}$, i.e. points such that the generating Hamiltonian 
$K_t$ satisfies the inequalities $K_t(x_{\min})\le K_t(x)\le K_t(x_{\max})$ for all $x\in M$ and $t\in [0,1]$. 
However, these extrema could be degenerate and need not form the same
manifold for all $t$.

If fixed extrema exist, then the
 $S^1$-orbit of a path from $x_{\max}$ to $x_{\min}$ forms a $2$-sphere, and it is easy to check that the integral of $\om$ over this $2$-sphere is 
just $\|K\|: = \int \bigl(K_t(x_{\max})-K_t(x_{\min})\bigr) dt$; cf.\cite[Ex.9.1.11]{MS}.
Hence such a loop cannot exist on a symplectically 
aspherical manifold $(M,\om)$.  However,  so far no examples
are known  of symplectically aspherical manifolds with nonzero
$\pi_1(\Ham(M))$.

It is easy to construct nonzero elements in $\pi_1(\Ham(M))$
for manifolds that are not uniruled.  For example, by \cite[Prop~1.10]{Mcbl} one could take $M$ to be the two 
point blow up of any symplectic $4$-manifold.  Hence, if one could better understand the situation when the fixed points are degenerate, it might be possible to exhibit manifolds for which $\pi_1(\Ham)$ is nontrivial
 but where no nonzero element in this group has a length minimizing representative.  This question is the subject of ongoing research. 
 
 \begin{rmk}\labell{rmk:max}\rm (i) 
 It is not clear whether a
Hamiltonian $S^1$-manifold $(M,\om)$ always has a blow up that is strongly uniruled. However,
 as we show in Proposition~\ref{prop:a}  there is not much 
 difference between the uniruled and the strongly uniruled conditions.
 Moreover  by  Corollary~\ref{cor:coh} they coincide
  in the case when  $H^*(M;\Q)$ is generated by $H^2(M;\Q)$.
  \SSS
 
 \NI (ii)  It is essential in Corollary~\ref{cor:main} 
 that the loop is  Hamiltonian; it is not enough that it has fixed points.  For example,  the semifree symplectic circle action
constructed in~\cite{Mcs} has fixed points but these 
 all have index and coindex equal to 
$2$.  Hence the first Chern class $c_1$ vanishes on $\pi_2(M)$ and
 $M$ cannot be uniruled for dimensional reasons.\SSS
 
 \NI (iii)  One might wonder why  we work with the blow up $\TM$ rather than with $M$ itself.   One answer is seen in results such as
Lemma~\ref{le:U} and Propositions~\ref{prop:SU}
and ~\ref{prop:eps}.  These
 make clear that the 
 exceptional divisor in $\TM$ forms a visible marker that allows us to show that certain Gromov--Witten invariants of $M$ do not vanish. 
The point is that
  even though we cannot calculate $QH_*(\TM)$ in general, the results of
  \S\ref{ss:qh} show that
  we can calculate the quotient $QH_*(\TM)/\Ii$ provided that $(M,\om)$ is {\it not} uniruled.
  
 As always, the problem is  that it is very hard to calculate 
 Gromov--Witten
 invariants for a general symplectic manifold; if one has no global information about the manifold, the contribution  to an invariant of a known $J$-holomorphic curve  could very well be cancelled by some other unseen curve.
 The Seidel representation is one of the very few general tools that 
  can be used to show that certain invariants of an arbitrary symplectic
   manifold cannot vanish and hence it has found several interesting applications; cf. Seidel~\cite{Sei2} and Lalonde--McDuff~\cite{LM2}.
 It is made by counting section invariants of Hamiltonian bundles $P$ with fiber $M$ and base $S^2$. It is fairly clear that the existence of  fiber invariants of the blow up bundle $\TP$ that involve the exceptional divisor $E$ should imply that $(M,\om)$ is uniruled.  A crucial step in our argument is Lemma~\ref{le:horpt} 
 which states  that the existence of certain section invariants
of $P$ also implies that $(M,\om)$ is uniruled.
 
Another way of getting global information on Gromov--Witten invariants is
 to use  symplectic field theory.  
This approach was recently taken by J. He~\cite{He} who shows
that subcritical manifolds are strongly uniruled, thus proving
 a conjecture of Biran and Cieliebak in
   \cite[\S9]{BC}.   This case is somewhat different than the one considered here; for example $\PP^n$ is subcritical while $\PP^1\times \PP^1$ is not.
  \end{rmk}

The main proofs are given in \S\ref{s:S1}. They use some properties  
 of absolute and relative 
Gromov--Witten invariants established in 
\S\ref{s:rel}.  
%Because  there seems no suitable reference in the literature, we 
%begin this section with an extended discussion of the properties of relative GW invariants.
Conditions that imply $(M,\om)$ is strongly uniruled are 
discussed in \S\ref{s:fur}.  The appendix treats general questions about the structure of the quantum homology ring.

\begin{rmk}[Technical underpinnings of the proof]\labell{rmk:tech}\rm The main technical tool used in this paper
 is the decomposition rule for relative genus zero 
Gromov--Witten invariants. Since we use this with descendent absolute insertions, it is useful to have  this in a strong form in which the virtual moduli cycles in question are at least $C^1$.  To date the most relevant references for this result are Li--Ruan~\cite[Thm~5.7]{LR} and Hu--Li--Ruan~\cite[\S3]{HLR}.  However, relative Gromov--Witten theory is a part of the symplectic field theory package (cf. \cite{EGH,BE}).
Therefore the work of Hofer--Wysocki--Zehnder~\cite{H,HWZ1,HWZ2,HWZ3} on polyfolds  should eventually lead to an alternative proof.  
In \S\ref{s:rel} we explain the relative moduli spaces, but our proofs assume the basic results about them.  Note that we cannot get anywhere by restricting to the semi-positive case (a favorite way of avoiding virtual cycles)
since we need to consider almost complex structures on $M$ that are $S^1$-invariant and these are almost never regular unless the action is semifree.

Another important 
ingredient of the proof is the Seidel representation of $\pi_1$ of the Hamiltonian group $\Ham(M,\om)$, which is proved for general symplectic manifolds in
~\cite[\S1]{Mcq}. Finally, we use the identities (\ref{eq:LPi}) and (\ref{eq:LPii}) for genus zero Gromov--Witten invariants.  
The former is fairly well known and is not hard to deduce from the corresponding result for genus zero stable curves, while the latter
was first proved by Lee--Pandharipande~\cite[Thm~1]{LP} in the algebraic case.  For Theorem~\ref{thm:main} we only need a very special 
case of this identity
whose proof we explain in Lemma~\ref{le:LP}.  The full force of
(\ref{eq:LPii}) is used to prove Proposition~\ref{prop:usu}, 
but this result is not central to the main argument.
\end{rmk}

\NI {\bf Acknowledgements}   Many thanks to Tian-Jun Li, 
Yongbin Ruan, Rahul Pandharipande  and Aleksey Zinger for useful discussions
about relative Gromov--Witten invariants.  I also thank 
the first two named above as well as Jianxun Hu for showing me 
early versions of their paper~\cite{HLR}, and Mike Chance and 
the referees for various
helpful comments.  Finally I wish to thank MSRI for their May 2005 conference on Gromov--Witten invariants and Barnard College and Columbia University for their hospitality during the final stages
 of my work on this project.

%%%%%%%%%%%%%%%%%%%%%%%%%%%%%%%%%%%%%%%%%%%%%%%%%%%%%%%%%%
\section{The main argument}\labell{s:S1}
%%%%%%%%%%%%%%%%%%%%%%%%%%%%%%%%%%%%%%%%%%%%%%%%%%%%%%%%%%

Because the main argument is somewhat involved, we shall begin by outlining it. In \S\ref{ss:qh}
we explain the structure of the quantum homology ring $QH_*(\TM)$
of the one point blow up of a manifold $(M,\om)$ that is not  uniruled.
By Proposition~\ref{prop:unicond2} this condition on $(M,\om)$ implies that the subspace of $QH_*(\TM)$ spanned by the
homology classes in $H_*(M\less pt)$ forms an ideal $\Ii$.  Moreover,
the quotient $\Rr: = QH_*(\TM)/\Ii$ decomposes as the sum of two fields (Lemma~\ref{le:R}), and we can work out an explicit formula for the inverse $u^{-1}$ of any unit $u$ in $\Rr$.

The unit in question is the Seidel element $\Ss(\Tga)\in QH_*(\TM)$ of the Hamiltonian loop $\Tga$ on $\TM$.  Usually it is very hard to calculate $\Ss(\Tga)$.  However, when the maximum fixed point set $\TF_{\max}$ of $\Tga$ is a divisor, one can calculate 
its leading term (Corollary~\ref{cor:S1}).  Moreover, because the final blow up is at a point of $F_{\max}$ we show in Proposition~\ref{prop:SU} that this leading term has 
nontrivial image in $\Rr$. 
We next use  the explicit formula for $u^{-1}\in \Rr$ to deduce
the nonvanishing of certain terms in the Seidel element 
$\Ss(\Tga')$ for the inverse loop $\Tga': = \Tga^{-1}$.
But $\Ss(\Tga')$ is calculated from the
Gromov--Witten invariants
of the fibration $\TP'\to \PP^1$ determined by $\Tga'$.  Hence we  deduce in Corollary~\ref{cor:SU} that some of these invariants do not vanish. 

Finally we show that these 
particular invariants can be nonzero only if $M$ is uniruled, so that
our initial assumption that $(M,\om)$ is not uniruled is untenable.
The proof here involves two steps.  First, by blowing down the bundle 
$\TP'$ to $P'$, the bundle determined by $\ga': = \ga^{-1}$, we deduce the nonvanishing of certain  
Gromov--Witten invariants in $P'$ (Proposition~\ref{prop:eps}).  
Second, we show in Lemma~\ref{le:horpt} that these invariants 
can be nonzero only if $(M,\om)$ is uniruled.

In this section we prove essentially all results that do not involve 
comparing the Gromov--Witten invariants of a manifold 
and its blow up. (Those proofs are deferred to \S\ref{s:rel}.)
We assume throughout that $(M,\om)$ 
is a closed 
symplectic manifold of dimension $2n$ where $n\ge 2$.

%%%%%%%%%%%%%%%%%%%%%%%%%%%%%%%%%%%%%%%%%%%%%%%%%%%%%%%%%%
\subsection{Uniruled manifolds and their pointwise blowups}\labell{ss:qh}
%%%%%%%%%%%%%%%%%%%%%%%%%%%%%%%%%%%%%%%%%%%%%%%%%%%%%%%%%%

Consider the small quantum homology  $QM_*(M):= H_*(M)\otimes \La$ of $M$.  
 Here we use the Novikov ring  $\La: = \La_\om [q,q^{-1}]$ where $q$ is a polynomial
 variable of degree $2$
 and  $\La_\om$ denotes the
generalized Laurent series ring with elements 
$\sum_{i\ge 1} r_it^{-\ka_i}$, 
where $r_i\in \Q$ and $\ka_i$ is a strictly
 increasing sequence that 
tends to $\infty$ and lies in the period subgroup $\Pp_\om$ of $[\om]$, i.e. the image of the homomorphism $I_\om: \pi_2(M)\to \R$ given by integrating $\om$.
We assume that $[\om]$ is chosen so that $I_\om$ is injective.
 We write the elements of
 $QH_*(M)$ as infinite sums $\sum_{i\ge 1} a_i\otimes q^{d_i}t^{-\ka_i}$, where $a_i\in H_*(M;\Q)=:H_*(M)$, the $|d_i|$ are bounded  and $\ka_i$ is as before.
 The term $a\otimes q^{d}t^{\ka}$ has degree $2d+\deg a$.
 
 The quantum product $a*b$ of the elements $a,b\in 
 H_*(M)\subset QH_*(M)$ is defined as follows.  Let $\xi_i, i\in I$, be a basis for $H_*(M)$ and write $\xi_i^*, i\in I,$ for the basis of $H_*(M)$ that is dual with respect to the 
 intersection pairing,  that is $\xi_j^*\cdot\xi_i = \de_{ij}$.  Then 
\begin{equation}\labell{eq:QH}
 a*b: = \sum_{i,\be \in H_2(M;\Z)} \bla a,b,\xi_i\bra^M_\be\,\, \xi_i^*\,\otimes q^{-c_1(\be)} t^{-\om(\be)},
\end{equation}
 where $\bla a,b,\xi_i\bra^M_\be$ denotes the Gromov--Witten invariant in $M$ that counts curves in class $\be$ through the homological
 constraints
 $a,b,\xi_i$.
 Note that if $(a*b)_\be: = \sum_i \bla a,b,\xi_i\bra^M_\be \xi_i^*$, then $(a*b)_\be\cdot c = \bla a,b,c\bra^M_\be$. Further, 
 $\deg(a*b) = \deg a + \deg b - 2n$, and the identity element is $\1: = [M]$.

   \begin{lemma}\labell{le:QH} The following conditions are equivalent:
\MS

\NI
{\rm (i)}  $(M,\om)$ is not strongly uniruled;\SSS

\NI
{\rm (ii)}  $\Qq_-: = \underset{i<2n}\oplus  H_i(M)\otimes \La$ 
is an ideal in the small quantum homology $QH_*(M)$;\SSS

\NI
{\rm (iii)}  $pt*a=0$ for all $a\in \Qq_-$,  where $*$ is the quantum product.\SSS

\NI
Moreover, if these conditions hold every unit $u\in QH_*(M)^{\times}$ in $ QH_*(M)$ has the form
$u=\1\otimes\la + x$, where $ x\in \Qq_-$ and $\la$ is a unit in  $\La$.
\end{lemma}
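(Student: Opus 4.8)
The plan is to prove the cyclic chain $(i)\Rightarrow(iii)\Rightarrow(ii)\Rightarrow(i)$ and then read off the statement about units from $(ii)$. Two structural facts will organize everything. First, since $M$ is connected $H_{2n}(M)=\Q\,\1$, so there is a $\La$-module splitting $QH_*(M)=(\1\otimes\La)\oplus\Qq_-$; for $y\in QH_*(M)$ I call its $(\1\otimes\La)$-component the \emph{leading part}. Because $\1\cdot pt=1$ under the intersection pairing while all of $\bigoplus_{i<2n}H_i(M)$ pairs trivially with $pt$, the leading part of a quantum product $b*a$ (with $a,b\in H_*(M)$) is exactly $\sum_\be \bla b,a,pt\bra^M_\be\,\1\otimes q^{-c_1(\be)}t^{-\om(\be)}$. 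Hence $b*a\in\Qq_-$ if and only if $\bla b,a,pt\bra^M_\be=0$ for all $\be$. Second, the defining relation $(a*b)_\be\cdot c=\bla a,b,c\bra^M_\be$, together with nondegeneracy of the intersection pairing and the symmetry of three-point invariants, lets me pass freely between vanishing of a product and vanishing of the associated point-constrained invariants.

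For $(i)\Rightarrow(iii)$ I take $a\in H_j$ with $j<2n$ (a general element of $\Qq_-$ is a $\La$-combination of such). The classical term $(pt*a)_0=pt\cdot a$ vanishes for degree reasons, and for $\be\ne0$ the hypothesis that $(M,\om)$ is not strongly uniruled gives $\bla pt,a,c\bra^M_\be=0$ for every $c$, whence $(pt*a)_\be=0$ by nondegeneracy; so $pt*a=0$. Conversely, for $(iii)\Rightarrow(i)$, given any invariant $\bla pt,a_2,a_3\bra^M_\be$ with $\be\ne0$ I first invoke the fundamental class axiom to discard the $\1$-components of $a_2,a_3$ (such insertions vanish for $\be\ne0$, $k=3$), reducing to $a_2,a_3\in\bigoplus_{i<2n}H_i(M)$; then $(iii)$ gives $pt*a_2=0$, so $\bla pt,a_2,a_3\bra^M_\be=(pt*a_2)_\be\cdot a_3=0$. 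Thus $M$ is not strongly uniruled, settling $(i)\Leftrightarrow(iii)$.

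For $(iii)\Rightarrow(ii)$ it suffices, by $\La$-bilinearity and commutativity of the quantum product, to check $b*a\in\Qq_-$ for $b\in H_*(M)$ and $a\in\bigoplus_{i<2n}H_i(M)$. When $b=\1$ this is clear; otherwise the leading part of $b*a$ has coefficients $\bla b,a,pt\bra^M_\be=\bla pt,a,b\bra^M_\be=(pt*a)_\be\cdot b$, which vanish because $pt*a=0$ by $(iii)$; so $b*a\in\Qq_-$ and $\Qq_-$ is an ideal. For $(ii)\Rightarrow(i)$, if $\Qq_-$ is an ideal then for $a\in\bigoplus_{i<2n}H_i(M)$ and any $b$ the product $b*a$ lies in $\Qq_-$, so its leading part vanishes and $\bla pt,a,b\bra^M_\be=0$ for all $\be$; by symmetry every three-point invariant with a $pt$ insertion and another insertion in $\bigoplus_{i<2n}H_i(M)$ vanishes, and the fundamental-class reduction above then kills every strongly uniruled invariant. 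The statement about units now follows from $(ii)$: via the splitting, the quotient ring $QH_*(M)/\Qq_-$ is isomorphic as a $\La$-algebra to $\La$, with projection $\pi:QH_*(M)\to\La$ a ring homomorphism sending $u=\1\otimes\la+x$ (the unique decomposition, $x\in\Qq_-$) to $\la$; since ring homomorphisms carry units to units, $u\in QH_*(M)^{\times}$ forces $\la\in\La^{\times}$.

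The one genuinely substantive point, and the step I expect to require the most care, is the identification of the leading part of a quantum product: that the coefficient of $\1$ in $b*a$ equals the point-constrained invariant $\bla b,a,pt\bra^M_\be$. This is what converts the purely algebraic ``ideal'' condition into a statement about the very invariants entering the definition of strongly uniruled, and it is also where the fundamental class axiom must be applied to dispose of insertions along $\1=[M]$. Everything else is bookkeeping with the dimension formula $\deg(a*b)=\deg a+\deg b-2n$ and the nondegeneracy of the intersection pairing.
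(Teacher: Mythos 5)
Your proof is correct and takes essentially the same route as the paper's: both hinge on identifying the $\1$-coefficient of $a*b$ with the point invariants $\bla a,b,pt\bra^M_\be$ (note that your ``if and only if'' at this step silently uses the standing assumption that $I_\om$ is injective, which is exactly what the paper invokes to rule out cancellation among distinct classes $\be$), together with the degree-reason vanishing of the $\be=0$ term. The paper writes out only (i)$\Leftrightarrow$(ii) and dismisses the rest as ``similar arguments,'' so your explicit chain (i)$\Rightarrow$(iii)$\Rightarrow$(ii)$\Rightarrow$(i) and the quotient-homomorphism argument for units simply supply the details the paper leaves implicit.
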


\begin{proof}    Choose the basis $\xi_i$ so that $\xi_0=pt$, $\xi_M=[M]=:\1$ and all other $\xi_i$ lie in $H_j(M)$ for $1\le j<2n$.
Then $\xi_0^* = \xi_M$, and
the coefficient of $\1\otimes q^{-d} t^{-\ka}$ in $a*b$ is
 $$
\sum_{\be:\om(\be)=\ka,c_1(\be)=d}
 \bla a,b,pt\bra_\be. 
$$ 
Since we assume that $I_\om:\pi_2(M)\to \Ga_\om$ is an isomorphism
and $\ka\in \Ga_{\om}$, there is precisely one class $\be$ such that
$\om(\be)=\ka$.  
If $\be = 0$ this invariant is the usual intersection product
$a\cap b\cap pt$ and so always vanishes for $a,b\in H_{<2n}(M)$.  
Therefore,  this coefficient is nonzero for some such $a,b$  iff
 $(M,\om)$ is strongly uniruled. But this coefficient is nonzero 
 iff $\Qq_-$ is not a subring.  Note finally that since  $\Qq_-$ has codimension $1$, it is an ideal iff it is a subring.  
 
 This proves the equivalence of (i) and (ii).
 The other statements are proved by similar arguments. 
\end{proof}

\begin{rmk}\label{rmk:app}\rm   It seems very likely that $(M,\om)$ is strongly uniruled iff
$\Qq_-$ contains no units.   We prove this in the appendix, 
as well as a generalization to uniruled manifolds, in the case when
the odd Betti numbers of $M$ vanish.
\end{rmk}

Now let $(\TM,\Tom)$ be the $1$-point blow up of $M$ with 
exceptional divisor  $E$.  Put $\eps: = E^{n-1}$, the class of a line in $E$. The blow up parameter is $\de: = \Tom(\eps)$, 
which we assume to be $\Q$-linearly independent from $\Ga_{\om}$.
Later it will be convenient to write
$\La_{\Tom}=:\La_{{\om,\de}}$.

\begin{lemma}\labell{le:GWE} 
 Invariants of the form
$$
  \bla E^i,E^j,E^k\bra^{\TM}_{p\eps},\qquad 0< i,j,k<n,\; p>0  $$
are nonzero  only if $p=1$ and $i+j+k=2n-1$, and in this case the invariant is $-1$.
  \end{lemma}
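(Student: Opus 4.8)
The plan is to localize all stable maps in the classes $p\eps$ onto the exceptional divisor $E\cong\CP^{n-1}$, whose normal bundle in $\TM$ is $\Oo(-1)$, and then to reduce the computation to a standard line count in $\CP^{n-1}$. First I would dispose of almost everything by a dimension count. Restricting $T\TM$ to a line in $E$ splits as $TE|_{\text{line}}\oplus\Oo(-1)|_{\text{line}}$, giving $c_1(\eps)=n-1$. Hence the virtual dimension of $\oMm_{0,3}(\TM,p\eps)$ is $2p(n-1)+2n$, while the insertions $E^i,E^j,E^k$ cut down by real codimension $2(i+j+k)$; a nonzero invariant therefore forces $i+j+k=p(n-1)+n$. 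Since $0<i,j,k<n$ gives $i+j+k\le 3(n-1)=3n-3$, whereas $p\ge 2$ would require $i+j+k\ge 3n-2$, only the case $p=1$, $i+j+k=2n-1$ can contribute.

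Next I would localize the moduli space. Choosing an $\om$-tame $J$ for which $E$ is $J$-holomorphic and which is integrable and modelled on $\Oo(-1)\to\CP^{n-1}$ near $E$, I claim every stable map in class $p\eps$ has image in $E$. Indeed, writing the total class as the sum of the classes of the components lying in $E$ (necessarily a multiple $q\eps$, $q\ge 0$) and those not in $E$ (then forced to be $(p-q)\eps$), positivity of intersections with the $J$-holomorphic divisor $E$ gives $(p-q)(\eps\cdot E)=-(p-q)\ge 0$, so $q\ge p$, while $\Tom\bigl((p-q)\eps\bigr)=(p-q)\de\ge 0$ gives $q\le p$. Hence $q=p$ and no non-ghost component leaves $E$, so $\oMm_{0,3}(\TM,\eps)$ coincides with $\oMm_{0,3}(E,\text{line})$.

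For $p=1$ the normal bundle of a line is $\Oo(-1)|_{\CP^1}$, which has $H^0=H^1=0$; the normal part of the linearized operator is then invertible and contributes neither deformations nor obstructions, so the moduli space is a smooth manifold of the expected dimension equal to $\oMm_{0,3}(\CP^{n-1},\text{line})$ (regular because $\CP^{n-1}$ is Fano), and its virtual class is the fundamental class. The evaluation maps factor through $E^3$, and the constraints restrict cohomologically via $\iota^*\mathrm{PD}_{\TM}(E)=c_1(\Oo(-1))=-h$, where $h\in H^2(\CP^{n-1})$ is the hyperplane class. The projection formula then gives
\[
\bla E^i,E^j,E^k\bra^{\TM}_{\eps}=(-1)^{i+j+k}\,\bla h^i,h^j,h^k\bra^{\CP^{n-1}}_{\text{line}}.
\]
Finally, the genus-zero line invariant of $\CP^{n-1}$ equals $1$ whenever $1\le i,j,k\le n-1$ and $i+j+k=2n-1$: it is a structure constant of $QH^*(\CP^{n-1})\cong\Q[h,q]/(h^{n}-q)$, and enumeratively counts the unique line meeting three generic linear subspaces of the appropriate dimensions. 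Hence the invariant equals $(-1)^{2n-1}=-1$.

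The hard part will be the reduction in the third step: one must verify that the virtual class of $\oMm_{0,3}(\TM,\eps)$ is genuinely the fundamental class of $\oMm_{0,3}(E,\text{line})$ — this is exactly where the vanishing $H^1(\Oo(-1))=0$ on a line, hence the absence of an excess obstruction bundle, is essential — and one must take care that the correct constraint on $E$ is the cohomological restriction $\iota^*e^i=(-1)^i h^i$, of codimension $2i$ in $E$, rather than the naive homological self-intersection cycle $E^i\cap E$, which would give the wrong sign and an incorrect count.
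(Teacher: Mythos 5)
Your proof is correct and follows essentially the same route as the paper: the dimension count $i+j+k = n+p(n-1)$ combined with $i+j+k\le 3n-3$ forces $p=1$ and $i+j+k=2n-1$, and the value $-1$ comes from localizing all $\eps$-curves to $E$ for a $J$ that is standard near the exceptional divisor. The paper merely asserts this last step, whereas you supply the details it leaves implicit (positivity of intersections plus the area bound to confine curves to $E$, regularity from $H^0=H^1=0$ for $\Oo(-1)|_{\CP^1}$, and the sign from $\iota^*\mathrm{PD}(E)=-h$), all of which are accurate.
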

  \begin{proof}   Since $c_1(\eps) = n-1$  the invariant
  $  \bla E^i,E^j,E^k\bra^{\TM}_{p\eps}$  is nonzero only if
  $n + p(n-1) = i+j+k$. Since $i+j+k\le 3n-3$ we must have $p=1$.
  The result now follows from the fact that
if $J$ is standard 
near $E$ then the only $\eps$ curves are lines in $E$.
\end{proof}

\begin{cor}\labell{cor:GWE} Let $0\le i,j < n$.  Then
$$\begin{array}{lcl}
E^i* E^j = E^i\cap E^j = E^{i+j}&\mbox{if} &i+j<n,\\
E^i*E^{n-i} = - pt + E\otimes q^{-n+1}t^{-\de}, &&\mbox{and}\\
E^i*E^j= E^{i+j-n+1}\otimes  q^{-n+1}t^{-\de}&\mbox{if} &n<i+j<2n-1.\end{array}
$$
\end{cor}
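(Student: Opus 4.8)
The plan is to expand each product through the defining formula for the quantum product on $QH_*(\TM)$,
\[
E^i*E^j=\sum_\be\Bigl(\sum_l\bla E^i,E^j,\xi_l\bra^{\TM}_\be\,\xi_l^*\Bigr)\otimes q^{-c_1(\be)}t^{-\Tom(\be)}=\sum_\be(E^i*E^j)_\be\otimes q^{-c_1(\be)}t^{-\Tom(\be)},
\]
and to identify the homology class $(E^i*E^j)_\be$ for each $\be$, using the relation $(E^i*E^j)_\be\cdot c=\bla E^i,E^j,c\bra^{\TM}_\be$. The term $\be=0$ is the classical triple intersection, so $(E^i*E^j)_0=E^i\cap E^j$; the self-intersection behaviour of the exceptional divisor (its normal bundle restricts to $\Oo_{\PP^{n-1}}(-1)$ on a line, so $E\cdot\eps=-1$ and hence $E^n=-pt$) gives $E^i\cap E^j=E^{i+j}$ when $i+j\le n$ and $E^{i+j}=0$ once $i+j>n$. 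This already accounts for the classical summand in each of the three cases.

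Next I would localize the contributing curve classes for $\be\ne0$. Choosing $J$ standard near $E$ and compatible with the blow-down $\pi\colon\TM\to M$, note that for $0<i,j<n$ the cycles representing $E^i,E^j$ lie in $E$, so any contributing curve $u$ carries two marked points on $E$ and its projection $\pi\circ u$ represents the $H_2(M)$-component $A$ of $\be$. If $A\ne0$ then $\pi\circ u$ is nonconstant with area in $\Ga_\om$; since $\de\notin\Ga_\om$ these contributions are separated in the Novikov grading, and a dimension count together with positivity of intersection with $E$ rules them out. Hence $\be=p\eps$ with $p>0$, and since the $\eps$-curves are precisely the lines in $E$, Lemma~\ref{le:GWE} forces $p=1$. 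Thus the only quantum contribution comes from $\be=\eps$, for which $c_1(\eps)=n-1$ and $\Tom(\eps)=\de$.

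It remains to determine $(E^i*E^j)_\eps$. Because every $\eps$-curve lies in $E$, this class lies in the image of $H_*(E)\to H_*(\TM)$; it is homogeneous of degree $(2n-2i)+(2n-2j)-2n+2(n-1)=2(2n-1-i-j)$, so it is a multiple $a\,E^{i+j-n+1}$ of the unique power of $E$ in that degree. When $i+j<n$ this degree exceeds $\dim E=2n-2$, where the image of $H_*(E)$ vanishes, so the correction is $0$ and $E^i*E^j=E^{i+j}$, which is the first case. When $i+j\ge n$ I fix $a$ by pairing with the complementary power: on one hand $(E^i*E^j)_\eps\cdot E^{\,2n-1-i-j}=\bla E^i,E^j,E^{\,2n-1-i-j}\bra^{\TM}_\eps=-1$ by Lemma~\ref{le:GWE}, since $0<2n-1-i-j<n$ exactly when $i+j\ge n$ and the three exponents then sum to $2n-1$; on the other hand $E^{i+j-n+1}\cdot E^{\,2n-1-i-j}=E^n=-pt$ contributes $-1$ to the pairing. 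Comparing gives $a=1$, so $(E^i*E^j)_\eps=E^{i+j-n+1}$. This yields the $\eps$-term $E^{i+j-n+1}\otimes q^{-n+1}t^{-\de}$ in the remaining two cases; for $i+j=n$ it is $E\otimes q^{-n+1}t^{-\de}$, which together with the classical part $-pt$ completes the second case.

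The main obstacle is the localization step, i.e. excluding the curve classes $\be$ whose $H_2(M)$-component is nonzero. The degree constraint by itself is insufficient, since a class pushed forward from $H_2(M)$ can share the Chern number $n-1$ of $\eps$; one genuinely needs the geometric fact that the constraints $E^i,E^j$ confine the curve to a neighbourhood of $E$, which I would extract from the adapted choice of $J$, positivity of intersection with $E$, and the description of $\eps$-curves already used in Lemma~\ref{le:GWE}. The remaining ingredients — the self-intersection relation $E\cdot\eps=-1$, the degree bookkeeping, and the pairing $E^k\cdot E^{n-k}=-pt$ used to invert the coefficient — are routine.
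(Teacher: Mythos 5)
Your bookkeeping for the classical term and for the $\eps$-contribution is correct and is exactly how the paper reads the statement: the $\be=0$ part is the classical intersection ring of the blow up ($E^n=-pt$), and the $\be=p\eps$ part is pinned down by Lemma~\ref{le:GWE} together with $c_1(\eps)=n-1$, $\Tom(\eps)=\de$, and the pairing $E^{i+j-n+1}\cdot E^{2n-1-i-j}=-1$. The genuine problem is your ``localization step'': the claim that classes $\Tbe$ with nonzero $H_2(M)$-component can be excluded unconditionally, via positivity of intersection with $E$ and the idea that the constraints $E^i,E^j$ confine curves to a neighbourhood of $E$. That claim is not just unproved, it is false. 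Take $\TM$ to be the one point blow up of $\PP^2$ (so $n=2$) and $\Tbe=\la-\eps$ the class of the proper transform of a line through the blown-up point; then $c_1(\Tbe)=2$, and by the divisor axiom $\bla E,E,pt\bra^{\TM}_{\Tbe}=\bigl((\la-\eps)\cdot E\bigr)^2\,\bla pt\bra^{\TM}_{\Tbe}=1$, since exactly one such line passes through a generic point. Hence $E*E$ contains the extra term $[\TM]\otimes q^{-2}t^{\de-\om(\la)}$, and the displayed formula for $E*E^{n-i}$ fails. A curve can meet $E$ positively at the two marked points and still sweep through the rest of $\TM$; no choice of $J$ standard near $E$ and no dimension count can rule such classes out, because for uniruled $M$ they genuinely contribute.

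This is precisely why the paper makes no such argument. The Corollary is meant as an immediate consequence of Lemma~\ref{le:GWE}: it records only the contributions of the classes $0$ and $p\eps$. The vanishing of all contributions from $0\ne\Tbe\ne p\eps$ is \emph{not} part of the Corollary; it is the content of Proposition~\ref{prop:unicond2}, which holds only under the hypothesis that $(M,\om)$ is not uniruled, and whose proof is deferred to \S\ref{s:rel} where it uses the Hu--Li--Ruan relative Gromov--Witten and degeneration machinery rather than any local positivity argument. Accordingly, when the paper needs the full multiplication table (to identify $\Ee_{2n}$ with $\La_{\om,\de}[s]/(s^n=st^{-\de})$), it invokes Corollary~\ref{cor:GWE} \emph{and} Proposition~\ref{prop:unicond2} together. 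To repair your write-up, either restrict your claim to computing the $0$- and $p\eps$-parts of the product, or add the non-uniruled hypothesis and cite Proposition~\ref{prop:unicond2} for the excluded classes; the soft geometric localization you propose cannot be made to work.
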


We prove the next result  in \S\ref{s:rel} by the technique of Hu--Li--Ruan~\cite{HLR}.

 \begin{prop}\labell{prop:unicond2} Let $(\TM,\Tom)$ be the one point blow up of $M$ with 
exceptional divisor  $E$, and let $a,b\in H_{<2n}(M)$.
 If any  invariant of the form
 $$
  \bla a,b,E^k\bra^{\TM}_\Tbe,\quad  \bla a,E^i,E^j\bra^{\TM}_\Tbe,\quad \bla E^i,E^j,E^k\bra^{\TM}_\Tbe,\quad \mbox{for }\; 0\ne\Tbe\ne p\eps,\;\;\; i,j,k\ge 1
  $$
 is nonzero, then  $(M,\om)$ is  uniruled.
\end{prop}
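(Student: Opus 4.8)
The plan is to push the hypothesized nonvanishing invariant on $\TM$ down to a nonvanishing genus-zero invariant on $M$ carrying a point constraint, and then to quote the descendent criterion of Hu--Li--Ruan. Write $\pi\colon\TM\to M$ for the blow-down, which collapses $E$ to the blown-up point $p$. Since the kernel of $\pi_*\colon H_2(\TM)\to H_2(M)$ is precisely $\Z\eps$, the hypothesis $0\neq\Tbe\neq p\eps$ is equivalent to $\al:=\pi_*\Tbe\neq 0$, and this is the class in which I shall produce a curve on $M$. The classes $a,b\in H_{<2n}(M)$ have positive codimension, so they are represented by generic cycles missing $p$; their total transforms therefore impose the same constraints $a,b$ after blow-down. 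Each factor $E^i$ with $i\geq1$, on the other hand, is supported on $E$, and by Lemma~\ref{le:GWE} and Corollary~\ref{cor:GWE} it is, up to sign, the class of a linear $\CP^{n-i}\subset\CP^{n-1}=E$.

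The main tool is the degeneration-to-the-normal-cone construction underlying the technique of Hu--Li--Ruan~\cite{HLR}: blowing up $M\times\C$ along $\{p\}\times\{0\}$ yields a family over $\C$ whose generic fibre is $M$ and whose central fibre is $\TM\cup_E\PP^n$, with the two pieces meeting along $E=\PP^{n-1}$. Feeding this into the degeneration (symplectic sum) formula for relative genus-zero invariants --- used, as noted in Remark~\ref{rmk:tech}, in the strong $C^1$ form --- and comparing the relative and absolute theories of $\TM$ along $E$, one obtains a formula expressing the given absolute invariant of $\TM$ as a finite $\Q$-linear combination of genus-zero invariants of $M$ in the class $\al$. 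I would carry this out while keeping careful track of the insertions generated by the factors $E^i$.

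The crux is that every factor $E^i$ with $i\geq1$ produces a point insertion on the $M$-side. Geometrically, a stable map in $\TM$ meeting the cycle $E^i$ at a marked point blows down to a map through $p$ whose tangent direction there is confined to the linear subspace $\CP^{n-i}\subset\PP(T_pM)=E$; confining the direction to a subspace of complex codimension $i-1$ is exactly a descendent constraint $\tau_{i-1}\,pt$. Thus $E^i$ becomes $\tau_{i-1}\,pt$, with $E=pt$ when $i=1$, and each of the three invariant types in the statement contains at least one such factor. Consequently the formula of the previous paragraph writes the $\TM$-invariant as a finite sum of invariants $\bla\ldots,\tau_{i-1}\,pt,\ldots\bra^{M}_{\al}$, every one of which carries a (possibly descendent) point insertion and lies in the nonzero class $\al$.

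If the $\TM$-invariant is nonzero, then at least one summand on the right is nonzero, so $M$ admits a nonzero genus-zero invariant with a point (or descendent-point) insertion in a nonzero class; by Hu--Li--Ruan~\cite{HLR} this forces $(M,\om)$ to be uniruled, as claimed. I expect the whole difficulty to sit in the degeneration step: setting up the relative moduli spaces and their virtual cycles with enough regularity, matching the relative insertions along $E$, and checking that no cancellation annihilates the point-insertion terms. The same combinatorics is what destroys control of the number of marked points, so this route delivers only ``uniruled'' and not ``strongly uniruled,'' in accordance with the blow-down theorem~\cite[Thm~1.1]{HLR}.
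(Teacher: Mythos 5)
Your overall strategy coincides with the paper's: this is exactly the Hu--Li--Ruan degeneration-to-the-normal-cone technique, your dictionary ``$E^i$ becomes $\tau_{i-1}pt$'' is the multiplicity-one case of the paper's Lemma~\ref{le:0}(iii) (the general rule being $k=n\ell-j$), and the final appeal to \cite[Thm~4.9]{HLR} is the paper's last step too. But the pivotal middle step is mis-stated, and the mistake matters. The symplectic sum/degeneration formula expresses an absolute invariant of the \emph{smoothing} $M$ as a sum of products of relative invariants of the two pieces $(\TM,E)$ and $(\PP^n,\PP^{n-1})$; it does \emph{not} express an absolute invariant of $\TM$ as a $\Q$-linear combination of invariants of $M$, and it cannot simply be inverted. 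So your concluding inference --- ``if the $\TM$-invariant is nonzero then at least one summand on the right is nonzero'' --- runs the logic backwards. What the hypothesis actually yields, after a \emph{separate} degeneration of $\TM$ itself into $(\TM,E)\cup_E(X,E^+)$ with $X=\PP(\Oo(-1)\oplus\C)$ (the paper's Lemma~\ref{le:d1}, your phrase ``comparing the relative and absolute theories of $\TM$ along $E$''), is the nonvanishing of a single connected relative invariant of $(\TM,E)$ of the form (\ref{eq:Eblow}). This appears as \emph{one term} in the decomposition formula for a suitably chosen descendent invariant of $M$; the burden is then to prove that the \emph{whole sum} is nonzero, i.e.\ that no other terms cancel it.

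That non-cancellation is the real mathematical content of the proposition, and you explicitly defer it (``I expect the whole difficulty to sit in the degeneration step\dots checking that no cancellation annihilates the point-insertion terms''). The paper's Lemma~\ref{le:d2} is devoted precisely to this: one chooses $\be$ of minimal energy and then a minimal number $s$ of relative insertions among all nonzero invariants of the form (\ref{eq:Eblow}); Lemma~\ref{le:0}(i) forces every component of the $\PP^n$-side graph $\Ga_2$ to carry an absolute insertion; the genus-zero constraint plus minimality of $s$ forces $\Ga_2$ to have exactly $s$ components, each a two-point invariant as in Lemma~\ref{le:0}(iii); and the uniqueness of the decomposition $k_i=n\ell_i-j_i$ pins down the relative constraints and multiplicities, so that exactly one term survives. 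Without this (or a substitute for it), the proposal is an outline of the paper's argument with its essential step missing.
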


 Decompose $QH_*(\TM)$ additively as $\Ii\oplus \Ee$ where
$\Ii$ is  generated as a $\La_{\Tom}$-module by the image of $H_{<2n}(M)$ in $H_*(\TM)$ and $\Ee$ is spanned by $\1,E,\dots, E^{n-1}=\eps$. Proposition~\ref{prop:unicond2} implies:

\begin{cor}  If $(M,\om)$ is not uniruled,
$\Ii$ is an ideal in $QH_*(\TM)$.
\end{cor}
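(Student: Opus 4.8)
The plan is to deduce this directly from Proposition~\ref{prop:unicond2}, the only extra ingredient being a short computation with the intersection form on $H_*(\TM)$. Since the quantum product is $\La_{\Tom}$-bilinear and $\Ii$ is a $\La_{\Tom}$-submodule, I would first reduce the ideal property to showing $b*\xi\in\Ii$ for $b$ in the image of $H_{<2n}(M)$ and $\xi$ ranging over the homology basis $\{b_\mu\}\cup\{\1,E,\dots,E^{n-1}\}$ of $H_*(\TM)$, where the $b_\mu$ span the image of $H_{<2n}(M)$. The case $\xi=\1$ is trivial since $b*\1=b$, so I may assume $\xi$ is some $b_\mu$ or some $E^j$ with $1\le j\le n-1$. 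Expanding $b*\xi=\sum_\Tbe(b*\xi)_\Tbe\otimes q^{-c_1(\Tbe)}t^{-\Tom(\Tbe)}$ and using that $\Tom$ separates the classes $\Tbe$ (as $I_\om$ is injective and $\de$ is $\Q$-independent of $\Ga_\om$), this reduces to checking $(b*\xi)_\Tbe\in\Ii_0$ for each $\Tbe$, where $\Ii_0:=\Ii\cap H_*(\TM)$ is the $\Q$-span of the $b_\mu$.

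The key step is to identify the orthogonal complement of $\Ii_0$ under the nondegenerate intersection pairing on $H_*(\TM)$. I expect $\Ii_0^\perp=\langle E,E^2,\dots,E^n\rangle$, where $E^n=-pt$ by Corollary~\ref{cor:GWE}. Each $E^k$ with $1\le k\le n-1$ annihilates the image of $H_{<2n}(M)$: a class from $M$ may be represented in the complement of $E$ and is therefore disjoint from any cycle supported in $E$, while $E^k\cdot pt=0$ for degree reasons; and $E^n=-pt$ pairs nontrivially only with $H_{2n}(\TM)$, so it too kills $H_{<2n}(M)$. A dimension count---the blow-up contributes the $n-1$ new classes $E,\dots,E^{n-1}$ while $\Ii_0$ omits only the top class $\1$---shows $\dim\Ii_0^\perp=n$, forcing equality. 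Since the form is nondegenerate, $(\Ii_0^\perp)^\perp=\Ii_0$, so $(b*\xi)_\Tbe\in\Ii_0$ iff $(b*\xi)_\Tbe\cdot E^k=\bla b,\xi,E^k\bra^{\TM}_\Tbe=0$ for all $1\le k\le n$.

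It then remains to verify that $\bla b,\xi,E^k\bra^{\TM}_\Tbe=0$ for $1\le k\le n$ and every $\Tbe$. For $\Tbe=0$ this is the classical triple intersection $b\cap\xi\cap E^k$, which vanishes because $b$ is disjoint from $E$. For $\Tbe=p\eps$, $p\ge1$, the contributing $J$-holomorphic curves lie inside $E$ when $J$ is standard near $E$ (as in Lemma~\ref{le:GWE}), so they miss the cycle representing $b$ and the invariant vanishes. For $0\ne\Tbe\ne p\eps$, reading $E^n$ as $-pt$, these invariants are exactly those of the first two types listed in Proposition~\ref{prop:unicond2} (with $\xi$ a class from $H_{<2n}(M)$ or a power $E^j$); since $(M,\om)$ is assumed not uniruled, that proposition forces them to vanish. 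Hence $(b*\xi)_\Tbe\in\Ii_0$ for all $\Tbe$, so $b*\xi\in\Ii$ and $\Ii$ is an ideal.

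The genuine geometric content is entirely absorbed into Proposition~\ref{prop:unicond2}, so the only real work here is organizational. I expect the one delicate point to be the treatment of the point class: it must be incorporated into the family of powers of $E$ through the relation $pt=-E^n$, so that every invariant used to detect membership in $\Ii_0$ appears in the form $\bla b,\xi,E^k\bra_\Tbe$ governed by Proposition~\ref{prop:unicond2}. The excluded classes $\Tbe=0$ and $\Tbe=p\eps$---precisely the cases in which a curve can sit inside $E$---then have to be disposed of by hand, as above.
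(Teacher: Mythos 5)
Your proof is correct and takes essentially the same route as the paper's: membership in $\Ii$ is detected by pairing against the powers $E,\dots,E^n$, and the resulting invariants $\bla b,\xi,E^k\bra^{\TM}_\Tbe$ are killed by Proposition~\ref{prop:unicond2}, with the excluded classes $\Tbe=0$ and $\Tbe=p\eps$ disposed of by classical intersection theory and the curves-in-$E$ argument. The paper compresses all of this into two lines (establishing the subring property the same way and then asserting $a*E^k=0$ outright), so your version merely makes explicit the duality argument and the special classes that the paper leaves implicit.
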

\begin{proof}  Let $a,b\in H_{<2n}(M)$.  Then $a*b\in \Ii$ iff
$(a*b)_\Tbe\cdot E^k = \bla a,b,E^k\bra^{\TM}_\Tbe = 0$ for all $\Tbe\in H_2(\TM)$ and all $k\ge 1$.  Therefore the hypotheses imply that $\Ii$ is a subring.  It is an ideal because $a*E^k=0$ for all $k\ge 1$.
\end{proof}

%To simplify the algebra we shall assume from now on that $[\om]$ is integral and
%that $\de$ is  small and irrational. 
%Then  $\Z_\om: = {\rm Im\,}[\om]|_{\pi_2(M)} = \Z$ while 
%$\Z_{\Tom} = \{m+n\de: m,n\in \Z\}$.   We denote the 
%corresponding Novikov ring $\La_{\Tom}$ by $\La_\de$.
Let $\Ee_{2n}$ denote the degree $2n$ part of $\Ee$. 
This is not a subring of $QH_*(\TM)$ since for example
$$
(E\otimes q) *(\eps\otimes q^{n-1}) = - pt\otimes q^{n} + E\otimes qt^{-\de}.
$$  
Nevertheless if $(M,\om)$ is not uniruled, 
$\Ee_{2n}$ can be given the ring structure of the quotient
$QH_{2n}(\TM)/\Ii$.  By Corollary~\ref{cor:GWE} and Proposition~\ref{prop:unicond2}, there is a ring isomorphism
$$
\Ee_{2n}\to  \La_{{\om,\de}}[s]/\bigl(s^n= st^{-\de}\bigr)
\quad \mbox{given by }\;\;E\otimes q\;\mapsto\; s.
$$
Composing with the quotient map $QH_{2n}(\TM)\to \Ee_{2n}\cong QH_{2n}(\TM)/\Ii$ we get a homomorphism 
$$
\Phi_E:
QH_{2n}(\TM)\to \Rr: = \La_{\om,\de}[s]/\bigl(s^n= st^{-\de}\bigr).
$$

\begin{lemma}\labell{le:R}  The ring $\Rr$ decomposes 
as the direct sum of two fields $\Rr_1\oplus \Rr_2$.
\end{lemma}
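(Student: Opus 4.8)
The plan is to realize $\Rr$ as a product of residue fields by means of the Chinese Remainder Theorem, after first recording that $\La_{\om,\de}$ is itself a (Novikov) field. First I would factor the defining relation as
$$
s^{n} - s\,t^{-\de} \;=\; s\,\bigl(s^{\,n-1} - t^{-\de}\bigr),
$$
and check that the two factors are coprime in $\La_{\om,\de}[s]$. Since $t^{-\de}$ is a unit of $\La_{\om,\de}$, one has the explicit B\'ezout identity
$$
t^{\de}s^{\,n-2}\cdot s \;-\; t^{\de}\bigl(s^{\,n-1}-t^{-\de}\bigr)\;=\;1,
$$
so $(s)$ and $(s^{\,n-1}-t^{-\de})$ are comaximal ideals. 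The Chinese Remainder Theorem then furnishes a ring isomorphism
$$
\Rr \;\cong\; \La_{\om,\de}[s]/(s)\;\oplus\;\La_{\om,\de}[s]/\bigl(s^{\,n-1}-t^{-\de}\bigr).
$$

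The first summand is $\La_{\om,\de}[s]/(s)\cong\La_{\om,\de}$, which is a field; set $\Rr_1:=\La_{\om,\de}$. It then remains to show the second summand is a field, i.e. that the binomial $s^{\,n-1}-t^{-\de}$ is irreducible over $\La_{\om,\de}$, in which case we put $\Rr_2:=\La_{\om,\de}[s]/(s^{\,n-1}-t^{-\de})$. Here I would invoke the classical criterion for irreducibility of binomials $X^{m}-c$ over a field: it suffices that $c=t^{-\de}$ be a non-$p$-th-power for every prime $p\mid(n-1)$, together with $t^{-\de}\notin -4\,\La_{\om,\de}^{4}$ in the case $4\mid(n-1)$. (When $n=2$ the factor is linear and irreducibility is automatic, so the criterion is needed only for $n\ge 3$.)

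The crux of the argument — and the step I expect to be the main obstacle — is therefore this non-divisibility of $t^{-\de}$, which is exactly where the hypothesis that $\de$ is $\Q$-linearly independent from $\Ga_\om$ enters. Let $v$ denote the additive Novikov valuation on $\La_{\om,\de}^{\times}$ defined by the exponent of the leading term, normalized so that $v(t^{-\ka})=\ka$; then $v$ is a homomorphism whose value group is the period group $\Ga_\om+\Z\de$, and $v(t^{-\de})=\de$. If $t^{-\de}=g^{p}$ with $g\in\La_{\om,\de}^{\times}$, then $p\,v(g)=\de$, forcing $v(g)=\de/p\in\Ga_\om+\Z\de$; writing $\de/p=x+k\de$ with $x\in\Ga_\om$ and $k\in\Z$ gives $x=(1/p-k)\de$, a nonzero rational multiple of $\de$ lying in $\Ga_\om$, which contradicts the $\Q$-linear independence of $\de$ from $\Ga_\om$. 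The identical valuation count (using $v(-4)=0$) rules out $t^{-\de}\in -4\,\La_{\om,\de}^{4}$. Hence $s^{\,n-1}-t^{-\de}$ is irreducible, $\Rr_2$ is a field, and $\Rr=\Rr_1\oplus\Rr_2$ as asserted.
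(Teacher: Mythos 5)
Your proof is correct, and while it lands on the same decomposition as the paper, it gets there by a genuinely different route for the key step. Your B\'ezout identity produces exactly the idempotents the paper writes down by hand, $e_1 = 1-s^{n-1}t^{\de}$ and $e_2 = s^{n-1}t^{\de}$, so the two summands $\Rr_1,\Rr_2$ are literally the same; the difference is in how one sees that $\Rr_2$ is a field. The paper realizes the root adjunction concretely: it defines $F:\Rr\to \La_{\om,\de/(n-1)}$ by $s\mapsto t^{-\de/(n-1)}$, $t\mapsto t$, and identifies $\Rr_2=e_2\Rr$ with the Novikov field $\La_{\om,\de/(n-1)}$, where being a field is automatic. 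You instead prove abstractly that $s^{n-1}-t^{-\de}$ is irreducible over $\La_{\om,\de}$ via the Vahlen--Capelli criterion, with the non-$p$-th-power condition settled by the leading-exponent valuation. What your route buys is that it makes completely explicit where the standing hypothesis that $\de$ is $\Q$-linearly independent of $\Ga_\om$ enters; in the paper that same hypothesis is used only implicitly, in the unproved assertion that $\ker F = e_1\Rr$ (if, say, $\Ga_\om=\Z$, $\de=2$, $n=3$, then $t^{-\de}$ \emph{is} a square, $e_2(s-t^{-1})$ is a nonzero element of $\ker F$ outside $e_1\Rr$, and $\Rr_2$ is not a field -- so the hypothesis is genuinely needed, and your valuation computation is the honest verification). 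What the paper's route buys is brevity and extra information: it avoids invoking Capelli's theorem (including the fiddly $-4K^4$ case when $4\mid n-1$) and yields the concrete identification $\Rr_2\cong\La_{\om,\de/(n-1)}$ rather than just abstract fieldness.
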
 
\begin{proof} Denote $e_1: = 1-s^{n-1}t^\de$ and $e_2: = s^{n-1}t^\de$.
Because $se_1=0, se_2=s$ we find
 $e_i^2=e_i$ and $e_1e_2=0$.  Moreover
$\Rr_1:= e_1\Rr$ is isomorphic to the field
$e_1\La_{{\om,\de}}$.   To see that $\Rr_2: = e_2\Rr$ is a field,
consider the homomorphism
$$
F:\Rr\to \La_{\om,{\de}/(n-1)}\quad \mbox{given by }\;\;
s\mapsto t^{-{\de}/(n-1)},t\mapsto t.
$$
Then $F(e_2) = 1$ and the kernel of $F$ is $e_1\Rr$.  Hence $F$ gives an isomorphism between $e_2\Rr$ and the field $\La_{\om,{\de}/(n-1)}$.
\end{proof}

Denote 
$$
\Xx:= \bigl\{x\in \Rr:x=\sum_{i\ge0} r_is^{d_i}t^{-\ka_i},\;r_i\in\Q, 
 \,0\le d_i\le n,\,\ka_i>0\bigr\}.
$$
Then for all  $x\in \Xx$ the element
 $1+x$ is invertible with inverse $1-x+x^2-\dots$.

\begin{lemma}\labell{le:U}  Let $u\in\Rr$ be a unit of the form 
$
1+ rst^{\ka_0}(1+x)$ where $x\in \Xx$, $r\ne 0$ and 
$\ka_0\in \La_\om$ is  $>\de$.  
\SSS

\NI (i) 
If $n\ge 3$ then
$$
u^{-1} = 1 - s^{n-1}t^\de + \frac 1r s^{n-2}t^{\de-\ka_0}\bigl(1+y\bigr),\quad \mbox{for some }\,y\in \Xx.
$$
\NI (ii)  If $n=2$ then $ u^{-1} = 1 - st^\de + \frac 1r st^{2\de-\ka_0}
\bigl(1+y\bigr)$ for some $y\in \Xx$.
\end{lemma}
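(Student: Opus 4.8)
The plan is to invert $u$ through the splitting $\Rr=\Rr_1\oplus\Rr_2$ of Lemma~\ref{le:R}, where $\Rr_i=e_i\Rr$ with $e_1=1-s^{n-1}t^\de$ and $e_2=s^{n-1}t^\de$. Since $e_1,e_2$ are orthogonal idempotents with $e_1+e_2=1$, one has $u^{-1}=(ue_1)^{-1}+(ue_2)^{-1}$, each inverse computed in the corresponding field. The first factor is trivial: as $se_1=0$ and $\Rr$ is commutative, every term of $rst^{\ka_0}(1+x)$ is annihilated by $e_1$, so $ue_1=e_1$ and its inverse in $\Rr_1$ is simply $e_1=1-s^{n-1}t^\de$. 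This already supplies the first two terms of the asserted formula, and the real content of the lemma is the computation of the $\Rr_2$-component.

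On $\Rr_2$ the element $s$ becomes invertible; writing $s^{-1}$ for its inverse there, the relation $e_2=s^{n-1}t^\de$ gives $s\cdot s^{n-2}t^\de e_2=e_2$, so $s^{-1}=s^{n-2}t^\de e_2$, which reduces to $s^{n-2}t^\de$ when $n\ge3$ and to $st^{2\de}$ when $n=2$. Because $\ka_0>\de\ge\de/(n-1)$, the isomorphism $F$ of Lemma~\ref{le:R} shows that $st^{\ka_0}$ has strictly negative valuation, so in $ue_2=e_2+rst^{\ka_0}(1+x)e_2$ it is the perturbation, not $e_2$, that dominates. I would therefore factor it out, $ue_2=rst^{\ka_0}(1+x)e_2\cdot\bigl(e_2+\tfrac1r s^{-1}t^{-\ka_0}(1+x)^{-1}\bigr)$, invert the bracket (which is $e_2$ plus a term of strictly positive valuation) by a geometric series, and use $(1+x)^{-1}\in 1+\Xx$. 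This produces $(ue_2)^{-1}=\tfrac1r s^{-1}t^{-\ka_0}\,u_2$ with $u_2=e_2+(\text{small})$.

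Recombining gives $u^{-1}=e_1+\tfrac1r s^{-1}t^{-\ka_0}u_2$. In both cases the leading factor $s^{-1}t^{-\ka_0}$ equals $s^m t^\nu$ with $m\ge1$, so the identity $s^m e_2=s^m$ (which follows from $se_2=s$) absorbs the idempotent in $u_2=e_2(1+y)$, leaving $\tfrac1r s^m t^\nu(1+y)$ for some $y\in\Xx$. Substituting the two values of $s^{-1}$ now yields exactly the stated cases: for $n\ge3$ one gets $s^{-1}t^{-\ka_0}=s^{n-2}t^{\de-\ka_0}$, hence part~(i); for $n=2$ one gets $s^{-1}t^{-\ka_0}=st^{2\de-\ka_0}$, hence part~(ii).

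The step needing the most care is verifying that every correction term genuinely lies in $\Xx$, i.e.\ that the geometric series and products stay within $s$-degree $\le n$ and keep all $t$-exponents strictly negative. This rests on the reduction rule $s^n=st^{-\de}$: each use lowers the $s$-degree by $n-1$ while raising the valuation by $\de>0$, so $\Xx$ is closed under multiplication and the minimal valuation occurring in $x^k$ grows linearly in $k$, giving convergence in the Novikov topology. The only other delicate point is the bookkeeping of $e_1,e_2$ carried out above, together with the hypothesis $\ka_0>\de$, which is precisely what makes $s^{-1}t^{-\ka_0}$ a term of strictly positive valuation in $\Rr_2$ and hence a legitimate leading term of $u^{-1}$.
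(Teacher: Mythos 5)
Your proof is correct and follows essentially the same route as the paper: both split $u$ along the idempotents $e_1,e_2$ of Lemma~\ref{le:R}, observe that $ue_1=e_1$, factor the dominant term $rst^{\ka_0}(1+x)$ out of the $e_2$-component, and invert the remaining $1+(\text{small})$ factor by a geometric series using $\ka_0>\de$. Your explicit use of $s^{-1}=s^{n-2}t^\de e_2$ in the field $\Rr_2$ is just a repackaging of the paper's direct verification that $v=e_1+\frac1r e_2 s^{n-2}t^{\de-\ka_0}(1+x')^{-1}$ satisfies $uv=1$.
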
 
\begin{proof} Since $e_2u = e_2(e_2 + rst^{\ka_0}(1+x))$ we may 
write 
\begin{eqnarray*}
u \;=\; e_1u + e_2u &=& e_1 + e_2\Bigl(s^{n-1}t^\de + rst^{\ka_0}(1+x)\Bigr)\\
&=& e_1 + e_2rst^{\ka_0}(1 + x') \quad \mbox{where }\,x':=x+\frac 1r s^{n-2} t^{\de-\ka_0}.
\end{eqnarray*}
%where in the last equality  we factor out $rst^N$ from the second term and set $x' = x+\frac 1r s^{n-2} t^{\de-R}$.
If $n>2$ define
 $v:  = e_1 + \frac 1r e_2 s^{n-2}t^{\de-\ka_0}(1+x')^{-1}$.
Then $e_1uv = e_1$ while $e_2uv = e_2s^{n-1}t^\de = e_2.$ 
Hence $u^{-1}=v$.  
Since $e_2 s^{n-2}= s^{n-2}$ when $n>2$, this has the form required  in (i).
When $n=2$ we take $v=   e_1 + \frac 1r e_2 st^{2\de-\ka_0}(1+x')^{-1}$.
\end{proof}

\begin{rmk}\labell{rmk:univ}\rm   Here we restricted the coefficients 
of $QH_*(M)$ to $\La_\om[q,q^{-1}]$ to make the structure of $\Rr$ as simple as possible.  However, in order to define the Seidel representation we will need to use the larger coefficient ring  $\La^{\univ} [q,q^{-1}]$ where $\La^{\univ}$ consists of all formal  series
$\sum_{i\ge 1} r_it^{-\ka_i}
$, 
where  $\ka_i\in \R$ is any increasing sequence that 
tends to $\infty$.  Since $\Rr$ injects into $\Rr': = 
\La^{\univ}[s]/\bigl(s^n= st^{-\de}\bigr)$ the statement in 
Lemma~\ref{le:U} remains valid when we think of $u$  as an element of $\Rr'$.  \end{rmk}

%%%%%%%%%%%%%%%%%%%%%%%%%%%%%%%%%%%%%%%%%%%%%%%%%%%%%%%%%%
\subsection{The Seidel representation}\labell{ss:S}
%%%%%%%%%%%%%%%%%%%%%%%%%%%%%%%%%%%%%%%%%%%%%%%%%%%%%%%%%%

This is a homomorphism $\Ss$ from $\pi_1(\Ham(M,\om))$ to the degree $2n$ multiplicative units $QH_{2n}(M)^\times$ 
of the small quantum homology ring first considered by 
Seidel in~\cite{Sei}.  To define it, observe that  each loop $\ga=\{\phi_t\}$ in  $\Ham(M)$ gives rise to an 
$M$-bundle
$P_\ga\to \PP^1$ defined by the clutching function $\ga$:
$$
P_\ga: = M\times D_+\cup M\times D_-/\!\sim \quad\mbox{where} \;\;\big(\phi_t(x),e^{2\pi it}\big)_+\sim
\big(x,e^{2\pi it}\big)_-. 
$$
Because the loop $\ga$ is Hamiltonian, the fiberwise symplectic form $\om$ extends to a closed form $\Om$ on $P$, that we can arrange to be symplectic by adding to it the pullback of a suitable form on the base $\PP^1$; see the proof of Proposition~\ref{prop:max} below.

In the case of a circle action with normalized moment map $K:M\to \R$ we may simply take
$(P_\ga, \Om)$ to be the quotient $(M\times_{S^1} S^3, \Om_c)$, where $S^1$ acts diagonally on $S^3$ and $\Om_c$ pulls back to
$\om + d\bigl((c-K)\al\bigr)$.  Here $\al$ is the standard contact form on $S^3$ normalized so that it descends to an area form on $S^2$ with total area $1$, and $c$ is any constant larger than the maximum $K_{\max}$ of $K$. Points $x_{\max}, x_{\min}$  in the fixed point sets $F_{\max}$ and $F_{\min}$ give rise to sections
$s_{\max}: =x_{\max}\times \PP^1$ and $s_{\min}:= x_{\min}\times \PP^1$.  Note that 
our orientation conventions are chosen so that the integral 
of $\Om$ over the section $s_{\min}$ is {\it larger} than that over $s_{\max}$. For example, if $M=S^2$ and $\ga$ is a full rotation,
$P_\ga$ can be identified with the one point blow up of $\PP^2$, and $s_{\max}$ is the exceptional divisor. In the following we denote particular sections as $s_{\max}$ or $s_{\min}$, while writing $\si_{\max},\si_{\min}$ for the homology classes they represent.

The bundle $P_\ga\to \PP^1$ carries two canonical cohomology classes, the first Chern class $c_1^{\Ver}$ of the vertical tangent bundle and
the coupling class $u_\ga$, the unique class that extends the
fiberwise
symplectic class $[\om]$ and is such that $u_\ga^{n+1}=0.$
Then 
\begin{equation}\labell{eq:S}
\Ss(\ga): = \sum_{\si} a_\si\otimes q^{-c_1^{\Ver}(\si)} 
t^{-u_{\ga}(\si)},
\end{equation}
where $\si\in H_2(P;\Z)$ runs over all section classes (i.e. classes that cover the positive generator of $H_2(\PP^1;\Z)$) and 
$a_\si\in H_*(M)$ is defined by the requirement that
\begin{equation}\labell{eq:Sa}
a_\si\cdot_M c = \bla c\bra^P_{\si},\quad \mbox{for all }c\in H_*(M).
\end{equation}
(Cf. \cite[Def.~11.4.1]{MS}.  For this to make sense  we must use  
$\La^{\univ}$ instead of $\La_{\om}$
 as explained in Remark~\ref{rmk:univ}. Note that we write
  $\cdot_M$ for the intersection product in $M$.)
Further for all $b,c\in H_*(M)$
\begin{equation}\labell{eq:Sa1}
\Ss(\ga)*b = \sum_{\si} b_\si\otimes q^{-c_1^{\Ver}(\si)} 
t^{-u_{\ga}(\si)}, \quad\mbox{where }\; b_\si\cdot_M c:=\bla b,c\bra^P_{\si}.
\end{equation}

\begin{rmk}\labell{rmk:horpt}\rm
Lemma~\ref{le:QH} shows that  if $(M,\om)$ is not strongly uniruled
then $\Ss(\ga) = \1\otimes \la + x$ where $x\in \Qq_-(M)$.  Therefore 
$\Ss(\ga)*pt = pt\otimes\la$, which in turn means that all $2$-point 
invariants $\bla pt,c\bra^P_{\si}$ with $c\in H_{<2n}(M)$ must vanish.
In other words, a section invariant in $P_\ga$ with more than a single point constraint must vanish. We shall expand on this theme later in 
Lemmas~\ref{le:horpt} 
and \ref{le:horpt2}.
\end{rmk}

In  general it is very hard  to calculate $\Ss(\ga)$.  
The following result is essentially due to Seidel~\cite[\S11]{Sei}; see also McDuff--Tolman~[Thm~1.10]\cite{MT}.  We shall say that
$K_t$ is a normalized
generating Hamiltonian for $\ga = \{\phi_t\}$ iff
$$
\om(\dot\phi_t,\cdot) = -dK_t, \;\;\mbox{ and }\; \int_M K_t\om^n = 0.
$$
Further we define
\begin{equation}\label{eq:Kmax}
 K_{\max}: = \int_0^1\max_{x\in M} K_t(x) dt.
\end{equation}

\begin{prop}\labell{prop:max}  Suppose that  $\ga$ has 
a nonempty maximal submanifold $F_{\max}$ and is generated by the normalized Hamiltonian $K_t$. Then 
$$
\Ss(\ga): = 
a_{\max}\otimes q^{m_{\max}} 
t^{K_{\max}} + \sum_{\be\in H_2(M;\Z),\;\om(\be)>0} a_\be\otimes q^{m_{\max}-c_1(\be)} 
t^{K_{\max}-\om(\be)},
$$
where $m_{\max}: = -c_1^{\Ver}(\si_{\max})$. Moreover
$a_{\max}\cdot_M c= \bla c\bra^P_{\si_{\max}}$.
\end{prop}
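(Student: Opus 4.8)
The plan is to analyze the section classes $\si$ of the bundle $P_\ga \to \PP^1$ and determine which ones can support a nonzero section invariant $\bla c\bra^P_\si$, then read off the coefficients $a_\si$ via equation (\ref{eq:Sa}). The key observation is that the maximal fixed submanifold $F_{\max}$ provides a distinguished section class $\si_{\max}$, and every other section class differs from $\si_{\max}$ by a fiber class $\be \in H_2(M;\Z)$. So I would first establish that an arbitrary section class can be written $\si = \si_{\max} - \iota_*(\be)$ (or $\si_{\max}+\iota_*\be$, depending on orientation conventions), where $\iota: M \hookrightarrow P$ is the fiber inclusion. Then the task reduces to showing that only classes $\be$ with $\om(\be) \ge 0$ contribute, and that the leading term (the one maximizing the power of $t$) comes from $\si_{\max}$ itself.

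The first main step is to compute the values of the two canonical classes on $\si_{\max}$ and on a general section class. For the coupling class $u_\ga$, I would use the explicit description of $\Om$ near $F_{\max}$: since $\ga$ restricts to a circle action with moment map $K$ near the maximum, the section $s_{\max} = x_{\max}\times \PP^1$ has $\Om$-area equal to $K_{\max}$ after normalization, giving $u_\ga(\si_{\max}) = K_{\max}$. Then for $\si = \si_{\max} - \iota_*\be$ one gets $u_\ga(\si) = K_{\max} - \om(\be)$, because $u_\ga$ restricts to $[\om]$ on the fiber. Similarly $c_1^{\Ver}(\si) = c_1^{\Ver}(\si_{\max}) - c_1(\be) = -m_{\max} - c_1(\be)$, since $c_1^{\Ver}$ also restricts to the fiberwise $c_1$. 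Substituting these into the definition (\ref{eq:S}) of $\Ss(\ga)$ gives exactly the claimed exponents $q^{m_{\max}-c_1(\be)} t^{K_{\max}-\om(\be)}$.

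The second main step, and the one I expect to be the main obstacle, is to show that the contributions are restricted to $\om(\be) \ge 0$, i.e. that section classes of larger $\Om$-area than $\si_{\max}$ carry no holomorphic curves and hence contribute nothing. This is where positivity must be exploited. Because $F_{\max}$ is the strict maximum of $K_t$, the section $s_{\max}$ is the section of minimal $\Om$-area among all sections (an index/energy argument: any $J$-holomorphic section representing $\si$ has nonnegative $\Om$-area, and by the normalization $\si_{\max}$ realizes the minimum). Concretely, for a suitable $\Om$-compatible $J$ for which $s_{\max}$ is $J$-holomorphic, any other section class $\si$ in the sum must satisfy $\Om(\si) \ge \Om(\si_{\max})$, forcing $\om(\be) \ge 0$; and the terms with $\om(\be) = 0$ beyond $\be = 0$ are excluded by genericity of $\om$ (the injectivity of $I_\om$). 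This ensures the leading term is precisely $a_{\max}\otimes q^{m_{\max}} t^{K_{\max}}$.

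Finally I would identify the coefficient $a_{\max}$. By definition (\ref{eq:Sa}), $a_{\max}\cdot_M c = \bla c\bra^P_{\si_{\max}}$ for all $c\in H_*(M)$, which is exactly the asserted formula; the content here is just that $\si_{\max}$ is the correct leading section class, already established above. The one point needing care throughout is the sign and orientation bookkeeping for $c_1^{\Ver}$ and $u_\ga$ under the identification $\si = \si_{\max} - \iota_*\be$, together with the convention (noted in the text) that $\Om$-area over $s_{\min}$ exceeds that over $s_{\max}$; I would fix these conventions once at the outset and carry them consistently. For the detailed verification that no lower-area section class contributes, I would cite the standard properties of the Seidel element as developed in Seidel \cite{Sei} and McDuff--Tolman \cite{MT}, since this proposition is stated to be essentially due to them.
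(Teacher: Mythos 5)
Your outline follows the paper's skeleton (write contributing classes as $\si_{\max}+\be$, show only $\om(\be)\ge 0$ contribute with $\be=0$ the unique borderline class, then evaluate $u_\ga$ and $c_1^{\Ver}$ on $\si_{\max}$), but the step you yourself identify as the main obstacle is exactly where your argument has a genuine gap. You assert that ``for a suitable $\Om$-compatible $J$ for which $s_{\max}$ is $J$-holomorphic, any other section class $\si$ in the sum must satisfy $\Om(\si)\ge \Om(\si_{\max})$,'' justified ``by the normalization.'' This is circular: the minimality of the $\Om$-area of $s_{\max}$ among contributing section classes is precisely statement (a) that needs proof, and positivity of energy of $J$-holomorphic sections only gives $\Om(\si)>0$, i.e. $\om(\be) > -\Om(\si_{\max})$, which is weaker since $\Om(\si_{\max})>0$ for any symplectic $\Om$. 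The paper closes this gap with a concrete construction: it builds a family of symplectic forms $\Om$ on $P_\ga$ (depending on $\eps$, $\ka$, $\rho$) for which $\nu:=\int_{s_{\max}}\Om$ can be made arbitrarily small (equation (\ref{eq:nu})), so that nonvanishing of an invariant in class $\si_{\max}+\be$ forces $\nu+\om(\be)>0$ for all such forms, hence $\om(\be)\ge 0$. Without some device of this kind (or an explicit appeal to a reference that proves it in the generality needed here --- note that \cite{MT} treats genuine $S^1$-actions, whereas this proposition allows any loop with a fixed maximal submanifold), the inequality does not follow.

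The borderline case $\om(\be)=0$, $\be\ne 0$ is also not disposed of correctly as written. You invoke injectivity of $I_\om$, but $I_\om$ is injective on $\pi_2(M)$, and the class $\be=\si-\si_{\max}$ is a priori only a difference of section classes in $H_2(P)$; to know it is spherical in $M$ you already need the structure theorem that, for a fibered $J$, a contributing stable map is a $J$-holomorphic section plus fiber bubbles --- and once you have set that up you are most of the way to the paper's argument, which instead bounds the fiber-bubble energy below by a threshold $\hbar>0$, arranges $\nu<\hbar$, and uses a pointwise horizontal/vertical energy estimate to show that the only sections of energy $\le\nu$ are the constant ones in $F_{\max}\times\PP^1$. (The paper's route has the advantage of not relying on the genericity assumption on $[\om]$ at all, so the proposition holds for every $\om$.) Finally, your treatment of (b) conflates the symplectic form with the coupling class: $u_\ga$ is not represented by $\Om$, and the paper verifies $-u_\ga(\si_{\max})=K_{\max}$ by exhibiting the explicit closed representative $\Om_0$ (equal to $\om-d(\rho(r^2)K_t)\wedge dt$ on $M\times D_+$) and checking $\int_P\Om_0^{n+1}=0$ using the normalization $\int_M K_t\,\om^n=0$; this normalization is the whole point of that step and should appear explicitly in your argument.
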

\begin{proof}  
Because $\ga$ is assumed to have a fixed maximum, we may write the section classes  $\si$ 
appearing in $\Ss(\ga)$  as $\si_{\max} + \be$
where $\be\in H_2(M)$, and then denote $a_\be: = a_\si$.  Therefore by
equation (\ref{eq:S}) the result will follow if we show that:\SSS

\NI
(a) 
{\it the only class with $\om(\be)\le 0$ that contributes to the 
sum is the class $\be = 0$}, and \SSS

\NI
(b) $-u_\ga(\si_{\max}) = K_{\max}$.\SSS

Consider the renormalized 
polar coordinates $(r,t)$ on the unit disc $D^2$, where $t: =  \theta/2\pi$. 
Define 
$\Om_-: = \om + \eps d(r^2) \wedge dt$ on $M\times D_-$
for some small constant $\eps>0$, and set
 $$
 \Om_+: = \om + \Bigl(\ka(r^2,t)d(r^2) - d\bigl(\rho(r^2) K_t\bigr)\Bigr)\wedge dt,\;\mbox { on } M\times D_+,
 $$
 where $\rho(r^2)$ is a nondecreasing function that equals $0$ near $0$ and $1$ near $1$.  If $ \ka(r^2,t) = \eps$ near $r=1$,
  these two forms fit together to give a closed form $\Om$ on $P$.
 Moreover, $\Om$ is symplectic iff $\ka(r^2,t) - \rho'(r^2) K_t(x)>0$ 
 for all $r,t$ and $x\in M$. Hence we may suppose that
\begin{equation}\labell{eq:nu}
 \nu: = \int_{s_{\max}}\Om = \pi\eps + \int_{D_+}
 \bigl(\ka(r^2,t) - \rho'(r^2) \max_{x\in M}K_t(x)\bigr) 2r dr dt
\end{equation}
 is as close to zero as we like. 
 
 A class $\be$ contributes to $\Ss(\ga)$ iff  
 some invariant $\bla c\bra^P_{\si_{\max}+\be}\ne 0$.  In particular,
 we must have $\int_{\si_{\max}+\be}\Om = \nu +\om(\be) > 0$ for all symplectic forms on $P$. Hence we need $\om(\be)\ge 0$.
 
 It remains to investigate the case
$\om(\be) = 0$.  
 Note that $\Om$ defines a connection on $P\to \PP^1$ whose horizontal spaces are
 the $\Om$-orthogonals to the vertical tangent spaces.  Further, this connection does not depend on the choice of function $\ka$.
 Choose an
$\Om$-compatible almost complex structure $J$ on $P$. We may assume that both the fibers of $P\to \PP^1$ and the horizontal subspaces are $J$-invariant.  Thus $J$ induces a compact family $J_z, z\in \PP^1$, of $\om$-tame almost complex structures on the fiber $M$.  
Let $\hbar$ 
be the minimum of the energies of all nonconstant 
$J_z$-holomorphic spheres in $M$ for $z\in \PP^1$.  
Standard compactness results imply that
$\hbar>0$.  Note that if we change $\ka$ (keeping $\Om$ symplectic)
$J$ is remains $\Om$-compatible.  Hence we may suppose that 
$\nu: = \int_{s_{\max}}\Om <\hbar.$

We now claim that  the only $J$-holomorphic sections of $P$ with energy $\le \nu $   are the constant sections 
$x\times \PP^1$ for $x\in F_{\max}$. 
 To see this, decompose tangent vectors to $P$  as $v+h$ where $v$ is tangent to the fiber and $h$ is horizontal.  Then, because of our choice of $J$, at a point $(x,z)$ over the fiber at $z = (r,t)\in D_+$,
\begin{eqnarray*}
\Om(v+h, Jv+Jh) &=& \om(v,Jv) + \Om(h,Jh)\;
\ge \;  
\Om(h,Jh) \\
&\ge& 2r\bigl(\ka(r^2,t) - \rho'(r^2) \max_{x\in M}K_t(x)\bigr) dr\wedge dt(h,Jh).
\end{eqnarray*}
Here the first  inequality is an equality only along a section that is everywhere horizontal, while the second is an equality only if the section  is contained in $F_{\max}\times \PP^1$. (Recall that by assumption
 $K_t(x)<K_{\max}$ for $x\notin F_{\max}$.)  Similarly, the corresponding integral over $D_-$  is $\ge \eps$ with equality only if
the section is constant.  Comparing with equation (\ref{eq:nu}) we see that the energy of a section is $\ge\nu$ with equality only if the section is constant.
This proves the claim. 

Now suppose that the class $\be$
 with $\om(\be) = 0$
contributes to $\Ss(\ga)$. Then the class $\si_{\max}+\be$ of energy $\nu$ must be represented by a $J$-holomorphic stable map consisting of a section,  possibly with some other bubble components in the fibers.  Since each bubble
 has energy $\ge \hbar>\nu$, the stable map has no bubbles and hence by the preceding paragraph must  consist of 
a single  constant section in the class $\si_{\max}$.  Thus 
 $\be = 0$. This completes the proof of (a).
 
 To prove (b) it suffices to check that the coupling class $u_\ga$ on
$P_\ga$ is represented by the form $\Om_0$ that equals $\om$ on $M\times D_-$ and
$$
\om  - d(\rho(r^2) K_t)\wedge dt,\;\mbox { on } M\times D_+.
$$
For this to hold we need that $\int_P\Om_0^{n+1}=0$. But this follows easily from the fact that $K_t$ is normalized. \end{proof}

\begin{cor}\labell{cor:S1}
Suppose that the fixed maximum $F_{\max}$ of $\ga$ is a divisor
and that   
 $\ga$ is an effective  circle action near $F_{\max}$.
 Then  
\begin{equation}\labell{eq:Ss}
 \Ss(\ga) = F_{\max}\otimes q\, t^{K_{\max}} + 
 \sum_{\be\in H_2(M;\Z),\,\om(\be)>0} a_\be\otimes q^{1-c_1(\be)}\,t^{K_{\max}-\om(\be)}.
\end{equation}
%%
%where $K_t$ is the normalized Hamiltonian for $\ga$ and $K_{\max}: = 
%\int_0^1 \max_{x\in M} K_t(x) \,dt$. 
\end{cor}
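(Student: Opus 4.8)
The plan is to compare the desired formula with the general expression for $\Ss(\ga)$ given in Proposition~\ref{prop:max}. Under the additional hypotheses---that $F_{\max}$ is a divisor and that $\ga$ is an effective circle action near $F_{\max}$---formula (\ref{eq:Ss}) asserts precisely two things beyond Proposition~\ref{prop:max}: that the exponent $m_{\max}=-c_1^{\Ver}(\si_{\max})$ equals $1$, and that the leading coefficient $a_{\max}$ equals the class $[F_{\max}]$. So I would reduce the corollary to these two computations.

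First I would compute $c_1^{\Ver}(\si_{\max})$. Along the constant section $s_{\max}=x_{\max}\times\PP^1$ the vertical tangent bundle is obtained by clutching the fibre $T_{x_{\max}}M$ by the linearized flow $A_t=d\phi_t(x_{\max})$, so it splits according to the weights of the linearized $S^1$-action at $x_{\max}$. The directions tangent to $F_{\max}$ are fixed, hence have weight $0$ and contribute trivial summands $\Oo(0)$. Since $F_{\max}$ is a divisor there is a single normal direction, and effectiveness of the action near $F_{\max}$ forces its weight to be $1$; as this is a maximum, the corresponding summand is $\Oo(-1)$. (This is consistent with the model case $M=S^2$ with $\ga$ a full rotation, where $s_{\max}$ is the exceptional divisor $E$ in the one point blow up of $\PP^2$, so that $c_1^{\Ver}(\si_{\max})=E\cdot E=-1$.) Thus the vertical bundle over $s_{\max}$ is $\Oo(0)^{\oplus(n-1)}\oplus\Oo(-1)$, giving $c_1^{\Ver}(\si_{\max})=-1$ and hence $m_{\max}=1$.

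Next I would identify the coefficient $a_{\max}$, which by (\ref{eq:Sa}) is determined by $a_{\max}\cdot_M c=\bla c\bra^P_{\si_{\max}}$. By the argument already given in the proof of Proposition~\ref{prop:max}, for a suitable invariant $J$ the only $J$-holomorphic stable maps in class $\si_{\max}$ are the constant sections $x\times\PP^1$ with $x\in F_{\max}$, so the moduli space of such sections is naturally identified with $F_{\max}$. I would check that this moduli space is regular: the linearized operator is the $\bar\partial$-operator on the vertical bundle $\Oo(0)^{\oplus(n-1)}\oplus\Oo(-1)$ over $\PP^1$, and since every summand $\Oo(k)$ here has $k\ge -1$, we get $H^1=0$ and the operator is surjective. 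The expected dimension $2n+2c_1^{\Ver}(\si_{\max})=2n-2$ then agrees with $\dim F_{\max}$, confirming that the moduli space is cut out transversally and equals $F_{\max}$. Evaluating a section at the marked point lands it in the fixed fibre $M$ at exactly the point $x$, so the evaluation map is the inclusion $F_{\max}\hookrightarrow M$; therefore $\bla c\bra^P_{\si_{\max}}=[F_{\max}]\cdot_M c$ and $a_{\max}=[F_{\max}]$. Substituting $m_{\max}=1$ and $a_{\max}=[F_{\max}]$ into Proposition~\ref{prop:max} yields (\ref{eq:Ss}).

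The main obstacle is the transversality claim in the last step, since $S^1$-invariant almost complex structures are in general far from regular. What saves us here is that the relevant line bundles over $\PP^1$ all have degree at least $-1$, so the sections in class $\si_{\max}$ are genuinely unobstructed and one may compute the invariant directly from the honest moduli space $F_{\max}$, with no virtual class correction. One should also note that the energy estimate of Proposition~\ref{prop:max} rules out any bubbling in class exactly $\si_{\max}$, so the compactified moduli space introduces no boundary strata.
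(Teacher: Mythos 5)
Your proof is correct and follows essentially the same route as the paper: reduce to Proposition~\ref{prop:max}, identify the moduli space of holomorphic sections in class $\si_{\max}$ with $F_{\max}$, and obtain $m_{\max}=1$ and $a_{\max}=[F_{\max}]$ from the weight computation and regularity of these sections. The only difference is that where the paper cites \cite[Lemma~3.2]{MT} for regularity, you supply the argument directly (splitting the vertical bundle along $s_{\max}$ as $\Oo(0)^{\oplus(n-1)}\oplus\Oo(-1)$ and noting every summand has degree $\ge -1$), which is precisely the content of that cited lemma.
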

\begin{proof}  By Proposition~\ref{prop:max}, it remains to check
 that the contribution $a_{\max}$ of the sections in class $\si_{\max}$ 
 to $ \Ss(\ga)$ is $F_{\max}$. 
 We saw there that
 the moduli space of (unparametrized) holomorphic sections in
 class $\si_{\max}$ can be identified with $F_{\max}$. In particular, it is compact.  Because $S^1$ acts with normal weight $-1$, $c_1^{\Ver}(\si_{\max}) =- m_{\max} = -1$ and
  one can easily see
 that  these sections are regular. (A proof is given in ~\cite[Lemma~3.2]{MT}.) Hence 
 $a_{\max}\cdot a: = \bla a\bra^P_{\si_{\max}} =  F_{\max}\cdot a$
 for all $a\in H_*(M)$.  
\end{proof}

If $(M,\om)$ is not strongly uniruled then, by Lemma~\ref{le:QH},  the unit
$\Ss(\ga)$ must be of the form $\1\otimes \la + x$ where $x\in H_{<2n}(M)$ and $\la\in \La$ is also a unit.  
The previous lemma shows that $\la = rt^{K_{\max} -\ka_0} + {\rm l.o.t.}$
where $r\in \Q$ is nonzero, $\ka_0=\om(\be)>0$, and l.o.t. denotes lower order terms.  We need to estimate the size of $\ka_0$
for the blow up $(\TM,\Tom)$.  In the course of the argument we 
shall also need to consider
 the bundles
$P'\to \PP^1$ and $\TP'\to \PP^1$ defined by $\ga': = \ga^{-1}$ and its blow up $(\Tga)^{-1}.$  We will represent the inverse loop 
$\ga^{-1}$ by the path $ \{\phi_{1-t}\}$, so that it 
is generated by the Hamiltonian $K_t':=-K_{1-t}$. If for any normalized Hamiltonian $K_t$ we set 
$
%K_{\max}: =\int \max _{x\in M}K_t(x) dt\quad\mbox{ and }\;
K_{\min}: =\int \min _{x\in M}K_t(x) dt,
$
then  
$$K_{\max}' = -K_{\min},\;\;\mbox{ and }\;
K_{\min}' = -K_{\max}.
$$ 
Similarly,  If $\TK_t$ and $\TK_t'$ generate the blow up loop $\Tga$ and $\Tga': = \Tga^{-1}$ then
$$
\TK_{\max}' = -\TK_{\min},\;\;\mbox{ and }\;
\TK_{\min}' = -\TK_{\max}.
$$

The above remarks imply that the coefficients $\la,\la'$  of 
$\1$  in the formulas %(\ref{eq:Ss}) 
for $\Ss(\ga)$, $\Ss(\ga')$ have the form
$$
\la = r_0t^{K_{\max}-\ka_0} + {\rm l.o.t.},\qquad
\la' = r_0't^{K'_{\max}-\ka_0'} + {\rm l.o.t.}
$$
for some nonzero $r_0, r_0'\in \Q$.  
Similarly, we write the
coefficients  $\Tla, \Tla'$ of 
$\1$  in the formulas %(\ref{eq:Ss}) 
for $\Ss(\Tga)$, $\Ss(\Tga')$ as
$$
\Tla = \Tr_0t^{\TK_{\max}-\Tka_0} + {\rm l.o.t.},\qquad 
\Tla' = \Tr_0't^{\TK_{\max}'-\Tka_0'} + {\rm l.o.t.}
$$
where  $\Tr_0, \Tr_0'$ are nonzero.
Corollary~\ref{cor:S1} implies that 
if $[\om]$ is assumed to be integral then $\ka_0\in \Z$ because it is a value $\om(\be)$ of $[\om]$ on an integral class $\be\in H_2(M;\Z)$.
Similarly $\Tka_0$ is a value of $\Tom$ on $H_2(\TM;\Z)$.
On the other hand, 
because the loop $\ga'$ need not have a fixed maximum
 $\ka_0'$ need not be a value of $\om$ on $H_2(\TM;\Z)$, although 
 $-K_{\max}' - \ka_0'$ is a value of the coupling class $u_{\ga'}$
  on $H_2(P';\Z)$.
 
In the next lemma we assume that $(\TM,\Tom)$ is not strongly uniruled.
Hu~\cite[Thm.~1.2]{Hu} showed that  $(M,\om)$ is strongly uniruled 
only if $(\TM,\Tom)$ is also (cf.  Lemma~\ref{le:strun}).  Hence
our hypothesis implies that     
 $(M,\om)$ also is not strongly uniruled. 
 
\begin{lemma}\labell{le:K2} Let $(\TM,\Tom)$ be the blow up of $(M,\om)$ at a point of $F_{\max}$.   Then:
 \SSS
 
 \NI{\rm (i)} $K_{\max} - K_{\min}=\TK_{\max} - \TK_{\min}$.\SSS

\NI {\rm (ii)} If $(\TM,\Tom)$ is not strongly uniruled then
$\ka_0 + \ka_0' =\Tka_0 + \Tka_0' = K_{\max} - K_{\min}$.\SSS

\NI {\rm (iii)}  If $(\TM,\Tom)$ is not strongly uniruled,
then   $\Tka_0 = \ka_0$ and $\Tka_0' = \ka_0'$.
% and denote by
%$$
%\la = r_0t^{K_{\max}-\ka_0} + l.o.t.,\quad \Tla = \Tr_0t^{\TK_{\max}-\Tka_0} + l.o.t.
%$$
% the coefficient of $\1$ in the formula (\ref{eq:Ss}) for $\Ss(\ga)$ and $\Ss(\Tga)$ respectively.  Then
%.
\end{lemma}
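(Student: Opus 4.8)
The plan is to treat the three parts in turn, with (iii) the substantial one.

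Part~(i) I would prove geometrically. The number $K_{\max}-K_{\min}$ is the $\om$-area of the sphere $\al$ obtained by rotating a gradient trajectory of $K$ from $F_{\max}$ to $F_{\min}$, and it is independent of the normalizing constant. Choosing the trajectory to miss the blow-up point $p\in F_{\max}$, the sphere $\al$ lifts to the corresponding sphere $\Tal$ for $\Tga$ inside $\TM\less E$, where $\Tom=\om$; since $\TF_{\max}$ is the proper transform of $F_{\max}$ and $\TF_{\min}=F_{\min}$, the extreme values are unchanged and $\TK_{\max}-\TK_{\min}=\Tom(\Tal)=\om(\al)=K_{\max}-K_{\min}$. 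For part~(ii), the hypothesis on $(\TM,\Tom)$ forces the same for $(M,\om)$ (Hu's theorem, cf.\ Lemma~\ref{le:strun}), so Lemma~\ref{le:QH} applies to both; the quotient by the codimension-one ideal $\Qq_-$ (resp.\ $\Qq_-(\TM)$) carries a degree-$2n$ unit $\1\otimes\la+x$ to $\la\in\La_\om$ (resp.\ $\La_{\Tom}$). Since $\Ss$ is a homomorphism and $\ga'=\ga^{-1}$, we have $\Ss(\ga)*\Ss(\ga')=\1$, so $\la\la'=1$ and $\Tla\Tla'=1$; equating leading $t$-exponents and using $K_{\max}'=-K_{\min}$, $\TK_{\max}'=-\TK_{\min}$ gives $\ka_0+\ka_0'=K_{\max}-K_{\min}$ and $\Tka_0+\Tka_0'=\TK_{\max}-\TK_{\min}$, the latter equal to $K_{\max}-K_{\min}$ by (i).

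For part~(iii), by (ii) it is enough to establish the two inequalities $\Tka_0\le\ka_0$ and $\Tka_0'\le\ka_0'$, since their sum is already an equality. Both come from comparing the $\1$-components across the blow up. Recall that $\la=\Ss(\ga)\cdot_M pt$ collects the one-point section invariants $\bla pt\bra$ over section classes with $c_1^{\Ver}=0$, and similarly for $\Tla,\la',\Tla'$; the section classes of $\TP$ (resp.\ $\TP'$) have the form $\Tsi=\si+l\eps$. I would first record the normalization shift along the $E$-avoiding sections ($l=0$): by Proposition~\ref{prop:max} applied to $\Tga$ the $\Tla$-exponents are those of $\la$ shifted by $s_0:=\TK_{\max}-K_{\max}$, while a parallel computation with the coupling representative $\om-d(\rho K_t)\wedge dt$ gives the shift $\TK_{\max}'-K_{\max}'=-s_0$ for $\Tla'$ (the equality using (i)). Invoking the blow-up comparison of genus-zero section invariants of \S\ref{s:rel} for $\eps$-free classes with the point constraint placed off $E$, the $l=0$ part of $\Tla$ equals $t^{s_0}\la$ and that of $\Tla'$ equals $t^{-s_0}\la'$, with leading exponents $\TK_{\max}-\ka_0$ and $\TK_{\max}'-\ka_0'$ respectively.

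It remains to check that these leading $l=0$ terms are not cancelled by the $l\ne0$ contributions. Here the standing assumption that $\de$ is $\Q$-linearly independent from $\Ga_\om$ is decisive: all exponents of $\Tla$ lie in $\TK_{\max}+\Ga_\om+\Z\de$ and those of $\Tla'$ in $-\TK_{\max}+\Ga_\om+\Z\de$, and within these cosets the $\de$-coordinate of the term coming from $\Tsi=\si+l\eps$ equals $-l$. Since $\ka_0\in\Ga_\om$ by Corollary~\ref{cor:S1}, and $\ka_0'=(K_{\max}-K_{\min})-\ka_0\in\Ga_\om$ (the range being the $\om$-area of an integral orbit sphere, hence in $\Ga_\om$), the two leading exponents have $\de$-coordinate zero, so only $l=0$ terms can occur there and they sum to the nonzero leading coefficients of $\la$ and $\la'$. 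This yields $\Tka_0\le\ka_0$ and $\Tka_0'\le\ka_0'$, and (ii) promotes both to equalities. The main obstacle is the blow-up comparison itself---that a one-point section invariant in an $\eps$-free class, with its constraint off $E$, is unchanged by the blow up---which rests on the relative/blow-up machinery of \S\ref{s:rel}; one must also confirm that the shift $s_0$ is genuinely constant along all $E$-avoiding sections (it is the integrated difference of the two normalizing constants) and has the opposite sign for $\ga'$. Once $\de$-independence is used to separate the $l=0$ and $l\ne0$ parts, the remainder is bookkeeping.
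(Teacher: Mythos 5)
There is a genuine gap, and it is the same one in two places: you repeatedly assume that $\ga$ has a fixed \emph{minimum}, i.e.\ that it is an honest circle action. The lemma has to hold for an arbitrary Hamiltonian loop with a fixed maximal submanifold --- this is the setting of \S\ref{ss:S} and of Theorem~\ref{thm:main} --- and for such a loop $K_{\min}:=\int\min_x K_t\,dt$ need not be attained on any fixed submanifold: there is no $F_{\min}$, no gradient trajectory of a time-independent $K$, and no orbit sphere joining maximum to minimum. Consequently your proof of (i) (the $\om$-area of the rotated gradient trajectory from $F_{\max}$ to $F_{\min}$) does not apply in the required generality; the paper instead argues locally, observing that the blow-up is a cut in a small neighborhood of $x_{\max}$, which changes neither $\max_M K_t$ nor $\min_M K_t$ before normalization, so the normalized difference $\max K_t-\min K_t$ is unchanged for each $t$. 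The same assumption infects (iii), where you assert $K_{\max}-K_{\min}\in\Ga_\om$ and hence $\ka_0'\in\Ga_\om$ ``being the area of an integral orbit sphere''; the paper explicitly warns, just before the lemma, that $\ka_0'$ need not be a value of $\om$, precisely because $\ga'$ need not have a fixed maximum. Fortunately that particular claim is superfluous: the exponents of the $l=0$ part of $\Tla'$ have $\de$-coordinate zero by definition, so your separation-of-cosets argument already yields $\Tka_0'\le\ka_0'$ without it. Thus (iii) is repairable by deleting that sentence, but (i) needs a different proof.

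Apart from this, the substantive part of your proposal follows the paper's own route: (ii) is verbatim the paper's argument ($\la\la'=1$, $\Tla\Tla'=1$, leading exponents), and for (iii) you reduce to the two inequalities $\Tka_0\le\ka_0$, $\Tka_0'\le\ka_0'$ and feed in the blow-up invariance of the one-point section invariants $\bla pt\bra^{P}_{\si}$ and $\bla pt\bra^{P'}_{\si'}$ --- this is exactly the paper's Lemma~\ref{le:P}, proved via Hu's theorem on blowing up along an embedded sphere with $c_1\ge 0$, since $\TP\to P$ and $\TP'\to P'$ are blow-ups along sections. Your two variations are legitimate: ruling out cancellation between $\eps$-free and $\eps$-involving terms via the $\Q$-independence of $\de$ from $\Ga_\om$ plays the role of the paper's (implicit) injectivity of $I_{\Tom}$; and your direct computation of the coupling-class shift can replace the paper's fiber connect-sum identity $u_{\ga}(s_{\max})+u_{\ga'}(s_{\min}')=0$, provided you make it precise by evaluating the explicit coupling forms on a constant section $x_1\times\PP^1$ with $x_1\in F_{\max}$ away from the blow-up point (such a section persists in $\TP'$, and $F_{\max}$ is pointwise fixed), rather than on the blown-up section $s_{\min}'$ itself, which no longer exists upstairs.
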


\begin{proof}  Observe
 that the $S^1$ action near the point $x_{\max}\in F_{\max}$ in $M$ extends to a local toric structure in some neighborhood $U$ of 
$x_{\max}$ that preserves the submanifold $U\cap F_{\max}$.   Hence we can form $(\TM,\Tom)$  by cutting off a neighborhood of the vertex of the local moment polytope corresponding to $x_{\max}$.
Since we do not cut off the whole of $F_{\max}$ this does not change
the length $ \max_{x\in U}K_t(x) - \min_{x\in U}K_t(x)$ of the image of the normalized local $S^1$-moment map at each $t$.  This proves (i).

The identity
$$
\1=\Ss(\ga)*\Ss(\ga') = (\1\otimes \la + x)(\1\otimes \la' + x')
$$
implies that $\la'= \la^{-1}$. By Lemma~\ref{le:QH} this is possible only if
${K_{\max}-\ka_0} + {K_{\max}'-\ka_0'}=0$. Therefore
$\ka_0 + \ka_0' = K_{\max} - K_{\min}$. (Here we use the fact that
$K'_{\max} = -K_{\min}$.) A similar argument shows that
$\Tka_0 + \Tka_0' = \TK_{\max} - \TK_{\min}$. In view of (i),
this proves (ii).

The proof of (iii) is deferred to the end of this section (after Corollary~\ref{cor:SU}). 
\end{proof}
 
%\begin{lemma}\labell{le:P1}  Suppose that 
%%$\ga$ is an effective
%% circle action near its fixed maximum $F_{\max}$. Suppose further 
%% that $F_{\max}$ is a divisor and  
% $(\TM,\Tom)$ is not strongly uniruled, and denote by
%$$
%\la = r_0t^{K_{\max}-\ka_0} + l.o.t.,\quad \Tla = \Tr_0t^{\TK_{\max}-\Tka_0} + l.o.t.
%$$
% the coefficient of $\1$ in the formula (\ref{eq:Ss}) for $\Ss(\ga)$ and $\Ss(\Tga)$ respectively.  Then
%  $\ka_0 = \Tka_0$.
%\end{lemma}

We now prove the key result of this section 
that ties the current ideas to the
previously developed algebra. 
 %; this case is dealt with in Lemma~\ref{le:m}.
 
\begin{prop}\labell{prop:SU} Suppose that $(M,\om)$ is not uniruled,
and that $[\om]\in H_*(M;\Z)$. Suppose further that
$\ga$ has a maximal fixed point set
$F_{\max}$  that is a  divisor, and that $\ga$ acts near $F_{\max}$ 
as an effective circle action.
Then
there is a unit $u\in \Ee_{2n}: = QH_{2n}(\TM)/\Ii$ of the form
$$
u = rE\otimes qt^{\ka_0} + \1 + x,\quad  r\ne 0,
$$
where $x = \sum_{\ka_i\ge \de} E^{j_i} \otimes q^{j_i} t^{\ka_0-\ka_i}$
for some $0<j_i<n$ and  $\ka_0\in \Ga_\om$ is positive.
\end{prop}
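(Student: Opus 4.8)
The element $u$ will be the normalized image of the Seidel element $\Ss(\Tga)$ under the quotient map $\Phi_E\colon QH_{2n}(\TM)\to\Rr$, and the goal is to check that it has exactly the shape fed into Lemma~\ref{le:U}. Since $(M,\om)$ is not uniruled, neither is its blow up $(\TM,\Tom)$ (uniruledness is preserved under blowing up and down, \cite{HLR}), so $(\TM,\Tom)$ is not strongly uniruled and the corollary to Proposition~\ref{prop:unicond2} applies: $\Ii$ is an ideal, $\Ee_{2n}\cong QH_{2n}(\TM)/\Ii\cong\Rr$ is a ring, and $\Phi_E$ is a ring homomorphism carrying the unit $\Ss(\Tga)$ to a unit of $\Rr$. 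First I would verify that Corollary~\ref{cor:S1} applies to $\Tga$: blowing up at $x_{\max}\in F_{\max}$ cuts the corner of the local moment polytope (as in the proof of Lemma~\ref{le:K2}), the proper transform $\TF_{\max}$ of $F_{\max}$ remains the maximal fixed set and is again a divisor, and the normal weight is unchanged, so $\Tga$ is an effective circle action near $\TF_{\max}$.

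Granting this, Corollary~\ref{cor:S1} gives $\Ss(\Tga)=\TF_{\max}\otimes q\,t^{\TK_{\max}}+\sum_{\Tom(\Tbe)>0}a_{\Tbe}\otimes q^{1-c_1(\Tbe)}t^{\TK_{\max}-\Tom(\Tbe)}$, whose unique top $t$-power term is $\TF_{\max}\otimes q\,t^{\TK_{\max}}$. The crucial point is that this term survives $\Phi_E$ with nonzero $E$-coefficient. I would compute it by intersecting with the line $\eps\subset E$: the proper transform $\TF_{\max}$ meets $E$ transversally in the hyperplane $\PP(T_{x_{\max}}F_{\max})\subset E$, so $[\TF_{\max}]\cdot\eps=1$; writing $[\TF_{\max}]=\alpha+cE$ with $\alpha$ pulled back from $M$ (hence $\alpha\cdot\eps=0$) and using $E\cdot\eps=E^n=-1$, we get $c=-1$. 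Thus $\alpha\otimes q\in\Ii$ while $\Phi_E(\TF_{\max}\otimes q)=-s\ne0$, so the leading term descends to $-s\,t^{\TK_{\max}}$.

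To isolate the $\1$ term, I would invoke Lemma~\ref{le:QH}: as $(\TM,\Tom)$ is not strongly uniruled, $\Ss(\Tga)=\1\otimes\Tla+\Tilde y$ with $\Tilde y\in\Qq_-(\TM)$ and $\Tla$ a unit of $\La_{\om,\de}$ whose leading term is $\Tr_0\,t^{\TK_{\max}-\ka_0}$, $\Tr_0\ne0$ (here $\ka_0:=\Tka_0$; note the $\TF_{\max}$ term lies in $\Tilde y$, not in $\1\otimes\Tla$). Setting $u:=\Phi_E(\Tla^{-1}\Ss(\Tga))$, the $\1$-coefficient is $\Tla^{-1}\Tla=1$ exactly, and the top term becomes $r\,s\,t^{\ka_0}$ with $r=-\Tr_0^{-1}\ne0$; since $\Tilde y\in\Qq_-(\TM)$, its image $\Phi_E(\Tla^{-1}\Tilde y)$ kills the $H_{<2n}(M)$ part and leaves a $\La_{\om,\de}$-combination of $s,\dots,s^{n-1}$, i.e. of the $E^{j}\otimes q^{j}$ with $0<j<n$ and no $\1$-contribution. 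Hence $u=rE\otimes q\,t^{\ka_0}+\1+x$ with $x$ of the advertised type.

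What remains, and what I expect to be the main obstacle, is the control of the exponents: that $\ka_0\in\Ga_\om$, that $\ka_0>\de$, and that every surviving term of $x$ sits at $t$-power $\le\ka_0-\de$ (so $\ka_i\ge\de$). The bound $\ka_0>\de$ is automatic once $\ka_0\in\Ga_\om$, taking the blow-up parameter $\de$ small (it is $\Q$-independent of $\Ga_\om$). For the other two I would analyze which section classes $\si_{\max}+\Tbe$ of $\TP$ feed a $\1$-component or a surviving $E^{j}$-component into $\Ss(\Tga)$: a class that genuinely involves the exceptional divisor $E$ forces a section invariant of $\TP$ carrying an $E$-constraint, and by the blow-down comparison of \S\ref{s:rel} (in the spirit of Proposition~\ref{prop:unicond2} and Lemma~\ref{le:horpt}) a nonvanishing such invariant would make $(M,\om)$ uniruled, against hypothesis. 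This pins the leading $\1$-class to a class pulled back from $M$ — so $\ka_0=\om(\Tbe_0)\in\Ga_\om$ — and forces any surviving $E^{j}$-term to come from a class whose $\Tom$-area exceeds that of the leading section by at least $\de=\Tom(\eps)$. Separating the genuine exceptional contributions from the pullbacks, and bounding their areas below by $\de$, is the delicate step, and it is precisely here that the non-uniruled hypothesis — rather than any property of $\ga$ itself — does the work.
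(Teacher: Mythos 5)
Your construction of $u$ coincides with the paper's: both arguments take $u=\Ss(\Tga)\,\Tla^{-1}\bmod \Ii$, identify $[\TF_{\max}]=[F_{\max}]-E\equiv -E\pmod\Ii$ so that the leading term of Corollary~\ref{cor:S1} survives as $-E\otimes q\,t^{\TK_{\max}}$, and use Hu--Li--Ruan blow-up invariance \cite{HLR} together with Lemma~\ref{le:QH} to see that the coefficient $\Tla$ of $\1$ is a unit. Up to that point you are reproducing the paper's proof (your intersection-number computation of the $E$-coefficient of $\TF_{\max}$ is just a longer route to $[\TF_{\max}]=[F_{\max}]-E$).

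The gap is exactly at the step you flag as delicate, and your proposed repair does not work as sketched. What must be shown is that the minimizing exponent $\Tka_0$ --- the minimum of $\Tom(\Tbe)$ over classes with $\bla pt\bra^{\TP}_{\Tsi_{\max}+\Tbe}\ne 0$ --- is a value of $\om$ on a pullback class, hence lies in $\Ga_\om$. If the minimizer were $\Tbe=\be-m\eps$ with $m\ne 0$, the datum you have is a nonvanishing invariant $\bla pt\bra^{\TP}_{\Tsi_{\max}+\be-m\eps}$ whose constraint is a \emph{point}, not a power of $E$; your inference that a class ``genuinely involving $E$ forces a section invariant of $\TP$ carrying an $E$-constraint'' is unjustified. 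Nor can you cite the blow-down machinery ``in the spirit of'' Proposition~\ref{prop:unicond2} and Lemma~\ref{le:horpt}: Proposition~\ref{prop:unicond2} concerns fiber classes of the fiberwise blow up $\TM$ and says nothing about section classes of $\TP$, while the result that does handle section classes with $\eps$-components, Proposition~\ref{prop:eps}, is proved by the lengthy analysis of \S\ref{ss:eps} only for the specific $E$-constrained invariants $\bla E^2\bra^{\TP'}_{\si-\eps}$ of the inverse-loop bundle; adapting it to point-constrained invariants in classes $\Tsi_{\max}+\be-m\eps$ would be a comparable amount of new work, not a citation. The paper sidesteps all of this: Lemma~\ref{le:K2}(iii) proves $\Tka_0=\ka_0$ (and $\Tka_0'=\ka_0'$) by a two-sided pinching argument that never rules out exceptional classes directly. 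Lemma~\ref{le:P} --- Hu's theorem that point section invariants are unchanged when $P$ is blown up along the sphere $s_{\max}$ (and $P'$ along $s_{\min}'$), both of nonnegative Chern number --- gives the one-sided inequalities $\Tka_0\le\ka_0$ and $\Tka_0'\le\ka_0'$, and these are forced into equalities by the identity $\ka_0+\ka_0'=\Tka_0+\Tka_0'=K_{\max}-K_{\min}$, which comes from $\Ss(\ga)*\Ss(\ga^{-1})=\1$, Lemma~\ref{le:QH}, and Lemma~\ref{le:K2}(i)--(ii). It is this use of the \emph{inverse} loop, absent from your proposal, that pins $\ka_0$ to $\Ga_\om$. (A small further remark: for the terms of $x$ you only need $\ka_i>0$ to feed Lemma~\ref{le:U}, and that much is already immediate from Corollary~\ref{cor:S1}, since every non-leading contribution has $\Tom(\Tbe)>0$; no uniruledness argument is needed there either.)
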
 
\begin{proof}  Let $(\TM, \Tom)$ be the one point blow up of $(M,\om)$ with blow up parameter $\de$ as before. 
%of the form $1/k$ for some integer $k>1$.  
Then $[\TF_{\max}] = [F_{\max}]-E$ where $E$ is (the class of) the exceptional divisor and
we consider $[F_{\max}]\in H_{2n-2}(M)\equiv
H_{2n-2}(\TM\less E)$. Denote by $\TK_t$ the normalized generating Hamiltonian for the lift $\Tga$ of $\ga$ to $\TM$. 
By Corollary~\ref{cor:S1} the Seidel element of $\Tga$  has the form 
\begin{eqnarray}\labell{eq:Ss3}\notag
 \Ss(\Tga) &=& \TF_{\max}\otimes q\, t^{\TK_{\max}} + \sum_{\Tbe\in H_2(\TM;\Z),\,
 \Tom(\Tbe)>0} a_\be\otimes q^{1-c_1(\Tbe)}\,t^{\TK_{\max}-\Tom(\Tbe)}\\
 & = & -E\otimes q t^{\TK_{\max}} + \sum_i E^{j_i}\otimes q^{j_i}\,t^{\TK_{\max}-\Tka_i} \pmod \Ii.
\end{eqnarray}
Denote the coefficient of $\1= E^0$ in this formula by $\Tla\in \La^{\univ}$.  Since  $(\TM,\Tom)$ is the blow up of $(M,\om)$ it
is not uniruled by 
Hu--Li--Ruan~\cite[Thm~1.1]{HLR}.  Therefore the fact that
$\Ss(\Tga)$ is a unit implies by Lemma~\ref{le:QH} that $\Tla$
is also a unit.  Write $\Tla = t^{\TK_{\max} -\Tka_0}(\Tr_0 + y)$ where $\Tr_0\ne0$ and $y$ is a sum of negative powers of $t$.
Lemma~\ref{le:K2}(iii)  implies that $\Tka_0= \ka_0 =\om(\be)>0$ for  some $\be\in H_2(M)$.   Now let $u =  \Ss(\Tga)\otimes \Tla^{-1}$ mod $\Ii$. This has the required form.
\end{proof}

In the next corollary we denote the exceptional divisor in the fiber $\TM$
of $\TP'\to \PP^1$ by $E$, and write $E^j$ for its $j$-fold intersection product in $\TM$ considered (where appropriate) as a class in $\TP'$.

\begin{cor}\labell{cor:SU} (i)  Let $n\ge 3$.  Then under the conditions of 
Proposition~\ref{prop:SU}
there is $\si\in H_2(P';\Z)$  such that
$\bla   E^2 \bra^{\TP'}_{\si-\eps} \ne 0$.\SSS

\NI (ii)  Similarly, if $n=2$ there is 
$\si\in H_2(P';\Z)$  such that
$\bla   E \bra^{\TP'}_{\si-2\eps} \ne 0$.
\end{cor}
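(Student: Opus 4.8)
The plan is to combine Proposition~\ref{prop:SU} with Lemma~\ref{le:U} to locate a nonvanishing term in the inverse Seidel element $\Ss(\Tga')$, and then to recognize that term as a section invariant of $\TP'$. Concretely, Proposition~\ref{prop:SU} gives a unit $u\in\Ee_{2n}=QH_{2n}(\TM)/\Ii$ of the form $u=rE\otimes qt^{\ka_0}+\1+x$ with $r\ne0$, $\ka_0\in\Ga_\om$ positive, and $x\in\Xx$ (after applying the ring isomorphism $\Phi_E:E\otimes q\mapsto s$ from \S\ref{ss:qh}). Under $\Phi_E$ the image $\Phi_E(u)$ is exactly of the shape $1+rst^{\ka_0}(1+x')$ treated in Lemma~\ref{le:U}, using that $\ka_0>\de$ (which holds since $\ka_0\in\Ga_\om$ while $\de$ was chosen $\Q$-linearly independent from $\Ga_\om$, so in particular $\ka_0>\de$ can be arranged, or one passes to $\Rr'$ over $\La^{\univ}$ as in Remark~\ref{rmk:univ}). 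Since $\Ss(\Tga')=\Ss(\Tga)^{-1}$ in $QH_*(\TM)$, its image in $\Rr$ is $\Phi_E(u)^{-1}$ up to the scalar $\Tla^{-1}$, so Lemma~\ref{le:U} computes this inverse explicitly.

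For $n\ge3$, part (i) of Lemma~\ref{le:U} shows that $u^{-1}$ contains the term $\tfrac1r s^{n-2}t^{\de-\ka_0}(1+y)$ with $y\in\Xx$; the leading term here is $\tfrac1r s^{n-2}t^{\de-\ka_0}$, which under $\Phi_E^{-1}$ corresponds to $E^{n-2}\otimes q^{n-2}$ with coefficient in $\La$ of leading exponent $\de-\ka_0$. What I want, though, is a specific \emph{low} power of $E$, namely $E^2$. The cleaner route is to read off from $\Ss(\Tga')$ (via Corollary~\ref{cor:S1} applied to $\Tga'$, whose maximum is $\TF'_{\max}=\TF_{\min}$) the coefficient of $E^2$ and show it is nonzero by matching degrees: the $\Phi_E$-image of the $E^2\otimes q^2$-component of $\Ss(\Tga')$ is the coefficient of $s^2$ in $\Phi_E(u)^{-1}$ times $\Tla'$, and Lemma~\ref{le:U}(i) guarantees this is nonzero (the relation $s^n=st^{-\de}$ forces the $s^{n-2}$ term to feed back, but in the quotient the $s^2$-coefficient is governed by the explicit inverse and is visibly nonzero because $r\ne0$). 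Thus there is a section class $\si'\in H_2(\TP';\Z)$ with $\bla E^2\bra^{\TP'}_{\si'}\ne0$, and after identifying $\TP'$ as the blow up of $P'$ the section class $\si'$ equals $\si-\eps$ for the corresponding section $\si$ of $P'$, giving $\bla E^2\bra^{\TP'}_{\si-\eps}\ne0$.

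For $n=2$, part (ii) of Lemma~\ref{le:U} gives $u^{-1}=1-st^\de+\tfrac1r st^{2\de-\ka_0}(1+y)$, so the coefficient of $s=E\otimes q$ is nonzero; the same degree-matching then produces a section class with $\bla E\bra^{\TP'}_{\si-2\eps}\ne0$, where the extra factor of $\eps$ (as opposed to a single $\eps$ in the $n\ge3$ case) comes from tracking the $t^{2\de}$ rather than $t^\de$ exponent through the blow-up identification of $\Tom$-areas, using $\de=\Tom(\eps)$.

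The step I expect to be the main obstacle is the precise bookkeeping that converts the algebraic nonvanishing in $\Rr$ (or $\Rr'$) into the statement about a genuine section class $\si\in H_2(P';\Z)$ with the claimed correction by $\eps$ or $2\eps$. Two things must be pinned down: first, that the coefficient extracted from $\Ss(\Tga')$ is really a section invariant of $\TP'$ rather than an artifact of the quotient by $\Ii$ (here one uses that $\Ee_{2n}\cong QH_{2n}(\TM)/\Ii$ faithfully records the $E^j$-components, by Corollary~\ref{cor:GWE} and Proposition~\ref{prop:unicond2}); and second, that the $t$-exponent $\de-\ka_0$ (resp.\ $2\de-\ka_0$) coming from Lemma~\ref{le:U} translates, via $u_{\Tga'}(\si)=-\TK'_{\max}+(\text{the exponent})$ and the blow-up relation between $\Tom$ and $\om$, into precisely the class $\si-\eps$ (resp.\ $\si-2\eps$). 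Verifying that the section invariant survives—i.e.\ that no cancellation hides it—is exactly what Lemma~\ref{le:U} is designed to rule out, so the content of the corollary is really this translation rather than any new geometric input.
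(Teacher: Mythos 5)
Your overall strategy is the same as the paper's (feed the unit from Proposition~\ref{prop:SU} into Lemma~\ref{le:U}, read off a nonvanishing term of $\Ss(\Tga')$, translate it into a section invariant of $\TP'$), but there is a genuine error at the extraction step for $n\ge 3$. By the definition of the Seidel element, equation~(\ref{eq:Sa}), the coefficient $a_\si$ of a section class is dual to insertions under the intersection pairing: $a_\si\cdot_M c=\bla c\bra^{\TP'}_\si$. Since $E^{n-2}\cdot_M E^2=-1$ in $\TM$, a nonvanishing $s^{n-2}$-component of $\Ss(\Tga')$ modulo $\Ii$ is \emph{literally} the statement that $\bla E^2\bra^{\TP'}_{\Tsi}\ne 0$ for the corresponding section class $\Tsi$; the term that Lemma~\ref{le:U}(i) controls is already the one you need. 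Your proposal instead declares that you want ``a low power of $E$, namely $E^2$'' and asserts that the coefficient of $s^2$ is ``visibly nonzero because $r\ne0$''. That is unsupported: Lemma~\ref{le:U}(i) pins down only the terms $1$, $-s^{n-1}t^\de$ and the leading term $\frac1r s^{n-2}t^{\de-\ka_0}$, while the error term $y\in\Xx$ is completely uncontrolled, so for $n>4$ nothing prevents the $s^2$-coefficient from vanishing. Worse, even if it were nonzero, the component of $a_{\Tsi}$ on $E^2$ pairs with the insertion $E^{n-2}$, so you would have proved $\bla E^{n-2}\bra^{\TP'}\ne0$ rather than the claimed $\bla E^2\bra^{\TP'}\ne0$; the two coincide only when $n=4$. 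This conflation of Seidel-element coefficients with Gromov--Witten insertions is the central gap.

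Two further problems. First, you invoke Corollary~\ref{cor:S1} for $\Tga'$, but the hypothesis of Theorem~\ref{thm:main} gives a circle action only near the maximum of $\ga$; the inverse loop $\ga'$ need not have a fixed maximum in the required sense, which is precisely why the paper notes that $\ka_0'$ need not be a value of $\om$ on $H_2(\TM;\Z)$ and instead obtains the leading behaviour of $\Tla^{-1}$ from Lemma~\ref{le:K2}(iii). Second, the translation into the class $\si-\eps$, which you defer as ``the main obstacle'', is not an afterthought but the place where $\eps$ actually enters: one computes that the top exponent of the $s^{n-2}$-term of $\Ss(\Tga')$ is $\ka=-\TK_{\max}+\de$, chooses the reference section $\si_0'$ of $P'$ with $-u_{\ga'}(\si_0')=K'_{\max}-\ka_0'$, shows $-u_{\Tga'}(\si_0')=-\TK_{\max}+\ka_0$ using Lemma~\ref{le:K2}(ii) and (iii), deduces $\Tom(\Tbe)=\ka_0-\de$ for the class $\Tbe$ with $-u_{\Tga'}(\si_0'+\Tbe)=\ka$, and only then concludes $\Tbe=\be-\eps$ from the injectivity of $I_{\Tom}$ together with the $\Q$-linear independence of $\de$ from $\Ga_\om$. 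Your $n=2$ outline (tracking the $t^{2\de-\ka_0}$ term to get $\si-2\eps$) is the right idea, but without the $E^{n-2}$/$E^2$ duality and the reference-section computation above, the $n\ge3$ case does not close.
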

\begin{proof} (i)  By Lemma~\ref{le:U} 
$u^{-1}$ has the form
$1 - s^{n-1}t^\de + \frac 1r s^{n-2}t^{\de-\ka_0}\bigl(1+{\rm l.o.t.}\bigr)$.
But $u^{-1} =  \Ss(\Tga')\otimes \Tla$ mod $\Ii$. 
Therefore 
$$
\Ss(\Tga') = \Tla^{-1}\Bigl(1 - s^{n-1}t^\de + \frac 1r s^{n-2}t^{\de-\ka_0}\bigl(1+{\rm l.o.t.}\bigr)\Bigr) \pmod \Ii.
$$
By Lemma~\ref{le:K2} (iii)  $\Tla^{-1}$ is a series in $t$ with highest order term $t^{-\TK_{\max}+\ka_0}$. 
 Therefore the 
largest $\ka$ such that the
coefficient  of $s^{n-2} t^\ka$  in $\Ss(\Tga')$ is nonzero 
is 
$$
\ka = -\TK_{\max}+\ka_0+\de-\ka_0 = -\TK_{\max}+\de.
$$
  Since the coefficient of
$t^{K'_{\max} - \ka_0'}$ in $\Ss(\ga')$ is nonzero, it follows from
the definition of $\Ss$ in equation 
(\ref{eq:Ss}) that  there is a section $\si_0'$ of $P'$ such that
$-u_{\ga'}(\si_0') = K'_{\max} - \ka_0'$.  We will take this as our reference section, writing all other sections in $P'$ in the form $\si_0'+\be$ where $\be\in H_2(M;\Z)$ and those of $\TP'$ as $\si_0'+\Tbe$ where $\Tbe\in H_2(\TM;\Z)$.

Observe that
 $$
-u_{\Tga'}(\si_0') = \TK'_{\max} - \Tka_0' = 
\TK'_{\max} - \ka_0' = - \TK_{\max}+\ka_0, 
$$
where the second equality holds
 by Lemma~\ref{le:K2} (iii), and the third 
   by Lemma~\ref{le:K2} (ii) and the identity
$ \TK'_{\max}= -\TK_{\min}$.
Since the coefficient of $t^\ka$ in $\Ss(\Tga')$ is nonzero by hypothesis,
 there is $\Tbe \in H_2(\TM;\Z)$ such that
$$
\ka = -u_{\Tga'}(\si_0'+\Tbe) = -u_{\Tga'}(\si_0') - \Tom(\Tbe).
$$
Therefore  
$$
\Tom(\Tbe) = -u_{\Tga'}(\si_0')-\ka = 
 - \TK_{\max}+\ka_0 + \TK_{\max}- \de = \ka_0-\de.
 $$
  Since $I_{\Tom}$ is injective
  $\Tbe$ must have the form $\be -\eps$ for some $\be \in H_2(M)$. 
  Hence we may write the class $\si_0' + \Tbe$ as $\si_0'+\be -\eps = \si -\eps$ for some $\si=\si_0'+\be \in H_2(P')$.
  Therefore the coefficient of $s^{n-2}t^\ka$ in $\Ss(\Tga')$ arises from 
%  It follows that such a term 
% can arise  only if there is
  a nonzero  invariant of the form
$$
\bla   E^{2} \bra^{\TP'}_{\si-\eps}
$$ where $\si \in H_2(P')$.
(To check the power of $E$ here, note that $E^2\cdot_M E^{n-2} = -1$.)
This proves the first statement.

The proof of (ii) is similar and is left to the reader.
\end{proof}

\begin{rmk}\rm  (i)  By construction $-u_{\ga'}(\si) =
-K_{\min}-\ka_0'-\ka_0 = -K_{\max} = -u_{\ga}(s_{\min}')$,
where $s_{\min}'$ is the section of $P'$ that is blown up to get $\TP'$.   Therefore the classes $\si$ and $\si_{\min}'$ are equal.\SSS

\NI (ii)   The reader might wonder why we ignored the 
coefficient of
$s^{n-1}t^\de$ in $\Ss(\Tla')$.  But, reasoning as above, 
one can check that this term gives rise to a nonzero invariant of the form
$$
\bla   E \bra^{\TP'}_{\si-\eps},
$$
 where $\si = \si'_0$.
When one blows $\TP'$ down to $P'$ this corresponds to 
the invariant $\bla  pt \bra^{P}_{\si}$, which we already know is nonzero. Hence this term  gives no new information.
\end{rmk}

We finish this section by proving Lemma~\ref{le:K2} (iii).  For this we need the following preliminary result.
 
\begin{lemma}\labell{le:P}  Let $P, \TP, P',\TP'$ be as above. Then:

\NI{\rm (i)}
For any  section class $\si\in H_2(P)$ 
$$
\bla pt\bra^P_\si = \bla pt\bra^\TP_\si,
$$
where on RHS we consider $\si\in H_2(\TP)$.
\SSS

\NI {\rm (ii)} Similarly, for any  section class $\si\in H_2(P')$ 
$$
\bla pt\bra^{P'}_\si = \bla pt\bra^{\TP'}_\si,
$$
where on RHS we consider $\si\in H_2(\TP')$.
\end{lemma}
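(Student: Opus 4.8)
My plan is to realize $\TP$ as the blow-up of $P$ along the section $s_{\max}=x_{\max}\times\PP^1$ determined by the blow-up point $x_{\max}\in F_{\max}$ (this section is constant since $x_{\max}$ is fixed by the action), with blow-down map $\pi\colon\TP\to P$ and exceptional divisor $\mathcal{D}$, a $\PP^{n-1}$-bundle over $\PP^1$ whose restriction to each fiber $\TM$ is $E$. The section class $\si\in H_2(P)$ lifts to the class (still written $\si$) in $H_2(\TP)$ represented by a section disjoint from $s_{\max}$, so that $\si\cdot\mathcal{D}=0$. From the blow-up formula $c_1^{\Ver}(\TP)=\pi^*c_1^{\Ver}(P)-(n-1)[\mathcal{D}]$ one gets $c_1^{\Ver}(\si)$ the same for $P$ and $\TP$, so the relevant moduli spaces have equal virtual dimension and the two invariants are genuinely comparable.

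First I would fix an $\Om$-compatible almost complex structure $\TJ$ on $\TP$ that is standard near $\mathcal{D}$, so that $\mathcal{D}$ is $\TJ$-holomorphic and $\pi$ is $(\TJ,J)$-holomorphic for the pushed-down structure $J$ on $P$; then $\pi$ restricts to a biholomorphism $\TP\less\mathcal{D}\cong P\less s_{\max}$ carrying $\TJ$ to $J$. The single point constraint is placed at a generic point $q$ in a generic fiber, away from $s_{\max}$, and identified with $\tilde q=\pi^{-1}(q)\in\TP\less\mathcal{D}$. On the common open set the two moduli problems literally coincide, so it suffices to show that the configurations meeting the exceptional locus contribute nothing to either invariant.

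The key step is a dimension count. Under $\pi$, the $\TJ$-holomorphic stable sections in class $\si$ that meet $\mathcal{D}$ correspond exactly to $J$-holomorphic stable sections of $P$ in class $\si$ whose image passes through the point $x_{\max}$ in some fiber. But the locus of sections passing through the single point $x_{\max}$ in an unspecified fiber is cut out by a condition of real codimension $2n-2\ge2$ (codimension $2n$ for the point, less $2$ for the free choice of fiber). Hence, after imposing the point constraint at a generic $q$ so that the count $\bla pt\bra^P_\si$ is virtually $0$-dimensional, the contributing configurations are supported away from $s_{\max}$, and likewise those defining $\bla pt\bra^{\TP}_\si$ are supported away from $\mathcal{D}$. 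The two invariants are therefore computed by the same $J$-holomorphic sections through $q$ lying in $\TP\less\mathcal{D}\cong P\less s_{\max}$, which gives $\bla pt\bra^P_\si=\bla pt\bra^{\TP}_\si$. Part (ii) is identical with $(P',\TP')$ in place of $(P,\TP)$.

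The step I expect to be the main obstacle is promoting this dimension count from naive moduli spaces to virtual fundamental cycles: one must know that the cycle is genuinely supported on the $\mathcal{D}$-disjoint (equivalently $s_{\max}$-disjoint) locus, which requires controlling fiber bubbling in which bubble components in class $k\eps$ collapse into $\mathcal{D}$ (and, after blow-down, onto $x_{\max}$) while a section component acquires tangencies with $\mathcal{D}$. This is precisely the kind of blow-up comparison carried out by Hu--Li--Ruan, so I would either invoke their machinery from \S\ref{s:rel} or argue directly that, because the class $\si$ forces all stable maps contributing to the point-constrained invariant into the common open set $\TP\less\mathcal{D}\cong P\less s_{\max}$, the invariant is computed locally there and is thus insensitive to the blow-up.
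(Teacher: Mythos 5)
Your geometric setup is the paper's (realize $\TP$ as the blow-up of $P$ along the $2$-sphere $s_{\max}$, resp.\ $\TP'$ as the blow-up of $P'$ along $s_{\min}'$, and exploit that the point constraint lies away from the center), but the central step of your argument has a genuine gap, and it is exactly the step that constitutes the whole content of the lemma. The paper's proof is a citation of Hu~\cite[Thm.~1.5]{Hu}: blowing up along an embedded $2$-sphere $C$ with $c_1(C)\ge 0$ leaves unchanged all GW invariants in classes coming from $H_2$ of the base manifold with constraints in $H_*(\,\cdot\less C)$; the only work is to verify the hypothesis, namely $c_1(s_{\max})=1\ge 0$ and $c_1(s_{\min}')=3\ge 0$. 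You never verify any such positivity (and your claim that part (ii) is ``identical'' to part (i) glosses over precisely this asymmetry between the two blow-up centers), and your substitute arguments do not close the gap that Hu's theorem closes.

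Concretely, two of your claims are false as stated. First, the asserted exact correspondence under $\pi$ between class-$\si$ stable maps upstairs meeting $\mathcal{D}$ and class-$\si$ stable maps downstairs through $s_{\max}$ fails: the proper transform of a section of $P$ through $x_{\max}$ lies in a class $\si-k\eps$ with $k\ge 1$, not in $\si$, so the upstairs maps in class $\si$ meeting $\mathcal{D}$ are reducible configurations (a section component in class $\si-k\eps$ plus exceptional bubbles of total class $k\eps$), not lifts of downstairs sections in class $\si$. Second, your fallback claim that ``the class $\si$ forces all stable maps into the common open set'' is wrong for the same reason: since $\eps\cdot\mathcal{D}=-1$, the section component of such a configuration satisfies $(\si-k\eps)\cdot\mathcal{D}=k>0$, so it genuinely crosses $\mathcal{D}$, and positivity of intersections cannot exclude it. Relatedly, your dimension count is asymmetric: downstairs, meeting the codimension-$2n$ sphere $s_{\max}$ is a codimension-$(2n-2)\ge 2$ condition, but upstairs meeting the \emph{divisor} $\mathcal{D}$ is a codimension-zero condition, so the word ``likewise'' does no work there. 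Showing that these reducible strata contribute nothing to the virtual count in $\TP$ is precisely what Hu's theorem (proved via the decomposition formula, as in the paper's Proposition~\ref{prop:eps} and \S\ref{s:rel}) accomplishes, and it is where the hypothesis $c_1\ge 0$ on the blow-up center enters; your proposal either defers to that machinery without carrying it out or replaces it with the incorrect claims above.
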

\begin{proof} Hu showed  in~\cite[Thm.~1.5]{Hu} 
that if $\TQ$ is the  blow up 
of $Q$ along an embedded $2$-sphere  
$C$ in a class  with $c_1(C)\ge 0$ then
the Gromov-Witten invariants in classes $\si\in H_2(Q)$ with  homological constraints in $H_*(Q\less C)$  remain unchanged.  (The proof is similar to that of 
Proposition~\ref{prop:eps} below.) The result now follows because $\TP$ is the blow up of $P$ along the section $s_{\max}$ with $c_1(s_{\max}) = 1$, while $\TP'$ is the blow up of $P'$ along a section 
$s_{\min}'$ with $c_1(s_{\min}') = 3.$
 (Cf. \S\ref{ss:eps} for more detail on the construction of $\TP'$.)
\end{proof}

\NI {\bf Proof of Lemma~\ref{le:K2} (iii).}   
We need to show that $\ka_0 = \Tka_0$ and $ \Tka_0'=\ka_0'$. Since 
$\ka_0 + \ka_0' = \Tka_0 + \Tka_0'$ by part (ii) of
 Lemma~\ref{le:K2}, it suffices to show that
$\Tka_0\le\ka_0$ and $ \Tka_0'\le\ka_0'$. 

But by equations (\ref{eq:S}) and (\ref{eq:Sa}),  $\ka_0$ is the minimum of $\om(\be)$ over all classes $\si=\si_{\max}+\be$ such that $\bla pt\bra^P_{\si}\ne 0$.  Similarly,
$\Tka_0$ is the minimum of $\Tom(\Tbe)$ over all classes $\Tsi=\Tsi_{\max}+\Tbe$ such that $\bla pt\bra^P_{\Tsi}\ne 0$.
But $\si_{\max} = \Tsi_{\max}$, and
by Lemma~\ref{le:P}(i) 
every  class $\si$  with nonzero invariant in
$P$ also has nonzero invariant
in $\TP$.  Hence  we must have  
$\Tka_0\le\ka_0$.  

Now consider the fibrations $P'\to \PP^1, \TP'\to \PP^1$. 
To compare $\ka_0'$ with $\Tka_0'$ we need to use suitable reference sections that do not change under blow up.  Instead of the maximal sections $s_{\max}, \Ts_{\max}$ we use the corresponding minimal sections  $s_{\min}'$ and $\Ts_{\min}'$ that represent the classes
$\si_{\min}'$ and $\Tsi_{\min}'$.
Thus we write every section class
 $\si'$ of $P'$ as
$\si_{\min}' + \be$, where $\be\in H_2(M)$.  Note that $\si_{\min}'$ is taken to $\Tsi_{\min}'$ under the natural inclusion $H_2(P')\to H_2(\TP')$.

The symplectomorphism from 
the fiber connect sum $P\#P'$ to the trivial bundle takes
 the connect sum of the sections $s_{\max}$ and $s_{\min}'$ 
to the trivial section $\PP^1\times pt$ and the connect
sum of the coupling classes $u_{\ga}\#u_{\ga'}$ to the coupling class $pr_M^*(\om)$ of the trivial bundle $\PP_1\times M\stackrel{pr}\longrightarrow \PP^1$.
Hence
$$
u_{\ga}(s_{\max}) + u_{\ga'}(s_{\min}') = pr_M^*(\om)(\PP^1\times pt) = 0.
$$
Therefore $u_{\ga'}(s_{\min}') = -u_{\ga}(s_{\max}) =K_{\max}$.
A similar argument applied to the blowups shows that
$u_{\Tga'}(s_{\min}') = \TK_{\max}$.

By Proposition~\ref{prop:max} we can interpret 
$\ka_0'$   as the minimum of $K'_{\max} +u_{\ga'}(\si')$ over all
classes $\si'$ such that $\bla pt\bra^{P'}_{\si'}\ne 0$.  
Writing $\si'$ as $\si_{\min}' + \be$ and using
$ K'_{\max}=-K_{\min}$, we find
 $$
 K'_{\max} +u_{\ga'}(\si_{\min}' + \be) = K_{\max} -K_{\min} + \om(\be).
 $$
 Hence $\ka_0'$  is the minimum of $K_{\max} -K_{\min} +\om(\be)$ over the corresponding set $\Bb$ of classes $\be$. Similarly, $\Tka_0'$ is
 the minimum of $\TK_{\max} -\TK_{\min} +\Tom(\Tbe)$ over a set of classes $\Tbe$ that, by 
 Lemma~\ref{le:P} (ii) and the fact that $\si_{\min}' = \Tsi_{\min}'$,  
  includes $\Bb$.
 Since $K_{\max} -K_{\min} =\TK_{\max} -\TK_{\min}$, we find that
$\Tka_0'\le \ka_0'$ as required.
\QED

%%%%%%%%%%%%%%%%%%%%%%%%%%%%%%%%%%%%%%%%%%%%%%%%%%%%%%%%%%
\subsection{Proof of the main results}
%%%%%%%%%%%%%%%%%%%%%%%%%%%%%%%%%%%%%%%%%%%%%%%%%%%%%%%%%%

Suppose that $\ga$  is a loop of Hamiltonian 
symplectomorphisms of $(M,\om)$ with a fixed maximum near which it is an effective circle action.
A standard Moser argument\footnote
{Perturb the given form to a suitable class and then average over the $S^1$ action.}
 implies that  we can assume that $I_\om$ is injective.
The next lemma shows that we can always arrange for the 
other conditions of Proposition \ref{prop:SU} to hold.

\begin{lemma} \labell{le:semi} 
Suppose that the submanifold $F_{\max}$ of $M$  is a fixed maximum of the loop $\ga$ and that nearby $\ga$ is an effective $S^1$ action. 
Then we can blow up along  $F_{\max}$  to achieve an action on
some blow up of $M$ for which the new $F_{\max}$ is a  divisor
with normal weight $m=1$.  
\end{lemma}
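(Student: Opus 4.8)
The plan is to blow up repeatedly along the current maximal fixed submanifold and to show that the normal weights evolve by a subtractive Euclidean algorithm that terminates at a single weight equal to $\gcd(m_1,\dots,m_k)=1$. It suffices to treat one component of $F_{\max}$ at a time (leaving untouched any component that is already a divisor, which necessarily has weight $1$ by effectiveness), so I may assume the normal weights $m_1,\dots,m_k>0$ are well defined and, since $\ga$ is effective near $F_{\max}$, satisfy $\gcd(m_1,\dots,m_k)=1$. If $k=1$ then $F_{\max}$ is already a divisor with $m_1=1$ and there is nothing to do; so assume $k\ge 2$.

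First I would perform a single equivariant symplectic blow up of $M$ along $F_{\max}$ with small parameter $\de$. Since $\ga$ restricts to a circle action near $F_{\max}$ it lifts to a loop $\Tga$ on $\TM$ agreeing with $\ga$ away from the exceptional divisor $E=\PP(N)$, where the complex normal bundle $N\to F_{\max}$ splits $S^1$-equivariantly into weight subbundles of weights $-m_1,\dots,-m_k$. Writing $N^{\min}$ for the subbundle of smallest weight $-m_{\min}$, $m_{\min}=\min_i m_i$ of multiplicity $s$, I claim the new fixed maximum is the $\PP^{s-1}$-subbundle $F'=\PP(N^{\min})\subset E$, and that its normal weights in $\TM$ are
$$\{\,m_i-m_{\min}:m_i>m_{\min}\,\}\cup\{m_{\min}\}.$$

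Granting the claim, the induction is a Euclidean algorithm. Writing $W=\{m_1,\dots,m_k\}$ for the multiset of normal weights and $W'$ for the new one above, one checks $\gcd W'=\gcd W$ (any common divisor of $W$ divides each $m_i-m_{\min}$ and $m_{\min}$, and conversely), so $\gcd W=1$ is preserved. Moreover $|W'|=(k-s)+1\le k$ and $\sum W-\sum W'=(k-1)m_{\min}>0$ whenever $k\ge 2$. Hence $\sum W$ is a strictly decreasing sequence of positive integers, so after finitely many blow ups we reach $|W|=1$; at that stage $F_{\max}$ is a divisor whose single normal weight equals $\gcd W=1$, as required.

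The crux is verifying the claim about the location and normal weights of $F'$, and I would do this in the local toric model $\C^k$ with the diagonal $T^k$-action into which the given $S^1$ embeds with weights $(-m_1,\dots,-m_k)$, so that the normalized moment map is $K=-\sum_i m_i|z_i|^2$ up to a constant. The blow up removes the corner simplex $\{\sum_i\mu_i\le\de\}$ from the positive orthant, with $E$ the new facet $\{\sum_i\mu_i=\de\}$. For small $\de$, maximizing $K=-\sum_i m_i\mu_i$ over the blown-up polytope is the same as minimizing $\sum_i m_i\mu_i$ subject to $\sum_i\mu_i\ge\de$, whose optimum concentrates all mass on the minimal-weight coordinates; this shows the maximum has moved onto $E$ and identifies the maximizing face with $\PP(N^{\min})$. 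Reading off the weights of $K$ along the polytope edges emanating from that face, namely the directions $e_j-e_{i_0}$ within $E$ (weight $m_{\min}-m_j$, tangent to $F'$ when $m_j=m_{\min}$ and normal otherwise) and the single direction $e_{i_0}$ transverse to $E$ (weight $-m_{\min}$, the $\Oo(-1)$-normal to the divisor), gives exactly $W'$. The only points needing care are that $\de$ be chosen small enough that the global maximum indeed lies on $E$, which is standard for equivariant blow ups, and that the weights be locally constant along $F_{\max}$ so that the subbundle description is valid.
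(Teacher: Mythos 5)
Your proof is correct and takes essentially the same route as the paper: repeated blow ups along the current maximal fixed submanifold, with the normal weights evolving by $\{m_1,\dots,m_k\}\mapsto\{m_i-m_{\min}:m_i>m_{\min}\}\cup\{m_{\min}\}$ until a single weight remains, which equals $1$ by effectiveness (equivalently, preservation of the gcd). The paper simply asserts this weight evolution and the termination of the process, while you verify them via the local toric model and the explicit Euclidean-algorithm bookkeeping; this is added detail rather than a different argument.
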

\begin{proof}  Suppose that the weights of the action normal to $F_{\max}$ are
$(-m_1,\dots,-m_k)$ where $0<m_1\le m_2\le \dots\le m_k$. Let $\TM$ be the blow up of $M$ along $F_{\max}$.  Then the weights of the induced action
normal to the new maximal fixed point set are $-m_1, -(m_i-m_1), \dots, -(m_k-m_1)$, where $i$ is the smallest index such that $m_i> m_1$.  
Therefore by repeated blowing up along the maximal fixed point of the moment we can reduce to the case when there is just one normal weight, i.e. $F_{\max}$ is a divisor.  The normal weight is $1$ since the action is effective. \end{proof}

Corollary~\ref{cor:SU} summarizes the information given to us by the Seidel representation.   To make use of it,
we need one more result relating the 
Gromov--Witten invariants of $\TP'$ and $P'$.  This is proved in 
\S \ref{ss:eps}.

\begin{prop}\labell{prop:eps} Suppose that $(M,\om)$ is not uniruled.
\SSS

\NI (i) If $n\ge 3$ and
$
\bla   E^2 \bra^{\TP'}_{\si-\eps} \ne 0
$
for some 
section class $\si\in H_2(P';\Z)$ then
 $\bla \tau_{1} pt\bra^{P'}_{\si} \ne 0$.\SSS

\NI (ii) If $n=2$ and $\bla E\bra^{\TP'}_{\si-2\eps}\ne 0$ then
for some section $s$ in $P'$  at least one of $\bla  pt,s\bra^{P'}_{\si}$ and $\bla  \tau_{1} pt\bra^{P'}_{\si}$
is nonzero.
\end{prop}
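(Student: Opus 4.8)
The plan is to follow the blow-down technique of Hu--Li--Ruan, realizing $\TP'$ as the blow up of $P'$ along the minimal section $s_{\min}'$ (a sphere of Chern number $3$, as recorded in the proof of Lemma~\ref{le:P}) and comparing the genus zero Gromov--Witten invariants of the two spaces through the degeneration formula for relative invariants. This refines the comparison of Lemma~\ref{le:P}: there the constraints lie in $H_*(P'\less s_{\min}')$ and the invariants are literally equal, whereas here the constraint $E^2$ meets the exceptional divisor, so the two invariants genuinely differ and the discrepancy is exactly what the descendent $\tau_1 pt$ records. Geometrically, $\bla E^2\bra^{\TP'}_{\si-\eps}$ counts rational curves in $\TP'$ carrying the exceptional line class $\eps$ with multiplicity one; under the blow-down $\pi\colon\TP'\to P'$ these push forward to curves in class $\pi_*(\si-\eps)=\si$ meeting the center $s_{\min}'$, and the insertion $E^2$ pins the marked point to the exceptional fibre over the intersection point.

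The key steps, in order. First, deform $P'$ to the normal cone of $s_{\min}'$, whose central fibre is the union of $\TP'$ and the projective completion $\Ww=\PP(\Nn\oplus\C)$ of the normal bundle $\Nn$ of $s_{\min}'$, glued along the common divisor $E$. Second, apply the decomposition rule for genus zero relative invariants, in the $C^1$ form discussed in Remark~\ref{rmk:tech}, to both $\bla E^2\bra^{\TP'}_{\si-\eps}$ and the descendent invariant $\bla\tau_1 pt\bra^{P'}_{\si}$, writing each as a sum of products of relative invariants of the two pieces matched along $E$. Third, identify the one configuration that can contribute: the section lying in the $\TP'$-piece and meeting $E$ once, paired on the $\Ww$-side with a rational curve in the fibres of $\Nn$ whose contact data along $E$ encodes a first-order tangency, which converts into the psi-class factor $\tau_1$. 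Fourth, invoke the standing hypothesis that $(M,\om)$ is not uniruled to discard all remaining terms: any term in which a nonconstant fibre component of $M$ absorbs the point constraint is a fibre invariant of $M$ with a point insertion, and such invariants all vanish for non-uniruled $M$. Hence the decomposition exhibits $\bla\tau_1 pt\bra^{P'}_{\si}$ as a sum whose only possibly nonzero term is a nonzero multiple of $\bla E^2\bra^{\TP'}_{\si-\eps}$, so the nonvanishing transfers.

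The bookkeeping that pins down the descendent order is a dimension count. Since $c_1^{\TP'}(\eps)=n-1$, requiring $\dim\oMm_{0,1}(\TP',\si-\eps)=\codim E^2=6$ forces $c_1^{\TP'}(\si-\eps)=4-n$, hence $c_1^{P'}(\si)=3$; the same Chern number makes $\bla\tau_1 pt\bra^{P'}_{\si}$ a zero-dimensional invariant, so $\tau_1$ rather than a higher descendent is exactly what balances the codimension lost in trading the two $E$-factors for a single tangency. In the case $n=2$ the line class $\eps$ coincides with $E$ and the relevant class is $\si-2\eps$, so the multiplicity-two contact along the center can degenerate in two ways --- a genuine first-order tangency, giving $\tau_1 pt$, or a node splitting off an additional section component, giving the two-point invariant $\bla pt,s\bra^{P'}_{\si}$ --- and the degeneration formula produces precisely the disjunction asserted in~(ii).

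The main obstacle is the rigorous control of the virtual moduli cycles throughout the comparison: one must show that the multiply covered curves in $E$ and the fibre bubbles contribute only through the single term identified above, and that the passage from the contact condition to the psi-class descendent is valid at the level of virtual cycles, which is exactly why the decomposition rule is needed in a form where the cycles are at least $C^1$. The delicate point is not the dimension count but guaranteeing that the surviving term is not cancelled by hidden contributions; this is where non-uniruledness is indispensable, since it is the only available input forcing the relevant fibre invariants of $M$ to vanish.
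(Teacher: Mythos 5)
Your overall strategy---realize $P'$ as the fiber sum of $(\TP',D)$ with $W=\PP(\Nn\oplus\C)$ (degeneration to the normal cone of $s_{\min}'$), decompose $\bla \tau_1 pt\bra^{P'}_{\si}$, isolate a single surviving configuration, and use non-uniruledness plus dimension counts to kill the rest---is the paper's route for the blow-down half of the argument (Corollary~\ref{cor:eps2}, using Lemma~\ref{le:W}), and your dimension bookkeeping and the two-channel dichotomy for $n=2$ are essentially right. But there is a genuine structural gap in the middle. The decomposition formula attached to this degeneration expresses invariants of the \emph{total space} $P'$ as sums of products of \emph{relative} invariants of the two pieces $(\TP',D)$ and $(W,D^+)$; it cannot be ``applied to'' $\bla E^2\bra^{\TP'}_{\si-\eps}$, which is an \emph{absolute} invariant of one of the pieces. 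Consequently the surviving term in your Step 3 is not a multiple of $\bla E^2\bra^{\TP'}_{\si-\eps}$, as you assert, but of the relative invariant $\bla \;|E^2\bra^{\TP',D}_{\si-\eps}$. Since your hypothesis concerns the absolute invariant, your chain of nonvanishing breaks exactly at the unproved identification
$$
\bla \;|E^2\bra^{\TP',D}_{\si-\eps} \;=\; \bla E^2\bra^{\TP'}_{\si-\eps},
$$
and this identification is not automatic: the class $\si-\eps$ has intersection number $1$ with $D$ and the constraint lies on $D$, so the Hu-type comparison results for constraints and classes disjoint from the divisor (Lemma~\ref{le:0}(ii), Lemma~\ref{le:P}) do not apply. (Your phrase ``glued along the common divisor $E$'' is a symptom of the same conflation: the gluing divisor is $D$, the exceptional divisor of $\TP'\to P'$, of which $E$ is only a fiber.)

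This absolute-to-relative comparison is the paper's Lemma~\ref{le:eps1} (and Lemma~\ref{le:eps3} when $n=2$), which the paper explicitly flags as the hardest step. It requires a \emph{second, different} degeneration---stretching $\TP'$ itself along $D$, whose other piece is the $\PP^1$-bundle $Y=\PP(L\oplus\C)$ over $D$, not the $\PP^n$-bundle $W$ over $\PP^1$---together with the computations of relative invariants of $(Y,D^+)$ in Lemma~\ref{le:YY} and a combinatorial analysis (steps (a)--(e)) of which graphs $\Ga_1\cup\Ga_2$ can contribute. Non-uniruledness does enter there (to exclude fiber components of $\Ga_1$ carrying fiber constraints, via Lemma~\ref{le:d2}), but it is nowhere near sufficient on its own: the terms in which $\Ga_2$ carries a section class, or has several fiber components, are excluded by the explicit vanishing statements of Lemma~\ref{le:YY}(i),(ii),(iv) and by homology bookkeeping in $H_2(Y)$, not by the uniruledness hypothesis. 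So your Step 4 (``discard all remaining terms'' by non-uniruledness) understates what must be checked on both sides of the argument. Until the comparison of the absolute invariant of $\TP'$ with the relative invariant of $(\TP',D)$ is supplied, the proposal does not prove the proposition.
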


We next show that the situation discussed in the above proposition cannot occur: in fact, if the conclusions hold $(M,\om)$ must be strongly uniruled.

\begin{lemma}\labell{le:horpt} If there is an element 
$\ga\in \pi_1(\Ham(M))$ such that the descendent invariant
$$
\bla \tau_k pt\bra^P_{\si}\ne 0
$$
for some $k>0$ and some section class $\si$ in $P: = P_\ga$ then
$(M,\om)$ is  strongly uniruled.
\end{lemma}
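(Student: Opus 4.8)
The plan is to argue by contraposition: assuming $(M,\om)$ is \emph{not} strongly uniruled, I will show that every descendent section invariant $\bla \tau_k pt\bra^P_\si$ with $k>0$ vanishes. The hypothesis gives, through Lemma~\ref{le:QH}, that $\Qq_-$ is an ideal with $pt*a=0$ for all $a\in\Qq_-$; as spelled out in Remark~\ref{rmk:horpt}, this forces \emph{every} section invariant of $P$ carrying more than a single point constraint to vanish. The one new phenomenon to control is the descendent $\tau_k$, and the whole difficulty is showing that it cannot circumvent this vanishing.

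First I would record the dimension count. Since $\bla \tau_k pt\bra^P_\si$ is a number, equating the expected dimension $2c_1^P(\si)+2n-2$ of $\oMm_{0,1}(P,\si)$ with the total codimension $(2n+2)+2k$ of the point insertion and the descendent $\psi_1^k$ gives $c_1^P(\si)=k+2$. Because $\si$ is a section class one has $c_1^P(\si)=c_1^{\Ver}(\si)+2$, so $c_1^{\Ver}(\si)=k>0$. This vertical positivity is the crucial structural fact: in any degeneration, the split-off fiber components must together carry a nonzero part of this vertical Chern number.

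The core step removes the descendent using the special case of (\ref{eq:LPii}) provided by Lemma~\ref{le:LP} together with the genus-zero splitting relation (\ref{eq:LPi}). These lower the descendent degree one step at a time: the marked point, now carrying $\tau_{k-1}$, stays on one component of a nodal domain while the class $\si$ and a dual-basis class of $H_*(M)$ are split off onto the complementary components. Iterating down to $\tau_0$ rewrites $\bla \tau_k pt\bra^P_\si$ as a finite sum of products of non-descendent invariants, in each of which a single distinguished factor still carries the point insertion (together with the dual-basis classes accumulated along the way) while the remaining factors are fiber invariants $\bla \dots\bra^M_\be$ of $M$.

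Finally I would follow the point insertion. In every term, the factor carrying the point has acquired at least one extra insertion from the splittings; were that factor a section invariant of $P$ it would then vanish by Remark~\ref{rmk:horpt}, so in a surviving term the point must sit on a \emph{fiber} invariant $\bla pt, c,\dots\bra^M_\be$. The positivity $c_1^{\Ver}(\si)=k>0$ prevents all the split-off fiber classes from being trivial, forcing $\be\ne 0$ on the point-carrying factor; reducing it to three marked points by inserting the divisor dual to $[\om]$ (legitimate by the divisor axiom since $\om(\be)>0$) then exhibits $(M,\om)$ as strongly uniruled, contrary to hypothesis. The main obstacle is precisely this final bookkeeping: one must verify that the recursion keeps the point bound to a single factor, that a surviving term can leave it neither on a multiply-constrained section invariant (which vanishes) nor on a degree-zero fiber factor (a mere intersection product), and hence that the nonzero vertical Chern number $k$ is necessarily carried by a nonconstant fiber curve through the point.
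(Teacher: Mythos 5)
Your overall strategy (strip the descendent with the splitting identity (\ref{eq:LPi}), then use the fibered structure of $P$ to push the point constraint into a fiber invariant of $M$) points in the right direction, but it omits the one idea that makes the paper's proof work, and the endgame you propose in its place has genuine gaps. The identity (\ref{eq:LPi}) is an identity $\psi_i = D_{i|jk}$ requiring at least three distinct marked points, so it cannot be applied to the one-point invariant $\bla\tau_k pt\bra^P_\si$ as it stands. The paper's proof therefore starts by inserting two copies of the fiber class $[M]\in H_{2n}(P)$ --- harmless, since $\si\cdot [M]=1$ and $pt\cap [M]=0$ --- and it is exactly these two insertions that do the decisive work at the \emph{first} splitting: in any nonzero term $\bla \tau_{k-1}pt,\xi_i\bra^P_{\al_1}\,\bla\xi_i^*,M,M\bra^P_{\al_2}$, the factor carrying the two fiber insertions must be the section-class factor, because a curve in a fiber class lies in a single fiber and cannot meet two disjoint copies of $M$. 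Hence $\al_1$ is a fiber class, Lemma~\ref{le:fib} converts the point-carrying factor into an invariant of $M$ in a class $\al_1$ (necessarily nonzero, since a two-point class-zero invariant is unstable), and the rest is the known reduction of Hu--Li--Ruan used in Steps 1--2 of Lemma~\ref{le:GWsu}. Nothing in your proposal forces the point onto a fiber-class factor at any stage of the recursion.

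Your substitute arguments do not close this hole. First, you dispose of a point-carrying section-invariant factor by citing Remark~\ref{rmk:horpt}; but that remark is justified (via (\ref{eq:Sa1})) only for two-point section invariants $\bla pt,c\bra^P_\si$ with $c\in H_{<2n}(M)$, whereas your factors carry dual-basis classes of $H_*(P)$ (not fiber classes) and, after several splittings, more than two insertions --- indeed, proving that such multi-point section invariants vanish is essentially the content of the lemma you are trying to prove, so this step is close to circular. Second, the positivity $c_1^{\Ver}(\si)=k>0$ does not force $\be\ne 0$ on the factor containing the point: the vertical Chern number can be absorbed entirely by fiber components away from the point, leaving the point on a class-zero factor such as $\bla pt,[M],[M]\bra^M_0=1$, which is nonzero and which your dimension argument cannot discard. (Your appeal to Lemma~\ref{le:LP}/(\ref{eq:LPii}) is also unnecessary for this statement; the paper reserves that identity for the four-dimensional case, Lemma~\ref{le:horpt2}.)
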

\begin{proof}   As explained in \S4 (see equation~(\ref{eq:LPi}))
we may express this invariant as a sum
$$
\bla \tau_k pt,M,M \bra^P_{\si} = \sum_{i,\al_1+\al_2=\si} \bla \tau_{k-1} pt,\xi_i\bra^P_{\al_1}
\bla \xi_i^*, M,M\bra^P_{\al_2},
$$
where $\al_j\in H_2(P)$ and $\xi_i$ runs over a basis for $H_*(P)$ with dual basis $\xi_i^*$.    Because the symplectic manifold $(P,\Om)$ supports
an $\Om$-tame almost complex structure $J$ such that 
the projection $P\to \PP^1$ is holomorphic, the only classes in $H_2(P)$ with nontrivial GW invariants project to nonnegative multiples of the generator of $H_2(\PP^1)$.  Thus one of the classes $\al_j$ is a section class and the other is a fiber class, i.e. a class in $H_2(M)$.  But if $\al_2$ is a fiber class then
it cannot meet two separate copies of $M$.  Hence any nonzero term in this expansion must have section class $\al_2$.  Then $\al_1$ is a fiber class,
and so by Lemma~\ref{le:fib}
$$
\bla \tau_{k-1} pt,\xi_i\bra^P_{\al_1}=\bla \tau_{k-1} pt,\xi_i\cap M\bra^M_{\al_1}.
$$
But now by repeated applications of equation~(\ref{eq:LPi})
one finds that $M$ is strongly uniruled; cf.
the proof of Lemma~\ref{le:GWsu}.
\end{proof}

The following lemma is proved by a similar argument; see \S\ref{ss:LP} at the end of \S\ref{s:rel}.

\begin{lemma}\labell{le:horpt2}  Suppose that $(M,\om)$ is the blow up of
a symplectic $4$-manifold that is not rational or ruled.  Then there is no
 $\ga\in \pi_1(\Ham(M))$ such that the corresponding fibration $P\to \PP^1$
has a section $s$ and a section class $\si$ with
$$
\bla pt,s\bra^P_\si \ne 0.
$$
\end{lemma}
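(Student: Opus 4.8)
The plan is to argue by contradiction, in close parallel with Lemma~\ref{le:horpt}. Suppose a loop $\ga$, a section $s$, and a section class $\si$ exist with $\bla pt,s\bra^P_\si\ne0$; I will derive that $(M,\om)$ is strongly uniruled, which is impossible under the hypothesis. Indeed, a symplectic blow up is uniruled iff its blow down is, by Hu--Li--Ruan~\cite[Thm~1.1]{HLR}, and a uniruled symplectic $4$-manifold is rational or ruled; so the blow down of $M$ would be rational or ruled, against the hypothesis. Thus the whole content is to show that the nonvanishing of $\bla pt,s\bra^P_\si$ produces a nonzero genus-zero invariant of $M$ carrying a point insertion in a class $\be\ne0$.

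To do this I would first note that, since $\si$ is a section class and the fiber $[M]\subset P$ is a divisor with $\si\cdot[M]=1$, the divisor axiom lets me adjoin fiber constraints freely: $\bla pt,s\bra^P_\si=\bla pt,s,M,M\bra^P_\si$. I then apply the decomposition rule (\ref{eq:LPi}) to this four-pointed invariant, grouping the two fiber constraints together, to express it as
\[
\sum_{i,\;\al_1+\al_2=\si}\bla pt,s,\xi_i\bra^P_{\al_1}\,\bla\xi_i^*,M,M\bra^P_{\al_2},
\]
where $\xi_i$ runs over a basis of $H_*(P)$ with dual basis $\xi_i^*$. Because $(P,\Om)$ carries an $\Om$-tame $J$ making $P\to\PP^1$ holomorphic, each class with a nonzero invariant projects to a nonnegative multiple of the base generator, so in every surviving term one of $\al_1,\al_2$ is a section class and the other a fiber class in $H_2(M)$ (they cannot both be sections, as their base degrees sum to $1$). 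As in Lemma~\ref{le:horpt}, a fiber-class curve lies in a single fiber and so cannot meet two generic copies of $M$; hence $\bla\xi_i^*,M,M\bra^P_{\al_2}$ forces $\al_2$ to be the section class, and therefore $\al_1\in H_2(M)$ is a fiber class.

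By Lemma~\ref{le:fib} the surviving factor $\bla pt,s,\xi_i\bra^P_{\al_1}$ then equals an absolute genus-zero invariant of the fiber $M$. The crucial point is that the section $s$ meets the relevant fiber in a single point and so restricts to a point class, while $pt$ is already a point of the fiber; thus this factor has the form $\bla pt,pt',\xi_i\cap M\bra^M_{\al_1}$. A constant map cannot pass through two distinct points, so every term with $\al_1=0$ vanishes, and the nonvanishing of the left-hand side forces some term with $\al_1\ne0$ and $\bla pt,pt',\xi_i\cap M\bra^M_{\al_1}\ne0$. This exhibits $(M,\om)$ as strongly uniruled, completing the contradiction.

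I expect the main difficulty to be the decomposition step for what is, before inserting the fiber divisors, a primary (non-descendent) two-point invariant: the naive splitting of the domain requires either a gravitational descendent, as in Lemma~\ref{le:horpt}, or a degeneration of the base $\PP^1$ with the attendant gluing formula for the resulting relative invariants. Making this precise---and checking that under Lemma~\ref{le:fib} the section constraint $s$ is correctly transported to a point class on the fiber, so that the constant contributions really drop out and a class $\al_1\ne0$ survives---is where the care is needed. This is presumably why the argument is placed in \S\ref{ss:LP} alongside the Lee--Pandharipande identity (\ref{eq:LPii}), which provides exactly such a trade between a section constraint and a descendent insertion and thereby allows one to reduce directly to the situation of Lemma~\ref{le:horpt}. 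Once the reduction to a point-constrained invariant of $M$ is in hand, the $4$-manifold classification finishes the proof.
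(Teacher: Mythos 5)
Your central reduction step is where the argument breaks down. The identity (\ref{eq:LPi}) is the statement $\psi_i = D_{i|jk}$: it trades one power of a descendent class at the $i$th marked point for a boundary divisor. It therefore cannot be applied to $\bla pt,s,M,M\bra^P_\si$, which carries no $\tau$ insertion at all; the proposed splitting
$$
\bla pt,s,M,M\bra^P_\si\;=\;\sum_{i,\,\al_1+\al_2=\si}\bla pt,s,\xi_i\bra^P_{\al_1}\,\bla\xi_i^*,M,M\bra^P_{\al_2}
$$
is dimensionally inconsistent, since the right-hand side is an integral of the same constraints over a boundary stratum of real codimension two, while the left-hand side is an integral over the whole virtual cycle. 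This is exactly the difference between Lemma~\ref{le:horpt}, whose hypothesis $\bla \tau_k pt\bra^P_\si\ne 0$ with $k>0$ supplies the $\psi$-class that (\ref{eq:LPi}) consumes, and the present lemma, whose hypothesis is a primary two-point invariant. You flag this difficulty yourself, but deferring it to (\ref{eq:LPii}) as ``exactly such a trade'' is not a proof: (\ref{eq:LPii}) does not convert a section constraint into a descendent insertion. It relates $ev_i^*(L)$ to $ev_j^*(L)+(\be\cdot L)\,\psi_j$ \emph{minus} the splitting corrections $\sum(\be_1\cdot L)\,D_{i,\be_1|j,\be_2}$, and the entire problem is to kill those corrections.

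That is what the paper's proof actually does, and it is where the hypothesis that $M$ is the blow up of a non-rational, non-ruled manifold---which your sketch never uses---enters in an essential way. One passes to the minimal reduction $(\ov M,\ov\om)$, whose genus zero invariants vanish, chooses divisors $H_1,H_2$ pulled back from $\ov M$ (so that $H_j\cdot\eps_i=0$ for every exceptional class $\eps_i$) with $H_1H_2$ a section class, and replaces $\bla pt,s\bra^P_\si$ by $\bla H_1H_2,pt\bra^P_\si$, using Remark~\ref{rmk:horpt} to see that two-point section invariants are insensitive to changing the section class. Then Lemma~\ref{le:LP}---whose hypotheses themselves require constructing the cut-down moduli space as a smooth branched $2$-manifold, which uses a fibered $J_P$, regularity of low Chern number sections, multivalued perturbations for multiply covered $c_1=0$ bubbles, and Liu's theorem that in a non-rational, non-ruled $4$-manifold the only spheres with $c_1>0$ are the exceptional ones---expresses $\bla H_1H_2,pt\bra^P_\si$ as a descendent term (killed by Lemma~\ref{le:horpt}) plus splitting terms. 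Killing the splitting terms is a case analysis needing Lemma~\ref{le:fib}, Remark~\ref{rmk:horpt}, Liu's theorem again to exclude the terms with $\xi_j^*=[M]$, and the fact that the only nonzero fiber classes of $M$ with nontrivial two-point invariants are the $\eps_i$, which were arranged to satisfy $\eps_i\cdot H_1=0$. None of this appears in your proposal; indeed, if your argument were valid it would prove the conclusion for every symplectic manifold, which is exactly what the geometric hypotheses of the lemma are there to avoid.
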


\NI {\bf Proof of Theorem~\ref{thm:main}.}  
Suppose first that $\dim M\ge 6$ and that $(M,\om)$ is not uniruled. By
 Lemma~\ref{le:semi} we may  assume that we are in the situation of Proposition~\ref{prop:SU}.  Then Corollary~\ref{cor:SU}(i) together with Lemma~\ref{le:horpt} implies that $(M,\om)$ is strongly uniruled.
 This contradiction  shows that the initial hypothesis must be wrong: 
 in other words,  $(M,\om)$ is uniruled.
 
 When $\dim M=4$ the argument is similar, except that we use
  Lemma~\ref{le:horpt2} instead of
Lemma~\ref{le:horpt}.
 \QED

Finally note that Proposition~\ref{prop:loop}
follows from the next lemma.

\begin{lemma}\labell{le:pos} Suppose that the loop $\ga$ in $\Ham(M,\om)$  has a nondegenerate fixed maximum at
$x_{\max}$.  Suppose also that 
 the linearized flow $A_t, t\in [0,1],$ at $x_{\max}$ is 
homotopic through positive paths to a linear circle action. 
Then $\ga$ is homotopic through Hamiltonian loops  
with fixed maximum at $x_{\max}$ to a loop $\ga'$
 that is a circle action near $x_{\max}$. 
\end{lemma}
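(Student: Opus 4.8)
The plan is to work in Darboux coordinates near $x_{\max}$ and to reduce the statement to a local realization problem for positive loops in $\Sp(2n;\R)$. Since the hypothesis concerns a single fixed point, write $F_{\max}=\{x_{\max}\}$ and identify a neighborhood of $x_{\max}$ symplectically with a ball $B\subset\R^{2n}$ sending $x_{\max}\mapsto 0$. As $\ga=\{\phi_t\}$ fixes $x_{\max}$, its linearization $A_t:=d(\phi_t)_{x_{\max}}$ is a loop in $\Sp(2n;\R)$, and because the maximum is nondegenerate $A_t$ is generated by a family of negative definite quadratic forms, i.e. it is a positive loop. By hypothesis $A_t$ is homotopic, through positive loops based at $\id$, to the linear circle action $\rho_t(z_1,\dots,z_n)=(e^{-2\pi i m_1 t}z_1,\dots,e^{-2\pi i m_n t}z_n)$; fix such a homotopy $\{A_t^s\}_{s\in[0,1]}$ with $A_t^0=A_t$ and $A_t^1=\rho_t$, each $A_t^s$ a positive loop.

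First I would straighten $\ga$ near $x_{\max}$, homotoping it through Hamiltonian loops with nondegenerate fixed maximum at $x_{\max}$ to a loop that equals its linearization $A_t$ on a small ball. For this I use the rescaling $\phi_t\mapsto D_\la^{-1}\circ\phi_t\circ D_\la$ with $D_\la(z)=\la z$: conjugation by the (nonsymplectic) dilation sends symplectomorphisms to symplectomorphisms, preserves the linearization exactly, and satisfies $D_\la^{-1}\phi_t D_\la\to A_t$ in $C^\infty_{\rm loc}$ as $\la\to0$. Since the quadratic part $Q_t$ of the generating Hamiltonian is unchanged, $x_{\max}$ remains a nondegenerate maximum throughout. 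Cutting this rescaled loop off against $\phi_t$ away from $x_{\max}$, together with a small correction to close it up (see below), and then a final small perturbation from "$C^\infty$-close to $A_t$" to "exactly $A_t$" near $0$, produces the required straightened loop.

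The core step is a realization lemma: assuming now that $\ga$ equals $A_t$ near $x_{\max}$, I deform it along the family $A_t^s$. Set $G_t^s:=A_t^s\circ(A_t)^{-1}$, a smooth family of symplectic loops with $G_t^0=\id$; restricting the parameter to $[0,s]$ exhibits each $G^s$ as null-homotopic in $\Sp(2n;\R)$. I then realize $\{G_t^s\}$ by a family $\{g_t^s\}$ of Hamiltonian loops supported in a fixed neighborhood $U$ of $x_{\max}$, equal to $G_t^s$ near $x_{\max}$, with $g_t^0=\id$, and put $\ga^s:=\{g_t^s\circ\phi_t\}$. Near $x_{\max}$ this equals $A_t^s$, so $\ga^1$ is the circle action $\rho_t$ there, as desired; and $\ga^s$ is a Hamiltonian loop (a product of Hamiltonian loops) with $\phi_1^s=g_1^s\circ\id=\id$. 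Because each $A_t^s$ is a positive loop, its generating germ is negative definite, so $x_{\max}$ stays a strict local maximum; choosing $U$ and the correction small keeps it the global maximum, using that $\ga$ originally had $x_{\max}$ as a strict global maximum.

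The main obstacle is the construction of the supported loops $g_t^s$ realizing the contractible germs $G_t^s$. Generating $G_t^s$ by a time-dependent quadratic $q_t^s$ and cutting off to $\beta\,q_t^s$ with $\beta$ a bump function on $U$ produces a flow agreeing with $G_t^s$ near $0$ and with $\id$ outside $U$, but whose time-$1$ map is in general a nontrivial symplectomorphism supported in the transition annulus $\cong S^{2n-1}\times(1,2)$; thus the cut-off flow need not close up. The null-homotopy of $G_t^s$ is precisely what permits this time-$1$ error to be cancelled by a further isotopy supported in the annulus, restoring the loop property smoothly in $s$ with $g_t^0=\id$ (this is the usual flux-type cancellation in the spirit of Lalonde--McDuff~\cite{LM}). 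This realization, together with the analogous correction used in the straightening step, is the one genuinely technical point; the remainder is bookkeeping with coordinates, cut-offs, and the negative-definiteness that encodes the fixed maximum.
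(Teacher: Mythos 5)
Your overall strategy --- straighten $\ga$ so that it equals its linearization $A_t$ near $x_{\max}$, then realize the positive homotopy $\{A_t^s\}$ by compactly supported local modifications --- is the same as the paper's, and your Step 1 is sound in spirit. The genuine gap is in your core realization step, at the sentence ``choosing $U$ and the correction small keeps it the global maximum.'' The generator $q_t^s$ of $G_t^s=A_t^s\circ (A_t)^{-1}$ is a quadratic form which is in general \emph{not} sign-definite, and whose Hessian has norm bounded by a constant $C$ determined by the whole homotopy $\{A_t^s\}$ --- a constant you do not get to choose. On the cut-off annulus $\|x\|\sim r$ the term $\beta q_t^s$ can contribute up to $+Cr^2$ to the generating Hamiltonian of $\ga^s=g_t^s\circ \phi_t$, while the only thing keeping $x_{\max}$ on top is the negative definite part $-x^TQ_tx\le -\la r^2$ of $K_t$. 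Both terms scale like $r^2$ (the comparison is invariant under homotheties, since rescaling a ball rescales a cut-off quadratic Hamiltonian and the quadratic part of $K_t$ by the same factor), so shrinking $U$ does not improve their ratio: if $C>\la$, the loops $\ga^s$ --- including the endpoint $\ga^1$ --- acquire points on the annulus where the Hamiltonian exceeds its value at $x_{\max}$, so $x_{\max}$ is only a local maximum and the conclusion fails. The quantity that must be made small is not the radius of $U$ but the $C^1$-size of each homotopy step, i.e.\ $\frac{d}{ds}A_{st}$, relative to the smallest eigenvalue of the quadratic forms $Q_{st}$.

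This is exactly what the paper's Step 2 supplies: it reparametrizes the positive homotopy over a long interval $[1,N]$, so that each unit step changes $A_{st}$ by $O(1/N)$, and implements the homotopy step by step on a nested sequence of balls $B_{r_k}$, interpolating between $A_{k+1\,t}$ on $B_{r_{k+1}}$ and $A_{k\,t}$ on $B_{r_k}\less B_{2r_{k+1}}$; for $N$ large (depending on the smallest eigenvalue of the compact family $Q_{st}$) the quadratic error with constant $O(1/N)$ is dominated by $\la r^2$ uniformly in $r$, which is what preserves the fixed maximum. By contrast, your straightening step works because there the discrepancy $\phi_t^{-1}\circ A_t=\id+O(\|x\|^2)$ produces errors that are genuinely higher order (cubic against quadratic), so shrinking the ball does help --- this is also how the paper's Step 1 proceeds. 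A lesser but real second issue: your ``flux-type cancellation'' of the time-$1$ error of the cut-off flow is asserted, not proved (it requires an isotopy to the identity \emph{supported in the annulus}, which does not follow just from the null-homotopy of $G_t^s$); the paper sidesteps this entirely by interpolating the diffeomorphisms themselves --- the interpolations are automatically loops in $t$ because $\phi_t$ and $A_t$ are both loops --- and then restoring symplecticity with a Moser correction whose generating Hamiltonian it estimates explicitly.
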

\begin{proof} Identify a neighborhood of $x_{\max}$ with 
a neighborhood of $\{0\}$ in $\R^{2n}$ with its standard symplectic form $\om_0$.\MS

\NI {\bf Step 1:}  {\it  The loop $\ga=:\ga_0$ is homotopic
 through loops $\ga_s = \{\phi_{st}\}$
with fixed maximum at  $\{0\}$ to a loop $\ga_1$ that 
equals the linear flow $A_t$ in some neighborhood of  $\{0\}$.}\MS
  
  Let $\ga = \{\phi_t\}$ with generating Hamiltonian $H_t$.
  By assumption  $H_t = H^0_t + O(\|x\|^3)$ where for each $t$ 
  there is a positive definite symmetric matrix $Q_t$
  such that $H_t^0(x) = -x^TQ_t x$.
We shall choose $\phi_{st}$ of the form  $\phi_t\circ g_{st}\circ h_{st}$ 
where \SSS

\NI
$\bullet$ for each $s\in [0,1]$, the loop $g_{st}, t\in [0,1],$ 
consists of 
diffeomorphisms with support in 
some small  
ball $B_{2r_1}: = B_{2r_1}(0)$ such that for all $s,t$ we have
$$ 
dg_{st}(0)=id, %\mbox{ for sufficiently small }s,
\quad g_{0t}=id ,\quad g_{1t} = (\phi_t)^{-1}\circ A_t\mbox{ in  }B_{r_1};
$$
%Moreover, $g_{st}$ is independent of $s$ for $s$ close to $0$ and $1$.

\NI
$\bullet$  $h_{st}$ has support in the ball $B_{3r_1}$ and is such that $h_{st}^*(g_{st}^*\om_0) = \om_0$. Moreover $h_{1t}=id$ on $ B_{r_1}$.\SSS

Choose 
$g_{st}$ satisfying all conditions above and with support in ${\rm int\,}
B_{2r_1}$.  Since   $dg_{st}(0)=id$, we may write 
$g_{st}(x) = x + O(\|x^2\|)$.
Hence  we may 
arrange  that 
for some constants
$c_0, c_1$   we have $\|g_{st}(x)-x\|\le c_0\|x\|^2$ and
$|g_{st}^*\om_0-\om_0|\le c_1\|x\|$.  
Note that  we may assume that these constants $c_i$, as well as all subsequent ones, 
 depend only on the initial path $\phi_t$, i.e. 
 there are constants $R>0$ and $c_i$ such that suitable $g_{st}$ exist
 for all $r_1<R$.  Then, reducing $R$ as necessary, $c_1\|x\|$ is arbitrarily small on $B_{2r_1}$ so that we may assume that the
forms $\om_{st}: =  g_{st}^*\om_0$ are all
nondegenerate.  Let $\rho_{st}: = \frac d{dt}(g_{st}^*\om_0)$.
Since $g_{st} = id$ for all $s$ near $0$ by construction,
$\rho_{0t}=0$. Similarly, $\rho_{1t}=0$ in $B_{r_1}$.  Further for some constant $c_2$ as above
 $|\rho_{st}|\le c_2\|x\|$.

Now,  $\rho_{st}: =  d\be_{st}$, where for each $s,t$
 $$
 \be_{st}(x) = \int_0^{1} \rho_{st}(\p_r,\,\cdot\,)(\la x)d\la,\quad x\in B_{2r_1},
 $$
and $\p_r$ is the radial vector field in $\R^{2n}$.
 Hence $\be_{0t} \equiv 0$, and $\be_{1t}=0$ in $B_{r_1}$.  Moreover,
 each $1$-form $\be_{st}(x)$ satisfies $|\be_{st}(x)|\le c_3\|x\|^2$ and,
 because $\rho_{st}$ has support in ${\rm int\,} B_{2r_1}$,
  is closed and independent of $r: = \|x\|$ near $\p B_{2r_1}$.  Therefore, near $\p B_{2r_1}$ we may write
 $\be_{st} = df_{st}$ where $f_{st}$ is a function on $S^{2n-1}$ 
 of norm $\le c_4r_1^3$.  Therefore we may extend $\be_{st}$ by $d(\al(r)f_{st})$ for a suitable cut off function $\al$ so that it has support in $B_{3r_1}$ and still satisfies an  estimate 
 $|\be_{st}(x)|\le c_5\|x\|^2$.  
 %Note that by construction $\be_{1t}=0$ in $B_{r_1}$ and $\be_{st}=0$ for $s=0,1$.
 
Now construct $h_{st}, t\in [0,1],$ for each fixed $s$ 
 by the usual  Moser homotopy method so that  $\om_{st} (\frac d{dt}h_{st},\cdot) = \be_{st}$.% where $\dot$ denotes the $t$ derivative.  
 Then $h_{st}$ satisfies the required conditions.
 Moreover
$\|\frac d{dt} h_{st}\|\le c_6\|x\|^2$. 
%so that
%$\|h_{st}(x)- x\| \le c_6(\|x\|^2)$.  
Hence the Hamiltonian $G_{st}$ that generates $g_{st}\circ h_{st}$ satisfies
 $$
 |G_{st}| \le c_7\|x\|^3 \;\;\mbox{for }\|x\|\le 3r_1.
 $$
 The generating Hamiltonian for $\phi_{st}: =  \phi_t\circ g_{st}\circ h_{st}$
 is $$
 H_{st}(x): = H_t(x) + G_{st}(\phi_t^{-1}(x)) = H^0_t(x) + O(\|x\|^3).
 $$
  Since $c_7$ is independent of $r_1$, we can now choose $r_1$ so small that 
  $x=0$ is still the global maximum of $H_{st}$.
(The size of $r_1$ will depend on 
   the smallest eigenvalues of the matrices $Q_t$
   and also on the second derivatives of the $\phi_t$, i.e. the cubic term in $H_t$.)
This completes Step 1.\SSS
 
 By assumption there is 
 a homotopy
 of the path $\{A_t\}: = \{A_{1t}\}$ to a 
circle action $\{A_{2 t}\}$ through positive paths 
$\{A_{st}\}_{t\in [0,1]}$.  
Denote by $Q_{st}$ the corresponding  family of positive definite
matrices.
Reparametrize this homotopy with respect to $s$ to stretch it out over the interval $s\in [1,N]$ for some large $N$ to be chosen later.
% depends on the smallest eigenvalue of 
 %the compact family of matrices $Q_{st}$.
 Note that $\phi_{1t} = A_{1t}$ on $B_{r_1}$.
\MS

 \NI {\bf Step 2:} {\it There is a sequence $r_k, k\ge 2,$ 
 satisfying $0<2r_{k+1}<r_k$ for all $k$
 and a finite sequence of homotopies $\ga_{s}=\{\phi_{st}\}, s\in [k,k+1], k=1,\dots,N-1,$ of Hamiltonian loops with support in
$B_{r_{k}}$
so that:\SSS
 
\NI
$\bullet$  for all 
 $k\in [1,N-1]$ %there is a positive number $r_{k+1}< r_{k}/2$ such that 
 $\phi_{k+1\,t}= A_{k+1\,t}$ on $B_{r_{k+1}}$ and
 $ \phi_{k+1\,t}= A_{k\,t}$ on $B_{r_{k}} \less B_{2r_{k+1}}$.}
 \SSS

\NI
$\bullet$ {\it for each $s$ the Hamiltonian loop $\phi_{st}, t\in [0,1]$, 
has  fixed maximum at  $\{0\}$.}\SSS

 \NI  
 If $r_{k}$ and $ \phi_{k\,t}$ are  given,  an obvious modification of the  procedure described in Step 1 gives suitable
 $r_{k+1}$ and $ \phi_{k+1\,t}$ provided that the derivative
 $\frac d {ds} A_{st}, s\in [k,k+1],$ is not too large.
 As before, the idea is first to use the linear homotopy $A_{st}$ for $ s\in [k,k+1]$ to construct a smooth interpolation $g_{st}$ between $A_{k+1\,t}$ on 
 $B_{r_{k+1}}$ and $A_{k\,t}$ on a neighborhood of $\p B_{r_k}$, and then correct using a Moser homotopy.   Then $\|g_{st}(x) - A_{k\, t}(x)\|\le c\|x\|^2$, where  $c$ depends only on  $\frac d {ds} A_{st}.$
(Note that $c$ does not depend on $r_k$ because the linear maps $A_{st}$ are invariant under homotheties.)
   But we can arrange that $\frac d {ds} A_{st}$ is as small as we like  by choosing $N$ sufficiently large. In order that the resulting 
   loops $\phi_{st}$ have fixed maximum at  $\{0\}$,  the permissible size for $c$ (and hence  $N$)
will  depend on
the smallest eigenvalue of the compact family of matrices $Q_{st}$.

Step 2 completes the proof, since
the loop $\phi': = \phi_{Nt}$ satisfies the requirements of the lemma.
\end{proof}

%%%%%%%%%%%%%%%%%%%%%%%%%%%%%%%%%%%%%%%%%%%%%%%%%%%%%%%%%%
\section{Gromov--Witten invariants and blowing up}\labell{s:rel}
%%%%%%%%%%%%%%%%%%%%%%%%%%%%%%%%%%%%%%%%%%%%%%%%%%%%%%%%%%

We now prove the results on Gromov--Witten invariants needed above, i.e. Propositions \ref{prop:unicond2} and  \ref{prop:eps}.
 In \cite[Theorem~5.15]{HLR} Hu--Li--Ruan establish a correspondence 
between the
relative genus $g$ invariants of the blow up $(\TM,E)$ and certain
corresponding sets of absolute invariants of $M$. 
Proposition~\ref{prop:unicond2}
 could be proved using a special case
 of this general correspondence. However, instead of quoting their result we shall reprove the parts we need, since 
we do not need the full force of their results and also need 
some other related results.

We begin this section
with a discussion of relative GW invariants since this will be our main tool.  For more details see  Li--Ruan~\cite[\S4,5]{LR} and 
Hu--Li--Ruan~\cite[\S3]{HLR}, and Bourgeois {\it et al.}~\cite[\S10]{BE}
for relevant compactness results.  After stating the decomposition formula, we prove Proposition~\ref{prop:unicond}, a 
generalization of Proposition~\ref{prop:unicond2} that
characterizes uniruled manifolds $M$ in terms of properties of 
the one point blow up $\TM$.

Finally we prove Proposition~\ref{prop:eps} by using the decomposition formula for section classes of the bundle $P\to \PP^1$.

%%%%%%%%%%%%%%%%%%%%%%%%%%%%%%%%%%%%%%%%%%%%%%%%%%%%%%%%%%
\subsection{Relative  invariants of genus zero }\labell{ss:rel}
%%%%%%%%%%%%%%%%%%%%%%%%%%%%%%%%%%%%%%%%%%%%%%%%%%%%%%%%%%

Consider a pair $(X,D)$ where $D$ is a divisor in $X$, i.e. 
 a codimension $2$ symplectic submanifold.
The relative invariants count connected $J$-holomorphic curves in the $k$-fold prolongation $X_k$ (defined below) of $X$. 
Here $J$ is an $\om$-tame almost complex structure on $X$ satisfying certain normalization conditions along $D$.  In particular $D$ is $J$-holomorphic, i.e. $J(TD)\subset TD$. The invariants are defined by first 
constructing a compact moduli space $\oMm: = \oMm\,\!^{X,D}_{\be,\un d}(J)$ of genus zero $J$-holomorphic
curves $C$ in class $\be\in H_2(X)$ as described below and then integrating the given constraints over the corresponding virtual cycle $\oMm\,\!^{\,[vir]}$. Here $\un d = (d_1,\dots,d_r)$ is a partition of $d: =
 \be\cdot D\ge 0$.  
 
 To a first approximation the moduli space consists of curves in $X$, i.e. equivalence classes $C=[\Si,u,\dots]$ of stable maps to $X$, that intersect the divisor $D$ at $r$ points with multiplicities $d_i$.  
More precisely, each such  curve has $(k+1)$ 
levels $C_i$ for some $k\ge 0$, the principal level 
$C_0$  in $X\less D$ and the 
higher levels $C_i$ (sometimes called bubbles) in  the $\C^*$-bundle
$L_D^*\less D$ where $L_D^*\to D$ is the dual of the normal bundle to $D$. The whole curve $C$ therefore lies in a space $X_k$ called the $k$th prolongation of $X$, which is defined as follows.
Identify $L_D^*\less D$ with the complement of the sections $D_0,D_\infty$ of the ruled manifold 
$$
\pi_Q: Q: = \PP(\C\oplus L_D^*)\to D,
$$
where the zero section $D_0: = \PP(\C\oplus\{0\})$ has normal bundle $L_D^*$ and  the infinity 
section $D_\infty: = \PP(\{0\}\oplus L_D^*)$ has normal bundle $L_D$.
Think of the initial divisor $D\subset X$ as the infinity section $D_{0\infty}$ at level $0$. Then  
the  prolongation $X_k$ is simply the disjoint
 union of $X$ with $k$ copies 
 of $Q$, but it is useful to think that for each $i\ge 1$ the zero section $D_{i0}$ of the $i$th level is identified with the infinity section $D_{i-1\,\infty}$  of the preceding level.  The most important divisor in $X_k$ is 
 $D_{k\infty}$, which carries the relative constraints
 and hence plays the role of the relative divisor.

For each $i>0$ we assume that
the  ends of the components of the $i$th level curve  $C_i$  along the zero section $D_{i0}$  match with those of $C_{i-1}$ along $D_{i-1\,\infty}$. The final level $C_k$ carries the relative marked points
which are mapped to $D_{k\infty}$.
Assuming there are no absolute marked points, each level of $C$ is an equivalence class of stable maps 
 $$
 [\Si_i,u_i, y_{10},\dots,y_{r_{i0}0},y_{1\infty},\dots,
 y_{r_{i\infty }\infty}],
 $$
in some class $\be_i$,  where the internal relative marked points
 $y_{10},\dots,y_{r_{i0}0}$ are mapped to $D_{i0}$ 
 with multiplicities
  $\un m_{i0}$  that sum to $\be_i\cdot D_{i0}$
  and the marked points $y_{1\infty },\dots,y_{r_{i\infty }\infty}$ are taken to $D_{i\infty}$ with multiplicities $\un m_{i\infty }$ that sum to $\be_i\cdot D_{i\infty }$.  Note that the components $C_i$ and $C_{i+1}$ match along $D_{i\infty } = 
D_{i+1,0}$ only if the multiplicities $\un m_{i\infty }$ and
$\un m_{i+1,0}$ agree. In symplectic field theory such multilevel curves are called buildings: see~\cite[\S7]{BE}.

Each curve $C$ in $\oMm$ might also have some absolute marked points; these could lie on any of the levels $C_i$ but must be disjoint from the relative marked points.  
We require further that each component $C_i$   be stable, i.e. have a finite group of automorphisms.  For $C_0$ this has the usual meaning.  However, when 
  $i\ge 1$ we identify two level $i$ curves $C_i, C_i'$ if they lie in the same orbit of the fiberwise $\C^*$ action; i.e.  given representing maps $u_i:(\Si_i,j)\to Q$
 and 
$u_i':(\Si_i',j')\to Q$,  the curves are identified if there is $c\in \C^*$ and a holomorphic map $h:(\Si_i,j)\to (\Si_i',j')$ (preserving all marked points) such that $u_i\circ h=c \,u_i'$.\footnote{
Hence a level $C_i$ cannot consist only of multiply covered curves
$z\mapsto z^k$ in the fibers $\C^*$ of $L_D^*\less D$ because these are not stable.} 
Thus $L_D^*\less D$ should be thought of as a  rubber space.

  Note that although the whole curve is connected, 
the individual levels need not be but
should fit together to form a genus zero curve.  
The homology class $\be$ of such a curve is defined to be the sum of the homology class of its principal component with
the projections to $D$ of the classes of its higher levels.
\footnote
{
This is the class of the curve in $X$ obtained by gluing all the levels together.
When $k=0$ the curve only has one level and so the relative constraints lie along $D\subset X$.   Ionel--Parker~\cite{IP} work with the moduli space obtained by closing the space of $1$-level curves, which in principle could give slightly different invariants from the ones considered here. }  

When doing the analysis it is best to think that the domains and targets of the curves have cylindrical ends.  However, their indices
are the same as those of the corresponding compactified curves; see \cite[Prop~5.3]{LR}.  
 Thus the (complex) dimension of the moduli space $\oMm^{X,D}_{\be,k,(d_1,\dots,d_r)}$ of genus zero curves in class $\be$ with $k$ absolute marked points and $r$ relative ones is
\begin{equation}\labell{eq:dim}
 n + c_1^X(\be) + k+r-3 - \sum_{i=1}^r(d_i-1) = 
  n + c_1^X(\be) + k+2r-3 - d.
\end{equation}
Here $k+r-3$ is the contribution from moving the marked points and
we subtract $d_i-1$
at each relative intersection point of multiplicity $d_i$ 
 since, as far as a dimensional count is concerned, what is happened at such a point is that $d_i$ of the $d: = \be\cdot D$ intersection points of the $\be$-curve with $D$ coincide.

\begin{example}\labell{ex:rel}\rm   Let $X = \PP^2\#{\ov \PP}\,\!^2$, the one point blow up of $\PP^2$ and set $D: = E$, the exceptional curve. Denote by $\pi:X\to \PP^1$ the projection, and fix another section $H$ of $\pi$ that is disjoint from $E$. Let $J$ be the usual complex structure and $\oMm\,\!^{X,E}_\la(J;p)$ be the moduli space of holomorphic lines through some point $p\in H$, where $\la=[H]$ 
is the class of a line.
Since $\la\cdot E=0$ there are no relative constraints.
Then $\oMm\,\!^{X,E}_\la(J;p)$ has complex dimension $1$ and should be diffeomorphic to $\PP^1$.  The closure of the ordinary moduli space 
of lines in $X$ though $p$ contains
all such lines together with one reducible curve consisting of the union of the exceptional divisor $E$ with the fiber $\pi^{-1}(\pi(p))$ through $p$.  But the elements of $\oMm\,\!^{X,E}_\la(J;p)$ do not contain components in $E$.  Instead, this component becomes a higher level curve lying in
$Q=\PP(\C\oplus \Oo(1))$ that intersects 
$E_0=\PP(\C\oplus \{0\})$ in the point $E_0\cap \pi^{-1}(\pi(p))$ and lies in the class $\la_Q$ of the line in $Q\cong X$.  Note that modulo the action of $\C^*$ on $Q$ there is a {\it unique} such bubble.  Thus the corresponding two-level curve in $\oMm\,\!^{X,E}_\la(J;p)$ is a rigid object.
Moreover, because $\la_Q$ projects to the class $\eps\in H_2(E)$ of the exceptional divisor, its homology class $(\la -\eps) + pr(\la_Q)$ is $
(\la -\eps) + \eps = \la.$
\end{example}

The constraints for the relative invariants consist of homology classes  $b_j$ in the
divisor $D$ (the relative insertions) together with absolute (possibly descendent) insertions 
$\tau_{i_j}a_j$ where $a_j\in H_*(X)$ and $i_j\ge 0$.
   We shall denote
the (connected) relative genus zero invariants by:
\begin{equation}\labell{eq:GW}
\bla \tau_{i_1}a_1,\dots,\tau_{i_q}a_q\,|\, b_1,\dots,b_r\bra^{X,D}_{\be, (d_1,\dots,d_r)},
\end{equation}
where $ a_i\in H_*(X), b_i\in H_*(D),$ and $ d: = \sum d_i = \be\cdot D\ge 0.
$  (If $\be\cdot D< 0$ then the moduli space is undefined and the corresponding invariants are set equal to zero.)
% We shall also drop the suffix $0$ when there is no danger of confusion.)
This invariant counts isolated connected genus zero curves in class $\be$
that intersect $D$ to order $\un d: = (d_1,\dots,d_r)$ in the $b_i$,  i.e the $i$th relative marked point intersects $D$ to order $d_i\ge 1$ at a point on some representing cycle for $b_i$.  Moreover, the insertion
$\tau_{i}$ occurring at the $j$th absolute marked point $z_j$ means that
we add the constraint $(c_j)^{i}$, where $c_j$ is the first Chern class of the cotangent bundle $\Ll_j$ to the domain\footnote
{
Thus,  if  $z_j$ lies at the $m$th level, the fiber of $\Ll_j$ at $C$ is $T^*_{z_j}(\Si_m)$.}
 at $z_j$.    One can evaluate
(\ref{eq:GW}) (which in general is a rational number) by integrating
an appropriate product of Chern classes over the 
virtual cycle corresponding to
the moduli space of stable $J$-holomorphic  maps that satisfy the given homological constraints and tangency conditions.  
This cut down virtual cycle has dimension equal to the index of the curves $C$
satisfying these incidence conditions; if this dimension 
does not equal the total degree
$\sum_{j=1}^q i_j$ of the descendent classes the invariant is by 
definition set equal to $0$.
For details on how to construct this virtual cycle see for example Li--Ruan~\cite{LR} or Hu--Li--Ruan~\cite{HLR}. (Also cf. Remark~\ref{rmk:tech}.)
%\footnote
%{
%What one needs here is an appropriate framework in which to construct suitable multivalued perturbations as in \cite{Mcbr}; see also \cite{Mcq}.  There are many possible ways of solving this problem.
%A  very general construction that applies in a wide variety of situations will eventually
% be provided by Hofer--Wysocki--Zehnder~\cite{HWZ} and \cite{H}.  This should give a coherent setting in which one could reprove the results claimed here
% as well as the identities~(\ref{eq:LPi}) and ~(\ref{eq:LPii}) used below.}

We shall need the following information about specific genus zero relative invariants. 

\begin{lemma}\labell{le:0}  
 Let $X = \PP^n$,  and  $D=\PP^{n-1}$ be the hyperplane. Denote by $\la$ the class of a line.
 Then:\MS
 
 \NI {\rm (i)}  
if $d>0$ and $n>1$
$$
\bla \;| b_1,\dots,b_r\bra^{\PP^n,\PP^{n-1}}_{0,d\la,\un d} = 0,\qquad \mbox{for any }
b_i\in H_*(\PP^{n-1}).
$$ 
{\rm (ii)} {\rm (Hu~\cite[\S3]{Hu} and Gathmann~\cite[Lemma~2.2]{Ga})} Let  $\TM$ be a one point blow up with
 exceptional divisor $E$ and suppose that $\be\in H_2(M)\subset H_2(\TM)$ so that $\be\cdot E = 0$.  
Then for any $a_i\in H_*(\TM\less E)$, the relative invariant $\bla a_1,\dots,a_k|\;\bra^{\TM,E}_{0,\be}$ equals the absolute invariant 
$\bla a_1,\dots,a_k\bra^{\TM}_{0,\be}$.
\SSS

\NI
{\rm (iii)} {\rm (Hu--Li--Ruan~\cite[Theorem~6.1]{HLR})} The $2$-point invariant
\begin{equation}\labell{eq:Eblow3}
 \bla \tau_{k}pt \,|\, D^j\bra^{\PP^n,\PP^{n-1}}_{0,d f, (d)},\qquad 1\le j\le n,\;d\ge 1,
 \end{equation}
is nonzero iff $k = nd-j$.
\end{lemma}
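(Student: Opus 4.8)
Parts (ii) and (iii) are quoted directly from the cited work of Hu, Gathmann and Hu--Li--Ruan, so the only assertion requiring an argument is (i). For this I expect a pure virtual--dimension count to suffice: with no absolute insertions the only available constraints are the relative classes $b_i$, and these live in the lower--dimensional divisor $D=\PP^{n-1}$, so they are simply too weak to cut the moduli space down to a point. The plan is therefore to compute the virtual dimension of $\oMm\,\!^{\PP^n,\PP^{n-1}}_{0,d\la,\un d}$ and compare it with the largest codimension the $b_i$ can cut out.

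First I would record the dimension from (\ref{eq:dim}). Here $X=\PP^n$ has $\dim_\C X=n$, there are no absolute marked points so $k=0$, and $c_1^{\PP^n}(d\la)=(n+1)d$; moreover $d\la\cdot\PP^{n-1}=d=\sum_i d_i$, so the intersection number in (\ref{eq:dim}) is exactly $d$ and the number of relative points is the length $r$ of $\un d$. Substituting gives
$$
\dim_\C \oMm\,\!^{\PP^n,\PP^{n-1}}_{0,d\la,\un d} = n + (n+1)d + 2r - 3 - d = n + nd + 2r - 3,
$$
and note that $d>0$ forces $r\ge 1$.

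Next I would bound the constraints. Each relative insertion $b_i\in H_*(\PP^{n-1})$ has complex codimension at most $\dim_\C\PP^{n-1}=n-1$, with equality precisely when $b_i$ is a point, so the total codimension cut out by $b_1,\dots,b_r$ is at most $r(n-1)$. As explained after (\ref{eq:GW}), a relative invariant can be nonzero only when the total codimension of its constraints equals the virtual dimension of the moduli space. It therefore suffices to show that the moduli dimension strictly exceeds $r(n-1)$, which is immediate from
$$
\bigl(n + nd + 2r - 3\bigr) - r(n-1) = n(1 + d - r) + 3(r-1) \ge n \ge 2 > 0,
$$
where I used $r\le d$ (so $1+d-r\ge 1$) and $r\ge 1$ (so $r-1\ge 0$). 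Hence for every choice of the $b_i$ the cut--down virtual cycle has strictly positive dimension and the invariant vanishes, proving (i).

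This computation is routine, and I do not anticipate any genuine obstacle. The only step that needs care is the bookkeeping in (\ref{eq:dim}) together with the observation that the relative insertions are constrained to $D$ rather than to all of $\PP^n$; it is exactly this that caps their codimension at $n-1$ and makes the dimension count impossible to balance. Parts (ii) and (iii) are then simply imported from the references as stated.
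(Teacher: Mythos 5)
Your strategy for (i) — a pure dimension count, with (ii) and (iii) imported from the references — is exactly the paper's, and your conclusion is true, but there is a genuine bookkeeping error in how you pair the dimension formula with the codimension of the relative insertions. The formula (\ref{eq:dim}) you start from is the ``absolute-style'' count $n+c_1(\be)+k+2r-3-d$, in which each relative marked point still contributes a full $+1$; to be consistent with it, each relative insertion $b_i\in H_*(D)$ must be cut by its codimension \emph{in $X$}, namely $n-\tfrac12\deg b_i$ (between $1$ and $n$), which is what the paper does: it writes the cut-down dimension as $n+d(n+1)+r-3-(d-r)+\de-rn$ with $\de=\tfrac12\sum\deg b_i$. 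You instead cut by the codimension in $D$, at most $n-1$ per insertion, undercounting each constraint by exactly $1$, so your ``excess dimension'' is inflated by $r$. A sanity check that exposes the mismatch is part (iii) of the very lemma you are proving: taking $n=2$, $d=1$, $j=2$, $k=0$, it asserts $\bla pt \,|\, pt\bra^{\PP^2,\PP^1}_{\la,(1)}\ne 0$ (this counts the unique line through a generic point of $\PP^2$ and a fixed point of $D$), yet your accounting assigns this configuration cut-down dimension $1$ and would force it to vanish.

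The consequence for (i) is that your inequality, (\ref{eq:dim}) value minus $r(n-1)\ \ge\ 2$, is not strong enough. Under the correct pairing, the quantity that must be shown nonzero is the (\ref{eq:dim}) value minus $(rn-\de)$, which is your quantity minus $r$ plus $\de$; your bound therefore only yields $\ge 2-r$, and this fails to rule out a zero-dimensional cut-down cycle as soon as $r\ge 2$. The repair is immediate and recovers the paper's own computation: bound the total cut by $rn-\de$ rather than $r(n-1)$, so that nonvanishing would force $n+nd+2r-3=rn-\de$, i.e.\ $(d-r)n+n+2r+\de=3$; since $d\ge r$, $r\ge 1$, $\de\ge 0$ and $n\ge 2$, the left-hand side is at least $4$, a contradiction. (Equivalently, under the standard normalization in which relative insertions are cut inside $D$, the virtual dimension is $n+c_1(\be)+k+r-3-d$, i.e.\ $r$ less than (\ref{eq:dim}); with that formula your codimension count would have been the right one, and the same contradiction appears.)
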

\begin{proof} (i) holds for dimensional reasons.  
We shall show that under the given conditions on $n$ and $d$  it is impossible to choose  $\un d$ and the $b_i$ so  that
$\oMm\,\!^{\,\PP^n,\PP^{n-1}}_{d\la,\un d}$ has formal dimension $0$.
Therefore the invariant vanishes by definition.

By equation~(\ref{eq:dim}) the formal  (complex) dimension of the  moduli space of genus zero stable maps through the relative constraints $b_1,\dots,b_r$ is 
$$
n + d(n+1) + r-3 - (d-r) + \de - rn,
$$
where $\de$ is half the sum of the degrees of the
 $b_i$.  
   Thus $0\le \de \le rn$.  
 Since $d-r\ge 0$ and $r>0$, we therefore need
$(d-r)n + n + 2r+\de= 3$, which implies $d=r$ and $n=r=1$. 
Since we assumed $n>1$, this is impossible.  

Hu's proof of (ii) uses the decomposition formula stated below.  
(His proof can be reconstructed by arguing as in the proof of Proposition~\ref{prop:unicond}.) 
The proof by Gathmann is more elementary but applies 
only in the case of projective algebraic manifolds.
Claim (iii) is much deeper.  It is proved by Hu--Li--Ruan   using localization techniques.
\end{proof}

%%%%%%%%%%%%%%%%%%%%%%%%%%%%%%%%%%%%%%%%%%%%%%%%%%%%%%%%%%
\subsection{Applications of the decomposition formula}\labell{s:GWii}
%%%%%%%%%%%%%%%%%%%%%%%%%%%%%%%%%%%%%%%%%%%%%%%%%%%%%%%%%%

Our  main tool  is the decomposition rule
of  Li--Ruan~\cite[Thm~5.7]{LR} and (in a slightly different version) of Ionel--Parker~\cite{IP}.  
So suppose that the manifold $M$ is the fiber sum of $(X,D)$ with $(Y,D^+)$, where the divisors
 $D: = D^-$ and $ D^+$ are symplectomorphic with dual normal bundles. 
 Since this is the only case we shall need, let us assume 
   that the 
absolute constraints can be represented by cycles in $M$ 
that do not intersect the inverse image of the divisor, i.e. that
 $a_i\in H_*(X\less D), i\le q,$ and $a_i\in H_*(Y\less D^+), q<i\le p$.   
 For simplicity we assume also that the map $H_2(M)\to H_2(X\cup_DY)$ is injective.  (This hypothesis is satisfied whenever $H_1(D)=0$.)  
   Further, let $b_i, i\in I,$ be a basis for $H_*(D)= H_*(D;\Q)$ with
dual basis $b_i^*$ for $H_*(D)$.  Then
the genus zero decomposition formula 
has the following shape:
\begin{eqnarray}\labell{eq:DF}
&&\bla a_1,\dots,a_p\bra^M_\be\; =\\\notag
&&\;  \sum_{\Ga,\un d,({i_1},\dots,{i_r})}  
n_{\Ga,\un d} \bla a_1,\dots,a_q\,|\, b_{i_1},\dots,b_{i_r}\bra^{\Ga_1,X,D}_{\be_1, \un d}
\bla a_{q+1},\dots,a_p\,|\, b_{i_1}^*,\dots,b_{i_r}^*\bra^{\Ga_2,Y,D^+}_{\be_2, \un d}. 
\end{eqnarray}
Here we  sum with rational weights $n_{\Ga,\un d}$ over all decompositions $\un d$ of $d$,  all possible
 connected labelled trees $\Ga$, and all possible sets 
 ${i_1},\dots,{i_r}$ of relative constraints.  
Each $\Ga$ describes a possible combinatorial structure for
 a stable map that glues to give
a $\be$-curve.  Thus
$\Ga$ is a disjoint union $\Ga_1\cup \Ga_2$, where the graph $\Ga_1$ (resp. $\Ga_2$) describes the part of the curve lying in some $X_k$ (resp. some $Y_\ell$). Also $\be_1$ (resp. $\be_2$) is the part of its label that describes the  homology class;  the pair $(\be_1,\be_2)$
runs through all decompositions such that the result of gluing the two curves in the prolongations 
$X_{k_1}$ and $Y_{k_2}$ 
along their intersections with the relative divisors 
gives a curve in class $\be.$
Moreover, there is a bijection between the labels $\{(d_i,b_i)\in \NN\times H_*(D)\}$
of the relative constraints in $\Ga_1$ and those
$\{(d_i,b_i^*)\in \NN\times H_*(D^+)\}$ in $\Ga_2$. (These labels are  called
relative \lq\lq tails"  in \cite{HLR}.)
 $\Ga_i$ need not be connected; if it is not, we define  $\bla\dots|\dots\bra^{\Ga_i}$ to be the product of the invariants defined by its connected components.
 
 Because the total curve has genus zero, each component of $\Ga_1$ has at most one
relative tail in common with  each component of $\Ga_2$.
In many cases we will be able to show that $\Ga_1$ is connected, and hence that $\Ga_2$ has $r$ components, one for each relative constraint.

Most of the next result is  known:
 part (i) follows from 
Theorem~1.2 in Hu~\cite{Hu}, while part (ii) is very close to 
\cite[Theorem~6.1]{HLR}.

\begin{lemma}\labell{le:strun} Let $(\TM,\Tom)$ be the one point blow up of $(M,\om)$ with exceptional divisor $E$.\smallskip

\NI{\rm (i) } If $(M,\om)$ is strongly uniruled, then  $(\TM,\Tom)$ is also.\MS

\NI {\rm (ii)}  If $\bla a_1,a_2,pt\bra^{\TM}_\be\ne 0$ for some 
$a_i\in H_*(M) = H_*(\TM\less E)$ and some $\be\in H_2(M)\subset H_2(\TM)$  then $(M,\om)$ is strongly uniruled.
\end{lemma}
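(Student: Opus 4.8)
The plan is to reduce both parts to a single invariance principle: for homology classes $a_i\in H_*(\TM\less E)$ and $\be\in H_2(M)\subset H_2(\TM)$ with $\be\ne 0$, the absolute invariants of $M$ and $\TM$ coincide,
\[
\bla a_1,a_2,a_3\bra^M_\be \;=\; \bla a_1,a_2,a_3\bra^{\TM}_\be .
\]
Call this identity $(*)$. It is essentially Hu~\cite[Thm.~1.2]{Hu}, from which part (i) already follows; but I would reprove it here so as to isolate exactly what part (ii) requires. Granting $(*)$, part (ii) is immediate: a nonzero $\bla a_1,a_2,pt\bra^{\TM}_\be$ with $\be\ne 0$ yields a nonzero $\bla a_1,a_2,pt\bra^M_\be$, which by definition witnesses that $(M,\om)$ is strongly uniruled. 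For part (i), a witness $\bla pt,a_2,a_3\bra^M_{\be'}$ with $\be'\ne 0$ may be taken with $a_2,a_3\in H_{<2n}(M)$, since a fundamental-class insertion forces the invariant to vanish when $\be'\ne 0$; these classes, together with $pt$, are then representable in $\TM\less E$, and $(*)$ produces a nonzero $\bla pt,a_2,a_3\bra^{\TM}_{\be'}$.

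To prove $(*)$ I would realize the blow-down of $\TM$ as a symplectic sum. Since $E\cong\PP^{n-1}$ carries normal bundle $\Oo(-1)$ while the hyperplane $\PP^{n-1}\subset\PP^n$ carries the opposite normal bundle $\Oo(1)$, the fiber sum $M=\TM\,\#_E\,\PP^n$ reconstitutes $M$: deleting a neighborhood of $E$ from $\TM$ leaves $M$ minus a ball, deleting a neighborhood of the hyperplane from $\PP^n$ leaves a ball, and these glue back to $M$. Because $H_1(E)=0$, the injectivity hypothesis for the decomposition formula (\ref{eq:DF}) holds, so I would apply that formula to $\bla a_1,a_2,a_3\bra^M_\be$ with $X=\TM$, $D=E$, $Y=\PP^n$, $D^+=\PP^{n-1}$, representing all three constraints by cycles in $M$ minus the ball so that every absolute marked point lands on the $\TM$ side.

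The crux is to show that the $\PP^n$ side contributes nothing beyond the trivial term. Any component of $\Ga_2$ lies in a prolongation of $\PP^n$, carries no absolute insertion, and has homology class a multiple $d\la$ of the line (as $H_2(\PP^n)=\Z\la$); hence it is computed by a relative invariant $\bla\;|\,b_{i_1}^*,\dots\bra^{\PP^n,\PP^{n-1}}_{0,d\la,\un d}$, which vanishes for $d>0$ by Lemma~\ref{le:0}(i) (using the standing assumption $n\ge 2$). Since the matching condition pairs every intersection of the $\TM$-side curve with $E$ to such a $\PP^n$-side component, this vanishing forces $d=0$ on each relative tail, so the surviving configurations do not meet $E$ at all: $\Ga_2$ is empty, there are no relative marked points, and $\be$ splits as $\be_1=\be$ with $\be_1\cdot E=0$. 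The formula thus collapses to the single relative invariant $\bla a_1,a_2,a_3\,|\;\bra^{\TM,E}_{0,\be}$, which by Lemma~\ref{le:0}(ii) equals $\bla a_1,a_2,a_3\bra^{\TM}_\be$, giving $(*)$.

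I expect the main obstacle to be the bookkeeping in this collapse step, specifically confirming that a positive intersection $\be_1\cdot E=d>0$ on the $\TM$ side cannot occur unassisted, i.e. that it necessarily demands a nonzero-class component on the $\PP^n$ side to balance it across the relative divisor, so that Lemma~\ref{le:0}(i) genuinely eliminates it. Once this matching argument is in place, the symplectic-sum identification and the two appeals to Lemma~\ref{le:0} are routine.
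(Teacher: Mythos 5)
Your proposal is correct and follows essentially the same route as the paper: both realize $M$ as the fiber sum of $(\TM,E)$ with $(\PP^n,\PP^{n-1})$, apply the decomposition formula with all absolute constraints placed in $\TM\less E$, kill every term with $\Ga_2\ne\emptyset$ using Lemma~\ref{le:0}(i) (no absolute insertions on the $\PP^n$ side), and convert the surviving relative invariant into the absolute invariant of $\TM$ via Lemma~\ref{le:0}(ii). The only cosmetic difference is that you package the two directions as a single identity $(*)$ and make explicit the reduction to constraints of degree $<2n$ in part (i), both of which the paper leaves implicit.
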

\begin{proof} Consider (i).  By hypothesis there is a nonzero invariant $\bla a_1,a_2,pt\bra^M_\be$.  Think of $M$ as the fiber sum of $(\TM,E)$ with $(\PP^n,\PP^{n-1})$ and evaluate this invariant by the decomposition formula, putting all constraints into $\TM\less E$.  
It follows that there is a nonzero relative invariant
$$
\bla a_1,a_2, pt\,|\, E^{j_1},\dots,E^{j_s}\bra^{\Ga_1,\TM,E}_{\Tbe,\un \ell}
$$
with $s\ge0$ that is paired with a nonzero invariant
$\bla \;|E^{n-j_1+1},\dots,E^{n-j_s+1}\bra^{\Ga_2,\PP^n,\PP^{n-1}}_{\ell \la,\un \ell}$.  Note that $\Tbe = \be - \ell\eps$, where $\ell = \sum\ell_i$.  Since there are no absolute constraints
in $\PP^n$, Lemma~\ref{le:0} (i) implies that $s=\ell=0$ and $\Ga_2=\emptyset$.
But then  Lemma~\ref{le:0} (ii) states that
 the above relative invariant equals 
the absolute invariant $\bla a_1,a_2, pt\bra^{\TM}_\be$.
 This proves (i).

Now consider (ii).  
By Lemma~\ref{le:0} (ii)
$\bla a_1,a_2,pt\bra^{\TM}_\be = \bla a_1,a_2,pt\,|\,\bra^{\TM,E}_\be\ne 0$. 
Now use the decomposition formula to evaluate $\bla a_1,a_2,pt\bra^{M}_\be$, again putting all the absolute constraints in  
$\TM\less E$.  Because there are no absolute constraints in 
$(\PP^n,\PP^{n-1})$,
it follows as before that
 there are no terms in this formula with $\Ga_2\ne \emptyset$.
Hence 
$$
\bla a_1,a_2,pt\bra^{M}_\be = \bla a_1,a_2,pt\,|\,\bra^{\TM}_\be\ne 0
$$
as required.\end{proof}

\begin{rmk}\rm  The proof of Proposition~\ref{prop:unicond} given
below can be adapted to show that the statement in 
Lemma~\ref{le:strun} (ii) holds also in the case $\Tbe\cdot E = 1$.
However, it is not clear whether it continues to hold when
$\Tbe\cdot E > 1$.  It is also not known whether  
the strongly uniruled property 
persists under  blow ups along arbitrary submanifolds.
\end{rmk}

We now prove the following version of  
Proposition~\ref{prop:unicond2}.

\begin{prop}\labell{prop:unicond}
If  there is a nonzero invariant 
\begin{equation}\labell{eq:TM}
\bla a_1,\dots,a_p,E^{j_1},\dots, E^{j_q}\bra^{\TM}_{\Tbe},
\end{equation}
with  $ a_i\in H_*(\TM\less E)\cong H_*(M)$, $q\ge 1$ and 
$\Tbe\cdot E>0$ then $(M,\om)$ is uniruled.
\end{prop}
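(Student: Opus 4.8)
The plan is to use the expression of $M$ as the symplectic fiber sum of $(\TM,E)$ with $(\PP^n,\PP^{n-1})$ along $E\cong\PP^{n-1}$, whose normal bundles $\Oo(-1)$ and $\Oo(1)$ are dual, and to run the decomposition formula (\ref{eq:DF}) so as to manufacture, out of the given invariant, a nonzero genus zero invariant of $M$ carrying a point constraint. Once such an invariant exists, $(M,\om)$ is uniruled by the criterion of Hu--Li--Ruan~\cite{HLR} that any nonzero genus zero invariant with a (possibly descendent) point insertion detects uniruledness. The target class will be nonzero: writing $\Tbe=\be_0-d\,\eps$ with $\be_0\in H_2(M)$ and $d=\Tbe\cdot E>0$ (so that $\be_0=\pi_*\Tbe$ under the blow-down $\pi\colon\TM\to M$), nonvanishing of the invariant forces $\Tbe$ to be realized by a $J$-holomorphic curve; since $-d\,\eps$ is not effective, $\be_0\ne 0$.

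The first and main step is to trade the on-divisor absolute insertions $E^{j_i}$ for relative data. Because each $j_i\ge 1$, the class $E^{j_i}$ is supported on $E$ (up to sign it is $\iota_*$ of a power of the hyperplane class of $E$, where $\iota\colon E\hookrightarrow\TM$), so every curve contributing to the invariant meets $E$ at the marked points carrying the $E^{j_i}$. I would use the degeneration of $\TM$ along $E$ together with the dimension formula (\ref{eq:dim}) to convert the nonvanishing absolute invariant into the nonvanishing of a relative invariant
\[
\bla a_1,\dots,a_p\,|\,b_1,\dots,b_r\bra^{\TM,E}_{\Tbe,\un d},\qquad \textstyle\sum_i d_i=\Tbe\cdot E=d>0,
\]
whose relative insertions $b_i$ are powers of the hyperplane class of $E$. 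Since $d>0$ we have $r\ge 1$, i.e. at least one relative tail. The role of the dimension count, together with vanishing statements such as Lemma~\ref{le:0}(i), is to forbid the configurations in which components sink into $E$ or are absorbed into higher rubber levels, so that the on-divisor insertions are faithfully recorded by the relative tails.

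The second step is to cap off the relative tails with the $\PP^n$-side. For each tail $(d_i,b_i)$ I would pair it, via (\ref{eq:DF}), with a relative invariant of $(\PP^n,\PP^{n-1})$ of matching multiplicity $d_i$, dual relative insertion $b_i^*=D^{\ell_i}$, and a single absolute descendent point insertion $\tau_{k_i}pt$ placed in $\PP^n\less\PP^{n-1}$ (geometrically, near the blown-down point of $M$). By Lemma~\ref{le:0}(iii) the factor $\bla \tau_{k_i}pt\,|\,D^{\ell_i}\bra^{\PP^n,\PP^{n-1}}_{0,d_if,(d_i)}$ is nonzero precisely when $k_i=nd_i-\ell_i$, so each can be arranged nonzero. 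Gluing then contributes a term to the decomposition of
\[
\bla a_1,\dots,a_p,\tau_{k_1}pt,\dots,\tau_{k_r}pt\bra^{M}_{\be},\qquad \be=\pi_*\Tbe\ne 0,
\]
equal to a nonzero multiple of the product of the relative $(\TM,E)$-invariant above with these nonzero $\PP^n$-factors. Since $r\ge 1$ this invariant carries a (possibly descendent) point constraint, has $\be\ne 0$, and is of genus zero; hence $(M,\om)$ is uniruled.

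The main obstacle lives in the two conversions just described. First, in the passage to the relative invariant I must genuinely control the degeneration of $\TM$ along $E$: the dimension formula has to rule out every bubbling pattern except those that turn the insertions $E^{j_i}$ into honest relative tails, which is the delicate part of the Hu--Li--Ruan analysis. Second, the decomposition formula for the $M$-invariant is a sum over combinatorial types, and I must ensure the reconstructing term is not cancelled by the others; I would arrange this by choosing $\be$ and the descendent orders $k_i$ as economically as possible, so that the constraints rigidify the splitting and (\ref{eq:DF}) reduces to the single term above. Because the argument uses descendent absolute insertions throughout, it relies on the decomposition formula in the strong ($C^1$ virtual cycle) form recorded in Remark~\ref{rmk:tech}.
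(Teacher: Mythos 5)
Your strategy coincides with the paper's: degenerating $\TM$ along $E$ (i.e. the fiber sum of $(\TM,E)$ with the projectivized normal bundle $(X,E^+)$) to produce a nonzero relative invariant of $(\TM,E)$ is Lemma~\ref{le:d1}; capping the relative tails by summing with $(\PP^n,\PP^{n-1})$ and using the two-point descendent invariants of Lemma~\ref{le:0}(iii) is Lemma~\ref{le:d2}; and the final appeal to the Hu--Li--Ruan criterion that a nonzero descendent invariant with a point insertion implies uniruledness is exactly how the paper concludes. One inaccuracy in your first step: the decomposition formula only yields a \emph{connected component} of the $(\TM,E)$-side, so what you actually get is a nonzero connected relative invariant carrying some subset of the $a_i$, in some class $\be'-\ell\eps$ with $\ell>0$ --- not necessarily all $p$ insertions, the class $\Tbe$, or total tangency $\Tbe\cdot E$. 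This is harmless, since the second step needs only the weaker statement.

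The genuine gap is in your second step, and it is the heart of the matter: you must show that the reconstructing term in the decomposition of $\bla a_1,\dots,a_t,\tau_{k_1}pt,\dots,\tau_{k_s}pt\bra^M_\be$ is not cancelled, and ``choosing $\be$ and the $k_i$ as economically as possible so that the constraints rigidify the splitting'' is a hope, not an argument. The paper's Lemma~\ref{le:d2} makes it precise by a double minimality choice: first take $\om(\be)$ minimal among classes carrying a nonzero connected relative invariant of the form (\ref{eq:Eblow}), then take the number $s$ of relative tails minimal for that $\be$. With these choices, in any nonzero term of (\ref{eq:DF}): minimality of $\om(\be)$ forces the $(\TM,E)$-graph $\Ga_1$ to be connected; Lemma~\ref{le:0}(i) forces every component of the $(\PP^n,\PP^{n-1})$-graph $\Ga_2$ to carry one of the point insertions, so $\Ga_2$ has at most $s$ components; the genus-zero condition plus minimality of $s$ forces exactly $s$ components, each with exactly one point insertion; and since each $k_i$ has a \emph{unique} decomposition $k_i = n\ell_i - j_i$ with $1\le j_i\le n$, Lemma~\ref{le:0}(iii) pins down the class and relative insertion of each $\Ga_2$-factor, so every nonzero term coincides with the one you built. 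Without some such argument your step 2 does not close, because a priori the sum in the decomposition formula could vanish even though one of its terms does not.
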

 
We prove this by the method of Hu--Li--Ruan~\cite{HLR}.
Thus we first think of $\TM$ as the fiber (or Gompf) sum of $(\TM,E)$ with
$(X, E^+)$, where $X: = \PP(\Oo(-1)\oplus\C)$ is the projectivized normal bundle to $E=\PP^{n-1}$ and $E^+=\PP(\Oo(-1)\oplus \{0\})\cong \PP^{n-1}$ is the section
with positive normal bundle.  Using this decomposition, we show that the existence of the nonzero absolute invariant (\ref{eq:TM}) implies the
existence of a nontrivial relative invariant  (\ref{eq:Eblow}) for the pair $(\TM,E)$.
Next we identify the blow down $M$ as the fiber sum of $(\TM,E)$ with the pair $(\PP^n,\PP^{n-1})$ and deduce from the nontriviality of
(\ref{eq:Eblow}) the nontriviality of a suitable absolute invariant
for $M$.

In the following we denote by $\eps\in H_2(\TM)$ the class of the line in the exceptional divisor $E$, and write  the class $\Tbe\in H_2(\TM)$ as
$\be - d\eps$, where $d: = \Tbe\cdot E$ and  $\be\in H_2(\TM\less E)=H_2(M)$.  
The homology of the exceptional divisor $E\cong \PP^{n-1}$ is generated by the hyperplane class
in $H_{2n-4}(E)$. As a homology class we identify $E$ with
 the class in $H_{2n-2}(M)$ it represents. Hence the generator of
$H_{2n-4}(E)$ is $E^2$, and $\eps = E^{n-1}$.  
The relative constraints for $(\TM,E)$ have the form $E^j, j=1,\dots,{n}$.  With our conventions (which are different from~\cite{HLR})
the constraint in $E$ dual to 
$E^j$ is $-E^{n-j+1}$, i.e.
\begin{equation}\labell{eq:dual} E^j\cdot_E (-E^{n-j+1}) = pt.
\end{equation}

\begin{lemma}\labell{le:d1} If there is a nonzero absolute invariant 
of the form (\ref{eq:TM}) for a given  $p\ge 0$ and $q\ge 1$
then there is a nonzero (connected) relative invariant  of the form
\begin{equation}\labell{eq:Eblow}
\bla a_1',\dots,a_m'\,|\,E^{i_1}, \dots, E^{i_r}\bra^{\TM,E}_{\be'-\ell\eps, (\ell_1\dots,\ell_r)}, \qquad  i_j\ge 1,\;\;\sum \ell_i = \ell>0,
\end{equation}
where $0\le m\le p$, $\be'\in H_2(M)$ and $a_i\in H_*(M)$. 
 \end{lemma}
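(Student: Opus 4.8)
The plan is to evaluate the absolute invariant (\ref{eq:TM}) by degenerating $\TM$ to the normal cone of its divisor $E$ and applying the decomposition formula (\ref{eq:DF}). Since $E\subset\TM$ is a divisor with normal bundle $\Oo(-1)$, this realizes $\TM$ as the fiber sum of $(\TM,E)$ with $(X,E^+)$, where $X=\PP(\Oo(-1)\oplus\C)$ and $E^+=\PP(\Oo(-1)\oplus\{0\})$ is the section of positive normal bundle, the gluing identifying $E\subset\TM$ with $E^+\subset X$. The complementary piece $X\less E^+$ retracts onto the opposite section $\PP(\{0\}\oplus\C)\cong E$, so it plays the role of a tubular neighbourhood of $E$ in $\TM$. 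Accordingly I would place the absolute constraints $a_1,\dots,a_p$, which are represented by cycles in $\TM\less E$, on the $(\TM,E)$-side, and push the exceptional constraints $E^{j_1},\dots,E^{j_q}$ into $X\less E^+$ (near the zero section). The decomposition formula then writes (\ref{eq:TM}) as a sum over graphs $\Ga=\Ga_1\cup\Ga_2$ of products of a relative invariant of $(\TM,E)$ carrying the $a_i$ and a relative invariant of $(X,E^+)$ carrying the $E^{j_k}$.

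Since the left side of (\ref{eq:DF}) is nonzero, some term on the right is nonzero; fix its data $(\Ga_1,\Ga_2)$, classes $(\be_1,\be_2)$, partition $\un d$ and relative labels. Because $q\ge 1$ and the constraints $E^{j_k}$ have been pushed into $X\less E^+$, the $X$-piece is nonempty: $\Ga_2\ne\emptyset$. As the whole stable map is connected of genus zero, each component of $\Ga_2$ is joined to $\Ga_1$, and the two sides can meet only along the gluing divisor $E^+$; hence the total relative intersection multiplicity is strictly positive. Writing $\be_1=\be'-\ell\eps$ with $\be'\in H_2(\TM\less E)=H_2(M)$ and using $\eps\cdot E=-1$, this multiplicity equals $\be_1\cdot E=\ell$, so $\ell=\sum_i\ell_i>0$; in particular there is at least one relative tail, each labelled by some $E^{i_j}$ with $i_j\ge 1$ by the duality convention (\ref{eq:dual}).

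It remains to pass to a single connected piece. By definition the invariant $\bla\dots|\dots\bra^{\Ga_1,\TM,E}$ is the product of the invariants of the connected components of $\Ga_1$, so its nonvanishing forces each factor to be nonzero. Since $\ell=\sum_c\ell^{(c)}>0$, some connected component $\Ga_1^{(c)}$ has positive relative multiplicity $\ell^{(c)}>0$. Its invariant is then a nonzero connected relative invariant of $(\TM,E)$ whose absolute insertions form a subset of $\{a_1,\dots,a_p\}$, so that $0\le m\le p$; whose relative tails are classes $E^{i_j}$, $i_j\ge 1$, summing to $\ell^{(c)}>0$; and whose class has the form $\be'-\ell^{(c)}\eps$ with $\be'\in H_2(M)$. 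This is precisely an invariant of the shape (\ref{eq:Eblow}).

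I expect the main obstacle to lie in the second paragraph: justifying rigorously that pushing the $E^{j_k}$ into the rubber $X\less E^+$ is legitimate, that this forces $\Ga_2\ne\emptyset$, and that every surviving component of $\Ga_2$ genuinely meets the gluing divisor $E^+$ with positive multiplicity, so that no configuration with $\ell=0$ contributes. This rests on the compactness and matching properties of the relative moduli spaces underlying (\ref{eq:DF}) together with the genus-zero tree structure of $\Ga$; the accompanying homological computations are routine.
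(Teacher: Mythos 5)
Your strategy coincides with the paper's: the same degeneration of $\TM$ into the fiber sum of $(\TM,E)$ with $(X,E^+)$, the same placement of the $a_i$ in $\TM\less E$ and of the $E^{j_k}$ in $X\less E^+$, and the same extraction of a connected component of $\Ga_1$ with positive relative multiplicity. But there is a genuine gap in your second paragraph. Your derivation of $\ell>0$ runs: $\Ga_2\ne\emptyset$ because $q\ge1$, and then connectedness forces $\Ga_2$ to be joined to $\Ga_1$ across the gluing divisor. This presupposes $\Ga_1\ne\emptyset$. Nothing you say excludes a nonzero term of (\ref{eq:DF}) in which the entire limiting stable map lies in (the prolongations of) $X$, i.e. $\Ga_1=\emptyset$; for such a term there is no relative invariant of $(\TM,E)$ at all, and your argument produces nothing of the form (\ref{eq:Eblow}).

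Ruling out that case is exactly where the hypothesis $\Tbe\cdot E>0$ must enter, and you never invoke it. If $\Ga_1=\emptyset$, the whole class is carried by the $X$-side, so after collapsing $X$ onto $E$ it lies in the image of $H_2(E)$ in $H_2(\TM)$, i.e. $\Tbe=p\eps$; nonvanishing of the invariant forces $\om(\Tbe)>0$, hence $p>0$, and then $\Tbe\cdot E=-p<0$, contradicting the hypothesis. This is the paper's step ``Because $\Tbe\notin H_2(E)$, each term in the decomposition formula must correspond to a splitting $\Tbe=\Tbe'+\al$ where $0\ne\Tbe'\in H_2(\TM)$.'' The omission is not cosmetic: by Lemma~\ref{le:GWE}, $\bla E,E,E\bra^{\TM}_{\eps}=-1\ne0$ for the one point blow up of any symplectic $4$-manifold, yet if $M$ is not uniruled there is no nonzero invariant of the form (\ref{eq:Eblow}) at all (Lemma~\ref{le:d2} together with Proposition~\ref{prop:unicond}); so the conclusion genuinely fails for classes with $\Tbe\cdot E\le0$, and no proof that ignores this hypothesis can be complete. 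Note also that your closing paragraph misattributes the issue: you expect the configurations with $\ell=0$ to be excluded by compactness and matching properties of the relative moduli spaces, but no moduli-space technology can exclude them --- it is purely the homological condition on $\Tbe$ that does. Once this case is dealt with, the rest of your argument is correct (you settle for one component of $\Ga_1$ with $\ell^{(c)}>0$, whereas the paper observes that every component meets $E$; either suffices).
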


\begin{proof} Decompose $\TM$ as the fiber sum  of $\TM$ with the ruled manifold $(X, E^+)$ as above. 
Apply the decomposition formula to
evaluate the nonzero invariant
$$
\bla a_1,\dots,a_p,E^{j_1},\dots, E^{j_q}\bra^{\TM}_{\Tbe}
$$
putting all the $a_1,\dots,a_p$ insertions into $\TM\less E$ and the insertions $E^{j_1},\dots, E^{j_q}$ into $X\less E^+$.  Because $\Tbe\notin H_2(E)$, each term in the decomposition formula must correspond to a splitting $\Tbe = \Tbe'+\al$ where $0\ne\Tbe'\in H_2(\TM)$.  Hence  there is a nonzero relative invariant for $(\TM,E)$  in some class $\Tbe'\ne 0$ that goes through all the constraints
$a_1,\dots,a_p$.
 It counts 
 curves modelled on the possibly disconnected graph $\Ga_1$ and hence is a product of connected invariants, each of which has the  form 
(\ref{eq:Eblow}) for some subset of $a_1,\dots,a_p$. 
Note that each such  connected invariant has nonempty intersection with $E$ because the initial $\Tbe$-curve in $\TM$ is connected
and  $q\ge 1$. 
\end{proof}

\begin{lemma}\labell{le:d2} %{\rm (i)}
 If there is a nonzero relative invariant for $(\TM,E)$ of the form
(\ref{eq:Eblow}) for some $m\ge 0$ and $r\ge 1$ then there is a nonzero absolute invariant  on $M$
$$
\bla a_1,\dots, a_t, \tau_{k_1}pt,\dots,\tau_{k_s} pt\bra^{M}_{\be} 
$$
for some $\be\ne0$, $t\le m$, $1\le s\le r$, $ k_j\ge 0$ and $a_i\in H_*(M).$  
\end{lemma}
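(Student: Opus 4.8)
The plan is to realize the blow-down $M$ as the symplectic fiber sum of $(\TM,E)$ with $(\PP^n,\PP^{n-1})$, identifying $E$ with the hyperplane $\PP^{n-1}$ (their normal bundles $\Oo(-1)$ and $\Oo(1)$ are dual), exactly as in the paragraph preceding Lemma~\ref{le:d1}, and then to read off a nonzero \emph{absolute} invariant of $M$ from the decomposition formula (\ref{eq:DF}) by capping each relative constraint $E^{i_j}$ of the given invariant (\ref{eq:Eblow}) with a rational curve in $\PP^n$ carrying a single point descendent. Concretely, I would set $k_j := n\ell_j-(n-i_j+1)=n(\ell_j-1)+(i_j-1)\ge 0$ (note $1\le i_j\le n$, so $1\le n-i_j+1\le n$) and consider the candidate
$$
\bla a_1',\dots,a_m',\tau_{k_1}pt,\dots,\tau_{k_r}pt\bra^{M}_{\be'},
$$
placing all the $a_i'$ in $\TM\less E$ and all point descendents in $\PP^n\less\PP^{n-1}\cong\C^n$. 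Here $\be'\ne0$ since $\om(\be')=\Tom(\be'-\ell\eps)+\ell\de>0$, and the indices $t=m$, $s=r$ satisfy the required bounds $t\le m$, $1\le s\le r$.

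The core of the argument is to show this invariant is nonzero by controlling its decomposition. Since every absolute insertion has been assigned to one side, each term of (\ref{eq:DF}) is a product of a (possibly disconnected) relative invariant of $(\TM,E)$ carrying all the $a_i'$ with a relative invariant of $(\PP^n,\PP^{n-1})$ carrying all the descendents. By Lemma~\ref{le:0}(i) every positive-degree component on the $\PP^n$ side must carry at least one point insertion, and by (\ref{eq:dual}) the relative tails pair $E^{i_j}$ with its dual $-E^{\,n-i_j+1}$. The decisive step is a dimension count using (\ref{eq:dim}): summing the nonvanishing condition (descendent degree equals cut-down virtual dimension) over the $\PP^n$ components and comparing the total with $\sum_j k_j$, the specific choice of the $k_j$ should force each surviving $\PP^n$ component to be a \emph{single} cap in class $\ell_j\la$ with exactly one point descendent $\tau_{k_j}pt$ and exactly one relative tail of multiplicity $\ell_j$ and constraint $-E^{\,n-i_j+1}$. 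Lemma~\ref{le:0}(iii) then makes each such cap invariant nonzero, precisely because $k_j=n\ell_j-(n-i_j+1)$.

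Once the $\PP^n$ side is pinned to $r$ disjoint single caps, connectedness of the glued genus-zero (tree) curve forces the $(\TM,E)$ factor to be connected and to carry all $m$ absolute insertions together with exactly the relative data $(E^{i_1},\dots,E^{i_r})$ at multiplicities $(\ell_1,\dots,\ell_r)$, that is, to be the given invariant (\ref{eq:Eblow}). The decomposition would then collapse to a single combinatorial type, giving
$$
\bla a_1',\dots,a_m',\tau_{k_1}pt,\dots,\tau_{k_r}pt\bra^{M}_{\be'}=n_{\Ga,\un\ell}\,\bla a_1',\dots,a_m'\,|\,E^{i_1},\dots,E^{i_r}\bra^{\TM,E}_{\be'-\ell\eps,\un\ell}\prod_{j=1}^{r}\bla\tau_{k_j}pt\,|\,-E^{\,n-i_j+1}\bra^{\PP^n,\PP^{n-1}}_{\ell_j\la,(\ell_j)},
$$
a product of nonzero factors, hence nonzero.

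I expect the main obstacle to be exactly this rigidity step: proving that the tuned descendent orders leave \emph{no other} contributing combinatorial type, so that a single term survives and there is genuinely no cancellation. In particular I must rule out $\PP^n$ components absorbing two point insertions, components with two relative tails, ghost components, and alternative splittings of the class $\be'-\ell\eps$ on the $\TM$ side; this is where the precise bookkeeping of (\ref{eq:dim}) together with the rigidity of relative $\PP^n$ invariants from Lemma~\ref{le:0} does the work, and where the slack allowed by the bounds $t\le m$, $1\le s\le r$ leaves room should the forced configuration drop some of the capping insertions.
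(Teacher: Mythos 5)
Your setup coincides with the paper's: the same fiber sum of $(\TM,E)$ with $(\PP^n,\PP^{n-1})$, the same descendent orders $k_j=n\ell_j-(n-i_j+1)$, the same candidate absolute invariant, and Lemma~\ref{le:0}(iii) producing the one good term. The gap is exactly the step you flag and leave open, and it cannot be closed the way you propose, by ``precise bookkeeping of (\ref{eq:dim})''; what the paper uses instead is a double minimality argument that your proposal omits. Before invoking the decomposition formula, the paper replaces the given data by a nonzero \emph{connected} invariant of the form (\ref{eq:Eblow}) whose class $\be$ has \emph{minimal energy} $\om(\be)$ among all such invariants with at most $m$ absolute insertions, and whose number $s$ of relative constraints is \emph{minimal} for that class and that number of absolute insertions; this is precisely why the conclusion only asserts $t\le m$ and $1\le s\le r$ rather than equalities. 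Minimality of $\om(\be)$ forces $\Ga_1$ to be connected (otherwise its components would be nonzero invariants of the form (\ref{eq:Eblow}) in classes of strictly smaller positive energy); once $\Ga_1$ is connected, the genus-zero tree structure shows each component of $\Ga_2$ carries exactly one relative tail, and minimality of $s$ then forbids $\Ga_2$ from having fewer than $s$ components, i.e.\ from having a component that absorbs two point descendents. Only after all of this do Lemma~\ref{le:0}(iii) and the uniqueness of the decomposition $k_i=n\ell_i-j_i$ pin down every surviving term.

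Without those choices, dimension counting genuinely fails to isolate a single term. For instance, even granting $\Ga_1$ connected, a component of $\Ga_2$ with one relative tail and \emph{two} descendent insertions, $\bla\tau_{k_1}pt,\tau_{k_2}pt\,|\,b^*\bra^{\PP^n,\PP^{n-1}}_{d\la,(d)}$, satisfies the dimension condition coming from (\ref{eq:dim}) for suitable $d$ and $b^*$ (e.g.\ for $n=3$, $j_1=j_2=1$, $d=\ell_1+\ell_2$, $b^*=[\PP^{2}]$), and Lemma~\ref{le:0} says nothing about such multi-descendent invariants, so you can neither show the term vanishes nor exclude cancellation against your good term; similarly, a disconnected $\Ga_1$ attached twice to a two-tail $\Ga_2$ component passes all dimension tests. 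The fix is not finer bookkeeping but the induction implicit in the paper's minimality choices: run your argument for a minimal configuration, where every competing combinatorial type dies because one of its factors would itself be a nonzero invariant of the form (\ref{eq:Eblow}) with smaller energy or fewer relative constraints.
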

\begin{proof}
Choose a class $\be$ of minimal energy $\om(\be)$ such that there  is a nonzero connected relative invariant in some class
$\Tbe: = \be-\ell \eps, \ell>0,$ of the form (\ref{eq:Eblow}) with $t\le m$ absolute constraints.  
Denote by
 $s$ the smallest $r>0$ such that there is a nonzero 
invariant (\ref{eq:Eblow}) with $m=t$,  this class $\be$, and $r$ relative constraints.
Denote 
the corresponding relative constraints by 
$E^{n-j_1+1}, \dots, E^{n-j_s+1}$ (cf. equation~(\ref{eq:dual})) and the  multiplicities by $\un \ell= (\ell_1,\dots,\ell_{s})$. 
%Without loss of generality, we may suppose that 
%$\ell_1\le \ell_2\le \dots\le \ell_{s}$.
Note that $\sum \ell_j=\ell>0$.  In particular $s>0$.  

  Decompose $M$ into the fiber sum of $\TM$ with $Y=\PP^n$ by identifying $E\subset \TM$ with the hyperplane $E^+=\PP^{n-1}$ in $Y$.  
  Apply the decomposition formula to
evaluate 
\begin{equation}\labell{eq:Eblow2}
\bla a_1,\dots,a_t, \tau_{k_1}pt,\dots,
\tau_{k_{s}} pt\bra^{M}_{\be},\quad k_i: = n\ell_i - j_i,
\end{equation}
putting all the point insertions into $Y$ and the others into $\TM\less E$.  We claim that (\ref{eq:Eblow2}) is nonzero.

Note first that by Lemma~\ref{le:0}(iii) there is a 
nonzero term $T$ in this decomposition formula given by taking the product of the nonzero relative invariant (\ref{eq:Eblow}) with $s$ terms of the
form $\bla \tau_{k_i}pt \,|\, E^{j_i}\bra^{\PP^n,\PP^{n-1}}_{\ell_i \la, (\ell_i)}$. We need to check that all other terms in this  formula vanish.

Consider an arbitrary nonzero  term $T'$ in this formula.  It is
a product of a
 relative invariant for $(\TM,E)$ modelled on a graph $\Ga_1$ and
%of the form (\ref{eq:Eblow}) 
in some class $\be'-d\eps$
% and $r$ relative constraints $E^{m_1},\dots, E^{m_r}$, 
 with a relative invariant for $(\PP^n, \PP^{n-1})$ in 
 class $d \la$.  Since the classes $\be'-d\eps$ and $d\la$ combine to give $\be'$, we must have $\be' = \be$.  The minimality of $\om(\be)$ implies that 
 $\Ga_1$ is connected, since otherwise each of its components is in a class $\be_i-d_i\eps$ with $0<\om(\be_i)< \om(\be)$.

Now look at the other side.
  By Lemma~\ref{le:0} (i) each connected 
 relative invariant in
 $(\PP^n, \PP^{n-1})$ must go through some  absolute constraint. 
 Hence $\Ga_2$ has at most $s$ components.  But it cannot have fewer than $s$ components, because if it did  $\Ga_1$ would 
 have fewer than $s$ relative constraints (since the genus zero 
 requirement means that $\Ga_1$  meets each component of $\Ga_2$ at most once), which contradicts the minimality of $s$.  Therefore there are $s$ components of $\Ga_2$ and each goes through precisely one absolute constraint.  Thus 
 each component is
a nonzero $2$-point invariant of the form
$\bla \tau_{k_i}pt \,|\, E^{j}\bra^{\PP^n,\PP^{n-1}}_{d f, (d)}$
for some $i, d$ and $1\le j \le n$.  But $k_i$ has a 
unique decomposition of the form
$n\ell_i - j_i$ where $1\le j_i \le n$.  Hence, by Lemma~\ref{le:0} (iii)
 the relative constraints 
  in $\Ga_2$ are precisely the $E^{j_i}, i = 1\dots,s,$ with intersection multiplicities $\un\ell$.
  Thus $T'=T$. \end{proof}

\NI {\bf Proof of Proposition~\ref{prop:unicond}.} 
By Lemma~\ref{le:d2} it suffices to show that 
$M$ is uniruled iff an invariant of the form
$\bla \tau_{i_1}a_1, \dots, \tau_{i_{k-1}}a_{k-1}, \tau_{i_k}pt\bra^M_{\be}$ is nonzero.
 Hu--Li--Ruan~\cite[Theorem~4.9]{HLR} prove this by an inductive argument (similar to the proof of Proposition~\ref{prop:usu} below)
that is based on the
identity (\ref{eq:LPi}).
\QED

%%%%%%%%%%%%%%%%%%%%%%%%%%%%%%%%%%%%%%%%%%%%%%%%%%%%%%%%%%
\subsection{Blowing down section invariants.}
\labell{ss:eps}
%%%%%%%%%%%%%%%%%%%%%%%%%%%%%%%%%%%%%%%%%%%%%%%%%%%%%%%%%%

 It remains to prove Proposition~\ref{prop:eps}.
 We begin with a general remark about
 the  invariants of pairs of spaces $(X,D)$ that are fibered over $\PP^1$ with fiber $(F, D_F)$.   It concerns
 invariants in the class $\be\in H_2(F)$ of a fiber.  Denote by $\io:H_*(F)\to H_*(X)$ the map induced by inclusion.

\begin{lemma}\labell{le:fib} Let $\pi: X\to \PP^1$ be a Hamiltonian fibration with fiber $F$ that induces a fibration $F_D\to D\to \PP^1$
on the divisor $D$. If  $a_1:= \io(a_1)$ is a fiber class  and $\be \in H_2(F)$ then
$$
\bla \io(a_1),\dots\,|\, b_1,\dots\bra^{X,D}_{\be, \un d} = \bla a_1, a_2\cap F,\dots\,|\, b_1\cap D_F,\dots\bra^{F,D_F}_{\be, \un d}.
$$
Similarly, if
$b_1: = \io(b_1)$ is a fiber class then
$$
\bla a_1,\dots\,|\, \io(b_1),\dots\bra^{X,D}_{\be, \un d} = \bla
 a_1\cap F,\dots\,|\, b_1, b_2\cap D_F,\dots\bra^{F,D_F}_{\be, \un d}.
$$
In particular, any invariant with two fiber insertions must vanish.
Corresponding results hold for absolute invariants.
\end{lemma}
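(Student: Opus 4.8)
The plan is to exploit the fact that a fiber class forces all curves into a single fiber, reducing every invariant in such a class to an invariant of the fiber $F$ itself. First I would choose the $\om$-tame almost complex structure $J$ on $X$ to be \emph{fibered}: compatible with $\pi$ so that $\pi\colon X\to\PP^1$ is $(J,j)$-holomorphic for the standard $j$ on $\PP^1$, split in the normal direction to $D$ as required for relative invariants, and preserving $TD$. Since $D\to\PP^1$ and the rubber targets $Q=\PP(\C\oplus L_D^*)\to D$ also fiber over $\PP^1$, and the fiberwise $\C^*$-action on $Q$ covers the identity on $\PP^1$, the projection extends to a holomorphic map $X_k\to\PP^1$ on every prolongation. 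Moreover $J$ restricts on each fiber $F_z$ to an almost complex structure $J_z$ for which $D_F=D\cap F_z$ is $J_z$-holomorphic and carries the induced prolongation structure, because the normal bundle of $D$ in $X$ restricts along $D_F$ to the normal bundle of $D_F$ in $F_z$.

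The key geometric point is that any connected genus zero building $C$ representing a fiber class $\be\in H_2(F)$ lies in a single fiber. Indeed $\pi\circ C$ is holomorphic on each component and the building projects to $\PP^1$ with total degree $\pi_*\be=0$, so on the connected genus zero domain every component has nonnegative degree summing to zero, hence each is constant; by connectedness they all map to one point $z_0\in\PP^1$. Thus there is a well-defined map $\oMm\,\!^{X,D}_{\be,\un d}(J)\to\PP^1$, $C\mapsto z_0$, whose fiber over $z_0$ is precisely $\oMm\,\!^{F,D_F}_{\be,\un d}(J_{z_0})$. I would then promote this to the level of virtual cycles: for a fiber curve $u$ the linearized Cauchy--Riemann operator splits as the vertical $\bar\p$-operator of the fiber plus the $\bar\p$-operator on $u^*\pi^*T_{z_0}\PP^1$, and since $\pi^*T_{z_0}\PP^1$ is a trivial line bundle over a genus zero domain the latter contributes $H^0=\C$ (the deformation moving the curve into a neighbouring fiber) and $H^1=0$. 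Hence the obstruction theory is pulled back from $F$, the horizontal deformations account exactly for the $\PP^1$ of fibers, and $\oMm\,\!^{X,D,[vir]}_{\be,\un d}$ is a fibration over $\PP^1$ with fiber $\oMm\,\!^{F,D_F,[vir]}_{\be,\un d}$, compatibly with all absolute and relative evaluation maps.

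To finish the first identity, represent the fiber-class insertion $\io(a_1)$ by a cycle contained in one fiber $F_{z_0}$, which is possible since $a_1\in H_*(F)$. Because $\ev_1(C)$ lies in the fiber of $C$, this constraint cuts the base down to the single point $z_0$ and becomes the constraint $a_1$ inside $F_{z_0}$ — the extra complex codimension of $\io(a_1)$ over $a_1$ being exactly the one complex dimension of the $\PP^1$ of fibers just removed. The remaining absolute insertions $a_i$, represented by cycles transverse to $F_{z_0}$, then cut the fiber virtual cycle by $a_i\cap F$, and each relative insertion $b_j$ restricts to $b_j\cap D_F$ on $D_{F_{z_0}}$, so integration over the fiber virtual cycle yields exactly the right-hand side. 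The second identity is identical, pinning the fiber instead with a cycle representing the fiber class $\io(b_1)$ in a single $D_{F_{z_0}}$. Finally, two fiber insertions may be represented by cycles in two distinct fibers $F_{z_0}\ne F_{z_1}$; since a connected fiber curve meets only one fiber, no configuration satisfies both constraints and the invariant vanishes (equivalently, in the reduced fiber invariant the second insertion becomes $\io(a_2)\cap F=0$ in $H_*(F_{z_0})$). The absolute case is the same argument with the relative divisor and rubber levels removed. The one step needing genuine care is the passage to virtual cycles in the second paragraph: one must know that the virtual cycle construction respects the fibered structure, and this is exactly what the splitting of the linearized operator together with the triviality of $\pi^*T_{z_0}\PP^1$ provides.
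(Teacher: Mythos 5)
Your proposal is correct and follows essentially the same route as the paper: choose a fibered $J$ so that every curve in a fiber class $\be\in H_2(F)$ lies in a single fiber, pin down that fiber by representing the fiber-class insertion inside one $F_{z_0}$, and observe that the remaining constraints then cut the fiber moduli space by $a_i\cap F$ and $b_j\cap D_F$ (with two fiber insertions forcing vanishing). The paper merely asserts that the virtual cycle can be built so that its elements are fiber curves and defers the details to \cite[Prop~1.2(ii)]{Mcq}; your linearization argument (upper-triangular structure with respect to $0\to T^{\Ver}X\to TX\to \pi^*T\PP^1\to 0$ and triviality of $u^*\pi^*T\PP^1$ on a genus zero domain) is exactly the content of that reference.
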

\begin{proof} This holds because one can define $\oMm\,\!^{\,[vir]}_\be$
in such a way that each of its elements represents a curve lying in 
some fiber of $\pi$.  Therefore, if we represent the class $a_1$ by a cycle lying in the fiber $F_z: = \pi^{-1}(z)$ all the curves
that contribute to the invariant lie entirely in this fiber and hence must intersect the other cycles in representatives for $a_i\cap F, b_j\cap F$.
The details of the proof  in the absolute case are spelled out
 in \cite[Prop~1.2(ii)]{Mcq}.  The relative case is similar.
\end{proof}

We saw earlier that $\TP$ is the blow up of $P$ along a section $s_{\max}$ in $F_{\max}\times \PP^1$. Similarly,
$\TP'$ is the blow up of $P'$ along the corresponding section $ s_{\min}'$ in 
$F_{\min}'\times \PP^1$. Note that $s'_{\min}$ has normal bundle $\Oo(1) \oplus\C^{n-1}$,   where   $\Oo(m)\to \PP^1$ 
denotes the holomorphic line bundle of Chern class $m$.   
We denote by $D$ the exceptional divisor in $\TP'$.
Thus it is a $\PP^{n-1}$-bundle 
$$
\pi_D: D: = \PP(\Oo(1) \oplus\C^{n-1})\to \PP^1.
$$
Consider the section $s_D: = \PP( \{0\}\oplus \dots\oplus\{0\}\oplus \C)$
of $D\to \PP^1$.  It lies with trivial normal bundle in the
product divisor $V: =\PP(\{0\}\oplus\C^{n-1})\cong 
s_D\times \PP^{n-2}$ and so $c_1(s_D) = 3$.
The exceptional divisor $E$ of $\TM$ can be identified with
$\pi_D^{-1}(pt)$.  The line $\eps$ in $E$ intersects $V$ once.
Therefore classes $s_D-m\eps$ with $m>1$ have no holomorphic representatives in $D$ since they have negative intersection with $V$ and cannot be represented in $V$ itself.  On the other hand
the section class $s_Z: = s_D-\eps$ has the unique representative   
$  \PP(\Oo(1) \oplus \{0\}\oplus \dots\oplus\{0\})$.

The proof of Proposition~\ref{prop:eps} has two steps: we need to show that the absolute invariant in $\TP'$ equals a suitable relative invariant for $(\TP',D)$, and then look at the blow down correspondence.  The first of these steps turns out to be the hardest,
 requiring a detailed study of the 
 the invariants of 
 the ruled manifold
 $(Y,D^+)$.  Here $Y$ is the projectivization $\PP(L\oplus \C)$ of the normal bundle to $D\subset \TP'$ and $D^+: = \PP(L\oplus \{0\})$ as above; see Fig.~\ref{fig:1}.  
Thus $(Y,D^+)$ fibers over $\PP^1$ with fiber $(X,E^+)$
 equal to the pair consisting of the $1$-point blow up of $\PP^n$ and hyperplane class $E^+$, and there is a
 commutative diagram
$$
\begin{array}{ccccc}
\PP^1&\to & (X,E^+) &\stackrel{\rho_X}\to& E\\
\downarrow&&\downarrow&&\downarrow\\
\PP^1&\to& (Y,D^+)&\stackrel{\rho_Y}\to& D\\
&&\!\!\pi_Y\downarrow\;\;\;&&\!\!\!\pi_D\downarrow\;\;\;\\
&& \:\PP^1&=&\;\;\PP^1.\end{array}
$$
We denote by $D_0$ the divisor
$\PP(\{0\}\oplus \C)$ in $Y$ that is disjoint from $D^+$, and by
$s_{D_0}$ the section in $D_0$ corresponding to $s_D.$  
Since $D_0$ has the same normal bundle as $D\subset \TP'$,
$c_1(s_{D_0}) = 3$. Also $D_0\cap X$ can be identified with the exceptional divisor $E$ in $\TM$.  Thus $c_1(\la_X) = n-1$ where $\la_X$ is the class of a 
line in $X\cap D_0$. Note that $H_2(Y)$ is generated by $s_{D_0}, \la_X$ and $f$.  Moreover $\rho_Y(s_{D_0} - \la_X) = s_Z.$ 
Thus we write $s_{Z_0}: = s_{D_0} - \la_X$.
\begin{figure}[htbp] %  figure placement: here, top, bottom, or page
   \centering
   \includegraphics[width=4in]{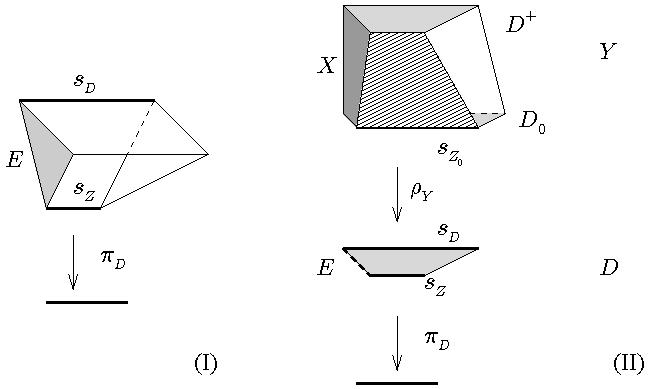} 
   \caption{Diagram (I) is a $3D$ picture of the moment polytope of the toric manifold $D$, while (II) is a $3D$ picture of that for $Y$ with $D$ reduced to $2$-dimensions}
   \label{fig:1}
\end{figure}

 The next lemma contains
 some preliminary calculations.
Here, and subsequently, we choose the relative constraints 
$b_i^*$ from the elements of the following self-dual 
basis for $H_*(D)$:
$$
D^i\in H_{2(n+1-i)}(D),\; 1\le i\le n,\quad
E^j\in H_{2(n-j)}(D),\; 1\le j\le n.
$$
As always $E^j$ denotes the image in $D$ of 
the class in $\TM$ represented by the $j$-fold intersection 
of $E$ with itself in $\TM$.
Thus the $E^j$ are fiber constraints for the projection $\pi_D: D\to \PP^1$, while the $D^i$ are nonfiber constraints.  Also $D^i\cdot E^j=0$ unless $i+j=n+1$.

%(Several fibrations occur in the following discussion. However,
%the words \le fiber/nonfiber" constraint always refer to the fibration $\pi$ over  $\PP^1$.)

\begin{lemma}\labell{le:YY} (i)  If $m\le -2$, all invariants of $(Y,D^+)$
in the classes $s_{D_0} +m\la_X + df$ vanish.
They also vanish in the classes $m\la_X + df$ with $m\le -1$.
\SSS

\NI (ii) Let $j = 1$ or $2$. Then $\bla E^j|b_1^*,\dots,b_r^*\bra^{Y,D^+}_{s_{D_0} + df + m\la_X, \un d} = 0$
for all $d$ such that  $0\le d\le m+1$.\SSS

\NI (iii) Let $d>0$ and $1\le j\le 2$.  If $\bla E^j|b_1^*,\dots,b_r^*\bra^{Y,D^+}_{df + m\la_X, \un d} \ne 0$
then $b_i^*$ is a nonfiber constraint for all $i$. Moreover if 
$n\ge 3$ and $r=1$ then $m= 0$ and $d=1$. The same holds if $n=2$ and $j=r=1$.
\SSS

\NI (iv) 
If $\bla \;|b_1^*,\dots, b_r^*\bra^{Y,D^+}_{df + m\la_X, (d)} \ne 0$
for some $m\ge 0$ and $1\le r\le 2$ then $m< d$.
\end{lemma}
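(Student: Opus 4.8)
The plan is to establish (iv) by a virtual-dimension count, exactly as in parts (i)--(iii), so that no new geometric input is needed. The three quantities I would record first are the complex dimension $\dim_\C Y=n+1$; the Chern number $c_1^Y(df+m\la_X)=2d+m(n-1)$, obtained from $c_1^Y(\la_X)=n-1$ together with the fact that the ruling fibre $f$ of $Y\to D$ satisfies $c_1^Y(f)=2$; and the relative intersection number $(df+m\la_X)\cdot D^+=d$, which follows from $f\cdot D^+=1$ and $\la_X\cdot D^+=0$. I would also note at the outset that, since the partition $\un d$ consists of $r$ parts each at least $1$, we automatically have $d=\sum_i d_i\ge r\ge 1$.

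Next I would substitute these into the dimension formula (\ref{eq:dim}). With no absolute marked points and $r$ relative ones it reads
\begin{equation*}
\dim_\C\oMm^{Y,D^+}_{df+m\la_X,\un d}=(n+1)+\bigl(2d+m(n-1)\bigr)+2r-3-d=(n-2)+d+m(n-1)+2r.
\end{equation*}
For a non-descendent invariant to be nonzero, the total codimension of its relative constraints must equal this number. Since $D^+\cong D$ has complex dimension $n$, each $b_i^*$ has complex codimension at most $n$, so the constraints contribute at most $rn$. Hence a nonzero invariant forces
\begin{equation*}
(n-2)+d+m(n-1)+2r\le rn,\qquad\text{i.e.}\qquad d+m(n-1)\le (n-2)(r-1).
\end{equation*}

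Finally, for $r\le 2$ the right-hand side is at most $n-2$. Suppose toward a contradiction that $m\ge d$; then, using $n-1\ge 1$ and $d\ge 1$,
\begin{equation*}
d+m(n-1)\ge d+d(n-1)=dn\ge n>n-2,
\end{equation*}
contradicting the inequality above. Therefore $m<d$, as claimed (and when the displayed bound cannot be met at all, the implication holds vacuously). The single step that needs care---and the only real obstacle I foresee---is justifying $c_1^Y(f)=2$: here one must use that $f$ is the fibre of the $\PP^1$-ruling $Y\to D$, equivalently the proper transform of a line through the centre of the blow-up in the fibre $X=\PP(\Oo(-1)\oplus\C)$, rather than the class of a generic line. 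This is exactly the identification that makes the vanishing asserted in part (i) consistent, so I would verify the two against each other.
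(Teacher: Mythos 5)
Your proposal covers only part (iv) and asserts that parts (i)--(iii) are ``exactly'' the same kind of virtual-dimension count, so that no geometric input is needed anywhere. That is not so, and the paper's own proof makes the point explicitly: (i) follows from positivity of intersections of holomorphic curves with the divisor $V$, using that $\rho_Y:Y\to D$ is holomorphic (it is a statement about \emph{all} invariants, including those with absolute insertions, so no dimension count could give it); for (ii) the paper notes that the class $s_{Z_0}$ \emph{passes} the dimension test and must be excluded geometrically (all $s_{Z_0}$-curves lie in the $2$-dimensional surface $\rho_Y^{-1}(s_Z)$, which a generic cycle representing $E^2$ misses); and (iii) rests on Lemma~\ref{le:fib}. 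So even if your part (iv) were complete, the lemma would remain unproved.

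More importantly, the dimension count you give for (iv) mixes two conventions and is therefore too strong by exactly $r$. Formula (\ref{eq:dim}) already credits each relative marked point with two complex dimensions (the $+2r$ term), so in the nonvanishing criterion each relative constraint must be counted by its codimension in the ambient space $Y$, i.e.\ at most $n+1$, not by its codimension in $D^+$, at most $n$. (This is how the paper applies (\ref{eq:dim}) in Lemmas~\ref{le:0} and~\ref{le:W}; one can also test it on lines in $\PP^2$ relative to a line through a point of that line, where the constrained family is the $1$-dimensional pencil.) With the consistent count the necessary condition for nonvanishing is
$$
d+m(n-1)\;\le\;(r-1)(n-1)+1,
$$
not your $d+m(n-1)\le (n-2)(r-1)$, and this weaker inequality does \emph{not} force $m<d$: for $r=2$ the case $d=1$, $m=1$ with both $b_i^*$ equal to the point class satisfies it with equality. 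Killing that case requires geometric input, namely Lemma~\ref{le:fib}: two point constraints are two fiber insertions for $\pi_Y:Y\to\PP^1$, so the invariant vanishes. This is precisely how the paper argues---it first shows that exactly one relative constraint can be a fiber constraint (the others being classes $D^i$), and only then counts dimensions, obtaining the sharper bound $d+m(n-1)\le (r-1)(n-2)+1$, which does yield $m<d$ for $r\le 2$. So the step you flag as the only delicate one ($c_1^Y(f)=2$, which is in fact correct) is not the problem; the claim that no geometric input is needed is where the proof breaks.
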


\begin{proof} (i)  Since $\rho_Y: Y\to D$ is holomorphic 
every class with nonzero GW invariants in $Y$ must map to a class with
a holomorphic representative in $D$.  Hence their image must have nonnegative intersection with the divisor $V$ containing $s_D$.
Since $s_D\cdot V = 1$ and $\eps\cdot V = -1$ the result follows.\SSS  

Now consider (ii).
A dimensional calculation shows that when $m+1\ge d$ 
the invariant (ii) can be nonzero only if $m=-1$, $j=2$ and $d=r=0$.  Hence it remains to consider the invariant
$\bla E^2|\;\bra^{Y,D^+}_{s_{Z_0}}$, where $s_{Z_0}: = s_{D_0}-\la_X$. 
Although this could 
be nonzero as far as dimensions are concerned, the geometry shows that it must be zero. To see this, 
consider the holomorphic fibration $\pi_Y:Y\to \PP^1$.  
 Since the normal bundle to $s_Z$ in $D$ is a sum of copies of 
 $\Oo(-1)$, there is a unique curve in $D$ in class $s_Z$.  
 Since any holomorphic curve in $Y$ in class $s_{Z_0}$  projects to an $s_Z$ curve in $D$, the $s_{Z_0}$ curves in $Y$ lie in the surface
 $\rho_Y^{-1}(s_Z)$; cf. Figure~\ref{fig:1} where this surface is 
 crosshatched.  This surface has $2$-dimensions, while the constraint
 $E^2$ has codimension $3$ in $Y$.  (Remember that $E$ is the exceptional divisor in $\TM$ and $E^2$ denotes its intersection in $\TM$.) Hence there is a cycle in $Y$ representing $E^2$ that does not meet any $s_{Z_0}$ curves. This proves (ii).
 
 \SSS
 
  In case (iii) we are counting curves in a fiber class
 of the holomorphic projection $\pi_Y:Y\to \PP^1$. 
 Since $E^j$ is a fiber constraint, the first claim is an immediate consequence of Lemma~\ref{le:fib}.  To prove the second, note that by Lemma~\ref{le:fib} the invariant reduces to one in $(X,E^+)$.  A count of dimensions shows that $m=0$.  But then $f$ is a fiber class for the projection $\rho_X$ so that the invariant is nonzero only if
 $\rho_X(b)\cap \rho_X(E^j)\ne 0$.  A further dimension count now implies that $d=1$ (or one can use the fact that one is counting $d$-fold covered spheres in $\PP^1$ with $2$ homological constraints).  The proof of the third claim is similar.
\SSS

Consider (iv).   Since the fibers $X$  of the bundle $Y\to \PP^1$
can all be identified (as K\"ahler manifolds) with the $1$-point blow up of $\PP^n$ (with $X\cap D^+$ the hyperplane class),  
this invariant can be nonzero only if one of the relative constraints, say $b_1^*$  is a fiber constraint while the others are not.  Therefore  $\de_b\ge r-1$.  Now count dimensions.  
\end{proof}

\begin{lemma}\labell{le:eps1} Suppose that 
$(M,\om)$ is not uniruled and that  $n\ge 3$.
Then  $$
\bla\;  |E^2\bra^{\TP',D}_{\si-\eps}
= \bla E^2\bra^{\TP'}_{\si-\eps}.
$$
\end{lemma}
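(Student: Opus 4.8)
The plan is to compute the absolute invariant $\bla E^2\bra^{\TP'}_{\si-\eps}$ by degenerating $\TP'$ to the normal cone of the exceptional divisor $D$, following the method of Hu--Li--Ruan. That is, I would write $\TP'$ as the symplectic (fiber) sum of $(\TP',D)$ with the ruled manifold $(Y,D^+)$ introduced above, gluing $D\subset\TP'$ to $D^+\subset Y$, and apply the decomposition formula (\ref{eq:DF}). Since $E^2$ is represented by a cycle lying in $E\subset D_0\subset Y$, I would push this constraint entirely onto the $Y$-side, so that the formula expresses $\bla E^2\bra^{\TP'}_{\si-\eps}$ as a sum of products
$$
 n_{\Ga,\un d}\;\bla\;|\,b_{i_1},\dots,b_{i_r}\bra^{\TP',D}_{\Tbe_1,\un d}\;\bla E^2\,|\,b_{i_1}^*,\dots,b_{i_r}^*\bra^{Y,D^+}_{\be_2,\un d},
$$
where $\si-\eps=\Tbe_1+\be_2$ and the $b_i,b_i^*$ run over the self-dual basis of $H_*(D)$. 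The goal is to show that exactly one term survives, namely the one producing $\bla\;|E^2\bra^{\TP',D}_{\si-\eps}$ with coefficient $1$.

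First I would pin down the $Y$-side. Both $\TP'$ and $Y$ fiber holomorphically over $\PP^1$, and $\si-\eps$ is a section class; hence one of $\Tbe_1,\be_2$ carries the section while the other is a fiber class. Since $E^2$ is a fiber constraint for $\pi_D$ (it lies in $E=\pi_D^{-1}(pt)$), I would use Lemma~\ref{le:YY}(i),(ii) to rule out the possibility that the section lies on the $Y$-side, i.e.\ that $\be_2$ has a nonzero $s_{D_0}$-component; this forces $\be_2=df+m\la_X$ to be a fiber class of $\pi_Y$ and $\Tbe_1$ to carry the section. Then, for a single relative tail ($r=1$), Lemma~\ref{le:YY}(iii) together with the hypothesis $n\ge3$ forces $m=0$ and $d=1$, so the $Y$-component is a single fiber line that merely transmits the constraint. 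Using the self-duality of the basis (cf.\ (\ref{eq:dual})) and the fact that the reduced $X$-fiber count equals $1$, this single-tail term reproduces exactly $\bla\;|E^2\bra^{\TP',D}_{\si-\eps}$.

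The remaining work, and the main obstacle, is to eliminate all multiple-tail terms ($r\ge2$). By the genus-zero constraint each component of $\Ga_1$ meets each component of $\Ga_2$ at most once, so $r\ge2$ forces $\Ga_2$ to have several components, only one of which carries the $E^2$ insertion; the others are constraintless invariants of $(Y,D^+)$ governed by Lemma~\ref{le:YY}(iv). Correspondingly the $\TP'$-side relative invariant acquires two or more relative $E$-constraints. This is exactly where I would invoke the hypothesis that $(M,\om)$ is not uniruled: reducing the fiber-class contributions by Lemma~\ref{le:fib} and comparing with the absolute invariants of the fiber blow-up via Proposition~\ref{prop:unicond}, any such surviving multi-tail configuration would yield a nonzero invariant of $\TM$ of the type (\ref{eq:TM}), forcing $M$ to be uniruled, a contradiction. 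The delicate point throughout is the bookkeeping of which classes $\be_2$ and tail-patterns can occur; this is precisely the content of the preparatory Lemma~\ref{le:YY}, and the assumption $n\ge3$ is essential there, since the rigidity $m=0,d=1$ fails when $n=2$, which is why that case is treated separately in Proposition~\ref{prop:eps}(ii).
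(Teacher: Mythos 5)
Your setup coincides with the paper's: the same fiber-sum decomposition of $\TP'$ into $(\TP',D)$ and $(Y,D^+)$ with the constraint $E^2$ pushed into $Y$, the same identification of the surviving single-tail term via Lemma~\ref{le:YY}(iii), and the same claim that the section class must sit on the $(\TP',D)$ side. The genuine gap is in your elimination of the multiple-tail terms, which is the heart of the proof. The dangerous terms are those in which $\Ga_1$ is a single section-class component of $(\TP',D)$ carrying $r\ge 2$ relative tails (so that $\Ga_2$ has $r$ components). For these your proposed mechanism fails: Lemma~\ref{le:fib} applies only to components lying in fiber classes, so a section-class relative invariant $\bla\;|b_1,\dots,b_r\bra^{\TP',D}_{\al_1-\ell\eps,\un\ell}$ cannot be reduced to an invariant of the fiber $\TM$, and Proposition~\ref{prop:unicond} --- a statement about \emph{absolute} invariants of the pointwise blow-up $\TM$ --- says nothing about it. So for precisely the configurations you call ``the main obstacle'' the contradiction you invoke never materializes. (There is also a mis-citation in the part of your argument that does work: for a fiber-class component of $\Ga_1$ carrying a fiber constraint one reduces via Lemma~\ref{le:fib} and then applies Lemma~\ref{le:d2}, not Proposition~\ref{prop:unicond}.)

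What actually kills the multi-tail terms in the paper is not the non-uniruledness hypothesis but a homology count on the $Y$ side. Non-uniruledness enters earlier and elsewhere, in step (a) of the paper's proof: a fiber component of $\Ga_1$ through a \emph{fiber} constraint would give, via Lemmas~\ref{le:fib} and \ref{le:d2}, a uniruling of $M$; this, fed through the genus-zero combinatorics, is what bounds the number of tails on each $\Ga_2$ component (step (b)) so that Lemma~\ref{le:YY}(iii),(iv) become applicable. Granting that, every component of $\Ga_2$ other than the one through $E^2$ is a constraintless fiber-class invariant with at most two tails, so Lemma~\ref{le:YY}(i),(iv) force its class $d_if+m_i\la_X$ to satisfy $0\le m_i<d_i$, i.e.\ $m_i-d_i\le -1$, while the component through $E^2$ has $r=1$ and hence $m_0-d_0=-1$ by Lemma~\ref{le:YY}(iii). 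Since the total class $\si-\eps$ imposes $\sum_{i\ge 0}(m_i-d_i)=-1$, there can be no extra components at all: $\Ga_2$ is connected in class $f$, hence $\Ga_1$ is connected and the only term is $\bla\;|E^2\bra^{\TP',D}_{\si-\eps}$. Note finally that your assertion that $r\ge2$ forces $\Ga_2$ to be disconnected presupposes $\Ga_1$ connected, which is not automatic; fiber components of $\Ga_1$ carrying nonfiber tails must be handled by the same step (a)/(b) analysis and the count above, not by genus-zero considerations alone.
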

\begin{proof}
As in the proof of Lemma~\ref{le:d1}
decompose $\TP'$ as the connected sum of $(\TP',D)$ with the ruled manifold $(Y,D^+)$ considered above.
Consider a typical term  in the decomposition formula
for $\bla E^2\bra^{\TP'}_{\si-\eps}$ where we put the constraint
$E^2$ in $D_0\subset Y$:
$$
\bla \;  |b_1,\dots,b_r\bra^{\Ga_1,\TP',D}_{\al_1-\ell\eps,\un \ell}\;
\bla E^2 |b_1^*,\dots,b_r^*\bra^{\Ga_2,Y,D^+}_{\al_2+\ell f,\un \ell}
$$
where $\al_1\in H_2(P')$ and $\al_2\in H_2(D_0)$.
(Note that the intersections on each side of $D$ match because $\al_1\cdot D = 0$ for all $\al_i\in H_2(P')$.)
There is one term in this formula with $\al_1=\si, \ell=r=1$ and $b_1=E^2$.
The first factor is then $\bla\;  |E^2\,\bra^{\TP',D}_{\si-\eps}$
while the second is 
$\bla E^2|D^{n-1}\bra^{Y,D^+}_f = 1.$
Hence we must see that all other terms vanish. 

The argument has several steps. Throughout the following discussion
the words {\it fiber/section class} and {\it fiber/nonfiber constraint}
apply to the fibration $\pi$ over $\PP^1$. Moreover a component of 
$\Ga_1$ or $\Ga_2$ is called a {\it fiber component} if it represents a class in the homology $H_2(\TM)$ of the fiber.
\SSS

\NI (a) {\it  No component of $\Ga_1$ can lie in a fiber class $\Tbe$ and go through a fiber constraint $b$.}   For if there were such a component, 
Lemma~\ref{le:fib} would imply that the corresponding invariant 
$\bla\;| b, b_2,\dots\bra^{\TP',D}_{\Tbe,\un k}$ equals the fiber invariant
 $\bla\;| b, b_2\cap \TM,\dots\bra^{\TM,E}_{\Tbe,\un k}$,
 and so $(M,\om)$ would be uniruled by Lemma~\ref{le:d2}.\SSS

\NI (b) {\it Every fiber component of $\Ga_2$ has $r\le 2$; moreover a fiber component through $E^j$ has $r=1$.}  If a component of $\Ga_2$ 
has two nonfiber relative 
constraints, say $b_1^*, b_2^*$, then the genus zero restriction implies that
at least one of the dual fiber constraints $b_1, b_2$ must lie in a fiber component of $\Ga_1$ (since $\Ga_1$ has at most one section component.)  But this contradicts Step (a). Thus the second claim holds by the first part of Lemma~\ref{le:YY}(iii), while the first
claim holds by Lemma~\ref{le:fib} 
which says that
only one of the relative constraints for each component can
be a fiber class.    \SSS

\NI (c) {\it If $\Ga_2$ is a section class, its section component must go through the absolute constraint.}  For otherwise $\Ga_2$ contains an  invariant of 
the type considered in Lemma~\ref{le:YY} (iii) whose relative constraint is dual to a fiber constraint. Hence $\Ga_1$ would have to contain a fiber component with a fiber constraint, contradicting (a).\SSS

\NI (d) {\it $\Ga_2$ cannot be a section class}.  If $\Ga_2$ is a section class then by (c) the fiber components  in $\Ga_2$ have no absolute constraints.  Hence by Lemma~\ref{le:YY}(i) and (iv) (which applies by (b))
the fiber classes 
have the form $d_i f + m_i\la_X$ with $0\le m_i < d_i$. Suppose the section class is
 $s_{D_0} + d_0f + m_0\la_X$.  Then these classes in $\Ga_2$ combine with $\al_1-d\eps$ to make $\si-\eps$ where 
$\si\in H_2(P')$.  But $\la_X$ projects to the class $\eps$ in $D$.
Hence 
$$
-d + \sum_{i\ge 0} m_i = \sum_{i\ge 0} (m_i  - d_i) = -1.
$$
We saw above that $m_i  - d_i\le -1$ for all $i>0$. Therefore either $\Ga_2$ is connected and  $m_0-d_0= -1$ or $m_0-d_0\ge0$.  Neither of these cases occur by 
Lemma~\ref{le:YY}(ii).\SSS

%\NI (e) {\it The component of $\Ga_2$ through $E^2$ has $r=1$.}  
%As in Step 
%By Lemma~\ref{le:fib} only one of the relative constraints for each component can
%be a fiber class.  But if a component has two nonfiber constraints, say $b_1^*, b_2^*$, then the genus $0$ restriction implies that
%at least one of the dual fiber constraints $b_1, b_2$ must lie in a fiber component of $\Ga_1$ (since $\Ga_1$ has only one section component.)  But this contradicts
%% Step (a).  This proves the first statement. The second follows 
%% from Lemma~\ref{le:YY}(iii).\SSS

\NI (e)  {\it Completion of the proof.}\,
%Note first that
% because the total curve is connected $d_i>0$ for all $i$. Now suppose that $d>1$. 
As in step (d) $d= \sum d_i$ and $\sum_{i\ge 0} (m_i  - d_i) = -1$.
The component of $\Ga_2$ through $E^2$ has $r=1$ by (b) and hence
$m_0=0$ and $d_0=1$ by  Lemma~\ref{le:YY}(iii).  Moreover all other components of $\Ga_2$ have $m_i\ge 0$ by Lemma~\ref{le:YY}(i) and hence $m_i<d_i$ by Lemma~\ref{le:YY}(iv).  Therefore $\Ga_2$ is connected and lies in class $f$.  Hence $\Ga_1$ is also connected 
 and lies in class $\si-\eps$.  Therefore this is the term already considered.
\end{proof}

We now relate the relative invariant in $\TP'$ to an absolute invariant in $P'$.
To this end, consider the $\PP^{n}$-bundle 
$W: =\PP(\Oo(1) \oplus\C^{n-1}\oplus \C)\to \PP^1$. It contains a copy 
$D^+: = \PP(\Oo(1) \oplus\C^{n-1}\oplus \{0\})$ of $D$ with normal bundle $L_D^*$ (where as usual $L_D$ is the normal bundle of $D$), and a disjoint section
$s_W: = \PP(\{0\} \oplus\{0\}\oplus \C)$. This can be identified with $s_D$ and so $c_1(s_W) = 3$. We also denote by $\la$ the line in the fiber of $W\to \PP^1$.  

\begin{lemma}\labell{le:W} Let $n\ge 2$.  Then:\SSS

\NI (i) 
 $\bla \tau_1 pt|b_1^*,\dots,b_r^*\bra^{W,D^+}_{s_W+d\la,\un d} = 0$
 for all $b_i^*\in H_*(D^+)$.\SSS
 
 \NI (ii) $\bla \tau_1 pt|b_1^*,\dots,b_r^*\bra^{W,D^+}_{d\la,\un d} \ne 0$ only if $d=r=1$ and $b_1^*$ is a multiple of $(D^+)^{n-1}$.
 
  \NI (iii) If $d>0$, $\bla \;|b_1^*,\dots,b_r^*\bra^{W,D^+}
  _{d\la,\un d} = 0$ for all $b_i^*$.
 \end{lemma}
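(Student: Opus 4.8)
The plan is to reduce all three statements to the index formula (\ref{eq:dim}) together with the fiber localization of Lemma~\ref{le:fib}. First I record the numerical inputs. Since the fiber of $W\to\PP^1$ is $\PP^n$, a fiber line $\la$ satisfies $c_1^W(\la)=n+1$, while $c_1^W(s_W)=3$ by hypothesis; also $\dim_\C W=n+1$ and $\dim_\C D^+=n$, and because $s_W$ is disjoint from $D^+$ and $\la$ meets the fiber hyperplane once we have $s_W\cdot D^+=0$ and $\la\cdot D^+=1$. Throughout I write $\de_b:=\tfrac12\sum_i\deg b_i^*$, so that $0\le\de_b\le rn$, and I use that each relative point carries multiplicity $d_i\ge1$ with $\sum_i d_i=\be\cdot D^+$, forcing $1\le r\le\be\cdot D^+$ whenever $\be\cdot D^+>0$. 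I will apply the bookkeeping convention visible in the proof of Lemma~\ref{le:0}(i): an absolute constraint $a$ cuts the complex dimension by $\dim_\C W-\tfrac12\deg a$, a relative constraint $b_i^*$ cuts it by $\dim_\C W-\tfrac12\deg b_i^*$, and a descendent $\tau_i$ contributes $i$ to the total degree that the cut-down dimension must match.

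For (iii) take $\be=d\la$ with $d>0$ and no absolute point, so $r\ge1$. Then $c_1^W(\be)=d(n+1)$ and (\ref{eq:dim}) gives moduli dimension $n(d+1)+2r-2$; matching the relative constraints forces $\de_b=n(r-d-1)-r+2$. Since $1\le r\le d$ and $n\ge2$ this is $\le -r<0$, contradicting $\de_b\ge0$, so the invariant vanishes. For (i) take $\be=s_W+d\la$ and one descendent point, so $c_1^W(\be)=3+d(n+1)$; dimension matching (the point costs $n+1$ and $\tau_1$ costs $1$) gives $\de_b=n(r-d)-r$, which is $\le-r<0$ for every $r\ge1$. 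Thus the only numerically admissible case is $r=0$, which forces $d=0$ because $d=\be\cdot D^+=\sum d_i\ge r$. That leftover case is the section invariant $\bla\tau_1 pt\,|\,\bra^{W,D^+}_{s_W}$, equal to the absolute invariant $\bla\tau_1 pt\bra^{W}_{s_W}$ since $s_W\cdot D^+=0$. I claim it is $0$: the class $s_W$ is the minimal section class with unobstructed normal bundle $\Oo(1)\oplus\C^{n-1}$, so its moduli space is smooth with no bubbling, and through a generic point $p$ lying over $b\in\PP^1$ the sections in class $s_W$ form a smooth one-parameter family, each meeting $p$ at the single domain point $b$. Hence the marked point is pinned to a fixed point of the domain along this family, the cotangent line $\Ll_1$ is the constant bundle $T_b^*\PP^1$, and $\psi_1=c_1(\Ll_1)=0$, so the integral vanishes.

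For (ii) take $\be=d\la$ with $d\ge1$ and one descendent point. Dimension matching now yields $\de_b=n(r-d)-r+3$, and combining this with $1\le r\le d$, $n\ge2$ and $0\le\de_b$ leaves only $r=d\in\{1,2,3\}$ together with $(n,d,r)=(2,2,1)$. To discard all but the first I use that $\be=d\la$ is a fiber class and $pt$ is a fiber constraint: the localization underlying Lemma~\ref{le:fib} confines every contributing curve to a single fiber and reduces the invariant to the fiber pair $(\PP^n,\PP^{n-1})$, with each relative constraint $b_i^*$ replaced by $b_i^*\cap D_F$, where $D_F=\PP^{n-1}$ is the fiber of $D^+\to\PP^1$. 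Any point class, or more generally any fiber class $b_i^*$, has $b_i^*\cap D_F=0$, so the invariant vanishes unless every $b_i^*$ is a nonfiber class. When $r=d=3$ one has $\de_b=0$, so all three $b_i^*$ are points and the invariant is $0$; when $r=d=2$ one has $\de_b=1$, so the two even-degree constraints have degrees $\{2,0\}$ and one is a point, again giving $0$; and $(n,d,r)=(2,2,1)$ has $\de_b=0$, so $b_1^*$ is a point (equivalently, Lemma~\ref{le:0}(iii) forces $d=1$ for a single relative constraint), giving $0$. This leaves $r=d=1$, where $\de_b=2$ forces $b_1^*\in H_4(D^+)$; its fiber component $E^{n-2}$ dies under $\cap D_F$, so only the nonfiber part contributes and $b_1^*$ must be a multiple of the nonfiber class $(D^+)^{n-1}\in H_4(D^+)$, as asserted.

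The main obstacle is part (ii): the raw index count does not by itself single out $r=d=1$, and one must feed in the fiber localization of Lemma~\ref{le:fib} together with the vanishing $b_i^*\cap D_F=0$ for point and fiber relative constraints in order to remove the residual cases $r=d\in\{2,3\}$ and $(n,d,r)=(2,2,1)$. Matching the constraint-codimension convention exactly to the proof of Lemma~\ref{le:0}(i) is what makes the arithmetic land on the stated answer, in particular producing the class $(D^+)^{n-1}\in H_4(D^+)$ rather than a lower-degree relative constraint.
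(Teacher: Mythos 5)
Your dimension counts for (iii), for (ii), and for (i) in the range $d\ge r\ge 1$ reproduce the paper's proof exactly: the same relation $(d-r)n+\de_b+r=0$ for the section classes (and $=3$ for the fiber classes $d\la$), the same elimination of the residual cases $r=d\in\{2,3\}$ and $(n,d,r)=(2,2,1)$ by fiber localization (two point-type constraints cannot lie in one fiber, Lemma~\ref{le:fib}), and the same appeal to Lemma~\ref{le:0}(iii) to force $b_1^*=(D^+)^{n-1}$ when $r=d=1$. Up to that point your proposal and the printed proof coincide.

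The genuine problem is your treatment of the leftover case $r=0$, $d=0$ in (i), i.e.\ the invariant $\bla \tau_1 pt\,|\,\bra^{W,D^+}_{s_W}$. (The paper's own proof silently restricts to $d\ge r\ge 1$ and never addresses this case; your observation that the index count leaves it open is correct.) Your argument rests on the claim that $s_W$ is the minimal section class, so that the moduli space has ``no bubbling'' and the marked point stays pinned, giving $\psi_1=0$. That premise is false: $W=\PP(\Oo(1)\oplus\C^{n-1}\oplus\C)$ contains the rigid holomorphic section $s_1:=\PP(\Oo(1)\oplus\{0\}\oplus\{0\})$ in class $s_W-\la$ (normal bundle $\Oo(-1)^{n}$), so the one-parameter family of $s_W$-sections through a generic point $p$ compactifies by the nodal stable map $s_1\cup\ell$, where $\ell$ is the line in the fiber through $p$ joining $p$ to $s_1$, and in the limit the marked point moves onto the bubble $\ell$. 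Consequently $\Ll_1$ is not trivial over the compactified moduli space: the universal curve is the one-point blow-up of $\PP^1\times\PP^1$, the marked-point section is the proper transform of the constant section, with self-intersection $-1$, and so
$\bla\tau_1 pt\bra^{W}_{s_W}=\int\psi_1=1$, not $0$. You can verify this without geometry: the divisor equation gives $\bla\tau_1 pt\bra^W_{s_W}=\bla\tau_1 pt,F,F\bra^W_{s_W}$ ($F$ the fiber divisor, using $pt\cap F=0$), and the splitting identity (\ref{eq:LPi}) evaluates the right side as $\bla pt,[s_1]\bra^W_{\la}\cdot\bla F,F,F\bra^W_{s_W-\la}=1\cdot 1=1$; the same value persists for the relative invariant. (Your reduction ``relative $=$ absolute'' is also not covered by Lemma~\ref{le:0}(ii), which excludes descendants, though here it happens to be harmless.) So the vanishing you assert in this case is not merely unproven but false; the case must be excluded from the statement --- read (i) with $r\ge 1$, as the paper's proof in effect does --- rather than proved. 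This configuration is precisely the $\Ga_1=\emptyset$ term in Corollary~\ref{cor:eps2}, so it does require care, but of a different kind than either your argument or the dimension count supplies.
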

 \begin{proof} (i)  Since $c_1(s_W) = 3$ and $c_1(\la) = n+1$ this 
 invariant is nonzero only if
 $$
 n+1 + 3 + d(n+1) + r+1-3 + \de_b -1 -(d-r) = (r+1)(n+1),
 $$
 which reduces to $(d-r)n + \de_b + r=0$.  Hence it
 cannot hold when $d\ge r\ge 1$.  
 
 In case (ii) the dimensional condition is the same except that LHS is reduced by $3=c_1(s_W)$. If
 $d> r$ then we need $n=2, r=1, d=2$ and $\de_b=0$.  But then 
 the invariant has two point constraints
 and hence vanishes by Lemma~\ref{le:fib}: it counts curves in a fiber class through  points that could lie in different fibers. If $d=r$, 
 we need $\de_b+r=3$.  If $r>1$ then at least one of the $b_i^*$ is a point constraint, and the invariant vanishes as before.
 Hence we must have $r=1$, $\de_b=2$.  Since $b$ cannot be a fiber class, it reduces to 
 $\bla \tau_1 pt|b\cap F\bra^{\PP^n,\PP^{n-1}}$ which
 is nonzero when $b= (D^+)^{n-1}$ by Lemma~\ref{le:0}(iii). 
 
Claim (iii) holds by a dimension count (or by
Lemma~\ref{le:0}(i)).
 \end{proof}
 
\begin{cor}\labell{cor:eps2} Suppose that $(M,\om)$ is not uniruled and that $n\ge 2$. Then there is $c\ne0$ such that
$\bla \tau_{1}pt\bra^{P'}_\si =
c\bla \; |E^2\bra^{\TP',D}_{\si-\eps}$.
\end{cor}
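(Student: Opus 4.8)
The plan is to treat this as the blow-down counterpart of Lemma~\ref{le:eps1}, using the same decomposition-formula machinery. First I would realize $P'$ as the symplectic blow-down of $\TP'$ along $D$, written as the fiber sum of $(\TP',D)$ with the ruled pair $(W,D^+)$, where $W=\PP(\Oo(1)\oplus\C^{n-1}\oplus\C)\to\PP^1$ is the $\PP^n$-bundle introduced above, $D^+$ is identified with $D$, and the section $s_W$ of $W$ corresponds to the section of $P'$ that was blown up. I would then evaluate the absolute invariant $\bla\tau_1 pt\bra^{P'}_\si$ by the genus zero decomposition formula, putting the single insertion $\tau_1 pt$ into $W\less D^+$ and leaving the $\TP'$ side free of absolute constraints. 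Since every section class $\si$ of $P'$ satisfies $\si\cdot D=0$, each term of the formula is indexed by a splitting $\si=\al_1+\al_2$ realized on the two sides, a partition $\un d$, and matched relative constraints drawn from the self-dual basis $\{D^i\}\cup\{E^j\}$ of $H_*(D)$.

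The term I expect to survive is the one in which $\Ga_1$ is a single connected section component of class $\si-\eps$ carrying the one relative constraint $E^2$, while $\Ga_2$ is the fiber class $\la$ in $W$ carrying both the absolute insertion $\tau_1 pt$ and the dual relative constraint $(D^+)^{n-1}$ (these are dual in $D$ by (\ref{eq:dual})). The $\TP'$ factor of this term is exactly $\bla\;|E^2\bra^{\TP',D}_{\si-\eps}$, and by Lemma~\ref{le:W}(ii) together with Lemma~\ref{le:0}(iii) the $W$ factor $\bla\tau_1 pt|(D^+)^{n-1}\bra^{W,D^+}_{\la,(1)}$ is a nonzero rational number (along the fiber $\PP^n$ it reduces to $\bla\tau_1 pt|D^{n-1}\bra^{\PP^n,\PP^{n-1}}_{\la,(1)}$, which is nonzero since $n\cdot 1-(n-1)=1$). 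Taking $c$ to be this $W$-invariant times the combinatorial weight of the term then yields the asserted identity, provided all other terms vanish.

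The main work, and the step I expect to be the real obstacle, is showing that every other term in the decomposition vanishes; the argument runs parallel to steps (a)--(e) of Lemma~\ref{le:eps1}. Because $M$ is assumed not uniruled, no fiber component of $\Ga_1$ may carry a relative constraint: by Lemma~\ref{le:fib} such a component reduces to a relative invariant of $(\TM,E)$ meeting $E$, which would force $M$ uniruled through Lemma~\ref{le:d2}. On the $W$ side I would invoke Lemma~\ref{le:W}: part (i) eliminates any term in which $W$ carries a section class $s_W+d\la$, and part (iii) eliminates fiber classes $d\la$ with $d>0$ that carry no absolute insertion. Hence $\tau_1 pt$ must sit on a single $W$-component, which by Lemma~\ref{le:W}(ii) is forced to be the fiber class $\la$ with the one relative tail $(D^+)^{n-1}$. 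The homological matching $\al_1+\al_2=\si$ together with the genus zero condition (each component of $\Ga_1$ meets each component of $\Ga_2$ at most once) then forces $\Ga_2$ to consist of this single component, $\al_1=\si-\eps$, and $\Ga_1$ to be connected with the single relative constraint $E^2$. Thus only the expected term remains, and the argument goes through uniformly for $n\ge 2$.
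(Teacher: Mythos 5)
Your proposal is correct and is essentially the paper's own proof: the paper likewise realizes $P'$ as the fiber sum of $(\TP',D)$ with $(W,D^+)$, puts the insertion $\tau_1 pt$ into $W$, and uses Lemma~\ref{le:W} to show that the unique surviving term in the decomposition formula is $\bla \; |E^2\bra^{\TP',D}_{\si-\eps,(1)}$ paired with the nonzero factor $\bla \tau_1 pt\,|\,(D^+)^{n-1}\bra^{W,D^+}_{\la,(1)}$. The only (harmless) difference is that your preliminary step excluding fiber components of $\Ga_1$ via the not-uniruled hypothesis, Lemma~\ref{le:fib} and Lemma~\ref{le:d2} is redundant: once Lemma~\ref{le:W} forces $\Ga_2$ to be a single fiber-class component with a single relative tail, connectedness of the glued genus zero curve already forces $\Ga_1$ to consist of the one section component in class $\si-\eps$, which is how the paper concludes.
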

\begin{proof}
Identify $P'$ with the fiber sum of $(\TP',D)$ with
$(W,D^+)$ and calculate $\bla \tau_{1} pt\bra^{P'}_\si$ using the decomposition formula, putting the point into $W\less D'$.
A typical  term in the decomposition 
formula~(\ref{eq:DF})  has the form
$$
\bla  \; |b_1,\dots,b_r\bra^{\Ga_1,\TP',D}_{\al_1-d\eps,\un d}\;
\bla \tau_{1}pt |b_1^*,\dots,b_r^*\bra^{\Ga_2,W,D^+}_{\al_2+d\la,\un d}
$$
where $\al_2=0$ or $s_W$, and $\la$ is the class of a line in the fiber of $W$. 

We saw in Lemma~\ref{le:W} that in a nonzero term $\al_2=0$. Further each nonzero component of $\Ga_2$  must have an absolute constraint
and $d=r=1$.  Hence
there is only one such component and it has
$b_1^* = (D^+)^{n-1}$.   Hence $b_1=E^2$ and  $\Ga_1$ must consist of the single component 
$\bla  \; |E^2\bra^{\TP',D}_{\si-\eps,(1)}$. Hence there is only one term in the decomposition formula. The result follows.\end{proof}

\NI {\bf Proof of Proposition~\ref{prop:eps}(i).}

This follows immediately from Corollary~\ref{cor:eps2} and Lemma~\ref{le:eps1}.\QED

We now turn to the proof Proposition~\ref{prop:eps}(ii) 
which concerns the case $n=2$.

\begin{lemma}\labell{le:eps3} Let $n=2$.  If
  $\bla E\bra^{\TP'}_{\si-2\eps} \ne 0$ then
 $  \bla \; |E,D\bra^{\TP',D}_{ \si-2\eps,(1,1)} \ne 0.$  
 \end{lemma}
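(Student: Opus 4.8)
The plan is to imitate the proof of Lemma~\ref{le:eps1}, decomposing $\TP'$ as the fiber sum of $(\TP',D)$ with the ruled manifold $(Y,D^+)$ and expanding the absolute invariant $\bla E\bra^{\TP'}_{\si-2\eps}$ by the decomposition formula~(\ref{eq:DF}), placing the absolute constraint $E$ into $D_0\subset Y$. Since $\eps\cdot D=-1$ we have $(\si-2\eps)\cdot D=2$, so the total relative multiplicity is $\ell=2$, and a typical term reads
\[
\bla\; |b_1,\dots,b_r\bra^{\Ga_1,\TP',D}_{\al_1-\ell\eps,\un\ell}\;
\bla E\,|\,b_1^*,\dots,b_r^*\bra^{\Ga_2,Y,D^+}_{\al_2+\ell f,\un\ell},
\]
with $\al_1\in H_2(P')$ and $\al_2\in H_2(D_0)$. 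My goal is to show that every such term vanishes except the one whose $(\TP',D)$ factor is precisely $\bla\; |E,D\bra^{\TP',D}_{\si-2\eps,(1,1)}$; then nonvanishing of the left-hand side forces that relative invariant to be nonzero.

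First I would run the exclusion steps (a)--(d) of Lemma~\ref{le:eps1} essentially verbatim. The non-uniruled hypothesis enters only through Lemma~\ref{le:d2}, which forbids any fiber component of $\Ga_1$ from carrying a fiber constraint. Combining this with Lemma~\ref{le:fib} and the vanishing statements of Lemma~\ref{le:YY}(i)--(iii) forces every component of $\Ga_2$ to be a fiber class of $\pi_Y$, rules out section components of $\Ga_2$, and (using that $\la_X$ projects to $\eps$ in $D$) yields the numerical identity $\sum_i(m_i-d_i)=-2$ for the fiber classes $d_if+m_i\la_X$ of $\Ga_2$.

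The genuinely new feature for $n=2$, and where I expect the main difficulty, is the bookkeeping forced by $\ell=2$. A priori the curve could meet $D$ in a single point of multiplicity $2$ (partition $(2)$) or in two points of multiplicity $1$ (partition $(1,1)$). The partition $(2)$ would require a single $\Ga_2$ component with $d=2$, which is excluded by Lemma~\ref{le:YY}(iii), while a single $\Ga_2$ component sharing two relative tails with $\Ga_1$ is excluded by the genus-zero tree condition. Since each fiber component meets $D^+$ with multiplicity $1$ (so $d_i=1$, as $f\cdot D^+=1$ and $\la_X\cdot D^+=0$) and then $m_i=0$ by $\sum(m_i-d_i)=-2$, we conclude that $\Ga_2$ has exactly two components, each in class $f$, whence $\Ga_1$ is connected and the partition is $(1,1)$. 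The distribution of constraints is then forced: by Lemma~\ref{le:YY}(iii) the component carrying the absolute constraint $E$ must have a \emph{nonfiber} dual, namely $D^2$, so its $(\TP',D)$ partner is $E$; the remaining component carries the point class $E^2$, whose partner is $D$. Matching the self-dual basis identifies the $(\TP',D)$ factor as $\bla\; |E,D\bra^{\TP',D}_{\si-2\eps,(1,1)}$ with nonzero coefficient. The only remaining obstacle is the routine verification, via Lemma~\ref{le:fib} and a dimension count, that both fiber invariants $\bla E\,|\,D^2\bra^{Y,D^+}_{f}$ and $\bla\; |\,E^2\bra^{Y,D^+}_{f}$ are nonzero, which completes the proof.
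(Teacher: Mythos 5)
Your strategy coincides with the paper's: write $\TP'$ as the fiber sum of $(\TP',D)$ with $(Y,D^+)$, expand $\bla E\bra^{\TP'}_{\si-2\eps}$ by the decomposition formula with $E$ placed in $Y$, run the exclusion steps of Lemma~\ref{le:eps1}, and identify the unique surviving term, whose $(\TP',D)$ factor is $\bla\;|E,D\bra^{\TP',D}_{\si-2\eps,(1,1)}$; your final matching of dual constraints is also correct. However, there is a genuine gap at the decisive bookkeeping step. You claim the total contact multiplicity along $D$ is fixed at $\ell=(\si-2\eps)\cdot D=2$, and from this that every fiber component of $\Ga_2$ has $d_i=1$. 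That is not what the decomposition formula says: the $(\TP',D)$-side class is $\al_1-d\eps$ with $d$ a free parameter, and the only constraint imposed by the glued class $\si-2\eps$ is the relation you yourself write down, $\sum_i(m_i-d_i)=-2$ (since $f$ projects to $0$ and $\la_X$ projects to $\eps$ in $D$). Terms with total multiplicity $d>2$ are therefore present a priori. Concretely, take $\Ga_1$ connected in class $\si-3\eps$ with multiplicities $(1,2)$, paired with $\Ga_2$ consisting of the component through $E$ in class $f$ plus a second fiber component in class $2f+\la_X$ carrying one relative point of multiplicity $2$: this configuration satisfies the multiplicity matching, the genus-zero tree condition, the homology count, and all the numerical statements of Lemma~\ref{le:YY}(i),(iii),(iv) (here $m'=1\ge 0$ and $m'<d'=2$), so none of your steps excludes it.

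This is exactly where the paper has to work hardest: having shown that there is exactly one fiber component of $\Ga_2$ besides the one through $E$, with $0\le m'=d'-1$ and $r\le 2$, it reduces that component's invariant via Lemma~\ref{le:fib} to one of the form $\bla\;|a_1,\dots,a_r\bra^{X,E^+}_{d'f+(d'-1)\la_X,\un d'}$ and then shows by a dimension count, valid precisely because $n=2$, that such an invariant vanishes unless $d'=1$; only then do both components of $\Ga_2$ lie in class $f$. This step is absent from your proposal and cannot be bypassed. A lesser imprecision: a single $\Ga_2$ component with two relative tails is not killed by ``the genus-zero tree condition'' alone; genus zero merely forces its two tails onto two different components of $\Ga_1$, and one must then invoke step (a) (i.e.\ the non-uniruled hypothesis through Lemma~\ref{le:d2}) to forbid the resulting fiber component of $\Ga_1$ from carrying a fiber constraint. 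Since this is step (b) of Lemma~\ref{le:eps1}, which you import wholesale, that point is recoverable; the missing $d'=1$ count is not.
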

   \begin{proof}
As before, we calculate the nonzero invariant
   $\bla E\bra^{\TP'}_{\si-2\eps}$ by considering
   $\TP'$ as a connected sum of $(\TP',D)$ with $(Y,D^+)$, putting $E$ into $Y$.  Steps (a), (b) and (c) of the proof of
    Lemma~\ref{le:eps1} go through as before.  So consider  (d) and suppose that $\Ga_2$ is a section class.  The homology class count is now
   $$
\sum_{i\ge 0} (m_i-d_i) = -2. 
  $$
   By Lemma~\ref{le:YY} (i), $m_0\ge -1$ and $m_i\ge 0$.  Therefore 
   by  Lemma~\ref{le:YY} (ii) $m_0 - d_0 < -1$ and by  
   Lemma~\ref{le:YY} (iv) $m_i-d_i\le -1$ for $i>0$. Therefore
   $\Ga_2$ must be connected and $m_0 - d_0 =-2$.  Since
   $d_0\ge 0$ this gives $m_0\le -2$, which is impossible.  
  
   Since $m_i-d_i\le -1$ for fiber constraints, this argument  also shows that $\Ga_2$ has at most two fiber components.
   One of these goes through the $E$ constraint and has $m=0,d=1$
   by  Lemma~\ref{le:YY} (iii).  Hence there is exactly one other 
  that has $0\le m'= d'-1$ and $r\le 2$.   Since this is a fiber invariant it reduces to an invariant in $(X,E^+)$ of the form
  $$
  \bla\;|a_1,\dots,a_r\bra^{X,E^+}_{d'f + (d'-1)\la_X, \un d'}.
  $$
Since $n=2$, this is possible only if $d'=1$. 
 Hence both components of $\Ga_2$ lie in the class $f$, and the $\Ga_2$ factor 
must be
$$
 \bla E |D^2\bra^{Y,D^+}_{f} \; \bla \;|pt\bra^{Y,D^+}_{f}.
 $$
Hence
 $
  \bla \; |E, D\bra^{\TP',D}_{ \si-2\eps,(1,1)} \ne 0.
  $
\end{proof}

  \NI {\bf Proof of Proposition~\ref{prop:eps}(ii).}\SSS
  
  Blow down $D$  by summing with $(W,D^+)$ as in Corollary~\ref{cor:eps2}. Consider the resulting 
  decomposition formula for
  $\bla pt,s_W\bra^{P'}_\si$ where the absolute constraints
  are put into $W$. \SSS

  \NI (a)  {\it There is no term in this formula with $\Ga_1=\emptyset$.} 
   In such a term $\si$ would be a section class in $W$ 
   with $c_1^W(\si) = 3$. Since the only section classes are $s_W+m\la$ and $c_1^W(\la) = 3$, we must have $\si=s_W$. 
   But  the only holomorphic representatives of $s_W$ that meet the product divisor $Z = \PP(\{0\}\oplus \C^2)\cong s_W\times \PP^1$ are parallel copies of $s_W$ lying entirely in $Z$.  Hence one can place the constraints $pt, s_W$ in $Z$ in such a way that no holomorphic  $s_W$-curve meets them both.
   \SSS
   
   \NI (b)  {\it $\Ga_2$ cannot be a section class.} 
   For if so, by  (a), there would have to be a nonzero invariant
   $$
   \bla a_1,\dots,a_m| b_1^*,\dots,b_r^*\bra^{W,D^+}_{s_W+d\la,\un d},\quad d>0,
   $$
   where $0\le m\le 2$ and the $a_i$ form a subset of $\{pt, s_W\}$.
   This is impossible for dimensional reasons.
   \SSS
   
   \NI (c) {\it $\Ga_2$ has at most $2$ components. 
    Each  has $d=r=1$}.
By (b) each component in $\Ga_2$ lies in some class $df$. 
Even if one puts all the constraints into one component, a dimension count shows that
the invariant
$\bla pt, s_W|b_1,\dots,b_r\bra^{W,D^+}_{d\la,\un d}$ 
is nonzero only if $d=1$.  It follows that $d=1$ in all cases.
Thus we are counting lines, and so
each component must have two constraints and hence since $r=1$
each must have an absolute constraint.\SSS

Therefore the only possibilities for $\Ga_2$ are:
one component of type
 $$
\bla pt, s_W|D\bra^{W,D^+}_{\la,(1)}= 
\bla pt, pt|E^+\bra^{F,E^+}_{\la,(1)},
$$
or the two terms in the product
$$
    \bla pt |s_D\;\bra^{W,D^+}_{\la,(1)} \; \bla s_W |pt\;\bra^{W,D^+}_{\la,(1)}.
$$ 
The corresponding $\Ga_1$ terms are 
$$
\bla \;|pt\bra^{\TP',D}_{\si-\eps},\quad\mbox{ and }\;\; 
\bla \;|D,E\bra^{\TP',D}_{\si-2\eps}.
$$
We saw in Lemma~\ref{le:eps3} that the second of these terms is nonzero. Therefore, if the first term vanishes, 
  $\bla pt,s_W\bra^{P'}_\si\ne 0$.  However, if the first term does not vanish, we may apply Corollary~\ref{cor:eps2} to conclude
  that $\bla \tau_1 pt\bra^{P'}_\si\ne 0$.
  This completes the proof.\QED

%%%%%%%%%%%%%%%%%%%%%%%%%%%%%%%%%%%%%%%%%%%%%%%%%%%%%%%%%%
\subsection{Identities for descendent classes}\labell{ss:LP}
%%%%%%%%%%%%%%%%%%%%%%%%%%%%%%%%%%%%%%%%%%%%%%%%%%%%%%%%%%
 
 To complete the proof of Theorem~\ref{thm:main} we must establish 
  Lemma~\ref{le:horpt2}.   Its proof is based on  
some identities for genus zero Gromov--Witten invariants
that we now explain.
We shall denote by $\psi_i$ the first Chern class of the cotangent bundle to the domain of a stable map at the $i$th marked point, and by $\oWw_\be$
the configuration space of all (not necessarily holomorphic) stable maps 
in class $\be$.
If $i\ne j$ consider the subset $D_{i,\be_1\,|\,j,\be_2}$  of $\oWw_\be$ consisting of all stable maps with
at least two components, one in class $\be_1$ containing $z_i$ and the other in class $\be_2: = \be-\be_1$ containing  $z_j$.
Further, if $i,j,k$ are all distinct consider the subset $D_{i|jk}$ 
of  $\oWw_\be$ 
 consisting of all stable maps with
at least two components, one in some class $\be_1$ containing $z_i$ and the other in class $\be-\be_1$ containing  $z_j, z_k$. 
 The 
virtual moduli cycle $\oMm\,\!^{\,[vir]}_\be$ 
has a natural map to $\oWw_\be$
that may be chosen to be transverse to the above subsets.  These therefore pullback to real codimension $2$ sub(branched) manifolds
in $\oMm\,\!^{\,[vir]}_\be$ which we denote by the same names. 

In \cite[Thm~1]{LP}, Lee--Pandharipande prove the following identities in
the algebraic case:
 \begin{eqnarray}\labell{eq:LPi}
\psi_i &=& D_{i|jk}\\\labell{eq:LPii}
ev_i^*(L)& = &ev_j^*(L) + (\be\cdot L)\,\psi_j -\sum_{\be_1+\be_2=\be}
(\be_1\cdot L)\, D_{i,\be_1\,|\,j,\be_2},
\end{eqnarray}
where the two sides are considered as elements of an appropriate Picard group and $L\in H_{2n-2}(M;\Z)\cong H^2(M)$ is any divisor. We might try to think of  
these equations as identities in
the second cohomology group of $\oMm\,\!^{\,[vir]}_\be$.
However, $\oMm\,\!^{\,[vir]}_\be$ is not 
really a space in its own right
since it is only well defined up to certain kinds of cobordism.
  Hence it does not have a well defined $H^2$.  
Therefore  we shall interpret these equations as 
 identities for 
Gromov--Witten invariants.  For example, (\ref{eq:LPi}) 
states that for all $k\ge 3$, all $i_1>0$, $i_j\ge 0$ and all classes $\be\in H_2(M), a_i\in H_*(M)$,
$$
\bla \tau_{i_1}a_1,\dots, \tau_{i_k}a_k\bra^M_{\be} =
\sum_{j, S, \be_1+\be_2=\be} 
\bla \tau_{i_1-1}a_1,\dots,\xi_j \bra^M_{\be_1}\;\bla \xi_j^*, \tau_{i_2}a_2, \tau_{i_3}a_3,\dots\bra^M_{\be_2}.
$$
Here the sum is over the elements of a basis $\xi_j$ of $H_*(M)$ (with dual basis $\xi_j^*$), all decompositions $\be_1+\be_2=\be$ of $\be$, and all distributions $S$ of the constraints $\tau_{i_4}a_4,\dots, 
\tau_{i_k}a_k$ over the two factors, subject only to the restriction that if $\be_1=0$ then 
the first factor must include at least one of the
$\tau_{i_\ell}a_\ell, \ell\ge 4,$ so that it is stable.  The second identity has a similar interpretation; cf. Lemma~\ref{le:LP}.

Identity (\ref{eq:LPi}) is proved for the space of genus zero stable curves
$\oMm_{0,k}$
by Getzler  at the beginning of \S4 in \cite{Ge}.
  It holds for  stable maps because
the descendent class
$\psi_i$ differs from the pullback of the corresponding class on 
$\oMm_{0,k}$ precisely 
by the boundary class consisting of the elements in
$\oMm\,\!^{\,[vir]}_\be$ such that the 
component containing the $i$th marked point is unstable.
Note that the proof of Proposition~\ref{prop:unicond} and hence of Theorem~\ref{thm:main} requires the full strength of 
this identity.

Lee--Pandharipande's proof of 
(\ref{eq:LPii}) in \cite[\S1.1]{LP}  adapts readily to the symplectic case provided that one has a good framework to work in: one needs a model for the virtual cycle $\oMm\,\!^{\,[vir]}_\be$ in which  the elements in $H_2(\oMm\,\!^{\,[vir]}_\be)$ have \lq\lq nice" representatives.
One could work with ad hoc methods as in \cite{Mcq} and interpret the idea of transversality using the normal cones to the strata of the 
moduli space provided by the gluing data, but it is cleaner to work in the category of polyfolds newly introduced by Hofer--Wysocki--Zehnder
since this provides the moduli spaces with a smooth structure.  
Note that because one needs to use multivalued perturbations
 the moduli spaces are in general branched.  (Here one can work with various essentially equivalent definitions, see \cite[Def~3.2]{Mcbr}
 and \cite[Def~1.3]{HWZ3}.)  
  
To prove Theorem~\ref{thm:main} we only need the very special case of this identity relevant to Lemma~\ref{le:horpt2}.
We will sketch the proof in this case to give the general idea.
Note that here $i=1,j=2$ and the term 
$ev_2^*(L)$ does not contribute since $L\cap pt = 0$.
When we prove Lemma~\ref{le:horpt2} we will explain how 
to construct the moduli space $B$ in this context; 
since we are working in a fibered $6$ dimensional manifold, 
it is not hard to obtain it by ad hoc methods. 

\begin{lemma}\labell{le:LP}  Given a class $\si\in H_2(M)$
and a divisor 
 $H\in H_{2n-2}(M)$, denote by $B: = \oMm^{\,[vir]}_{\si,2}(pt)$
the virtual moduli space of stable maps  
in class $\si$ with two marked points, one through
 a fixed point $x_0\in M$ and the other through a cycle representing $H$.  Suppose that $B$  can be constructed as a smooth branched manifold of real dimension $2$ that is transverse to the strata in $\oWw$.  Then, for any divisor $L\in H_{2n-2}(M)$
\begin{eqnarray*}
\bla LH, pt\bra_\si^M &=& (\si\cdot L)\,\bla H,\tau_1 pt\bra_\si^M%\\&&\qquad
-\sum_{j,\,\al_1+\al_2=\si}(\al_1\cdot L)\,
\bla H,\xi_j\bra_{\al_1}^M \bla \xi_j^*,pt\bra_{\al_2}^M.
  \end{eqnarray*}
\end{lemma}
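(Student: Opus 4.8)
The plan is to realize this as the case $i=1,\,j=2$ of the divisor relation (\ref{eq:LPii}) and to prove that relation directly over $B$ by a line-bundle computation on the universal curve, following Lee--Pandharipande~\cite{LP}. First I would form the universal curve $\pi:\Uu\to B$ with its universal stable map $\mu:\Uu\to M$ and the two tautological sections $s_1,s_2:B\to\Uu$ marking the points $z_1$ (constrained to lie on a cycle representing $H$) and $z_2$ (pinned to $x_0$). The key auxiliary object is the line bundle $\Ll$ on $\Uu$ with
$$
c_1(\Ll)\;=\;\mu^*L-(\si\cdot L)\,[s_2(B)],
$$
where $[s_2(B)]$ is the Poincar\'e dual of the section. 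Since the marked points are always distinct, $s_1(B)$ and $s_2(B)$ are disjoint, so $s_1^*\Ll=ev_1^*L$; and since the normal bundle of $s_2(B)$ in $\Uu$ is the vertical tangent line at $z_2$, whose Chern class is $-\psi_2$, we get $s_2^*\Ll=ev_2^*L+(\si\cdot L)\psi_2$. Because $z_2$ is pinned to the point $x_0$, the term $ev_2^*L$ vanishes on $B$, leaving $s_2^*\Ll=(\si\cdot L)\psi_2$. Integrating over $B$ then identifies $\int_B s_1^*\Ll=\bla LH,pt\bra^M_\si$ (by adjoining the extra constraint $z_1\in L$) and $\int_B s_2^*\Ll=(\si\cdot L)\bla H,\tau_1pt\bra^M_\si$.

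The heart of the argument is to evaluate $\int_B(s_1^*\Ll-s_2^*\Ll)$. The point is that $\Ll$ has fibre degree $\si\cdot L-(\si\cdot L)=0$, so on the open locus $B^\circ\subset B$ of irreducible domains, where $\Uu\to B$ is an honest $\PP^1$-bundle, $c_1(\Ll)$ lies in $\pi^*H^2(B^\circ)$ by Leray--Hirsch; hence $s_1^*\Ll=s_2^*\Ll$ on $B^\circ$. Consequently $s_1^*\Ll-s_2^*\Ll$ is supported at the finitely many points of $B$ carrying a reducible domain, i.e.\ the points of the strata $D_{1,\al_1|2,\al_2}\cap B$. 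Near such a point the family is a standard smoothing of a node in a complex surface, so the two components $C',C''$ (with $z_1\in C'$ in class $\al_1$ and $z_2\in C''$ in class $\al_2$) satisfy $C'\cdot C'=C''\cdot C''=-1$ and $C'\cdot C''=1$. Matching bidegrees shows $\Ll\cong\pi^*\mathcal{K}\otimes\Oo(-(\al_1\cdot L)\,C')$ locally; since $s_1$ meets $C'$ transversally once while $s_2$ misses it, the local contribution to the difference is $-(\al_1\cdot L)$. Summing over all reducible points, and counting those of $D_{1,\al_1|2,\al_2}\cap B$ by the splitting axiom as $\sum_j\bla H,\xi_j\bra^M_{\al_1}\bla\xi_j^*,pt\bra^M_{\al_2}$, gives $\int_B(s_1^*\Ll-s_2^*\Ll)=-\sum_{\al_1+\al_2=\si}(\al_1\cdot L)\sum_j\bla H,\xi_j\bra^M_{\al_1}\bla\xi_j^*,pt\bra^M_{\al_2}$. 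Combining with the two integrals above and rearranging yields the stated identity.

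The main obstacle is not the bookkeeping but justifying the intersection-theoretic steps in the symplectic, non-holomorphic setting: the $(-1)$ self-intersections of the boundary components, the passage from ``fibrewise degree zero'' to ``pulled back from the base,'' and the transversality of the sections $s_1,s_2$ to the boundary strata. These are exactly the places where one needs $B$ realized as a smooth branched manifold of the expected dimension that is transverse to the strata of $\oWw$ — which is precisely the hypothesis of the lemma, and the reason the general identity (\ref{eq:LPii}) calls for the polyfold framework. In the present $6$-dimensional fibred situation, however, $B$ can be built by the ad hoc gluing methods indicated before Lemma~\ref{le:horpt2}, and the standard local model for node-smoothing supplies the needed self-intersection numbers, so the computation goes through as above.
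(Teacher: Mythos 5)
Your proposal is correct and takes essentially the same route as the paper: both are adaptations of Lee--Pandharipande's universal-curve argument to the branched moduli space $B$, reducing the difference $\int_B \ev_1^*L - (\si\cdot L)\int_B\psi_2$ to contributions of the finitely many nodal fibers, each contributing $-(\al_1\cdot L)$ (with its weight), and then identifying the weighted count of nodal configurations with $\sum_j\bla H,\xi_j\bra^M_{\al_1}\bla \xi_j^*,pt\bra^M_{\al_2}$ via the splitting axiom. The only difference is bookkeeping: the paper blows the universal curve down to the $\PP^1$-bundle $\PP(V\oplus\C)$ and expands $[s_1]-[s_2]$ as a fiber class plus exceptional classes, whereas you twist $\mu^*L$ by $-(\si\cdot L)[s_2(B)]$ and localize the difference of section pullbacks at the nodes --- the same computation in dual form.
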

\begin{proof} For present purposes we may 
think of a branched $2$-dimensional manifold as the 
realization of
a rational singular $2$-cycle formed by taking the  union of a finite number of (positively) rationally weighted oriented $2$-simplexes $(\la_i, \De_i)$, where $ \la_i\in \Q^{>0}$, appropriately identified along their boundaries. (This might have singularities  
at the vertices, but since these are codimension $2$ this does not matter.
See \cite{Mcbr,HWZ3} for a more complete description.) Since  $B$ is transverse to the strata in $\oWw_\be$,  there is a finite set $\Sing_B$ 
 of points in 
$B$ corresponding to stable maps whose domain has two components,
and the other points have domain $S^2$.  We assume that each $b\in \Sing_B$ lies in the interior of a $2$-simplex and therefore has a weight $\la_b$.  

Let
$\pi: C\to B$ be  the universal curve formed by the domains
with  evaluation map  $f:C\to M$. The marked points define two disjoint sections $s_1, s_2$ of $C\to B$, numbered so that $f\circ s_1(b) \in H, f\circ s_2(b) =x_0$. We may assume that these sections are transverse to the pullback
divisor (codimension $2$ cycle) $f^*L$.
Note that $C$ is
 the blow up of an oriented $S^2$-bundle $P\to B$ at a finite number of points, one in each fiber over $\Sing_B$. For each such $b$ we choose the exceptional sphere $E_b$ to be the component that does not contain $s_2(b)$. Since the marked points never lie at  nodal points of the domain,
the sections $s_1, s_2$ blow down to disjoint sections $s_1',s_2'$ of the $S^2$ bundle $P\to B$.  Hence $P$ can be considered as the projectivization
$\PP(V\oplus \C)$ where $V\to B$ is a line bundle and
 $s_1'= \PP(V\oplus 0)$, $s_2'= \PP(0\oplus \C)$.   Note also that $s_2=s_2'$ while $s_1$ is the blow up of $s_1'$ over those points
 $b\in \Sing_B$ for which $s_1\cap E_b\ne \emptyset$.  For such $b
 \in \Sing_B$
  set $\de_1(b): = \la_b$, and otherwise set $\de_1(b): = 0$.
 
If $B$ were a manifold then,
as in the proof of Theorem 1 in \cite[\S1.1]{LP}, we would have  
$$
\bla H,\tau_1 pt\bra_\si^M = - s_2\cdot s_2 = -\int_B c_1(V).
$$  
In our situation
we must take the weights on $B$ into account: each point $y$ in the intersection $s_2\cdot s_2$ should be given the (positive) weight $\la(\pi(y))\in \Q$
of the corresponding point $\pi(y)\in B$ as well as the sign $o(y)\in \{\pm 1\}$
of the intersection.  Thus we find
$$ 
\bla H, \tau_1 pt\bra_\si^M = -\sum_{y\in s_1\cdot s_1} o(y)\La(\pi(y))
 =: -\int_B c_1(V).
$$

We want to calculate
\begin{eqnarray*}
\bla LH, pt\bra_\si^M &=& \bla LH, pt\bra_\si^M - 
\bla H, L pt\bra_\si^M \\
&=& \int_B \ev_1^*(L) - \ev_2^*(L) \\
&=& 
\int_{s_1} f^*(L) - \int_{s_2} f^*(L)\\
&=& \sum_{y\in s_1\cdot f^*(L)} o(y)\La(\pi(y)) - \sum_{y\in s_2\cdot f^*(L)} o(y)\La(\pi(y)).
\end{eqnarray*}
This can be done just as in \cite{LP}.  The divisor
$f^*(L)$ intersects a generic fiber $F$ of $C\to B$ with multiplicity $\si\cdot L$, and  intersects the exceptional divisor $E_b$
 with multiplicity $\al_1(b)\cdot L$, where $\al_1(b)=[f_*(E_b)]$.
Since $H_2(C;\Q)$ splits as the sum $H_2(P;\Q)\oplus\sum_{b\in\Sing_B}
[E_b]\Q$,
 we may consider the difference $[s_1]-[s_2]\in H_2(C;\Q)$ 
as the sum of $[s_1']-[s_2']$ with $-\sum_{b\in \Sing_B} \de_1(b)[E_b]$.
But $[s_1']-[s_2']=k[F]$ where $k: =  -\int_B c_1(V) F$ and $[F]$ denotes the fiber class of $P\to B$.  
It follows that
\begin{eqnarray*}
\int_{s_1} f^*(L) - \int_{s_2} f^*(L) &=& 
-(\si\cdot L) \int_B c_1(V) - \sum_{b\in \Sing_B} \de_1(b)
\al_1(b)\cdot L\\
&=& (\si\cdot L) \bla H, \tau_1 pt\bra_\si^M  
-\sum_{\al_1+\al_2=\si}(\al_1\cdot L) N_{\al_1,\al_2}
\end{eqnarray*}
where $N_{\al_1,\al_2}= \sum_j
\bla H,\xi_j\bra_{\al_1}^M \bla \xi_j^*,pt\bra_{\al_2}^M$
 is the number of two-component curves, one in class $\al_1$ through $H$ and the other in class $\al_2$ through the point $x_0$.
 This completes the proof.
\end{proof}

  \NI
  {\bf Proof of Lemma~\ref{le:horpt2}}  If a symplectic $4$-manifold $(M,\om)$  is not the blow up of a rational or ruled manifold then it has a unique maximal collection $\{\eps_1,\dots,\eps_k\}$ of disjoint 
  exceptional classes (i.e. classes that may be represented by symplectically embedded spheres of self-intersection $-1$).  If we blow these down, the resulting manifold  $(\ov M, \ov\om)$ (called the minimal reduction of 
  $(M,\om)$) has trivial genus zero Gromov--Witten  invariants.  We show 
  that for these $M$  it is impossible for an invariant of the type
  $\bla pt,s\bra_\si^P$ to be nonzero.  
   
 Since  $(M,\om)$ is not strongly uniruled, it follows from
 Remark~\ref{rmk:horpt}  that $\bla pt, a\bra_\si^P=0$ 
 for all $a\in H_2(M).$
 Therefore  $\bla pt,s\bra_\si^P$ is the same for all section classes $s$.
Choose classes $\ov h_1,\ov h_2\in H^2(\ov M;\Z)$ with $\ov h_1\ov h_2\ne 0$.  Pull them back to $M$ and then extend them to $H^2(P;\Q)$ (which is possible by~\cite[Thm~1.1]{Mcq} for example.)   Multiplying them by a suitable constant
we get integral classes $h_1,h_2\in H^2(P;\Z)$ with Poincar\'e duals $H_1,H_2$.  By construction $s_0: =H_1H_2$ is a section class
and $H_j\cdot\eps_i=0$ for all exceptional divisors $\eps_i$ in $M$.

We now claim that we can apply Lemma~\ref{le:LP} to evaluate
the nonzero invariant  $\bla H_1H_2, pt\bra_\si^P$.  
For this, it suffices to show that the space of (regularized) stable maps in class $\si$ and through the point $x_0$ can be constructed as a branched $2$-manifold $B'$.
To this end,
 consider an $\Om$-tame almost complex structure $J_P$ on $(P,\Om)$
for which the projection $\pi:(P,J_P)\to (S^2,j)$ is holomorphic.
Then $J_P$ restricts on each fiber $P_z: = \pi^{-1}(z)$ to an $\om$-tame almost complex structure on $M$.  
Every $J_P$-holomorphic stable map in class $\si$ consists of a holomorphic section plus some fiberwise bubbles. 
Since the family of such stable maps through  some fixed point $x_0$ has real dimension $2$, we can assume that each such bubble is a $k$-fold cover of an embedded regular curve in some class $\be$ with $c_1(\be)\ge 0$ and $\om(k\be)\le \ka = \Om(\si)$, and that the sections through $x_0$ with energy $\le \ka$ are regular and so lie in classes with $2\le c_1(\si')\le 3$. 
For any $J_P$, each exceptional
 class $\eps_i$ is represented by a unique
embedded sphere. Since $(M,\om)$ is not rational or ruled, it follows from Liu \cite{Liu}
 that these  are the only 
$J_P$-holomorphic spheres in classes $\be$ with $c_1(\be)>0$. (For details of this argument see ~\cite[Cor~1.5]{MS0}.)  

Since regular sections through $x_0$ in classes with $c_1(\si')=2$
are isolated, there can be a finite number of two component stable maps whose bubble is an exceptional sphere and there
 are no stable maps involving
multiply covered exceptional classes.  
However there may be some
with multiply covered bubbles in classes $\be$ with $c_1(\be) = 0$.  
 To deal with these, choose a suitable very small multivalued perturbation $\nu$ over the moduli spaces of
fiberwise curves with class $k\be$ for $\om(k\be)\le \ka$
so that there are only isolated solutions of the corresponding perturbed equation.  
Since the sections through $x_0$ are regular, they can meet one of these isolated bubbles only if 
they lie in a moduli space of real dimension $2$ and for generic choices 
of $J_P$ and $\nu$ they will meet only one such bubble.  Therefore for this choice of $\nu$ the perturbed moduli space contains isolated two-component 
stable maps. It remains to extend the perturbation over a neighborhood
of this stratum in $\oWw_\si$ (tapering it off to zero outside this neighborhood), and to define $B'$ as the  solution space of the resulting perturbed Cauchy--Riemann equation.

 Lemma~\ref{le:LP} now implies that
\begin{eqnarray*}
\bla H_1H_2, pt\bra_\si^P &=& (\si\cdot H_1)\,\bla H_2,\tau_1 pt\bra_\si^P%\\&&\qquad
-\sum_{j,\al_1+\al_2=\si}(\al_1\cdot H_1)\,
\bla H_2,\xi_j\bra_{\al_1}^P \bla \xi_j^*,pt\bra_{\al_2}^P,
  \end{eqnarray*}
 where $\xi_j$ runs over a basis for $H_*(P)$  with dual basis  $\xi_j^*$.
 Note that we may choose this basis so that exactly one of  each pair $\xi_j,\xi_j^*$
 is a fiber class.
 
The first term must vanish, since otherwise
$(M,\om)$ is strongly uniruled by Lemma~\ref{le:horpt}.  
Therefore there must be some nonzero product.  If $\al_2$ is a fiber class then by Lemma~\ref{le:fib} $ \bla \xi_j^*,pt\bra_{\al_2}^P=
 \bla \xi_j^*\cap M,pt\bra_{\al_2}^M$ so that  $(M,\om)$ is strongly uniruled by definition.  Hence these product terms vanish.  
 Further if $\al_2$ is a section class and $\xi_j^*\ne [M]$ is a fiber class then $(M,\om)$ is strongly uniruled by Remark~\ref{rmk:horpt}.
  On the other hand if $\xi_j^*=[M]$ then there is a nonzero invariant
$\bla H_2,s\bra_{\al_1}^P$ for $\al_1\in H_2(M)$ and some section class $s$ which implies that $c_1^M(\al_1) = 1$.  
Also because $\al_1\cdot H_2\ne 0$, the choice of $H_2$ implies that $\al_1$ is not
one of the exceptional classes $\eps_i$. But this is impossible 
since we saw above that the only  bubbles
with Chern class $1$ are the exceptional spheres.
 Therefore there must be a nonzero term in which both 
$\al_1$ and $\xi_j$  are fiber classes.
 In this case,
 $\bla H_2,\xi_j\bra_{\al_1}^P=
\bla H_2,\xi_j\cap M\bra_{\al_1}^M$ is an invariant in $M$.  But the only nonzero classes $\al_1\in H_2(M)$ with nontrivial $2$-point invariants are the exceptional divisors $\eps_i$ and
$\eps_i\cdot H_1=0$ by construction.  Therefore these terms must vanish as well.  This completes the proof.\QED

%%%%%%%%%%%%%%%%%%%%%%%%%%%%%%%%%%%%%%%%%%%%%%%%%%%%%%%%%%
\section{Special cases}\labell{s:fur}
%%%%%%%%%%%%%%%%%%%%%%%%%%%%%%%%%%%%%%%%%%%%%%%%%%%%%%%%%%

We now discuss some special $S^1$-actions for which it is possible to prove directly that $(M,\om)$ is strongly uniruled.

\begin{prop}\labell{prop:sfree}  Suppose that $(M,\om)$ is a semifree Hamiltonian $S^1$-manifold. Then $(M,\om)$ is  strongly
uniruled.
\end{prop}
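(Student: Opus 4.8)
The plan is to exhibit directly a nonzero three-point invariant $\bla pt,a_2,a_3\bra^M_\al$ by counting the gradient spheres of the action --- the one case (as the introduction notes) in which the geometric count cannot be spoiled by cancellation. First I would fix an $S^1$-invariant $\om$-compatible $J$, with associated invariant metric $g_J$, and let $K$ be the normalized moment map; recall that $F_{\max}$ and $F_{\min}$ are connected and that, the action being semifree, the normal weights along them are all $-1$ and all $+1$ respectively. Writing $\al$ for the class of the $S^1$-orbit of a gradient flow line running from $F_{\max}$ to $F_{\min}$, a weight computation at the two poles gives $c_1(\al)=2n-\dim_{\C}F_{\max}-\dim_{\C}F_{\min}\ge 2$ and $\om(\al)=K_{\max}-K_{\min}>0$, so $\al\ne0$. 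The candidate is
$$
\bla pt,[F_{\min}],[F_{\max}]\bra^M_\al .
$$
Its constraints have total (real) codimension $6n-\dim F_{\min}-\dim F_{\max}$, which equals the dimension $2n+2c_1(\al)$ of the moduli space $\oMm_{0,3}(\al)$ of genus zero $\al$-curves with three marked points, so this is a genuine number.

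The heart of the matter is to evaluate it using the (non-generic) invariant $J$, for which the honest $J$-holomorphic representatives of $\al$ are available geometrically. I would argue that any stable $\al$-map whose three marked points are sent to a generic $p\in M$, to $F_{\min}$, and to $F_{\max}$ must in fact be the orbit of a single gradient trajectory: the two end constraints force the image to meet both extremal fixed sets, and since $\om(\al)=K_{\max}-K_{\min}$ is exactly the total variation of $K$, this together with the semifree local normal form near $F_{\max}$ and $F_{\min}$ should leave no room for a non-equivariant curve or for a broken/bubbled configuration meeting all three cycles (the latter also excluded on dimensional grounds once $p$ is generic). Through a generic $p$ there is a unique gradient trajectory from $F_{\max}$ to $F_{\min}$, meeting each of $F_{\max},F_{\min}$ in one point, so there is exactly one contributing configuration.

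The step where semifreeness is indispensable --- and the main obstacle --- is transversality. As flagged in Remark~\ref{rmk:tech}, invariant almost complex structures are almost never regular; but here they are for this class. Pulling back, $u^*TM$ splits $S^1$-equivariantly into line bundles whose pole weights lie in $\{-1,0\}$ over $F_{\max}$ and in $\{0,+1\}$ over $F_{\min}$, so every summand has nonnegative degree and $H^1$ of the normal bundle vanishes. Hence the gradient spheres are cut out transversally, which is precisely what guarantees that the single geometric solution is not cancelled by hidden contributions of the kind described in the introduction. Granting this, the cut-down moduli space is a single regular point and the invariant equals $\pm1$.

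Since $\bla pt,[F_{\min}],[F_{\max}]\bra^M_\al=\pm1\ne0$ with $\al\ne0$, the manifold $(M,\om)$ satisfies the definition of strongly uniruled, proving the proposition. I expect the localization-and-transversality step to be the crux; the remaining ingredients are the standard dictionary between $S^1$-orbits of gradient lines and equivariant spheres, and a routine dimension count. This computation is essentially the content of the semifree case referred to in Proposition~\ref{prop:al}.
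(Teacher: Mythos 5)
There is a genuine gap, and it sits exactly at the step you call the crux. Your count hinges on the claim that, for an $S^1$-invariant $J$, the only stable maps in class $\al$ through $p$, $F_{\min}$, $F_{\max}$ are orbits of single gradient trajectories, and you support this with (a) the energy identity $\om(\al)=K_{\max}-K_{\min}$ plus the local normal form, and (b) the assertion that transversality of the gradient spheres ``is precisely what guarantees that the single geometric solution is not cancelled by hidden contributions.'' Neither holds up. The estimate ``energy $\ge$ moment-map drop divided by the isotropy order'' that the paper uses in this context (Propositions~\ref{prop:2iso} and~\ref{prop:al}) is a statement about \emph{invariant} spheres, i.e.\ orbits of gradient segments; it gives no information about non-invariant $J$-holomorphic curves, every one of which in class $\al$ automatically has energy exactly $K_{\max}-K_{\min}$, so nothing in your argument excludes them. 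And regularity of the curves you have exhibited controls only their own contribution ($+1$ each); it cannot rule out contributions from curves you have not exhibited. Since $J$ is forced to be $S^1$-invariant it is not generic, and the moduli space of $\al$-curves may contain non-invariant configurations, possibly of excess dimension, whose contribution to the virtual count must be shown to vanish. That is the hard step, and it is missing.

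Your choice of constraints makes this step unavailable by the standard route. The way such counts are actually justified (in \cite{MT}, on which the paper leans) is equivariant localization: when \emph{all} cut-down constraints are $S^1$-invariant cycles, the constrained moduli space inherits an $S^1$-action whose fixed points are the invariant stable maps, and one argues that only these contribute. By placing the point constraint at a generic $p$, which is not fixed, you destroy this action (rotating a curve through $p$ moves it off $p$). This is precisely why the paper's geometric version of your argument, Proposition~\ref{prop:al} (offered as the ``alternative proof''), first arranges $F_{\max},F_{\min}$ to be divisors so that $c_1(\al)=2$ and then computes the one-point invariant $\bla pt\bra^M_\al$ with the point taken \emph{on} $F_{\max}$, keeping the constrained moduli space $S^1$-invariant; and it is why the paper's primary proof of Proposition~\ref{prop:sfree} avoids the geometric count altogether: it quotes \cite[Thm~1.15]{MT}, where the equivariant analysis is carried out, to conclude $\Ss(\ga)*pt = a\otimes q^{-d}t^{\ka}+ \mbox{l.o.t.}$ with $0\ne a\in H_{2d}(M)$, $d>0$, and then gets a contradiction in one line from Lemma~\ref{le:QH}, since if $(M,\om)$ were not strongly uniruled one would have $\Ss(\ga)*pt=pt\otimes\la$. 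Your peripheral ingredients are fine --- the dimension count, the formula $c_1(\al)=2n-\dim_{\C}F_{\max}-\dim_{\C}F_{\min}\ge 2$, and the nonnegativity of the degrees in the equivariant splitting of $u^*TM$ --- but without a completeness-of-the-count argument (or a localization framework that your choice of constraints forecloses), the proof does not go through.
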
 

\begin{proof}  Denote by $\ga$ the element in $\pi_1(\Ham M)$ represented by the circle action and by $\Ss(\ga)\in QH_*(M)^{\times}$ its Seidel element.  Since the action is semifree, 
\cite[Thm~1.15]{MT} shows that 
$$
\Ss(\ga)*pt = a\otimes q^{-d} t^\ka + \mbox { l.o.t.}
$$
 where $a$ is a nonzero element of $H_{2d}(M)$ with $d >0$.
 But if  $(M,\om)$ is not strongly
uniruled, $\Ss(\ga)*pt = (\1\otimes \la + x)*pt = pt\otimes \la$ by
 Lemma~\ref{le:QH}. An alternative proof is given in 
 Proposition~\ref{prop:al}.
\end{proof}

%\begin{rmk}\labell{rmk:sfree}\rm 
%If $M$ is semifree then after at most one blow up along 
%$F_{\max}$ and $F_{\min}$ we obtain a semifree manifold 
%for which these fixed point sets are divisors.  In this case, one can  argue as in Proposition~\ref{prop:2iso} below to 
% prove that $\bla pt,[F_{\max}],[F_{\min}]\bra^{M}_\al=1$, where $\al$ is represented by the orbit of a $K$-gradient flow line from $F_{\max}$ to $F_{\min}$ with respect to an invariant metric.\end{rmk} 

The ideas of \cite{MT} also work when the isotropy weights have absolute value 
$\le 2$.  (In this case, we say that the action has at most $2$-fold isotropy.)  This property is stable under blow up along the maximal or minimal fixed point sets: cf. Lemma~\ref{le:semi}. 

\begin{prop} \labell{prop:2iso} Suppose that $(M,\om)$ is an effective Hamiltonian $S^1$-manifold with at most $2$-fold isotopy and isotropy weights $1$ along $F_{\max}$. Then $(M,\om)$ is strongly  
uniruled.
\end{prop}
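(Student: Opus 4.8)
The plan is to argue exactly as in Proposition~\ref{prop:sfree}: compute the leading term of the product $\Ss(\ga)*pt$ and show that it cannot have the shape forced by Lemma~\ref{le:QH}. So suppose, for contradiction, that $(M,\om)$ is not strongly uniruled, where $\ga\in\pi_1(\Ham M)$ denotes the given circle action. Then Lemma~\ref{le:QH} gives $\Ss(\ga)=\1\otimes\la+x$ with $x\in\Qq_-$ and $\la$ a degree-zero unit of $\La$, hence $\la\in\La_\om$ carries no power of $q$; using part (iii) of that lemma ($pt*a=0$ for $a\in\Qq_-$) we get $\Ss(\ga)*pt=(\1\otimes\la)*pt+x*pt=pt\otimes\la$. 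Thus the $H_*(M)$-coefficient of every monomial of $\Ss(\ga)*pt$ is a multiple of $pt\in H_0(M)$ and every power of $q$ is $q^0$. Equivalently, by Remark~\ref{rmk:horpt}, every section invariant $\bla pt,c\bra^P_{\si}$ with $c\in H_{<2n}(M)$ vanishes. It therefore suffices to exhibit a single term of $\Ss(\ga)*pt$ with nonzero $q$-degree, or equivalently one nonzero invariant of this form.

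First I would fix an $S^1$-invariant $\om$-compatible $J$ and the associated metric, so that, as in the proofs of Proposition~\ref{prop:max} and Corollary~\ref{cor:S1}, the holomorphic sections of $P=M\times_{S^1}S^3\to\PP^1$ are the gradient spheres of the normalized moment map $K$. Since the normal weights along $F_{\max}$ are all equal to $1$, the action is semifree near $F_{\max}$, so the sections in the minimal class $\si_{\max}$ form a regular copy of $F_{\max}$ and Proposition~\ref{prop:max} yields the leading term $[F_{\max}]\otimes q^{m_{\max}}t^{K_{\max}}$ of $\Ss(\ga)$, with $m_{\max}=\codim_\C F_{\max}\ge 1$. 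Writing, as in (\ref{eq:Sa1}),
$$
\Ss(\ga)*pt=\sum_{\si}(pt)_{\si}\otimes q^{-c_1^{\Ver}(\si)}\,t^{-u_\ga(\si)},
$$
the constant sections in class $\si_{\max}$ do not pass through a generic point and so contribute nothing; the surviving contributions of highest $t$-power therefore come from the minimal nonconstant gradient spheres issuing from $F_{\max}$ and running through the generic point. Following the computation of McDuff--Tolman~\cite[Thm~1.15]{MT}, this forces a leading term $a\otimes q^{-d}t^\ka$ with $a\ne 0$, $a\in H_{2d}(M)$ and $d>0$, which directly contradicts $\Ss(\ga)*pt=pt\otimes\la$.

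The crux is this last moduli-theoretic step, and it is exactly where the hypothesis on the isotropy enters. An $S^1$-invariant $J$ is almost never regular once the action fails to be semifree, so the sections of $P$ must be treated virtually, and one must rule out cancellation between the principal gradient sphere and the extra (possibly branched or multiply covered) configurations that competing section classes can contribute. Because the weights along $F_{\max}$ are $1$, the top of each contributing sphere is a free orbit and the relevant moduli space is cut out transversally there; because every isotropy subgroup has order at most $2$, the only covered gradient configurations that can compete involve weights $\pm 2$, so there are only finitely many combinatorial types of stable section. Each type can be enumerated and its rational contribution computed by the equivariant/localization techniques of~\cite[\S\S3--4]{MT}, and one checks that after summing the leading coefficient does not vanish, just as in the semifree case. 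I expect this non-cancellation to be the main obstacle: without the bound on the isotropy order, arbitrarily high covers could occur and the leading coefficient could in principle be killed.

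Granting the nonvanishing, the resulting term of $\Ss(\ga)*pt$ has nonzero $q$-degree, contradicting $\Ss(\ga)*pt=pt\otimes\la$; hence $(M,\om)$ is strongly uniruled. Note that Proposition~\ref{prop:max} applies to a fixed maximum of arbitrary codimension, so no blow-up is actually required; alternatively, since by Lemma~\ref{le:semi} the hypotheses (at most $2$-fold isotropy, weight $1$ along the maximum) are preserved under blowing up along $F_{\max}$, one may first reduce to the case in which $F_{\max}$ is a divisor and then invoke Corollary~\ref{cor:S1} verbatim for the leading term.
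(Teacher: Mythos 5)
Your reduction of the problem to exhibiting a term of nonzero $q$-degree in $\Ss(\ga)*pt$ is sound, and it is indeed how Proposition~\ref{prop:sfree} works; but the step on which everything rests --- that after enumerating the invariant stable sections allowed by $2$-fold isotropy ``one checks that after summing the leading coefficient does not vanish, just as in the semifree case'' --- is asserted, not proved, and you yourself flag it as the expected main obstacle. This is a genuine gap, not a routine verification: once the action fails to be semifree, an $S^1$-invariant $J$ is not regular, the contributing configurations include multiply covered beads and fiber bubbles that must be treated virtually, and there is no a priori reason why their rational contributions do not cancel. The appeal to \cite[Thm~1.15]{MT} does not cover this case (that theorem is proved for semifree actions), and no analogue of it for $2$-fold isotropy is established by the argument you give. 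So as written, your proof re-proves the semifree case and leaves the actual content of Proposition~\ref{prop:2iso} open.

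The paper's own proof is structured precisely to avoid computing any invariant. Assuming $(M,\om)$ is not strongly uniruled, Lemma~\ref{le:QH} forces the coefficient of $\1$ in $\Ss(\ga)$ to be a unit $\la = r\,t^{K_{\max}-\ka}+{\rm l.o.t.}$ with $r\ne 0$; the only moduli-theoretic input is then the soft statement (Prop.~3.4 of \cite{MT}) that a class $\be$ whose coefficient $a_\be$ is proportional to $\1$ can contribute only if there is an $S^1$-invariant stable map in class $\be$ joining $F_{\max}$ to each point of $F_{\min}$ --- a string of beads, each bead either lying in the fixed point set or swept out by gradient spheres. The isotropy bound of $2$ gives the energy estimate $\ka=\om(\be) > (K_{\max}-K_{\min})/2$, strict because the bead leaving $F_{\max}$ has trivial isotropy (weights $1$ there). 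Applying the same estimate to the inverse loop gives $\ka' > (K_{\max}-K_{\min})/2$ for the unit coefficient $\la'$ of $\Ss(\ga^{-1})$, and then $\Ss(\ga)*\Ss(\ga^{-1})=\1$ is impossible: since $\Qq_-$ is an ideal (Lemma~\ref{le:QH}(ii)), the $\1$-coefficient of the product is $\la\la' = rr'\,t^{\de}+{\rm l.o.t.}$ with $\de = (K_{\max}-K_{\min})-\ka-\ka' <0$, contradicting $\la\la'=1$. Note the contrast: you try to show that a specific virtual count is nonzero (hard, exactly because of possible cancellation), whereas the paper only needs to know \emph{which classes can contribute at all}, and lets the invertibility of the Seidel element, applied to both $\ga$ and $\ga^{-1}$, do the rest. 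To salvage your route you would have to actually carry out the localization computation with $2$-fold isotropy and prove non-cancellation --- which is precisely what McDuff's argument circumvents.
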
 

\begin{proof} 
By Proposition~\ref{prop:max}% and Corollary~\ref{cor:S1}, we find that 
 $$
 \Ss(\ga) = a_{0}\otimes q^d\, t^{K_{\max}} + \sum_{\be\in H_2(M;\Z),\,\om(\be)>0} a_\be\otimes q^{m-c_1(\be)}\,t^{K_{\max}-\om(\be)},
 $$
where  $a_\be \in H_*(M)$ and $a_0$ is in the image of $H_*(F_{\max})$ in $H_*(M)$.
Suppose that $(M,\om)$ is not strongly  uniruled.
By Lemma~\ref{le:QH} (iii), there is at least one term 
in $ \Ss(\ga)$ with $a_\be  = r\1$ where $r\ne 0$.  Consider the term of this form with minimal $\om (\be )$.  Let $J$ be a generic $\om$-tame and $S^1$-invariant almost complex 
structure on $M$, with corresponding metric $g_J$.
Proposition 3.4 of \cite{MT} shows that
 in order for $r\ne 0$ there 
must be, for every point $y\in F_{\min}$,  an $S^1$-invariant 
$J$-holomorphic genus zero stable map in class $\be $ that intersects $F_{\max}$ and $y$.  Such an invariant element consists of a connected 
string of $2$-spheres from $F_{\max}$ to $y$, possibly with added bubbles. Components of the string (called {\it beads} in \cite{MT}) either
lie in the fixed point set $M^{S^1}$ or are
 formed by the orbits of the $g_J$-gradient trajectories of  $K$.  The  
 energy $\om(\be')$ of an invariant sphere in class $\be'$
 that joins the two fixed point components $F_1, F_2$ is at least $|K(F_1)-K(F_2)|/q$,
  where $q$ is the order of the isotropy at a generic point of the sphere.  
 Therefore, if the isotropy has order at most $2$
 the energy needed to get from $F_{\max}$ to 
 $y\in F_{min}$ is
 at least $(K_{\max}-K_{\min})/2$.  In the case at hand, it is strictly larger than $(K_{\max}-K_{\min})/2$ since the 
 first  element of the string has trivial isotropy.
 Therefore there is $ r\ne 0, x\in \Qq_{-}$ such that
 $$
 \Ss(\ga) =  \1\otimes 
 \bigl(rt^{K_{\max}-\ka} + \mbox{ l.o.t}\bigr) + x,\quad \ka > (K_{\max}-K_{\min})/2.
 $$
Similarly, there is $ r'\ne 0,  x'\in \Qq_{-}$ such that
$$
 \Ss(\ga^{-1}) =\1\otimes 
 \bigl(r't^{-K_{\min}-\ka'} + \mbox{ l.o.t}\bigr) + x',\quad \ka' > (K_{\max}-K_{\min})/2.
 $$
Since $\Ss$ is a homomorphism, we know that 
$ \Ss(\ga)* \Ss(\ga^{-1})=\1$.  Now assume that $M$ is not strongly uniruled.  Then by Lemma~\ref{le:QH}(ii) the above expressions imply that
$$
 \Ss(\ga)* \Ss(\ga^{-1}) = rr'\1\otimes \bigl(t^\de + 
 \mbox{ l.o.t}\bigr),\quad \de<0,
 $$
 a contradiction.
\end{proof}

The previous results give conditions under which $(M,\om)$ is strongly uniruled,
but they do not claim that the specific invariant $\bla pt\bra^M_\al$ is nonzero, where $\al$ is the orbit of a generic gradient flow line from $F_{\max}$
 to $F_{\min}$.  There are two cases when we can prove this. Note that condition 
 (ii) is not very general since $F_{\max}$ is often obtained by blow up 
 and any such manifold is uniruled.

\begin{prop}\labell{prop:al}
Suppose that $(M,\om)$ is a Hamiltonian $S^1$-manifold whose maximal and minimal fixed point sets are  divisors.   Suppose further that at least one of the following conditions holds:\SSS

\NI {\rm  (i)} the action is semifree, or \SSS

\NI {\rm (ii) } there is an $\om$-tame almost complex structure  
$J$ on $F_{\max}$ such that the nonconstant $J$-holomorphic 
spheres in $F_{\max}$ do not go through every point.\SSS

\NI
Then $\bla pt\bra^M_\al\ne 0.$
\end{prop}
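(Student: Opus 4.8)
The plan is to fix an $S^1$-invariant $\om$-compatible almost complex structure $J$ with associated metric $g_J$, and to show that for a generic point $p\in M$ the only $J$-holomorphic stable map in class $\al$ through $p$ is the single gradient sphere $C_p$ whose $S^1$-orbit passes through $p$, and that $C_p$ is moreover a regular curve contributing $+1$. Since $c_1(\al)=2$, the one-marked-point moduli space $\oMm_{0,1}(M,\al)$ has virtual real dimension $2n$, so after imposing the point constraint $\bla pt\bra^M_\al$ is an honest integer, and it suffices to show this count equals $1$. Two numerical facts are used throughout: $\om(\al)=K_{\max}-K_{\min}$, the total height of the normalized moment map $K$, and $\al\cdot[F_{\max}]=\al\cdot[F_{\min}]=1$, which holds because $C_p$ is $J$-holomorphic and meets each of the two $J$-holomorphic divisors $F_{\max},F_{\min}$ transversally and positively in a single point.

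First I would rule out bubbling and breaking through a generic $p$. Let $f$ be any $J$-holomorphic stable map in class $\al$ carrying a marked point at $p$. Because $\al\cdot F_{\max}=\al\cdot F_{\min}=1>0$, the connected image of $f$ meets both $F_{\max}$ and $F_{\min}$. The standard relation between the moment map and an $S^1$-invariant $J$ gives, for each component $C_i$ of $f$, the inequality $\om(C_i)\ge \max_{C_i}K-\min_{C_i}K$, with equality exactly when $C_i$ is an orbit sphere; summing over the tree of components, the total energy bounds the total $K$-height spanned, which is $K_{\max}-K_{\min}$. As $\om(\al)=K_{\max}-K_{\min}$ precisely, equality must hold everywhere, so $f$ has no fiberwise (constant-height) bubbles, no components inside $F_{\max}$ or $F_{\min}$, and consists of a connected chain of orbit spheres running monotonically from $F_{\max}$ to $F_{\min}$. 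Choosing $p$ in the open dense set of points whose downward $-\grad K$-trajectory limits to $F_{\min}$ and whose upward trajectory limits to $F_{\max}$, the orbit sphere of $f$ through $p$ is already the full gradient sphere $C_p$ of energy $K_{\max}-K_{\min}=\om(\al)$; no energy remains for further links, so $f=C_p$.

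It remains to show that $C_p$ is Fredholm regular, so that it is cut out transversally and contributes $+1$; this is where conditions (i) and (ii) enter. Writing $u^*TM=T\PP^1\oplus N$ with $\deg T\PP^1=2=c_1(\al)$, the normal bundle $N$ has total degree $0$ and splits $S^1$-equivariantly into line bundles $\Oo(d_i)$ whose degrees are governed by the isotropy weights of the action at the endpoints $q_\pm\in F_{\max},F_{\min}$; regularity amounts to $d_i\ge -1$ for all $i$, i.e. $H^1(\PP^1,N)=0$. In case (i) the action is semifree, so all weights have absolute value $1$ and each $d_i\ge -1$, which is exactly the regularity of orbit sections established in McDuff--Tolman~\cite{MT}. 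In case (ii) I would choose $J$ to restrict on $F_{\max}$ to the given $J_0$, and use the hypothesis that nonconstant $J_0$-spheres do not cover $F_{\max}$: placing $q_+=C_p\cap F_{\max}$ at a point not swept by such spheres prevents any section of the cokernel of the linearized operator from being supported in the $TF_{\max}$-summands, so again $N$ carries no summand of degree $\le -2$ and $C_p$ is regular. In both cases the unique curve $C_p$ is transversally cut out and contributes $+1$, whence $\bla pt\bra^M_\al=1\ne 0$.

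The main obstacle is precisely this regularity step: for $S^1$-invariant $J$ the gradient spheres are in general not Fredholm regular, and conditions (i) and (ii) are exactly what is needed to control the splitting of $N$ — forcing $d_i\ge -1$ in the semifree case, and forbidding the negative summands that $F_{\max}$-covering spheres would otherwise create. The moment-map uniqueness argument of the second paragraph, by contrast, is robust and requires no genericity of $J$, so essentially all the difficulty is concentrated in verifying transversality of the single surviving curve.
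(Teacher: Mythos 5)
Your overall skeleton (a single curve through the constraint, then regularity, hence count $=1$) resembles the paper's, but the step that carries all the weight is unjustified and, in the generality you need, false. You attempt to classify \emph{all} stable maps through a generic $p\in M$ by asserting that for an $S^1$-invariant $J$ every component $C_i$ of a stable map satisfies $\om(C_i)\ge \max_{C_i}K-\min_{C_i}K$, with equality exactly for orbit spheres. No proof or reference is given, and this is precisely the crux of the whole problem: the statement fails as soon as there is isotropy, because an invariant sphere swept out by gradient lines whose generic stabilizer is $\Z/q$ has energy equal to the height difference divided by $q$ (the paper itself uses exactly this fact in the proof of Proposition~\ref{prop:2iso}). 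Case (ii) of the proposition does \emph{not} assume the action is semifree, so such spheres genuinely occur there and your chain/monotonicity argument collapses. Even in the semifree case the inequality is clear only for \emph{invariant} curves; for the non-invariant curves that an invariant $J$ certainly admits, neither the inequality nor the rigidity statement in the equality case is established, and those are exactly the curves that make the count hard (invariant $J$ is essentially never regular, so ``other unseen curves'' cannot be dismissed by a formal energy identity).

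The paper never has to control non-invariant curves by an energy estimate: it places the point constraint at a generic point $x_0$ of $F_{\max}$, i.e.\ at an $S^1$-\emph{fixed} point, so that the cut-down moduli space is $S^1$-invariant and one only needs to classify \emph{invariant} stable maps through $x_0$. Uniqueness then follows from the Morse--Smale property of the gradient flow \cite[Lemma~4.5]{MT} plus the semifree energy bound for connecting strings in case (i), and in case (ii) from the hypothesis itself, which guarantees $x_0$ lies on no nonconstant $J$-sphere inside $F_{\max}$, so no invariant stable map through $x_0$ can have a component contained in $F_{\max}$. This is also where you misuse hypothesis (ii): it is a uniqueness/compactness input, not a transversality one, and your claim that it prevents cokernel elements ``supported in the $TF_{\max}$-summands'' of the normal bundle is not an argument; in particular, without semifreeness there is no reason the relevant splitting has all degrees $\ge -1$. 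To repair your proof you would either have to actually prove the energy inequality with its equality case for arbitrary (non-invariant) curves, or redo the argument equivariantly as the paper does, with the constraint on $F_{\max}$ and with the regularity results of \cite{MT} for invariant $J$ justifying that the unique invariant regular stable map computes $\bla pt\bra^M_\al=1$.
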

\begin{proof}  Suppose first that the action is semifree and let $J$ be a 
generic $S^1$-invariant almost complex structure on $M$.  
Then by \cite[Lemma~4.5]{MT} the gradient flow of the moment map with respect to the 
associated metric $g_J(\cdot,\cdot) =\om(\cdot,J\cdot)$ is Morse--Smale.  Hence 
for a generic point $x_0$ of $F_{\max}$ all the gradient flow lines that start 
at $x_0$  end on
$F_{\min}$.   The union of these flow lines is an invariant $J$-holomorphic
$\al$-sphere through $x_0$.  Moreover there is no other invariant 
$J$-holomorphic stable map in class $\al$ through $x_0$.  For as in the previous proof this would have to consist of a sphere $C$ through $x_0$ in $F_{\max}$ together with a  string of $2$-spheres from a point $x$ in $F_{\max}$ to a point $y$ in $F_{\min}$, possibly with added bubbles.  (The string has to reach $F_{\min}$ since $\al\cdot F_{\min}= 1$.)  But because the action is semifree
 the energy of such a string is at least $\om(\al)$.  Since $\om(C)>0$ this is impossible.
Therefore there is only one  invariant $J$-holomorphic
stable map in class $\al$  through $x_0$.  Since this is regular,  $\bla pt\bra^M_\al=1$.

A similar argument works in case (ii). Choose $J$ to be a generic  $S^1$-invariant extension of the given almost complex structure on $F_{\max}$.  The arguments of \cite[\S4.1]{MT} show that the set $X$ of points in  $F_{\max}$ that flow down to
some intermediate fixed point set of $K$ is closed and of codimension at least $2$.  Moreover by perturbing $J$ in $M\less F_{\max}$ we may jiggle $X$ so
that there is a point $x_0\in F_{\max}\less X$ that does not lie on any
 $J$-holomorphic sphere in $F_{\max}$.  Hence again there is only one invariant
stable map in class $\al$  through $x_0$.  The result follows as before.
\end{proof}

We end by discussing the case when 
$H^*(M;\Q)$ is generated by $H^2(M)$.   Our main result here is the following.

\begin{prop}\labell{prop:usu} Assume that $H^*(M;\Q)$ is generated by $H^2(M)$.  Then $(M,\om)$ is strongly uniruled iff it is uniruled.
\end{prop}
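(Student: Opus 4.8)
The implication from strongly uniruled to uniruled is immediate, since a nonzero three-point invariant $\bla pt,a_2,a_3\bra^M_\be$ with $\be\ne0$ is a special case of the invariants appearing in the definition of uniruled. So the real task is the converse, and the plan is to take a nonzero uniruled invariant and repeatedly simplify it, using the hypothesis $H^*(M;\Q)=\Q[H^2(M)]$ to write every insertion as a polynomial in divisors, until only a three-point invariant with a point constraint survives. Concretely, recall from the proof of Proposition~\ref{prop:unicond} that $(M,\om)$ is uniruled iff some invariant $\bla \tau_{i_1}a_1,\dots,\tau_{i_{k-1}}a_{k-1},\tau_{i_k}pt\bra^M_\be$ with $\be\ne0$ is nonzero; expanding each $a_j$ in a basis of monomials in $H^2$ and using multilinearity, I may assume that each non-point insertion is a product of divisors.

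The endgame is clean and worth isolating first. Suppose I have reached a nonzero invariant $\bla pt,H_1,\dots,H_r\bra^M_\be$ with $\be\ne0$, no descendents, and each $H_i$ a single divisor. Since $[\om]\cdot\be>0$, the divisor equation applies: it peels the $H_i$ off one at a time, and because the left-hand side is nonzero it forces both $\be\cdot H_i\ne0$ and the nonvanishing of the shorter invariant at each stage. Running this down to two insertions (and, if necessary, padding back up with the divisor $[\om]$) yields a nonzero $\bla pt,H,H'\bra^M_\be$, which is exactly strong uniruledness.

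The core of the proof is the reduction to that situation, carried out by the two identities of \S\ref{ss:LP} as a downward induction on $\om(\be)$ together with a secondary count of insertion complexity. Whenever a non-point insertion is a genuine product $L\cdot b'$ with $L$ a divisor, I would peel $L$ off using the full force of (\ref{eq:LPii}): moving $ev^*(L)$ onto the point marked point makes the transferred term vanish because $L\cap pt=0$ (this is precisely the computation of Lemma~\ref{le:LP}), giving a relation of the shape $\bla pt,Lb',\dots\bra_\be=(\be\cdot L)\bla \tau_1 pt,b',\dots\bra_\be-\sum_{\be_1+\be_2=\be}(\be_1\cdot L)\,N_{\be_1,\be_2}$, in which each boundary term $N_{\be_1,\be_2}$ splits as a product of two invariants, one of which carries the point. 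I would then remove the descendent $\tau_1$ (and any other descendents) by (\ref{eq:LPi}), which again splits the curve as $\be_1+\be_2=\be$ with the point on one component. In every such relation the left side is nonzero, so some term on the right is nonzero; a boundary term whose point-bearing factor has $\be_1\ne0$ strictly drops the energy and is handled by the inductive hypothesis, while the dual class $\xi_\ell^*$ glued at a node is again a polynomial in divisors by the standing hypothesis, so the inductive form is preserved.

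The main obstacle is the bookkeeping that makes this induction well-founded. The delicate point is that peeling a divisor through (\ref{eq:LPii}) \emph{raises} the descendent order by one even as it lowers the divisor count, so the two moves trade against each other and a naive single-integer complexity need not decrease; I expect the correct argument to interleave the two identities and to order terms primarily by $\om(\be)$, using that this energy lies in a discrete set bounded below by the minimal sphere energy $\hbar>0$, so that only finitely many classes occur. Equally delicate is guaranteeing that the point constraint is never lost: one must show that among the nonzero terms there is always one in which $pt$ sits on a component of strictly smaller complexity, ruling out the possibility that it is absorbed into a contracted ($\be_1=0$) factor --- where the invariant degenerates to a classical triple intersection that a point overdetermines --- or into a factor that, by Lemma~\ref{le:QH}, cannot carry a point on a manifold assumed not strongly uniruled. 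Handling these degenerate splittings cleanly is where the real work lies; the rest is the mechanical application of the divisor equation described above.
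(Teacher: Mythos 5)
Your overall strategy coincides with the paper's: prove the converse by reducing a general point invariant to a three-point one, alternating (\ref{eq:LPii}) (peeling a divisor $L$ off a product insertion, the transferred term dying because $L\cap pt=0$) with (\ref{eq:LPi}) (removing descendents), inducting on $\om(\be)$ and the number of insertions, and finishing with the divisor equation. But the two points you explicitly defer --- the well-foundedness of the interleaved induction, and the fate of the descendent-raising term --- are exactly where the content of the paper's Lemma~\ref{le:GWsu} lies, and the actual resolution is not the one you anticipate. The term $(\be\cdot L)\,\bla \tau_1 pt,\dots\bra^M_\be$ produced by (\ref{eq:LPii}) is not fed back into any induction: the paper proves it \emph{vanishes}. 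Indeed, applying (\ref{eq:LPi}) to it, a nonzero summand $\bla pt,\xi,\dots\bra^M_{\be_1}\,\bla \xi^*,\dots\bra^M_{\be-\be_1}$ would have to have $\be_1=0$ (otherwise the first factor is a nonzero point invariant with fewer than $m$ insertions, contradicting the minimality of $m$); then $pt\cap\xi\ne0$ forces $\xi=[M]$, hence $\xi^*=pt$, so the minimality of $m$ pushes every remaining insertion into the second factor --- leaving the first factor a constant map with only two marked points, which is unstable. This stability contradiction kills the $\tau_1$ term outright, and with it the trade-off between descendent order and divisor count that you correctly flag as the obstruction to any naive complexity induction.

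The second missing ingredient is the order of operations. The paper does not interleave the two identities: it first eliminates \emph{all} descendents (Steps 1--2 of Lemma~\ref{le:GWsu}), using a four-tier minimality --- minimal number of insertions $m$; then minimal $\om(\be)$ among nonzero length-$m$ invariants; then lexicographically minimal $(k_2,\dots,k_m)$; then minimal $k_1$ --- and only afterwards, with every insertion a descendent-free product of divisors, applies (\ref{eq:LPii}) a single time. The boundary terms $(\be_1\cdot H_2)\,\bla H_2^{i_2-1},\xi,\dots\bra^M_{\be_1}\,\bla pt,\xi^*,\dots\bra^M_{\be_2}$ are then excluded by concrete arguments rather than by an appeal to an inductive hypothesis about ``complexity'': any nonzero term has $\be_1\ne0$; the case $\be_2=0$ is impossible because stability would force a third insertion $H_j^{i_j}$, $i_j>0$, onto the contracted point-carrying component, violating the dimension condition of Lemma~\ref{le:00}; and the remaining case $0<\om(\be_2)<\om(\be)$ contradicts the energy minimality. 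Without these specific steps --- above all the stability argument annihilating the $\tau_1$ term --- your induction has no termination mechanism, so the proposal as written is a plan with the correct ingredients rather than a proof.
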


%
%\begin{prop}\labell{prop:coh}
%Suppose that $(M,\om)$ is a Hamiltonian $S^1$ manifold such that $F_{\max}$ is a divisor.  .
%\end{prop}

The proof is given  below.  By Theorem~\ref{thm:main} we  immediately
obtain:

\begin{cor}\labell{cor:coh}  Suppose that $(M,\om)$ is a 
Hamiltonian $S^1$ manifold such that  
$H^*(M;\Q)$ is generated by $H^2(M)$.
Then $(M,\om)$  is strongly uniruled.
\end{cor}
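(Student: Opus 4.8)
The plan is to obtain the corollary as a direct combination of the two principal inputs already in hand: Corollary~\ref{cor:main}, which asserts that every Hamiltonian $S^1$-manifold is uniruled, and Proposition~\ref{prop:usu}, which asserts that under the stated cohomological hypothesis the uniruled and strongly uniruled conditions coincide. No new geometric or algebraic construction is required once these two results are available; the entire content is carried by them, and the corollary is a purely formal deduction.

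First I would invoke Corollary~\ref{cor:main}. Since $(M,\om)$ is by hypothesis a Hamiltonian $S^1$-manifold, this immediately yields that $(M,\om)$ is uniruled, i.e. that there is a nonzero genus zero Gromov--Witten invariant of the form $\bla pt, a_2,\dots,a_k\bra_{k,\be}^M$ with $\be\ne 0$. (At the level of Theorem~\ref{thm:main} this amounts to observing that the circle action furnishes a loop in $\Ham(M)$ with a fixed maximum near which it is an effective circle action, after passing if necessary to the blow-up normalizations of Lemma~\ref{le:semi} and arranging $I_\om$ injective.)

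Second I would apply Proposition~\ref{prop:usu}. The hypothesis of the corollary is precisely that $H^*(M;\Q)$ is generated by $H^2(M)$, which is exactly the condition under which Proposition~\ref{prop:usu} guarantees that $(M,\om)$ is strongly uniruled whenever it is uniruled. Feeding the uniruledness obtained in the first step into this equivalence gives that $(M,\om)$ is strongly uniruled, which is the assertion of the corollary.

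There is no real obstacle at the level of the corollary itself: it is a one-line consequence of the two cited results. The genuine difficulty resides entirely within Proposition~\ref{prop:usu}, whose proof rests on the descendent identity~(\ref{eq:LPii}) of Lee--Pandharipande together with the inductive reduction of a general point-constrained invariant to a three-point invariant. Were one to insist on a self-contained argument, that inductive step would be the hard part to reproduce; but since Proposition~\ref{prop:usu} is proved independently, the corollary requires nothing beyond the two invocations above.
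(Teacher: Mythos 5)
Your proposal is correct and matches the paper's own proof exactly: the paper deduces Corollary~\ref{cor:coh} by combining Corollary~\ref{cor:main} (every Hamiltonian $S^1$-manifold is uniruled, via Theorem~\ref{thm:main}) with Proposition~\ref{prop:usu} (uniruled equals strongly uniruled when $H^*(M;\Q)$ is generated by $H^2(M)$). You have also correctly identified that all the substance lies in those two cited results, not in the corollary itself.
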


\begin{rmk}\rm  (i) Observe that if $M$ is a Hamiltonian $S^1$-manifold
such that
$H^*(M;\Q)$ is generated by $H^2(M)$ then the same holds for the blow up of $M$ along any of its fixed point submanifolds $F$.  For because the moment map $K$ is a perfect Morse function the inclusion $F\to M$ induces an injection on homology.  (Any class $c$ in $H_*(F)$ can be written as $c^-\cap F^+$, where $c^-, F^+$ are the canonical downward and upward extensions of $c, [F]$ defined for example in ~\cite[\S4.1]{MT}.)
  Hence $H^*(F)$ is generated by the restrictions 
  to $F$ of the classes in $H^2(M)$.  
   Since the exceptional divisor $E$ is a
    $\PP^k$ bundle over $F$, $H^*(E)$  is also generated by $H^2(E)$: in fact the generators are the pullbacks of the classes in 
    $H^2(F)$ plus the first Chern class 
   $c\in H^2(E)$ of the canonical line bundle over $E$.   
   But $c$ is the restriction to $E$ of the class $\Tc$ in 
   $\TM$ that is Poincar\'e dual to $E$.  It follows easily (using the 
   Mayer-Vietoris sequence) that $H^*(\TM)$ is generated by 
   the pullback of the classes in $H^2(M)$ together with $\Tc$;
    see the discussion of the cohomology  of a 
   blow up given in \cite[\S5.1]{HLR}.\SSS
   
\NI (ii)    It is not clear which Hamiltonian $S^1$ manifolds have the property that $H^*(M)$ is generated by $H^2(M)$.
It is not enough that the fixed points are isolated.
For example, Sue Tolman\footnote{Private communication.} pointed out that the complex Grassmannian $Gr(2,4)$
of $2$-planes in $\C^4$ has $H^2$ of dimension $1$ and $H^4$ of dimension $2$ so that $H^4\ne (H^2)^2$. It also has an $S^1$ action with precisely $6$ fixed points.  However, it is enough to have isolated fixed points plus semifree action, since in this case
Tolman--Weitsman~\cite{TW} show that
  $H^*(M)$ is isomorphic as a ring to the cohomology of a product of $2$-spheres.
Also $H^*(M)$ is generated by $H^2(M)$ in the toric case.  However, 
 these cases are uninteresting in the present context since we already
know that these manifolds are  strongly uniruled, in the former case by Proposition~\ref{prop:al} and in the latter by the fact that
 toric manifolds are projectively uniruled.\end{rmk}

The proof  of Proposition~\ref{prop:usu} uses 
the identities (\ref{eq:LPi}) and (\ref{eq:LPii}).
 Note that each time we apply one of these formulas 
we must take special care with the zero class. 
The following lemma is well known: cf. \cite[Lemma~4.7]{HLR}.
\MS

\begin{lemma}\labell{le:00}
  $\bla\tau_{k_1} a_1, \dots,\tau_{k_p} a_p\bra^M_{0}\ne 0$ for some $p\ge 3$
 only if the intersection product of the classes $a_i$ is nonzero (i.e. the (real) codimensions of the $a_i$ sum to $2n$) and $\sum k_i = p-3$.
 \end{lemma}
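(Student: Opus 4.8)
The plan is to compute this invariant directly, exploiting the fact that in genus zero the moduli space of constant maps is unobstructed, so that no virtual cycle machinery is needed. For $\be=0$ and $p\ge 3$ marked points, a genus zero stable map in class $0$ is a constant map, sending a stable $p$-pointed genus zero domain to a single point $x\in M$. Hence the moduli space is canonically the product $M\times \oMm_{0,p}$, where $\oMm_{0,p}$ is the Deligne--Mumford space of stable genus zero curves. Since the domain is a tree of spheres and $u^*TM$ is trivial, $H^1(\Si,u^*TM)=0$, so there is no obstruction; equivalently, the actual complex dimension $n+(p-3)$ of $M\times \oMm_{0,p}$ agrees with the expected dimension $n+c_1(0)+p-3$. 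Thus the virtual cycle is the ordinary fundamental class and the invariant is an honest integral over the smooth space $M\times \oMm_{0,p}$.

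Next I would record two features of this product. Because every map is constant, each evaluation map $\ev_i\colon M\times \oMm_{0,p}\to M$ equals the projection $pr_M$ to the first factor. Moreover, each descendent class $\psi_i$, the first Chern class of the cotangent line to the domain at the $i$th marked point, is the pullback under the projection $pr_2\colon M\times \oMm_{0,p}\to \oMm_{0,p}$ of the corresponding class on $\oMm_{0,p}$. Writing each homological constraint as the cap product with its Poincar\'e dual ${\rm PD}(a_i)\in H^*(M)$, the integrand therefore splits as
\[
\prod_{i=1}^p \ev_i^*\bigl({\rm PD}(a_i)\bigr)\cup \psi_i^{k_i}
= pr_M^*\Bigl(\prod_{i=1}^p {\rm PD}(a_i)\Bigr)\cup pr_2^*\Bigl(\prod_{i=1}^p \psi_i^{k_i}\Bigr).
\]
By the K\"unneth theorem the integral over $M\times \oMm_{0,p}$ then factors as
\[
\Bigl(\int_M \prod_{i=1}^p {\rm PD}(a_i)\Bigr)\cdot
\Bigl(\int_{\oMm_{0,p}} \prod_{i=1}^p \psi_i^{k_i}\Bigr).
\]

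For the invariant to be nonzero both factors must be nonzero. The first is nonzero exactly when $\prod_i {\rm PD}(a_i)$ is a nonzero class of top degree $2n$, i.e. when the intersection product of the $a_i$ is nonzero, which in turn forces the real codimensions of the $a_i$ to sum to $2n$. The second is nonzero only when $\prod_i \psi_i^{k_i}$ has top degree on $\oMm_{0,p}$, i.e. $\sum_i k_i=\dim_{\C}\oMm_{0,p}=p-3$. Both stated conditions follow.

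The only real obstacle is the first step: one must be sure that the genus zero, class $0$ moduli space carries no obstruction bundle, so that the invariant reduces to an integral over the smooth product $M\times \oMm_{0,p}$. Once this unobstructedness and the accompanying dimension count are in hand, the factorization via K\"unneth and the resulting degree bookkeeping are routine.
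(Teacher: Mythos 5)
Your proof is correct and is essentially the paper's own (sketched) direct argument: the paper likewise reduces to the space of constant maps, observes that the $\psi_i$ live on the Deligne--Mumford factor while the constraints live on the $M$ factor, and deduces both conditions from this separation. Your write-up usefully makes explicit two points the paper leaves implicit, namely the unobstructedness ($H^1(\Si,\Oo_\Si)=0$ for a genus zero nodal domain, so the virtual class is the honest fundamental class of $M\times\oMm_{0,p}$) and the resulting K\"unneth factorization of the integral.
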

 \begin{proof}  This is immediate from the definition if $k_i=0$ for all $i$. To prove the general case, one can either argue  directly or
can construct an inductive proof based on the identity (\ref{eq:LPi}). 
To understand why the invariant vanishes when 
$\dim(a_1\cap \dots\cap a_m)>0$, observe that in 
 this case the moduli space $\Mm_0$ of constant maps
with {\it fixed} marked points and through the constraints can be identified with
$a_1\cap \dots\cap a_m$ and so has dimension $>0$.   But the classes $\psi_i$ are trivial on $\Mm_0$ and hence the integral of any product of the
 $\psi_i$ over the full moduli space (with varying marked points) must vanish.
 \end{proof}  
\MS

\NI {\bf Proof of Proposition~\ref{prop:usu}.}

Since any strongly uniruled manifold is uniruled,
it suffices to prove the converse.  This in turn is an 
immediate consequence of the next lemma.

\begin{lemma}\labell{le:GWsu} Suppose that $H^*(M;\Q)$ is generated by $H^2(M)$ and that there is a nonzero invariant of the form
\begin{equation}\labell{eq:su}
\bla \tau_{k_1} pt, \tau_{k_2}a_2, \dots,\tau_{k_m} a_m\bra^{M}_{\be},\quad a_i\in H_*(M), k_i\ge 0, \be\ne0.
\end{equation}
Then $(M,\om)$ is strongly uniruled.
\end{lemma}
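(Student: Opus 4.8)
The plan is to strip the invariant down to a three‑point primary invariant with a point constraint, using the two recursions (\ref{eq:LPi}) and (\ref{eq:LPii}) together with the hypothesis that every cohomology class is a polynomial in divisors. First I would dispose of the descendents. By the Hu–Li–Ruan reduction invoked in the proof of Proposition~\ref{prop:unicond} (which uses only (\ref{eq:LPi})), the existence of a nonzero invariant of the form (\ref{eq:su}) already forces $(M,\om)$ to be uniruled; that is, there is a nonzero \emph{primary} invariant $\bla pt, a_2,\dots,a_m\bra^M_\be$ with $\be\ne 0$. Throughout this step one must check that whenever (\ref{eq:LPi}) splits an invariant into a product $\bla\dots\bra_{\be_1}\bla\dots\bra_{\be_2}$ the point constraint survives in a factor of nonzero class: a class‑$0$ factor containing $pt$ is an ordinary intersection number, and Lemma~\ref{le:00} shows such a factor can be nonzero only in a degenerate way that forces the point into the complementary factor of class $\be\ne0$.

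The main work is to reduce the number of marked points to three, and here the hypothesis that $H^*(M;\Q)$ is generated by $H^2(M)$ is essential. By linearity I may assume each non‑point insertion is a monomial $a_i = L_{i,1}\cap\cdots\cap L_{i,r_i}\cap[M]$ in divisor classes $L_{i,s}\in H_{2n-2}(M)$; set $R=\sum_{i\ge2}r_i$. I would induct on the number $m$ of marked points, reducing $R$ at fixed $m$. If $m\le3$ we are done, since a nonzero invariant with a point constraint and at most three marked points is exactly the strongly uniruled condition. If $m\ge4$ then $R\ge1$ (if every $a_i=[M]$ the invariant vanishes for $\be\ne0$ by the fundamental class axiom), so some insertion is $a_i=L\cap b$ with $L$ a divisor. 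The key move combines the two identities: applying (\ref{eq:LPii}) with indices $i$ and $j$ taken to be the slot of $a_i$ and the slot of $pt$, the term $\ev_j^*(L)$ drops since $L\cap pt=0$ (exactly as in Lemma~\ref{le:LP}), giving
\begin{multline*}
\bla pt,\dots,(L\cap b)_i,\dots\bra^M_\be = (\be\cdot L)\,\bla \tau_1 pt,\dots,b_i,\dots\bra^M_\be \\ - \sum_{\be_1+\be_2=\be}(\be_1\cdot L)\,\bla \dots b_i\dots\,|\,\xi\bra^M_{\be_1}\,\bla \xi^*\,|\,pt,\dots\bra^M_{\be_2},
\end{multline*}
and then removing the $\tau_1$ on the point by (\ref{eq:LPi}). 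On the main term the net effect is to lower $R$ by one while keeping $m$ fixed and returning to a primary invariant with a point, whereas each splitting term is a product of two invariants with strictly fewer than $m$ marked points.

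Nonvanishing of the left side forces either the main term or some splitting term to be nonzero. If a splitting term survives, the factor containing $pt$ (after discarding any class‑$0$ factor via Lemma~\ref{le:00}, which pushes the point into the complementary nonzero‑class factor) is a primary invariant with a point, nonzero class, and fewer than $m$ marked points, so the inductive hypothesis on $m$ applies. If instead the main terms keep surviving, the recursion strictly decreases $R\ge0$ at fixed $m$; since the $R=0$ main term $\bla pt,[M],\dots,[M]\bra^M_\be$ vanishes for $\be\ne0$, the chain of nonzero main terms must terminate by producing a nonzero splitting term, again dropping $m$. Either way the induction closes at $m\le3$, proving $(M,\om)$ strongly uniruled. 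I expect the genuine obstacle to be precisely this bookkeeping: keeping the point constraint in a factor of nonzero homology class through every application of (\ref{eq:LPi}) and (\ref{eq:LPii}), correctly handling stability and constant‑map (class‑$0$) contributions via Lemma~\ref{le:00}, and arranging the well‑founded complexity (lexicographic in $m$ and $R$, with descendents cleared first) so that every branch strictly decreases it — care rather than new ideas.
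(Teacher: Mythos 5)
Your strategy is the same as the paper's (clear descendents, then use (\ref{eq:LPi}) and (\ref{eq:LPii}) together with the divisor-generation hypothesis to cut down to three insertions), but the induction you set up does not close, and the failure is exactly at the bookkeeping you deferred as routine. Your claim that ``each splitting term is a product of two invariants with strictly fewer than $m$ marked points'' is false. In a splitting term of (\ref{eq:LPii}) the factor containing $pt$ is $\bla \xi^*, pt, \dots\bra^M_{\be_2}$, and when all of the spectator insertions $a_\ell$, $\ell\ne i$, are distributed to that factor it has exactly $m$ insertions: it loses $a_i$ but gains $\xi^*$. What does decrease on this branch is the energy, not the number of points: the coefficient $\be_1\cdot L$ kills every term with $\be_1=0$, so any surviving term has $\om(\be_2)<\om(\be)$. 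Neither $m$ nor your $R$ records this (the insertion $\xi^*$ is an arbitrary basis element of $H_*(M)$, not a shorter divisor monomial), so the lexicographic complexity $(m,R)$ simply fails to decrease. This is precisely why the paper's proof assumes, in addition to $m$ minimal, that $\om(\be)$ is minimal among nonzero length-$m$ invariants and argues by contradiction; that energy minimality (whose existence itself rests on Gromov compactness) is a genuinely missing ingredient in your scheme, not a matter of care.

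Second, (\ref{eq:LPi}) does not do what your ``main term'' step asks of it. It is the splitting identity $\psi_1=D_{1|23}$: applied to $\bla\tau_1 pt,\dots,b_i,\dots\bra^M_\be$ it produces a sum of two-factor products $\bla pt,\xi,\dots\bra^M_{\be_1}\,\bla\xi^*,b_i,a_3,\dots\bra^M_{\be_2}$, not a primary invariant with the same $m$ insertions and $R$ lowered by one. The unique term that would accomplish such stripping ($\be_1=0$, $\xi=[M]$, no spectators in the first factor, so that $\xi^*=pt$) is $\bla pt,[M]\bra^M_0\cdot\bla pt,b_i,\dots\bra^M_\be$, and it is excluded from the sum because its first factor is unstable; indeed, in the final step of the paper's proof of Lemma~\ref{le:GWsu} this exclusion is the whole point, since it is what forces the $\tau_1$-term to vanish outright. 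Consequently the ``chain of nonzero main terms'' that your termination argument invokes does not exist: a nonzero main term yields, after (\ref{eq:LPi}), stability, Lemma~\ref{le:00} and the minimality of $m$, either a nonzero point-invariant in a class $\be_1\ne 0$ with at most $m-1$ insertions, or nothing at all. Repairing both defects --- adding $\om(\be)$ to the induction and treating the main term by splitting rather than stripping --- essentially reproduces the paper's argument. Two smaller omissions would also need attention: insertions that are pure divisors must first be removed with the divisor equation (the paper's observation that $i_2>1$), and the Lemma~\ref{le:00} vanishing arguments require that no insertion equal $[M]$, which your reduction to divisor monomials has to arrange explicitly.
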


\begin{proof}
The first two steps in this argument apply to all $M$ and 
are contained in the proof of ~\cite[Thm~4.9]{HLR}.  We include them for completeness.
Without loss of generality we consider a nonzero invariant  
(\ref{eq:su})
such that $m$ is minimal and $\om(\be)$ 
is minimal among all nonzero invariants (\ref{eq:su}) of length $m$ with $\be\ne 0$. We then order the indices $k_i$ so that $k_2\le \dots\le k_m$
and suppose that the $k_i$ for $i>1$ are minimal with respect to the lexicographic order for the given $m,\om(\be)$. 
Finally we choose a minimal $k_1$ for the given $m, \om(\be),$ and $k_i,i>1$.
\MS

\NI {\bf Step 1:} {\it We may assume that  $k_i=0$ for $i>1$.}
 
  If not,  let $r$ be the minimal integer greater than one such that $k_r\ne 0$.  Suppose first that $m\ge 3$ and $r>2$ and apply 
  (\ref{eq:LPi}) with $i=r, j=1$ and $k=2$.  Then the invariant  (\ref{eq:su}) is a sum of products
$$
\bla\tau_{k_r-1} a_r, \xi,\dots\bra^M_{\be_1} \;
\bla \xi^*, \tau_{k_1}pt, a_2,\dots\bra^{M}_{\be-\be_1},
$$
 where $\xi$ runs over a basis for $H_*(M)$ with dual basis 
$\{\xi^*\}$, and
the dots represent the other constraints $\tau_{k_\ell}a_\ell$ (which may be distributed in any way.)  There must be a nonzero product of this form.

We now show that this is impossible. Suppose first that there is
 such a product with $\om(\be_1)>0$.  Then the second factor is an invariant of type   (\ref{eq:su}) in a class $\be'$ with $\om(\be')<\om(\be)$ and at most $m$ constraints.  Since our assumptions imply that all such terms vanish, this is impossible.  Hence any nonzero product
 must have $\om(\be_1) = 0$ 
and hence  $\be_1=0$.  But then the second factor 
has at least one fewer nonzero $k_i$, since it has the homological
constraint $\xi^*$ instead of $\tau_{k_r}a_r$.  This contradicts the
assumed minimality of $k_2,\dots,k_m$.
%other constraints must lie in the second factor (in order to give it $m$
%constraints), and the only difference between this invariant and the one we started with
%is the substitution of the constraint homological
%$\xi^*$ for $\tau_{k_r}a_r$. 
%Since this substitution reduces 
% $k_2,\dots,k_m$ the 
%minimality of the $k_i,i>1,$ implies that the invariant must vanish.

This completes the proof when $r>2$.  If $m\ge 3$ but  $r=2$ use the same argument but take $k=3$ instead of $k=2$.  If $m<3$  add 
divisorial constraints
to get a nonzero invariant with $3$ constraints.  The reader can check that the previous argument still goes through because it does not use the minimality of $m$ in any essential way. 
\MS

\NI {\bf Step 2:} {\it  $k_1=0$.}

If $k_1>0$ apply  (\ref{eq:LPi}) with $i=1, j=2$ and $k=3$.
Again, there must be a nonzero product of the form
$$
\bla\tau_{k_1-1} pt, \xi,\dots\bra^M_{\be_1} \;
\bla \xi^*, a_2, a_3,\dots\bra^{M}_{\be-\be_1}.
$$
Since the first factor can have at most $(m-1)$ constraints, the minimality of $m$ implies that $\be_1=0$.   But then $pt\cap \xi\ne 0$, 
so that $\xi=[M]$.    Hence $\xi^*=pt$ and the second factor is an invariant of the required form  with $k_1=0$.\MS

\NI {\bf Step 3:} {\it Completion of the proof.}

By  hypothesis on $M$ and Step 2, there is a nonzero invariant
$\bla pt, H_2^{i_2}, \dots,H_m^{i_m}\bra^M_{\be}$ with $m$ constraints, where $H_j\in H_{2n-2}(M)$. 
Moreover, we may assume that all invariants (\ref{eq:su}) in a class $\be'$ with $\om(\be')<\om(\be)$ or $m'<m$ vanish and that the set $i_2\le \dots\le i_m$ is minimal in the lexicographic ordering.
Note that $i_2>1$ since otherwise we can reduce $m$ by using the divisor equation. 
 We must show that $m\le 3$.  
 
 Suppose not and apply (\ref{eq:LPii}) with  $i=2$ and $j=1$.  Since $H_2\cap pt = 0$ the first term on the
 RHS vanishes.  The second is a multiple of
 $\bla\tau pt, H_2^{i_2-1},H_3^{i_3},\dots, H_m^{i_m}\bra^M_{\be}$.
 Suppose it is nonzero and
 apply  (\ref{eq:LPi}) to it, with $i=1$ and $ j,k,=2,3$.
This gives  a sum of terms
 $\bla pt, \xi,\dots\bra^M_{\be_1}\; 
\bla  H_2^{i_2-1}, H_3^{i_3}, \xi^*,\dots\bra^{M}_{\be-\be_1}$.
Since the first factor has $<m$ constraints, we must have $\be_1=0$. 
But then $\xi=[M]$ so that $\xi^* = pt$.  Hence all the other constraints must lie in the second factor (since it must have at least $m$ constraints).
Therefore the first factor 
is a constant map with only two constraints, i.e. it  is unstable.  But this 
 is not allowed.
  Therefore, the second term in (\ref{eq:LPii}) must vanish.

It remains to consider the product terms in (\ref{eq:LPii}), namely
 $$
(\be_1\cdot H_2)\;\bla H_2^{i_2-1}, \xi,\dots\bra^M_{\be_1}\; 
\bla  pt, \xi^*,\dots\bra^{M}_{\be_2},
$$ 
where $\be_1+\be_2=\be$.
In any nonzero product of this form,  $\be_1\ne 0$. Also 
 the stability condition on the second factor implies that if $\be_2=0$ 
 this term must have another constraint.  Since this must have  
the form $H_j^{i_j}$ with $i_j>0$,  this is not
  possible by Lemma~\ref{le:00}. 
 Therefore $0<\om(\be_2)<\om(\be)$.  But then the second factor
vanishes by the minimality of $\om(\be)$.
\end{proof}

  \begin{rmk}\rm  Suppose that $(M,\om)$ is uniruled with even constraints,
  i.e. there is a nonzero invariant of the form
(\ref{eq:su}) in which all the $a_i$ have even degree.  Then the proof of
Lemma~\ref{le:GWsu} goes through if we assume only that the even part
$H^{ev}(M)$ of the cohomology ring
 is generated by $H^2$.  For if  the $a_i$ have even degree, odd dimensional homology classes appear
in the above proof only as elements $\xi, \xi^*$. Since these always appear as part of invariants where all the other insertions have even dimension, all terms involving odd dimensional $\xi, \xi^*$  must vanish. The appendix contains other results about such manifolds;
cf. Propositions~\ref{prop:frob} and \ref{prop:a}.
\end{rmk}

%%%%%%%%%%%%%%%%%%%%%%%%%%%%%%%%%%%%%%%%%%%%%%%%%%%%%%%%%%%%%%%  
\begin{appendix}
  \section{The structure of $QH_*(M)$ for uniruled $M$}
%%%%%%%%%%%%%%%%%%%%%%%%%%%%%%%%%%%%%%%%%%%%%%%%%%%%%%%%%%%%%%% 

  In this appendix we explore the extent to which the uniruled property can be seen in quantum homology, completing the discussion begun in 
  Lemma~\ref{le:QH}.   To simplify we shall ignore contributions to the quantum product from the odd dimensional homology classes.  Hence our results are not as general as they might be.

  Let $\FF: = \La$ be the field $\La^{\univ}$ of generalized Laurent series.  Observe that the
 even quantum homology 
 $$
 QH_*^{ev}(M): = 
 \bigoplus_{i=0}^n H_{2i}(M;\R)\otimes \La[q,q^{-1}]
 $$
 is a subring of $QH_*(M)$ because
when $a,b\in H_*(M)$ have even degree $\bla a,b,c\bra^M_\be=0$ unless $c$ also has even degree. We shall denote by
 $\Aa$ its subring $QH_{2n}(M) = QH_{2n}^{ev}(M)$ of elements of degree $2n$, regarded as a commutative algebra over $\FF$.  (Equivalently we can think of $\Aa$ as the algebra obtained 
from $QH_{ev}(M)$ by setting $q=1$.)

  Choose a basis $\xi_i$ for $H_{ev}(M;\Q)$ with $\xi_0=pt$, $\xi_N=\1$
  and so that $0<\deg(\xi_i)<2n$ for the other $i$.  These elements form a finite basis for $\Aa$ considered as a  vector space over $\FF$.  Hence there is a well defined linear map $f:\Aa\to \FF$  given by
  $$
 f(\sum_{i=0}^N \la_i\xi_i) = \la_0.
 $$
 Since the corresponding pairing  $(a,b):= f(ab)$ on $\Aa$ is nondegenerate, 
  $(\Aa,f)$ is a commutative Frobenius algebra.\footnote
  {
  A pair $(\Aa, f)$ consisting of a commutative finite dimensional unital algebra together with a linear functional $f:\Aa\to\FF$ satisfies the Frobenius nondegeneracy condition iff 
  $\ker f$ contains no nontrivial ideals.}
It is easy to see that it satisfies the other conditions of the next lemma, with $p=pt$ and $\Mm = {\rm span\,}\{\xi_i: i\ne 0,N\}$.  In particular $f(pa) = 0$ when $a\in \Qq_-: = p \FF \oplus
 \Mm$ because all 
Gromov--Witten invariants of the form
$$
\bla a,b,[M]\bra^M_{\be}, \quad \be\ne 0, \;a,b\in H_{<2n}(M)
$$
vanish.

 \begin{lemma}\labell{le:frob}  Let $(\Aa,f)$ be a finite dimensional commutative Frobenius algebra over a field $\FF$ that decomposes
 additively  as
$\Aa = p\,\FF\oplus \Mm \oplus \1\,\FF$ where
$f(p)=1$ and $\ker f = \Mm\oplus \1\,\FF$. Suppose further that
$f(pa) = 0$ for all $a\in \Qq_-: =p\,\FF\oplus \Mm$.   Then
$pa=0$ for all $a\in \Qq_-$ iff
there are no units in $\Qq_-$.
\end{lemma}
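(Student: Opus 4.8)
The plan is to prove the two implications separately: the forward direction is a one-line computation, while the reverse direction rests on showing that the absence of units forces $\Qq_-$ to be a maximal ideal, after which the Frobenius condition does the rest.

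First I would dispose of the implication ``$pa=0$ for all $a\in\Qq_-$ $\Rightarrow$ no units in $\Qq_-$''. Since $p\in\Qq_-$, the hypothesis gives $p\Qq_-=0$. If $u\in\Qq_-$ were a unit, then $p=p\1=(pu)u^{-1}=0$, contradicting $f(p)=1\ne0$. Hence $\Qq_-$ contains no unit.

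For the converse I would argue by contraposition, and the key structural input is that $\Aa$ is a finite-dimensional commutative algebra over the infinite field $\FF=\La^{\univ}$. Such an algebra is Artinian with only finitely many maximal ideals $M_1,\dots,M_k$, and an element is a non-unit precisely when it lies in some $M_j$; thus the set of non-units is the finite union $\bigcup_j M_j$ of proper $\FF$-subspaces. Assuming $\Qq_-$ contains no unit, the subspace $\Qq_-$ lies in $\bigcup_j M_j$, and since a subspace over an infinite field contained in a finite union of subspaces must lie in one of them, we get $\Qq_-\subseteq M_j$ for some $j$. Because $\Qq_-$ has codimension one (as $\Aa=\Qq_-\oplus\1\FF$) while $M_j\subsetneq\Aa$, this forces $M_j=\Qq_-$, so that $\Qq_-$ is itself a (maximal) ideal. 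With $\Qq_-$ an ideal, the conclusion follows from the Frobenius nondegeneracy of $(y,x)\mapsto f(yx)$ together with the standing hypothesis $f(p\Qq_-)=0$: for $a\in\Qq_-$ and arbitrary $x\in\Aa$ we have $ax\in\Qq_-$, hence $f\bigl((pa)x\bigr)=f\bigl(p(ax)\bigr)=0$, and nondegeneracy then yields $pa=0$.

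I expect the main obstacle to be the reverse direction, specifically the observation that ``no units in the hyperplane $\Qq_-$'' is exactly the assertion that $\Qq_-$ coincides with one of the maximal ideals. This is where one must simultaneously use the finiteness of the set of maximal ideals of an Artinian ring and the infinitude of $\FF$ (to exclude the possibility that $\Qq_-$ is covered by several maximal ideals while lying in none). Once $\Qq_-$ is identified as an ideal, the passage to $pa=0$ is a routine invocation of nondegeneracy and needs no further input.
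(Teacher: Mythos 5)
Your proof is correct, and the converse direction takes a genuinely different route from the paper's. Both arguments dispose of the easy implication identically (a unit $u\in\Qq_-$ with $pu=0$ would force $p=0$, contradicting $f(p)=1$). For the hard implication, the paper argues by contradiction through the structure theory of finite-dimensional commutative algebras: it splits $\Aa$ into indecomposable (local) factors $\Aa_1\oplus\dots\oplus\Aa_k$, uses the dichotomy that every element of such a factor is either a unit or nilpotent to show that all factors but one lie in $\Qq_-$ and are annihilated by $p$, deduces that $p$ itself is nilpotent, and finally produces a nilpotent $w$ with $pw$ a nonzero element of the socle, whose nondegenerate pairing against $\Aa_1$ contradicts the hypothesis $f(p\,\Qq_-)=0$. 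You bypass all of this by showing directly that the absence of units forces $\Qq_-$ to be an ideal: the non-units of the Artinian algebra $\Aa$ are exactly the union of its finitely many maximal ideals, a vector space over an infinite field cannot be covered by finitely many proper subspaces, and a proper ideal containing the codimension-one subspace $\Qq_-$ must equal it; after that, $f\bigl((pa)x\bigr)=f\bigl(p(ax)\bigr)=0$ for all $x\in\Aa$ and nondegeneracy give $pa=0$. Your route is much shorter, avoids the socle entirely, and makes transparent the equivalence ``no units in $\Qq_-$ iff $\Qq_-$ is an ideal,'' which parallels the equivalence (i)$\Leftrightarrow$(ii) of Lemma~\ref{le:QH}. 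What it costs is the standing assumption that $\FF$ is infinite, since the covering step can fail over small finite fields, whereas Lemma~\ref{le:frob} is stated for an arbitrary field; this is harmless for every application in the paper, where $\FF$ is a Novikov field containing $\Q$ (and in fairness the paper's own Step~1 also quietly needs enough scalars to solve $\sum_j\nu_j\la_j=\mu_i$ with all $\nu_j\ne 0$). In exchange, the paper's structural analysis yields by-products your argument does not -- e.g.\ that $p$ is nilpotent and pairs nontrivially with the socle -- which feed the discussion of rational connectedness and semisimplicity in the closing remark of the appendix.
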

\begin{proof}
One implication here is obvious: if $a\in \Qq_-$ is a unit then
$pa\ne 0$.   Conversely, suppose that $pa\ne 0$  
for some $a\in\Qq_-$, but that there are no units in $\Qq_-$.
We shall show by a sequence of steps that there are no Frobenius algebras 
$(\Aa,f)$ that have this property as well as satisfying the other conditions 
 in the statement of the lemma.
 
    Decompose $\Aa$ as a sum
$\Aa_1\oplus\dots\oplus \Aa_k$ of indecomposables and let $e_1,\dots, e_k$ be the corresponding unitpotents.   Thus  for all $i,j$
$$
e_i e_j=\de_{ij}e_i,\quad\mbox{and }\;
\Aa_i = \Aa e_i.
$$
     Each $e_i$ may be written uniquely in the form $\la_i\1 + x_i$ where $x_i\in \Qq_-$ and $\la_i\in \FF$.  Order the $e_i$ so that
the nonzero $\la_i$ are $\la_1,\dots,\la_\ell$.
  Since $\sum e_i = \1$, we must have 
$\sum _{i=1}^\ell\la_i=1$.  In particular, $\ell\ge 1$.  \MS

 \NI {\bf Step 1:} 
{\it All units in $\Aa_i, i>1,$ lie in $\Qq_-$. Hence $\ell=1$.}

Suppose there is a unit in $\Aa_i$ of the form $\mu_i\1 + x$, where $\mu_i\ne 0$  and $i>1$.  Choose nonzero $\nu_j\in \La$ for $j\ne i, j\le \ell$, so that $\sum \nu_j\la_j=\mu_i$ and set
$\nu_j: = 1 $ when $ j>\ell$.
Then $u: = \sum_{j\ne i} \nu_je_j - u_i$ is a unit of $\Aa$ since it is a sum of units, one from each factor.  By construction, the coefficient $\1$ in $u$ vanishes.  Hence $u$ 
 is a unit of $\Aa$ lying  in $\Qq_-$, which, by hypothesis, is impossible.
\SSS

\NI {\bf Step 2:}  {\it All nilpotent elements 
in $\Aa$ lie in $\Qq_-$.}
  
If $n = \1-x$  is nilpotent for some  $x\in \Qq_-$, then $x = \1-n$ is a
 unit of $\Aa$ lying in $\Qq_-$.

\NI {\bf Step 3:} {\it For all $i>1$, each $\Aa_i\subset \Qq_-$ and $pe_i=0$.}

The standard theory of indecomposable finite dimensional algebras over a  field implies that every nonzero element in 
$\Aa_i$ either is a unit or
 is nilpotent; see Curtis--Reiner \cite[pp.370-2]{CR}. 
The units in $\Aa_i, i>1$, lie in $\Qq_-$ by Step 1, and the nilpotent elements do too by Step 2.  This proves
 the first statement.  Thus $e_ix\in \Qq_-$ for all $x\in \Aa$ and $i>1$,
so that by our initial assumptions   $f(pe_ix) = 0$
for all $x$.  But the restriction of $f$ to each summand $\Aa_i$ 
is nondegenerate. Hence this is possible only if $pe_i=0$.
\SSS

\NI {\bf Step 4:} {\it Completion of the argument.}

Let $e = \sum_{i>1} e_i$. By the above we may assume that
 $p = p(\1-e)\in \Aa_1$. 
Decompose $\Aa_1$ additively as
the direct sum
 $\Nn \oplus \Uu$ where $\Nn$ is the subspace formed by the 
nilpotent elements and $\Uu$ is a 
complementary subspace.  Step 2 implies that $\Nn\subset 
\Aa_1\cap\Qq_-$.  On the other hand, as in Step 3,
 any nonzero element in 
$\Aa_1\cap\Qq_-$ that is not nilpotent
is a unit $u$ in $\Aa_1$.  If any such existed then
$u+e$ would be a unit of $\Aa$ lying in $\Qq_-$.  Hence by hypothesis we must have 
$\Nn=\Aa_1\cap \Qq_-$. Thus $p$ is nilpotent and $\dim \Uu = 1$, spanned by $e_1$.
Further because $\Qq_-$ is spanned by $\Nn$ and the $\Aa_i, i>1$, 
$p$  does not annihilate 
all elements in $\Nn$.  

We now need to use further information about the structure of $\Aa_1$.
Recall that the socle $\Ss$ of an algebra $\Aa_1$ is the annihilator   of $\Nn$.  Therefore, our assumption on $p$ implies that $p\notin
\Ss$.  But 
there always is $w\in \Nn$ such that $pw$ is a nonzero element of $\Ss$.
To see this, choose a set $x_1=p, x_2,\dots,x_k$ of multiplicative generators for $\Nn$ that includes $p$.  There is $N$ such that all products of the $x_i$ of length $>N$ must vanish. (Take $N$ to be the sum of the orders of the $x_i$.) Therefore there is  a nonzero product 
 of maximal length that contains $p$ and so can be written as $pw$. Since $pwx_i=0$ for all $i$, $pw\in \Ss$. (A more precise version
 of this argument is given in Abrams~\cite[Prop.3.3]{Ab2}.)

%Abrams shows in \cite[Prop~9]{Ab1} that this has the same dimension as $\Uu$.

The argument is now quickly completed.  For, by the Frobenius nondegeneracy condition there must be $z\in \Aa$ such that  
 $(pw,z)=f(pwz)\ne 0$.  By Step 3
 we may assume that $z\in \Aa_1$ so that $z=\la e_1 +n$ for some $n\in \Nn$.  But then $pwz=pw(\la e_1)=\la pw$ so that $f(\la pw)=\la f(pw)\ne 0$.  But $w\in \Nn\subset \Qq_-$ by construction. Hence 
 $f(pw)=0$ by our initial assumptions.  This contradiction shows that
our assumption that $\Qq_-$ has no units must be wrong. 
  \end{proof}

 We shall say that $(M,\om)$ is {\bf  uniruled with even constraints} 
if there is a nonzero Gromov--Witten invariant
of the form $\bla pt,a_2,\dots,a_k\bra^M_\be$ with $\be\ne 0$ and all $a_i$ of even degree.  Similarly,
 $(M,\om)$ is {\bf strongly uniruled with even constraints} if there is a nonzero invariant of this kind with $k=3$.   For example, 
 \cite[Cor.~4.3]{HLR}
shows  that any
 projective manifold  that is uniruled is in fact  strongly uniruled with even constraints.

\begin{prop}\labell{prop:frob} $(M,\om)$ is
 strongly uniruled with even constraints
 iff the even quantum homology ring  $QH_*^{ev}(M)$  has a unit in $\Qq_-$.
\end{prop}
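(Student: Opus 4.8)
The plan is to read off the proposition from the Frobenius-algebra criterion of Lemma~\ref{le:frob}, once its two equivalent conditions have been translated into the language of Gromov--Witten invariants. The discussion preceding the statement already verifies that the pair $(\Aa,f)$, with $p=pt$, $\Mm={\rm span}\{\xi_i:i\neq 0,N\}$ and $\Qq_-=p\FF\oplus\Mm$, is a commutative Frobenius algebra satisfying every hypothesis of Lemma~\ref{le:frob} (in particular $f(pa)=0$ for $a\in\Qq_-$). Hence the lemma applies, and all that remains is to match its conclusion ``$pa=0$ for all $a\in\Qq_-$'' with the failure of strong uniruledness with even constraints.

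The core of the argument is therefore the dictionary I would establish first: $(M,\om)$ is strongly uniruled with even constraints if and only if $pt*a\neq 0$ in $\Aa$ for some $a\in\Qq_-$. For the forward implication, a nonzero invariant $\bla pt,a_2,a_3\bra^M_\be$ with $\be\neq 0$ and $a_i$ even may be assumed to have $a_2,a_3\in H_{<2n}(M)$, since neither can be a multiple of $\1=[M]$: the fundamental-class axiom forces $\bla pt,a_2,[M]\bra^M_\be=0$ when $\be\neq 0$. Expanding $pt*a_2$ via (\ref{eq:QH}) and pairing against $a_3$, the nonzero $\be$-contribution shows that some dual-basis coefficient of $pt*a_2$ is nonzero, so $pt*a_2\neq 0$ with $a_2\in\Qq_-$. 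For the converse, I would note that for $a\in\Qq_-$ the classical term $pt\cap a$ vanishes for degree reasons (the point class has codimension $2n$, while $a\in H_{<2n}$), so $pt*a$ is assembled entirely from invariants $\bla pt,a,\xi_j\bra^M_\be$ with $\be\neq 0$. The one point requiring care is that $a=\sum_i\la_i\xi_i$ is an $\FF$-linear combination of basis classes, so $pt*a\neq 0$ must be shown to force some \emph{individual} $\bla pt,\xi_i,\xi_j\bra^M_\be\neq 0$; this is automatic because every classical triple product $\bla pt,\xi_i,\xi_j\bra^M_0$ with $\xi_i$ of degree $<2n$ vanishes, so no $\be=0$ term can cancel the quantum ones. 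As everything takes place in $QH^{ev}_*(M)$, the resulting invariant has even insertions, giving strong uniruledness with even constraints.

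With this dictionary, the proposition follows immediately: by Lemma~\ref{le:frob} the condition ``$pa=0$ for all $a\in\Qq_-$'' is equivalent to the absence of units in $\Qq_-$, and by the dictionary its negation is precisely strong uniruledness with even constraints, so taking contrapositives yields the claimed equivalence. I expect no real obstacle beyond the bookkeeping in the translation step, since all the substantive algebra has been isolated in Lemma~\ref{le:frob} and the degree and fundamental-class observations needed here are routine.
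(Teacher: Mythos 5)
Your proof is correct and follows essentially the same route as the paper: both reduce the statement to Lemma~\ref{le:frob} applied to the Frobenius algebra $(\Aa,f)$ with $p=pt$, after translating strong uniruledness with even constraints into the condition that $pt*a\ne 0$ for some $a\in\Qq_-$. The only difference is one of emphasis: you spell out this translation (the even-constraints analogue of Lemma~\ref{le:QH}(i)$\Leftrightarrow$(iii), using the fundamental class axiom and the degree argument) explicitly, whereas the paper treats it as immediate from the setup preceding Lemma~\ref{le:frob} and records only the reduction from $QH_*^{ev}(M)$ to its degree-$2n$ part $\Aa$.
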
  
\begin{proof}  
Note that  $QH_*^{ev}(M)$  has a unit iff its degree $2n$ part
$\Aa$ has.  Also the subring $\Qq_-(\Aa)$ of $\Aa$ considered above is just the intersection $\Qq_-\cap QH_{2n}^{ev}(M)$.
Therefore this is an immediate consequence of Lemma~\ref{le:frob}.
\end{proof}

If $(M,\om)$ is uniruled rather than strongly uniruled we can still see some effect on quantum homology if instead of considering the small quantum product $*$ we consider the whole family of products $*_a$, $a\in \Hh$. 
Here we shall take $\Hh: =H_{ev}(M;\C)$ and correspondingly allow the coefficients $r_i$ of the elements $\sum r_it^{\ka_i}$ in $\La$
to be in $\C$.
Let $\xi_i, i=0,\dots,N,$  be a basis for $\Hh$ with $\xi_0=pt$ as before, and
identify $\Hh$ with $\C^{N+1}$ by thinking of this as the standard basis in $\C^{N+1}$.
Denote by $T_0,\dots, T_N$ the corresponding coordinate functions on $\Hh$, thought of as formal variables.  If
$\al = (\al_1,\dots,\al_p)$ is a multi-index with $0\le \al_i\le N$
 define
$$
\xi^\al: = (\xi_{\al_1},\dots,\xi_{\al_p})\in \Hh^{p},\quad
T^\al: = \prod_{i=1}^pT_{\al_i}.
$$
In this language, 
the (even) Gromov-Witten potential
$\Phi(t,T)$ is the formal power series in the variables $t$ and $T$
given by 
$$
\Phi(t,T): = \sum_{\al} \sum_\be 
\frac 1{|\al|!}\bla \xi^\al\bra_{\be}\; t^{-\om(\be)}T^{\al}.
$$
Let us assume\footnote
{
We make this assumption  to simplify our subsequent  discussion.
It is satisfied in the case of manifolds such as $\C P^n$ (cf. \cite[Ch~7.5]{MS}), but
there is at present little understanding of when it is satisfied
in general.  Even if it were not satisfied, one could presumably adapt the results below in any particular case of interest.} 
 that the following condition holds:\SSS

\NI {\bf Condition (*):}\,  {\it there is $\de>0$ such that
this series converges if we consider the $T_i$ to be complex numbers such that $|T_i|\le \de$.}\SSS

Then we may think of $\Phi$ as a function defined near $0\in \Hh$ with values in the field $\La=\La_{\C}^{univ}$.
Further, as explained for example in \cite[Ch~11.5]{MS}, 
 the structure constants of the  associative product
$x*_ay$ are given by evaluating the third derivatives 
 of $\Phi(t,T)$ with respect to the variables $T_i$ at the point $T=a$.  In other words,\footnote
 {One needs to interpret these formulas with some care.  When we set $T=a$ we are thinking of $T$ as the set of numbers  $(T_0,\dots,T_k)$
 that are the coordinates of $a=\sum T_\ell\xi_\ell$. On the other hand, the arguments of a Gromov--Witten invariant are homology classes. Thus $\bla a,\dots,a\bra^M_{p,\be} = \sum_{\al}
 \bla \xi^\al\bra^M_{p,\be}t^{-\om(\be)}T^\al|_{T=a}$.}
 $\xi_i *_a\xi_j =\sum_k c_{ij}^k(a) \xi_k^*,
 $
 where 
\begin{eqnarray}\labell{eq:A1}
 c_{ij}^k(a) &=& \frac{\p^3 \Phi(t,T)}{\p T_i \p T_j \p T_k}  \Big|_{T=a}=
\sum_m\frac1{m!}\; \bla \xi_i,\xi_j,\xi_k, a,\dots,a
 \bra^M_{m+3,\be}\, t^{-\be}.
\end{eqnarray}
 We denote the corresponding (ungraded)  rings by $QH_{ev}^{a}(M)$. 
As before, they are Frobenius algebras over the field $\FF: = \La$.

The main point for us is the following lemma:

\begin{lemma}  Assume that condition (*) holds and that there is a nonzero invariant
 $\bla pt,a_2,\dots,a_k\bra^M_\be$ where $\be\ne 0$
  and the $a_i$ have even degree.  Then there are 
 $a,b\in \Hh$ with $\deg b < 2n$
  such that  $pt*_a b\ne 0$.
 \end{lemma}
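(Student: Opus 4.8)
The plan is to read the desired deformed product straight off the structure constants (\ref{eq:A1}), after first massaging the given invariant into a convenient shape. Since $\be\ne0$, the fundamental class axiom forces every insertion to have degree $<2n$: a nonzero multiple of $\1=[M]$ (the only even class of degree $2n$) would kill the invariant, so the nonvanishing hypothesis already guarantees that all the $a_i$ are even of degree $<2n$. If $k\ge 3$ there is nothing more to arrange. If $k\le 2$ I would manufacture a three-point invariant using the divisor equation: a dimension count on the relevant moduli space (of real dimension $2n+2c_1(\be)+2k-6$) shows that $c_1(\be)\ge 2$ once all constraints have degree $<2n$, so the class $D\in H_{2n-2}(M)$ Poincar\'e dual to $c_1(M)$ satisfies $\be\cdot D=c_1(\be)\ne0$. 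Hence $\bla pt,D,D\bra^M_\be=c_1(\be)^2\,\bla pt\bra^M_\be$ (for $k=1$) and $\bla pt,a_2,D\bra^M_\be=(\be\cdot D)\,\bla pt,a_2\bra^M_\be$ (for $k=2$) are nonzero three-point invariants whose non-$pt$ constraints are again even of degree $<2n$.

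So I may assume a nonzero invariant $\bla pt,a_2,\dots,a_k\bra^M_\be$ with $k\ge 3$, $\be\ne0$, and each $a_i$ even of degree $<2n$. Expanding $a_2,\dots,a_k$ in the basis $\{\xi_\ell\}$ by multilinearity, I would choose basis elements with $\bla pt,\xi_{j_2},\dots,\xi_{j_k}\bra^M_\be\ne0$, and set $b:=\xi_{j_2}$, which has degree $<2n$. I then examine the coefficient of $\xi_{j_3}^*$ in $pt*_a b$, namely the structure constant $c^{\,j_3}_{0,j_2}(a)$ of (\ref{eq:A1}). Writing $a=\sum_\ell T_\ell\xi_\ell$ and extracting the Taylor coefficient of the monomial $T_{j_4}\cdots T_{j_k}$ produces, by (\ref{eq:A1}) together with the invariance of genus-zero invariants under permutation of (even) insertions, a positive rational multiple of $\bla pt,\xi_{j_2},\xi_{j_3},\dots,\xi_{j_k}\bra^M_\be\,t^{-\om(\be)}$. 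Since we may assume $I_\om$ injective --- a generic choice within the symplectic deformation class, which changes none of the invariants --- no class $\be'\ne\be$ contributes the same power of $t$, so there is no cancellation and this Taylor coefficient is nonzero.

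Finally, Condition (*) guarantees that $c^{\,j_3}_{0,j_2}(\cdot)$ is a genuine $\La$-valued analytic function of $a$ on the polydisc $\{|T_\ell|\le\de\}$, equal there to its convergent Taylor series. A convergent power series with a nonzero coefficient cannot vanish identically, so there is an actual $a\in\Hh$ with $c^{\,j_3}_{0,j_2}(a)\ne0$; hence $pt*_a b\ne0$ with $\deg b<2n$, as required. I expect the only genuinely delicate points to be the two reduction steps and the appeal to the injectivity of $I_\om$ to exclude cancellation among distinct classes of equal area; the concluding analytic step is routine once Condition (*) is in force.
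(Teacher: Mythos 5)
Your proof is correct and follows essentially the same route as the paper's: expand the given invariant in the basis $\{\xi_\ell\}$, observe that the corresponding Taylor coefficient of the structure-constant power series $c^{j_3}_{0,j_2}$ from (\ref{eq:A1}) is a positive rational multiple of a nonvanishing Gromov--Witten invariant (with no cancellation among classes because $I_\om$ is injective, the paper's standing assumption from \S\ref{ss:qh}), and then use condition (*) to conclude that the convergent series is nonzero at some actual $a\in\Hh$. The only difference is the bookkeeping at small $k$: the paper takes $m+3$ to be the minimal number of insertions and sets $a=\1$ when $m\le 0$ (via Lemma~\ref{le:00}), whereas you boost $k\le 2$ up to $k=3$ by the divisor equation with the Poincar\'e dual of $c_1(M)$ after a dimension count showing $c_1(\be)\ge 2$ --- a harmless, and in fact somewhat more explicit, variant of the same reduction.
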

 \begin{proof}  Let $m+3$ be the minimal $k$ for which some 
  $\bla pt,a_2,\dots,a_k\bra^M_{k,\be}$
  does not vanish. 
  If $m\le 0$ then we can take 
 $a=\1$
 so that, by Lemma~\ref{le:00},  $*_a$ is the usual product.  If $m>0$ then the hypothesis implies that some invariant of the form
$\bla pt,\xi_j,\xi_k, a,\dots,a
 \bra^M_{m+3,\be}$ is nonzero.  
 (This holds because GW invariants are symmetric and multilinear functions of their arguments.)
 But by equation (\ref{eq:A1})
 the coefficients
  $c_{ij}^k(a)$ are power series in the coordinates of $a\in \Hh$.
  Hence  the fact that one coefficient of the power series for
 $c_{0j}^k(a)$ does not vanish implies that by perturbing $a$ 
 slightly if necessary
we can arrange that  $c_{0j}^k(a)$ itself is nonzero.  It follows that
$pt*_a\xi_j\ne 0$.
  \end{proof}
 
 A similar argument proves the analog of the other statements
 in Lemma~\ref{le:QH}.  Moreover, if 
  $(\Aa^a,f)$ denotes the Frobenius algebra $QH^a_{2n}(M)$, then  
 one can check as before that   $(\Aa^a,f)$ satisfies the conditions of 
 Lemma~\ref{le:frob}.  Thus we deduce:
 
 \begin{prop}\label{prop:a}  Assume that condition (*) holds.
 Then  $(M,\om)$ is  uniruled with even constraints iff
there is $a\in \Hh$ such that the even quantum ring
$QH_{ev}^a(M)$ has a unit in $\Qq_-$.
 \end{prop}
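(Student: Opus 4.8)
The plan is to reduce both directions to Lemma~\ref{le:frob} applied to the deformed algebra $\Aa^a:=QH^a_{2n}(M)$, invoking the lemma immediately preceding this proposition for the direction that starts from the uniruled hypothesis. The first task is to check that for every $a\in\Hh$ the pair $(\Aa^a,f)$ satisfies the hypotheses of Lemma~\ref{le:frob}, with $p=pt$ and $\Qq_-=pt\,\FF\oplus\Mm$. The crux here is that the bilinear form $(x,y):=f(x*_ay)$ does not depend on $a$ and equals the classical Poincar\'e pairing $x\cdot_M y$. Indeed, since $\xi_N^*=pt$ the functional $f$ applied to $\xi_i*_a\xi_j$ reads off the coefficient $c^N_{ij}(a)$, and inserting the fundamental class $\xi_N=\1$ into \eqref{eq:A1} annihilates every quantum correction by the fundamental-class axiom, leaving only the classical term $\bla \xi_i,\xi_j,\1\bra^M_0=\xi_i\cdot_M \xi_j$. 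Nondegeneracy of the intersection pairing gives the Frobenius condition, and the same computation shows $f(pt*_ac)=pt\cdot_M c=0$ for all $c\in\Qq_-$, since $pt$ pairs nontrivially only with $\1$.

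For the forward implication I would assume $(M,\om)$ is uniruled with even constraints and apply the preceding lemma to obtain $a,b\in\Hh$ with $\deg b<2n$, hence $b\in\Qq_-$, and $pt*_ab\ne0$. Thus it is not the case that $pt*_ac=0$ for all $c\in\Qq_-$, so the contrapositive of Lemma~\ref{le:frob} applied to $\Aa^a$ furnishes a unit of $QH^a_{ev}(M)$ lying in $\Qq_-$.

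For the reverse implication I would start from a unit of some $QH^a_{ev}(M)$ lying in $\Qq_-$ and use Lemma~\ref{le:frob} directly to produce $c\in\Qq_-$ with $pt*_ac\ne0$. Expanding in the basis, some structure constant $c^k_{0j}(a)$ with $\xi_j\in\Qq_-$ is then nonzero. By Lemma~\ref{le:00} the only $\be=0$ contribution to \eqref{eq:A1} is the classical triple intersection $\bla pt,\xi_j,\xi_k\bra^M_0$, and this vanishes because the classical product $pt\cdot_M\xi_j$ has homological degree $\deg\xi_j-2n<0$. Hence some genuinely quantum invariant $\bla pt,\xi_j,\xi_k,a,\dots,a\bra^M_\be$ with $\be\ne0$ must be nonzero; as $a$, $\xi_j$ and $\xi_k$ all have even degree, this exhibits a nonzero invariant with a point insertion and even constraints, so $(M,\om)$ is uniruled with even constraints.

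The main obstacle is the uniformity in the first step: establishing that $(\Aa^a,f)$ is genuinely a Frobenius algebra meeting every hypothesis of Lemma~\ref{le:frob} for all $a$ at once. This rests on the fundamental-class axiom collapsing $f(x*_ay)$ to the $a$-independent Poincar\'e pairing, and on Condition (*) ensuring that $*_a$ is a well-defined associative $\FF$-algebra structure of finite rank; granting these, both implications follow formally from Lemma~\ref{le:frob} together with the preceding lemma.
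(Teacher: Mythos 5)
Your proof is correct and follows essentially the same route as the paper: verify that $(\Aa^a,f)$ satisfies the hypotheses of Lemma~\ref{le:frob} (via the fundamental-class axiom collapsing $f(x*_ay)$ to the $a$-independent Poincar\'e pairing), use the lemma preceding the proposition for the forward implication, and for the converse combine Lemma~\ref{le:frob} with Lemma~\ref{le:00} and degree considerations to extract a nonzero invariant in a class $\be\ne 0$ with even constraints. The paper compresses all of this into "one can check as before" and "a similar argument proves the analog of the other statements in Lemma~\ref{le:QH}"; you have simply supplied the details it leaves implicit.
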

 
 \begin{rmk} \rm (i)
 Suppose that $(\Aa,f)$ is a
  Frobenius algebra over a field $\FF$ with the property that $f(\1)=0$.  Suppose further that there is $p\in \Aa$ such that $f(p)\ne 0$ 
  while $f(p^2)=0$. Since the functional
  $\ker f\to \FF$ given by $x\mapsto f(px)$ does not vanish when $x=\1$, its kernel $\Mm$ is a complement to   $\1\FF $ in $\ker f$.
  Moreover the assumption $f(p^2) = 0$ implies that 
  the subspace of $\Aa$ orthogonal to $p$ 
  is $p\FF\oplus \Mm=:\Qq_-$. 
  Hence $\Aa$ decomposes additively as $p\FF\oplus \Mm\oplus \1\FF$ 
  as  in Lemma~\ref{le:frob}. 
  Therefore the conditions in this lemma are satisfied for some  $\Mm$ provided only that $f(\1)=0$ and there is $p$ with $f(p)\ne 0, f(p^2) = 0$.\SSS
  
  \NI (ii)  The fact that there is such a nice characterization of the uniruled property in terms of the structure of quantum homology leads immediately
  to speculations about rational connectedness.   A projective manifold is said to be   {\bf rationally connected} if there is a holomorphic $\PP^1$ through every generic pair of points: see Kollar~\cite{K}.  This implies that  
  there is a holomorphic $\PP^1$ through generic sets of $k$ points, for any $k$, but it is not yet known whether these spheres are visible in quantum homology, e.g. it is not known whether 
  there must be a nontrivial Gromov--Witten invariant with more than one point constraint.  This would correspond to the point class $p: = pt$ in 
  $QH_{ev}^a(M)$ having nonzero square $p^2$.  This raises many questions.  Are there symplectic manifolds with $p$ nilpotent but with
  $p^2\ne 0$?  If $p$ is not nilpotent is the 
  quantum homology semi-simple?   
  Abrams' condition for semi-simplicity in~\cite{Ab2} involves the quantum Euler class.  What is its relation to the class $p$? There are many possible choices for the coefficient ring $\La$; 
 how do these affect the situation?  \SSS
  
  \NI (iii) The purpose of Proposition~\ref{prop:a} is to show that there is not much conceptual difference between the usual quantum product and its deformations $*_a$.  All the usual applications of quantum homology
  (such as the Seidel representation and spectral invariants) should have analogs for $*_a$.  For example, given $a\in H_{2d}(M;\Q)$ let $\Gg^a$ be the extension of 
  $\pi_1(\Ham(M))$ whose elements are pairs $(\ga,\Ta)$ consisting of an element $\ga\in \pi_1(\Ham(M))$ with a class $\Ta\in H_{2d+2}(P_\ga) $ 
  such that $\Ta\cap [M] = a$; cf. the discussion of the group ${\widehat G}$ in \cite[Ch~12.5]{MS}.  Then (assuming that the appropriate version of condition (*) holds) one can define a homomorphism
  $$
  \Ss^a:\Gg^a\to \bigl(QH_{ev}^a(M)\bigr)^\times
  $$
  by setting
  $$
  \Ss^a\bigl((\ga,\Ta)\bigr) = \sum_{\si,m,i} \frac 1{m!}\bla \xi_i,\Ta,\dots,\Ta\bra_{m+1,\si}\;\;
  \xi_i^*\otimes
t^{-u_\ga(\si)},
  $$
as in equation (\ref{eq:S}). It is not hard to check that this satisfies the analog of (\ref{eq:Sa1}), namely
  $$
   \Ss^a\bigl((\ga,\Ta)\bigr)*_ab = \sum_{\si,m,i}
   \frac 1{m!}\bla b,\xi_i,\Ta,\dots,\Ta\bra_{m+2,\si}  \;\; \xi_i^*\otimes t^{-u_\ga(\si)}.
$$
  \end{rmk}

  \end{appendix}

\end{document}